 \newlength{\baseunit}               
\numberwithin{equation}{section}
\newtheorem{theorem}[equation]{Theorem}
\newtheorem{prop}[equation]{Proposition}
\newtheorem{lm}[equation]{Lemma}
\newtheorem{cor}[equation]{Corollary}
\theoremstyle{definition}
\newtheorem{definition}[equation]{Definition}
\newtheorem{example}[equation]{Example}
\newtheorem{examples}[equation]{Examples}
\newtheorem{remark}[equation]{Remark}
\newtheorem{remarks}[equation]{Remarks}
\DeclareMathOperator{\coker}{coker}
\DeclareMathOperator{\Hom}{Hom}
\DeclareMathOperator{\Tors}{Tors}
\DeclareMathOperator{\Dim}{Dim}
\DeclareMathOperator{\Det}{Det}
\DeclareMathOperator{\dom}{dom}
\DeclareMathOperator{\cod}{cod}
\DeclareMathOperator{\Ob}{Ob}
\DeclareMathOperator{\Aut}{Aut}
\DeclareMathOperator{\Ind}{Ind}
\DeclareMathOperator{\Pro}{Pro}
\DeclareMathOperator{\Mult}{Mult}
\DeclareMathOperator{\Fun}{Fun}
\DeclareMathOperator{\Pic}{Pic}
\newcommand{\Zee} {\mathbb{Z}}
\newcommand{\g} {\mathfrak{g}}
\newcommand{\h} {\mathfrak{h}}
\newcommand{\vect}{{\bf Vect}_0(k)}
\newcommand{\onevect}{{\bf Vect}_1(k)}
\newcommand{\allvect}{{\bf Vect}(k)}
\newcommand{\rar}{\rightarrow}
\newcommand{\lrar}{\longrightarrow}
\newcommand{\hrar}{\hookrightarrow}
\newcommand{\hlar}{\hookleftarrow}
\newcommand{\epi}{\twoheadrightarrow}
\newcommand{\limi}{\underset{i\in I}{``\varinjlim"}}
\newcommand{\limj}{\underset{j}{\varinjlim}}
\newcommand{\limproi}{\underset{i\in I}{``\varprojlim"}}
\newcommand{\limproj}{\underset{j\in J}{``\varprojlim"}}
\newcommand{\proYj}{\underset{j\in J}{``\varprojlim"} Y_j}
\newcommand{\proYij}{\underset{j\in J}{``\varprojlim"} Y_{i,j}}
\newcommand{\proXj}{\underset{j\in J}{``\varprojlim"}X_j}
\newcommand{\proXij}{\underset{j\in J}{``\varprojlim"}X_{i,j}}
\newcommand{\proZj}{\underset{j\in J}{``\varprojlim"}Z_j}
\newcommand{\proUj}{\underset{j\in J}{``\varprojlim"}U_j}
\newcommand{\proVj}{\underset{j\in J}{``\varprojlim"}V_j}
\newcommand{\limiriga}{\underset{i\in I}{``\varinjlim"}X_i}
\newcommand{\limjrigaX}{\underset{j\in J}{``\varinjlim"}X_j}
\newcommand{\limjrigaY}{\underset{j\in J}{``\varinjlim"}Y_j}
\newcommand{\indjriga}{\underset{j\in J}{``\varinjlim"}X_j}
\newcommand{\limprigaXi}{\underset{i\in I}{``\varprojlim"} X_i}
\newcommand{\limprigaY}{\underset{j\in J}{``\varprojlim"} Y_j}
\newcommand{\indproriga}{\underset{j}{``\varinjlim"}``\underset{i}{\varprojlim"}X_{i,j}}
\newcommand{\limA}{\underleftrightarrow{\lim \ }\mathcal A}
\newcommand{\dlim}{\underleftrightarrow{\lim}}
\newcommand{\xrar}{\xrightarrow}
\newcommand{\A}{\mathcal A}
\newcommand{\D}{\mathcal D}
\newcommand{\E}{\mathcal E}
\newcommand{\F}{\mathcal F}
\newcommand{\Pp}{\mathcal P}
\newcommand{\T}{\mathcal T}
\newcommand{\Ll}{\mathcal L}
\newcommand{\pt}{\partial}
\newcommand{\simpl}{\Sigma_{\bullet}}
\newcommand{\SA}{S_{\bullet}(\A)}
\newcommand{\SSA}{S_{\bullet}S_{\bullet}(\A)}
\begin{document}
\pagestyle{plain} 
\title{Sato Grassmannians for generalized Tate spaces}
\author{Luigi Previdi}
\date{}
\maketitle
\begin{abstract}
We generalize the concept of Sato Grassmannians of  locally linearly compact topological vector spaces (Tate spaces) to the category $\limA$ of the ``locally compact objects" of an exact category $\A$, and study some of their properties. This allows us to generalize the Kapranov dimensional torsor $\Dim(X)$ and  determinantal gerbe $\Det(X)$ for the objects of $\limA$ and unify their treatment in the {\it determinantal torsor} $\D(X)$. We then introduce a class of exact categories, that we call {\it partially abelian exact}, and prove that if $\A$ is partially abelian exact, $\Dim(X)$ and $\Det(X)$ are  multiplicative in admissible short exact sequences. When $\A=\vect$, the category of finite dimensional vector spaces on a field $k$, we recover the case of the dimensional torsor and of the determinantal gerbe of a Tate space, as defined by Kapranov in \cite{ka}, and reformulate its properties in terms of the Waldhausen space $S(\A)$ of the exact category $\A$. The advantage of this approach is that it allows to define formally in the same way the Grassmannians of the iterated categories $\dlim^n\A$. We then prove that the category of Tate spaces $\T=\dlim\vect$ is partially abelian exact, which allows us to extend the results on $\Dim$ and $\Det$ already known for Tate spaces to 2-Tate spaces, such as the multiplicativity of $\Dim$ and $\Det$ for 2-Tate spaces, as considered by Arkhipov-Kremnizer and Frenkel-Zhu. 
\end{abstract} 

\tableofcontents
\setcounter{tocdepth}{1}

\section{Introduction}

Let $k$ be a field, and consider the Tate space $V=k((t))$. For such a space $V$, the group $GL(V)$ (sometimes called the ``Japanese group" $GL(\infty)$) has properties which are quite different from those of the naively defined group $GL_{\infty}=\bigcup GL(n)$. In particular, it is typically disconnected, with $\pi_0(GL(V))=\Zee$. This has been interpreted by M. Kapranov in \cite{ka} in terms of the {\it dimensional torsor} $\Dim(V)$, naturally associated with $V$, which gives rise to a class in $H^1(GL(V), \Zee)=\Hom(GL(V), \Zee)$. Kapranov also proves that, for all Tate spaces $V$, the dimensional torsor $\Dim(V)$ is multiplicative with respect to admissible short exact sequences of Tate spaces.

\vspace{0.3cm}

In the language of exact categories,  Kapranov's results amount to  the consideration of the dimensional torsor $\Dim(V)$ for the objects $V$ of the exact category of Tate spaces $\T=\dlim\vect$ (see sect. \eqref{tate}), where $\vect$ is the category of finite 
dimensional vector  spaces over the field $k$. In this paper we generalize the Kapranov dimensional torsor to the Beilinson category 
$\limA$, where $\A$ is an  exact category. Objects of $\limA$ will serve as categorical generalizations of Tate spaces referred to in 
the title of this article. We prove the multiplicativity of $\Dim(V)$, and sketch the analog theory for the determinantal gerbe $\Det(V)
$, under the extra assumption that $\A$ has
pullbacks of admissible monomorphisms and pushouts of admissible epimorphisms. We call such categories ``partially abelian exact", 
since they can equivalently be described as exact categories such that, for any morphism $f$ which is the composition of an 
admissible monomorphism followed by an admissibe epimorphism, $f$ can be written in a unique way (up to isomorphisms) as the 
composition of an admissible epimorphism followed by an admissible monomorphism. For example, we prove that the category $\T$ 
of Tate spaces is partially abelian exact, and thus our theory applies to the category $\T_2=\dlim(\T)=\dlim\dlim\vect$, whose objects 
can be called {\it 2-Tate spaces}. For example, for a field $k$, the space $k((t))((s))$ is a 2-Tate space over $k$. Study of 2-Tate 
spaces was recently taken up by  Arkhipov and Kremnizer in \cite{ak} and by Frenkel and Zhu in \cite{fz}, in connection with 
representations of double loop groups. In the same order of ideas, Gaitsgory and Kazhdan have recently provided a categorical framework for the study 
of the representations of the group $G({\mathbb F})$, where $G$ is reductive  and ${\mathbb F}$ is a 2-dimensional local field \cite{gk}. In a recent paper \cite{dr}, Drinfeld defined the notion of dimensional torsor in the more general situation of modules over a commutative ring, and defined the \'etale local notion of Tate module. Our results provide a categorical foundation for this study.

\vspace{0.3cm}

In order to generalize the dimensional torsor and the determinantal gerbe to the objects $X$ of the Beilinson category $\limA$, for 
$\A$ exact, we introduce an appropriate concept of Grassmannians for $\limA$, which generalizes the  {\it Sato Grassmannians}, 
originally defined by Sato in \cite{s} for the category of Tate spaces. Our definition  uses the language of ind/pro-objects on $\A$, 
which has the advantage to allow us to defined formally in the same way the Grassmannians for all the iterated categories 
$\dlim\dlim\A, \cdots, \dlim^n\A$. We then study the behavior of the Grassmannian of an object $X$ with respect to admissible 
short exact sequences of $\limA$, when $\A$ is partially abelian exact. This allows us to define the {\it determinantal torsor} 
$\D$ for the objects of the Beilinson category $\limA$. This is a torsor defined over a certain Picard category 
$\Pp$. When $\Pp=V(\A)$, the symmetric category of virtual objects on $\A$ defined by Deligne (cf. \cite{d}), the determinantal 
torsor $\D(X)$ encloses the datum of  the $K_0(\A)$-torsor $\Dim(X)$  and of the $K_1(\A)$-gerbe $\Det(X)$. In particular, for 
$\A=\vect$, it is $K_0(\A)=\Zee$ and $K_1(\A)=k^*$, and this construction provides a unified treatment of the Kapranov $\Zee$-dimensional torsor $\Dim(V)$ and $k^*$-gerbe $\Det(V)$, and extends the theory of \cite{ka} to the general $K$-theoretic setting.

\vspace{0.3cm} {\bf Acknowledgements.} This paper is the second part of the dissertation that I presented to the Faculty of the Graduate School of Yale University in partial fulfillment of the requirements for the Degree of Doctor of Philosophy in Mathematics. I would like to express my gratitude to my advisor, Mikhail Kapranov, for his constant assistance and help, and to Alexander Beilinson, who read a preliminary version of this work and made many remarks;  in particular, he pointed out to me the importance of the symmetry condition for determinantal theories, which appears to be crucial in the developement of the theory here proposed.

\section{Picard categories and the Waldhausen space}
\subsection{Generalities on exact categories}\label{quillen}
Let be $\A$ an exact category, in the sense of Quillen (\cite{q}). Recall that this means that $\A$ is endowed with a class $\E$ of sequences
$$
a'\stackrel{i}\hrar a \stackrel{j}\epi a''
$$
called {\it admissible short exact sequences}, which satisfy certain axioms (see \cite{q}). An admissible monomorphism is a 
morphism which appears as $i$ and an admissible epimorphism is a morphism which appears as $j$ in such a sequence. 

\vspace{0.2cm}

Equivalently, an exact category $\A$ can be described as a full subcategory of an abelian category $\F$ which is {\it closed under 
extensions}, i.e. whenever $a'\hrar x\epi a''$ is a short exact sequence of $\F$ with $a, a''\in\A$, we have $x\in\A$. Given an exact 
category $\A$, it is always possible to construct an embedding $h:\A\hrar\F$, where $\F$ is an abelian category, such that a 
sequence $a'\rar a\rar a''$  is an admissible short exact sequence of $\A$  if and only if $h$ carries it into a short exact sequence of 
$\F$. We then call $\F$ the {\it abelian envelope} of the exact category $\A$ and $h$ is called the {\it Quillen embedding}, see \cite{q}.

\vspace{0.1cm}

The following will be useful:

\begin{lm}\label{admono} (cf. \cite{pre}) Let be $f: a\epi b$ an epimorphism of $\F$, with $a, b\in\A$. Then $f$ is an admissible 
epimorphism of $\A$ if and only if $\ker(f)$ is in $\A$. Dually for  monomorphisms $g: c\hrar d$.
\end{lm}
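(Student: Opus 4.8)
The plan is to reduce both implications to the characterization of the Quillen embedding $h\colon\A\hrar\F$ recalled just above, namely that a sequence $a'\rar a\rar a''$ of $\A$ is an admissible short exact sequence precisely when $h$ sends it to a short exact sequence of $\F$. Throughout I would use, without further comment, that $h$ is fully faithful, and identify $\A$ with a full, isomorphism-closed subcategory of $\F$; in particular every morphism of $\F$ between two objects of $\A$ is automatically a morphism of $\A$, and a kernel or cokernel formed in $\F$ of such a morphism is claimed to ``lie in $\A$'' only up to isomorphism.

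First, for the forward implication: suppose $f\colon a\epi b$ is an admissible epimorphism of $\A$. By definition it occurs as the epi in some admissible short exact sequence $a'\stackrel{i}\hrar a\stackrel{f}\epi b$ of $\A$. Applying $h$ yields a short exact sequence of $\F$, so the mono $i$ exhibits $a'$ as $\ker(f)$ computed in $\F$; since $a'\in\A$ by hypothesis, $\ker(f)\in\A$.

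Then, for the converse: suppose $f\colon a\epi b$ is an epimorphism of $\F$ with $a,b\in\A$ and $a':=\ker(f)\in\A$. Then $a'\hrar a\stackrel{f}\epi b$ is a short exact sequence of $\F$ all of whose objects lie in $\A$, hence all of whose maps are morphisms of $\A$; so it is a sequence of $\A$ that $h$ carries to a short exact sequence of $\F$, and by the characterization it is an admissible short exact sequence of $\A$. Therefore $f$ is an admissible epimorphism. The dual statement for a monomorphism $g\colon c\hrar d$ of $\F$ ($g$ admissible in $\A$ iff $\coker(g)\in\A$) I would obtain by the same argument with $\coker$ in place of $\ker$, or formally by applying the case just proved to the opposite exact category $\A^{\mathrm{op}}$, whose abelian envelope is $\F^{\mathrm{op}}$.

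I do not anticipate a genuine obstacle: the lemma is essentially a repackaging of the defining property of the abelian envelope. The only two points deserving care are (i) that one is entitled to an $h$ which detects the exact structure on the nose — this is exactly the Gabriel–Quillen statement quoted before the lemma, to be cited rather than reproven — and (ii) the ``up to isomorphism'' bookkeeping flagged in the first paragraph, which is what makes ``$\ker(f)\in\A$'' a well-defined, intrinsic condition on $f$.
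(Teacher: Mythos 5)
Your argument is correct. The paper itself gives no proof of this lemma---it is quoted from \cite{pre}---so there is nothing internal to compare against; your derivation is the standard one, reducing both implications to the quoted property of the Quillen embedding (that $h$ is fully faithful and that a sequence of $\A$ is admissible exact iff $h$ carries it to a short exact sequence of $\F$), and you correctly flag the only two points needing care: citing the Gabriel--Quillen theorem in exactly that form, and the up-to-isomorphism reading of ``$\ker(f)\in\A$''.
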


\begin{lm}\label{pullback} (cf. \cite{pre})
A pullback diagram in the category $\A$ remains a pullback diagram in the category $\F$.
\end{lm}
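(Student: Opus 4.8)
The plan is to realize both pullbacks concretely and then compare them through the embedding $h\colon\A\hrar\F$. Write the given pullback square in $\A$ as
$$\xymatrix{ P \ar[r]^{q} \ar[d]_{p} & B \ar[d]^{g} \\ A \ar[r]_{f} & C }$$
so that $(P,p,q)$ is the pullback of $f$ and $g$ in $\A$. Since $\F$ is abelian it has all pullbacks; let $(Q,\bar p,\bar q)$ be the pullback of $h(f)$ and $h(g)$ in $\F$. The cone $(h(P),h(p),h(q))$ induces a canonical morphism $\phi\colon h(P)\rar Q$ in $\F$ with $\bar p\circ\phi=h(p)$ and $\bar q\circ\phi=h(q)$. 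Because $h$ is fully faithful, it is enough to show that $\phi$ is an isomorphism: then the square, identified through $\phi$ with the pullback square of $h(f),h(g)$ in $\F$, is itself a pullback in $\F$.

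First I would transport the universal property of $P$ through $h$. For every $a\in\A$, full faithfulness gives $\Hom_\F(h(a),h(X))=\Hom_\A(a,X)$ for all $X\in\A$. Applying $\Hom_\F(h(a),-)$ to the pullback defining $Q$ and using full faithfulness again yields a natural bijection
$$\Hom_\F(h(a),Q)\;\cong\;\Hom_\A(a,A)\times_{\Hom_\A(a,C)}\Hom_\A(a,B)\;\cong\;\Hom_\A(a,P),$$
the middle isomorphism being exactly the defining property of the pullback $P$ in $\A$, and the right-hand group equalling $\Hom_\F(h(a),h(P))$. A routine diagram chase identifies this composite bijection with $\Hom_\F(h(a),\phi)$. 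Hence $\Hom_\F(h(a),\phi)$ is bijective for every $a\in\A$.

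To finish I would invoke the one non-formal ingredient: for the Quillen embedding the subcategory $\A$ is \emph{generating} in $\F$, i.e. a morphism $\psi$ of $\F$ is an isomorphism as soon as $\Hom_\F(h(a),\psi)$ is bijective for all $a\in\A$. This is transparent in the standard construction realizing $\F$ as a full subcategory of $\Fun(\A^{\mathrm{op}},\mathrm{Ab})$ with $h$ the Yoneda embedding: there $\Hom_\F(h(a),-)$ is evaluation at $a$, so a natural transformation bijective at every object is an isomorphism — equivalently, Yoneda preserves all limits that exist in $\A$, so $h(P)$ is already the pullback in $\Fun(\A^{\mathrm{op}},\mathrm{Ab})$ and hence, lying in the full subcategory $\F$ which contains the relevant diagram, a fortiori the pullback $Q$ in $\F$. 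Applying the generation property to $\psi=\phi$ gives that $\phi$ is an isomorphism, and we are done. The only real obstacle is precisely this generation property: it is where one uses that $h$ is not merely a fully faithful exact functor (such a functor need not preserve pullbacks) but the specific Gabriel--Quillen embedding of \cite{q}, \cite{pre}; granting it, the remainder is a formal manipulation of universal properties.
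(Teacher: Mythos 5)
The paper offers no proof of this lemma: it is simply quoted from \cite{pre}, so there is nothing in the text to compare your argument against. On its own merits your proof is correct. The reduction to showing that the comparison map $\phi\colon h(P)\rar Q$ induces a bijection on $\Hom_{\F}(h(a),-)$ for every $a\in\A$ is the right move, and you correctly flag the one non-formal input. A caution on wording, though: ``generating'' in the usual sense (the functors $\Hom_{\F}(h(a),-)$ being jointly faithful) would \emph{not} suffice. Joint faithfulness yields that $\phi$ is a monomorphism, but surjectivity of each $\Hom_{\F}(h(a),\phi)$ does not force $\phi$ to be an epimorphism when the objects $h(a)$ are not projective (a map $G\to\coker(\phi)$ need not lift to the target of $\phi$). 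What you actually need, and what your last step correctly supplies, is the stronger density property built into the Gabriel--Quillen construction: $\F$ is realized as a full subcategory of $\Fun(\A^{\mathrm{op}},\mathrm{Ab})$ containing the representables, $\Hom_{\F}(h(a),F)\cong F(a)$ by Yoneda, and a natural transformation that is a pointwise bijection is an isomorphism. Equivalently, and most economically, your closing remark already finishes the proof by itself: the Yoneda embedding preserves all limits existing in $\A$, and $\F$ is closed under limits inside the presheaf category (it is a reflective subcategory, cut out by a left-exactness condition), so a limit cone of $\A$ whose image lies in $\F$ remains a limit cone in $\F$. So the argument is complete; I would only rephrase ``generating'' as ``dense'' (or state explicitly the property you define) to avoid suggesting that the weaker standard notion is being used.
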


\begin{definition} An {\it admissible subobject} of an object $a\in\A$ is a class of admissible monomorphisms $a'\hrar a$ modulo 
the equivalence relation given by  $(a'\hrar a) \sim (a''\hrar a)$ if and only if there exists an isomorphism $a'\xrar{\sim} a''$ such that
$$
\xymatrix{ 
a'  \ar@{^{(}->}[rd]  \ar[rr] &&a''\ar@{_{(}->}[ld] \\
& a  &
}
$$
is commutative.
\end{definition}
 
 As in \cite{pre}, we call a commutative square {\it admissible}
$$
\xymatrix{ 
X_{}\ar[d]
\ar[r]
& Y_{} \ar[d] \\
Z_{}\ar[r]_{}
& V_{}
}
$$
if the horizontal arrows are admissible monomorphisms and the vertical ones are admissible epimorphisms. If such a square is cartesian, it is also cocartesian, and vice versa (see again \cite{pre}).

\subsection{The Waldhausen S-construction}\label{SA}
In this section, we refer to \cite{gm} and \cite{w} for the terminology relative to simplicial categories. 

\vspace{0.1cm}

Given an exact category, Waldhausen \cite{w} associates to it a simplicial category $\SA$, whose geometric realization (as defined e.g. in \cite{gm} or in \cite{gz}) $S(\A)$ provides a topological model for the $K$-theory of $\A$, i.e., $K_i(\A)=\pi_{i+1}S(\A)$ (see \cite{w2}). 

\begin{definition}\label{defsa}
Let $\A$ be an exact category and $n\geq 0$ an integer. The category $S_n(\A)$ is defined as the category whose objects are data 
$\{\underline{a}\}$ consisting of:
\begin{itemize}
\item{
objects $a_{ij}\in\A$, given for each $(i, j)$ with $0\leq i\leq j\leq n$.
}
\item{
morphisms $\phi^{kl}_{ij}\colon a_{ij}\rar a_{kl}$, given for $i\leq k$, $j\leq l$ (we shall write $(i, j)\leq (k, l)$)
}
\end{itemize}
such that the following conditions are satisfied:
\begin{itemize}
\item{
For all $(i, j, k)$, with $i\leq j\leq k$,
$$
a_{ij}\xrar{\phi^{ik}_{ij}}a_{ik}\xrar{\phi^{jk}_{ik}}a_{jk}
$$
is an admissible short exact sequence of $\A$.
}
\item{
If $(i,j)\leq (k,l)\leq (m,n)$, we have a commutative diagram
$$
\phi^{mn}_{ij}=\phi^{mn}_{kl}\phi^{kl}_{ij}.
$$
}
\end{itemize}
A {\it morphism} between two objects $\underline{a}\lrar\underline{b}$ of $S_n(\A)$ is by definition a collection of isomorphisms $a_{ij}\stackrel{\sim}\rar b_{ij}$, $\forall i\leq j$, making the resulting diagrams commutative.
\end{definition}
In particular, $a_{ii}=0$ and we see that $\{\underline{a}\}$ gives rise to a {\it rigidified admissible
filtration} of objects of $\A$ of length $n$, i.e. a sequence of $n$ admissible monomorphisms of the following type:
$$
\underline{a}=0\hrar a_1\hrar a_2\hrar\cdots\hrar a_n
$$
toghether with a compatible choice of an object $a_{ij}$ in the isomorphism class of each quotient, so that $a_{ij}=\dfrac{a_j}{a_i}$, for $i\leq j$ and there is a commutative diagram
\begin{equation}\label{waldhausen}
\xymatrix{ 
a_{01}  \ar@{^{(}->}[r]& a_{02}\ar@{->>}[d]\ar@{^{(}->}[r] & a_{03}\ar@{->>}[d]\ar@{^{(}->}[r]&\cdots\ar@{^{(}->}[r] & a_{0n}\ar@{->>}[d]\\
& a_{12} \ar@{^{(}->}[r] & a_{13} \ar@{->>}[d]\ar@{^{(}->}[r]&\cdots\ar@{^{(}->}[r] & a_{1n}\ar@{->>}[d]\\
&& a_{23}\ar@{^{(}->}[r]&\cdots\ar@{^{(}->}[r] & a_{2n}\ar@{->>}[d]\\
&&                     &                    & \vdots\ar@{->>}[d] \\
&&                     &                    & a_{n-1n}
}
\end{equation}
whose horizontal arrows are admissible monomorphisms and the vertical arrows are admissible epimorphisms. 

\vspace{0.2cm}

For each $n\geq 0$, we define a functor $\pt_0: S_n(\A)\rar S_{n-1}(\A)$ by erasing the top row of \eqref{waldhausen} and reindexing. Then, $\pt_0(\underline{a})=a_{12}\hrar\cdots\hrar a_{1n}$, with $\pt_0(\underline{a})_{i,j}=a_{i+1, j+1}$; we define a functor $\pt_i:S_n(\A)\rar S_{n-1}(\A)$ for all $0<i\leq n$ by erasing the row $a_{i,*}$ and the column containing $a_i$. 

\vspace{0.2cm} The functors $s_i:S_n(\A)\rar S_{n+1}(\A)$, for $0\leq i\leq n$ are defined by doubling the object $a_i$ in $(\underline{a})$. Then, we have the following

\begin{prop} (\cite{w})
The system $(S_n(\A),\pt_i, s_j)$ forms a simplicial category $S_{\bullet}(\A)$.
\end{prop}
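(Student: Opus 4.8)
The plan is to recognize $S_n(\A)$ as (the maximal subgroupoid of) a functor category indexed by a poset built from $[n]=\{0<1<\dots<n\}$, and to obtain the simplicial structure by transporting the cosimplicial structure of $[\bullet]$ along this construction; the simplicial identities then come for free from those of $\Delta$.

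First I would let $\mathrm{Ar}[n]$ denote the poset of pairs $(i,j)$ with $0\le i\le j\le n$, ordered by $(i,j)\le(k,l)\iff i\le k$ and $j\le l$; equivalently $\mathrm{Ar}[n]$ is the poset of monotone maps $[1]\to[n]$. Unwinding Definition \ref{defsa}, an object of $S_n(\A)$ is precisely a functor $F\colon\mathrm{Ar}[n]\to\A$ (with $a_{ij}=F(i,j)$, the $\phi^{kl}_{ij}$ the structure maps, and the composition condition being functoriality) which is \emph{exact}, meaning $F(i,i)=0$ for all $i$ and $F(i,j)\hrar F(i,k)\epi F(j,k)$ is an admissible short exact sequence of $\A$ for all $i\le j\le k$; and a morphism of $S_n(\A)$ is exactly a natural isomorphism of such functors. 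Thus $S_n(\A)$ is the core groupoid of the full subcategory of $\Fun(\mathrm{Ar}[n],\A)$ on the exact functors.

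Next I would establish functoriality in $[n]$. A monotone map $\theta\colon[m]\to[n]$ induces a monotone map $\theta_*\colon\mathrm{Ar}[m]\to\mathrm{Ar}[n]$, $(i,j)\mapsto(\theta i,\theta j)$ (post-composition with $\theta$), hence a pullback functor $\theta^{*}=(-)\circ\theta_*$ on functor categories, carrying natural isomorphisms to natural isomorphisms. The one point needing a check is that $\theta^{*}$ preserves exactness: $(\theta^{*}F)(i,i)=F(\theta i,\theta i)=0$, and for a triple $i\le j\le k$ in $[m]$ either $\theta$ is injective on $\{i,j,k\}$, so $(\theta^{*}F)(i,j)\to(\theta^{*}F)(i,k)\to(\theta^{*}F)(j,k)$ is literally an admissible short exact sequence of $F$, or $\theta$ identifies two of the three, so the sequence is one of $0\hrar a\xrar{\mathrm{id}}a$ or $a\xrar{\mathrm{id}}a\epi 0$ — both admissible short exact sequences in any exact category (an isomorphism is both an admissible mono and an admissible epi). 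Hence $\theta^{*}$ restricts to $S_n(\A)\to S_m(\A)$, and since $(\theta\psi)_*=\theta_*\psi_*$ and $(\mathrm{id})_*=\mathrm{id}$, the assignment $[n]\mapsto S_n(\A)$ is a functor $\Delta^{\mathrm{op}}\to\mathbf{Cat}$, i.e. a simplicial category, with all simplicial identities inherited from the cosimplicial identities of $[\bullet]$ in $\Delta$.

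Finally I would check that the cofaces and codegeneracies reproduce the operators in the text: $(d^0)_*$ sends $(i,j)$ to $(i+1,j+1)$, so $(d^0)^{*}$ erases the top row of \eqref{waldhausen} and reindexes; for $i>0$, $d^i$ has image $[n]\setminus\{i\}$, so $(d^i)^{*}$ is restriction to the indices $\ne i$, i.e. erasing the row $a_{i,*}$ and the column through $a_i$; and $(s^i)^{*}F$ takes the value $F(i,i)=0$ on the newly repeated pair, which is exactly ``doubling the object $a_i$''. It then remains only to note the routine facts that the arrows in the output diagrams stay admissible monos and epis (composites of admissible monos are admissible monos, composites of admissible epis are admissible epis, and the extra arrows created by a degeneracy are identities), so the output genuinely lies in $S_{n-1}(\A)$, resp. $S_{n+1}(\A)$. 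I expect the only genuine subtlety to be the well-definedness of the degeneracies — that doubling really yields a rigidified admissible filtration — which is precisely the place where one uses that the trivial sequences $0\hrar a\xrar{\mathrm{id}}a$ and $a\xrar{\mathrm{id}}a\epi0$ are admissible; everything else is index bookkeeping that the poset reformulation organizes automatically.
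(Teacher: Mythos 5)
Your argument is correct, and it is essentially the argument of the cited source: the paper itself gives no proof of this proposition beyond the reference to Waldhausen, whose own definition of $S_\bullet$ is exactly your functorial one, with $S_n(\A)$ the groupoid of exact functors $\mathrm{Ar}[n]\to\A$ and the simplicial identities inherited from $\Delta$. The two points you single out for checking --- that $\theta^*$ preserves exactness because degenerate triples yield the admissible sequences $0\hrar a\xrar{\mathrm{id}}a$ and $a\xrar{\mathrm{id}}a\epi 0$, and that the resulting faces and degeneracies agree with the explicit $\pt_i$, $s_j$ of the text --- are indeed the only points of substance, and you handle both correctly.
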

Next, the geometric realization of $S_{\bullet}(\A)$ is constructed as follows.  Since $\SA$ is a simplicial category, we consider the geometric realizations $|S_n(\A)|$ of the categories $S_n(\A)$. These form a 
simplicial topological space $BS_{\bullet}(\A)$; we then take the geometric realization of $BS_{\bullet}(\A)$, 
and call it $S(\A)$. Thus, $S(\A)=|S_{\bullet}(\A)|$. This space is called the {\it Waldhausen space} associated with the exact category $\A$. Notice that the simplicial space $BS_{\bullet}(\A)$ is a bisimplicial set, and the 
space $S(\A)$ can be interpreted as the geometric realization of this bisimplicial set.

\begin{remark}\label{bisimplexes}
The geometric realization $S(\A)$ is thus constructed out of the $(p, q)$-bisimplices $\Delta^p\times\Delta^q$ glued together along the face maps of the bisimplicial set $S_{\bullet}(\A)$. The bisimplices of dimension $\leq 3$ are parametrized as follows:
\begin{itemize}
\item{$\Delta^0\times\Delta^0$: only one point (basepoint) $*$  in $S(A)$.
}
\item{$\Delta^1\times\Delta^0$: one for each  object $\{a\}$ of $\A$; geometrically, this gives rise in $S(\A)$ to a loop (embedded circle) at $*$ which we denote by $|a|$.
}
\item{$\Delta^1\times\Delta^1$: one for each isomorphism $\{a\xrar{\sim} b\}$ of $\A$, giving rise to a homotopy between the loops $|a|\sim|b|$, hence to an element of $\pi_2(S(A),*)$.
}
\item{$\Delta^2\times\Delta^0$: one for each admissible short exact sequence $\{\sigma\colon a'\hrar a\epi a''\}$. Geometrically, 2-simplexes as in the picture
\begin{figure}[h]
\centerline{\includegraphics{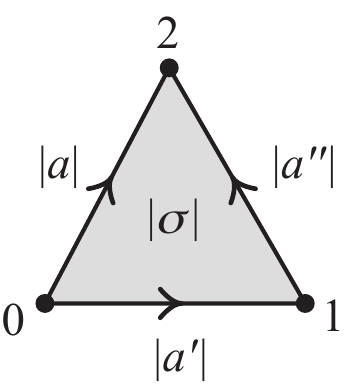}}
\caption{}\label{Fi:sigma}
\end{figure}
}
\item{$\Delta^2\times\Delta^1$: one for each isomorphism of admissible short exact sequences $\{\sigma_0\xrar{\sim}\sigma_1\colon\sigma_0, \sigma_1\in S_{2,0}(\A)\}$. Geometrically, the filled prism whose bottom is the 2-simplex $|\sigma_0|$ and whose top is the 2-simplex $|\sigma_1|$.
}
\item{$\Delta^1\times\Delta^2$: one for each composable pair of isomorphisms of $\A$: $\{a\xrar{\sim} b\xrar{\sim} c\}$
}
\item{$\Delta^3\times\Delta^0$: one for each rigidified admissible filtration of lenght 2 of $\A$ $\{\tau\colon a_1\hrar a_2\hrar a_3\}$. Geometrically, the filled tetrahedron generated by the $a_i$'s as in the figure

\begin{figure}[h]
\centerline{\includegraphics{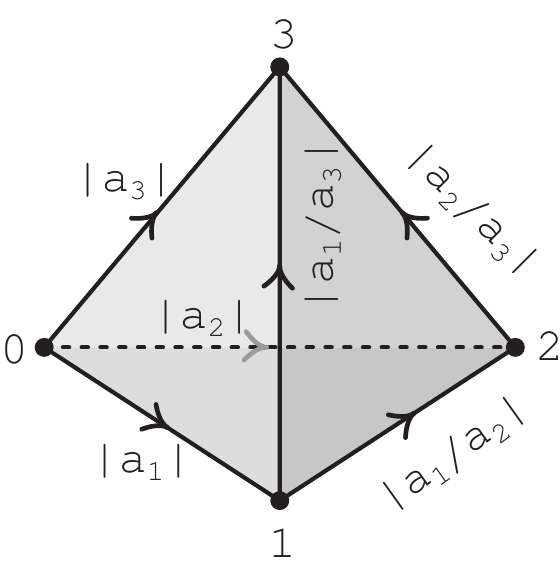}}
\caption{}\label{Fi:tau}
\end{figure}
}
\item{and so on.
}
\end{itemize}
In particular the  space $S(\A)$ is connected, since $|S_0(\A)|=*$. 
\end{remark}
\subsection{Iteration of the S-construction and delooping}\label{delooping}
 In \cite{w2} Waldhausen proves also that the space $S(\A)$ admits a delooping. Such  delooping is constructed as the geometric realization $SS(\A)$ of a bisimplicial category $\SSA$, obtained by ``iterating" the $S$-construction. Roughly speaking, the $(p, q)$-bisimplexes of $\SSA$ are $(p, q)$-rigidified admissible bifiltrations of objects of $\A$. By this expression we mean a commutative diagram
$$
\xymatrix{ 
a_{11} \ar@{^{(}->}[r]\ar@{^{(}->}[d]& a_{12}\ar@{^{(}->}[d]\ar@{^{(}->}[r] &\cdots\ar@{^{(}->}[r] & a_{1q}\ar@{^{(}->}[d]\\
\vdots&\vdots&\cdots & \vdots\\
\vdots\ar@{^{(}->}[d]&\vdots\ar@{^{(}->}[d]&\cdots & \vdots\ar@{^{(}->}[d]\\
a_{p1} \ar@{^{(}->}[r]& a_{p2}\ar@{^{(}->}[r] &\cdots\ar@{^{(}->}[r] & a_{pq}\\
}
$$
 such that each horizontal and vertical arrow is an admissible monomorphism, and rigidified similarly to Definition \eqref{defsa}.  We refer again to \cite{w2} for details. The clasifying space  $SS(\A)=|\SSA|$ is thus the geometric realization of a trisimplicial set. We get $S(\A)=\Omega SS(\A)$, and it is possible to furtherly iterate the $S$-construction to obtain an $n$-simplicial category $S^n_{\bullet}(\A)$, and prove that 
$S(\A)=\Omega^{n-1}(S^n(\A))$. As a corollary, we have that $S(\A)$ is an infinite loop space.

\vspace{0.1cm} 

Note that every object $\underline{a}$ of $S_n(\A)$ gives an object $\alpha(\underline a)$ of $S_nS_1(\A)$ and an object 
$\beta(\underline a)$ of $S_1S_n(\A)$ (bifiltrations going purely horizontally or purely vertically). We have therefore two maps of 
the suspension
$$
\Sigma S(\A)\rar SS(\A)
$$
both adjoint to the delooping isomorphism
$$
S(\A)\rar \Omega SS(\A).
$$
On the level of cells, each $(p, q)$-cell $\sigma$ of $S(\A)$ gives rise to a $(p, 1, q)$-cell $\alpha(\sigma)$ and to a $(1, p, q)$-cell 
$\beta(\sigma)$ of $SS(\A)$. Notice that up to dimension 4, all cells of $SS(\A)$ are obtained in this way except for the cells of the following type:
\begin{itemize}
\item{$\Delta^2\times\Delta^2\times\Delta^0$: one for each diagram of objects of $\A$:
\begin{equation}\label{symmetricintrinsic}
\xymatrix{ 
x^1_1 \ar@{^{(}->}[r]\ar@{^{(}->}[d] &x^1 \ar@{->>}[r]\ar@{^{(}->}[d] &x^1_2\ar@{^{(}->}[d]^{}\\
x_1 \ar@{^{(}->}[r]\ar@{->>}[d] &x\ar@{->>}[d]\ar@{->>}[r] &x_2\ar@{->>}[d]^{}  \\
x^2_1 \ar@{^{(}->}[r]_{} &x^2\ar@{->>}[r]_{} & x^2_2 \\
}
\end{equation}
whose rows and columns are admissible short exact sequences. 
}
\end{itemize}
\begin{remark} It is important to notice that  in the diagram \eqref{symmetricintrinsic} one has to impose the admissibility of the 
sequences of the quotients: for general exact categories $\A$ this condition does not descend from the admissibility of the 
monomorphisms which appear in the top left square.
\end{remark}
\subsection{Torsors over abelian groups}

\begin{definition} Let $G$ be a group (assumed to be abelian in the sequel). A {\it $G$-torsor} is a set $T$ with an action of $G$ 
which is free and transitive. If $T$ and $T'$ are $G$-torsors, a {\it morphism of G-torsors} is a map of the corresponding $G$-sets 
$f:T\lrar T'$.
\end{definition}

\begin{remark} It follows immediately from the definition that an isomorphism of $G$-torsors is a morphism which is a bijection of the underlying sets, and also that every morphism of $G$-torsors is in fact an isomorphism.
\end{remark}
Given two $G$-sets $S$ and $T$, with $G$ abelian, it is possible to define thier {\it tensor product} $S\otimes T$ as the set $S\times T$ quotiented by the equivalence relation generated by 
$$
(gs, t)\sim (s,gt).
$$
It is straightforward to check that the resulting set $S\otimes T$ is naturally a $G$-set. If $S$ and $T$ are both $G$-torsors, then $S\otimes T$ is a $G$-torsor. It is clear that we have a canonical isomorphism 
$S\otimes T\xrar{\sim} T\otimes S$.

\vspace{0.1cm}

Let be $\phi:G\rar H$ a morphism of abelian groups. Then $H$ is naturally a $G$-set. Let $T$ be a $G$-torsor. We define the {\it pushout} of $T$ along $\phi$ to be the $H$-set
$$
\phi_*T:=H\otimes_G T
$$
The following is easily proved:

\begin{prop}
The $H$-set $\phi_*T$ is naturally an $H$-torsor.
\end{prop}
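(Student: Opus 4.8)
The plan is to give $\phi_*T = H\otimes_G T$ the evident $H$-action and check directly that it is free, transitive, and nonempty. First I would note that $H$ acts on $H\times T$ by left translation, $h'\cdot(h,t) = (h'h,t)$, and that this action is compatible with the relation $(\phi(g)h,t)\sim(h,gt)$ defining $\otimes_G$, since $h'(\phi(g)h) = \phi(g)(h'h)$ (recall $H$ is abelian); hence the quotient map $H\times T\rar\phi_*T$ is $H$-equivariant and $\phi_*T$ becomes an $H$-set. It is nonempty because $H$ and the torsor $T$ are.

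The one point that requires a small argument is an explicit description of the equivalence relation $\sim$ on $H\times T$ generated by the relations $(\phi(g)h,t)\sim(h,gt)$, $g\in G$. I claim that
\[
(h_1,t_1)\sim(h_2,t_2)\iff \exists\, g\in G:\ h_1=\phi(g)h_2\ \text{ and }\ t_2 = g t_1 ,
\]
and that such a $g$ is then unique. To see this I would verify that the relation on the right-hand side is an equivalence relation — reflexivity uses $g=e$, symmetry uses $g\mapsto g^{-1}$, transitivity uses that $G$ is abelian — that it contains every generating pair, and that conversely a single application of a generator realizes it, so it coincides with the generated relation $\sim$; uniqueness of $g$ is then immediate from the freeness of the $G$-action on $T$.

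Granting this description, transitivity and freeness of the $H$-action are formal. For transitivity, given classes $[h_1\otimes t_1]$ and $[h_2\otimes t_2]$, use that $T$ is a $G$-torsor to find $g$ with $t_2 = g t_1$; then $(h_2,t_2)\sim(\phi(g)h_2,t_1)$, so $[h_2\otimes t_2] = \bigl(\phi(g)h_2 h_1^{-1}\bigr)\cdot[h_1\otimes t_1]$. For freeness, if $h'\cdot[h\otimes t] = [h\otimes t]$ then $(h'h,t)\sim(h,t)$, so by the description there is $g$ with $h'h = \phi(g)h$ and $t = gt$; freeness of the $G$-action on $T$ forces $g=e$, hence $h'=e$. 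This shows $\phi_*T$ is an $H$-torsor, and the naturality statement — functoriality of $X\mapsto\phi_*X$ on morphisms of $G$-torsors, and compatibility with the pushout construction — follows by applying the same computations levelwise. The main, and only mild, obstacle is the bookkeeping in the explicit description of $\sim$; everything else is routine.
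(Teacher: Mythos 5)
Your proof is correct and is exactly the standard direct verification that the paper leaves to the reader (it states only ``The following is easily proved'' and gives no argument). The explicit description of the generated equivalence relation, together with the freeness of the $G$-action on $T$ to get uniqueness of $g$, is the one genuinely non-formal step, and you handle it correctly.
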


\subsection{Picard categories}

We recall some welll known facts from \cite{d}.

\vspace{0.1cm}
Let be ($\Pp, \otimes, \alpha$) a category with an associative tensor product $\otimes$, i.e. for all objects $x_1, x_2, x_3$ there are 
natural isomorphisms $\alpha=\alpha_{x_1,x_2,x_3}\colon x_1\otimes (x_2\otimes x_3)\rar (x_1\otimes x_2)\otimes x_3$ which 
satisfy the MacLane pentagonal diagram (cf. \cite{ml}). We say that ($\Pp, \otimes, \alpha$)  is a {\it Picard category} if $\Pp$ is such 
that every morphism is an isomorphism and for all object $x$ of $\Pp$, the functors $x\rar x\otimes p$ and $x\rar p\otimes x$ are 
self-equivalences. It follows that $\Pp$ has a unit object, {\bf 1}, unique up to a unique isomorphism, and that each $x\in\Pp$ has a 
{\it  dual object} $x^*$ (also written $x^{-1}$), unique up to a unique isomorphism. A {\it symmetric} Picard category is a Picard category which is endowed with a symmetry $\sigma_{x,y}:x\otimes y\rar y\otimes x$ for each pair of objects, making it into a symmetric monoidal category 
in the  sense of MacLane (see \cite{ml}). A Picard category $\Pp$ is called {\it strictly symmetric} if $\sigma_{x,x}=1_{x\otimes x}$ for all $x$ in $\Pp$.
\vspace{0.1cm}

Let be $\Pp$ a Picard category. Introduce on $\Ob(\Pp)$  an equivalence relation by letting $x\sim y \Leftrightarrow$ there is an isomorphism $x\rar y$. The quotient set is denoted by $\pi_0(\Pp)$ and it is a group under the tensor product of $\Pp$ as the group multiplication of $\pi_0(\Pp)$. For a 
symmetric Picard category this group is abelian.
A Picard category is called {\it connected} if for any pair of objects $x, y$ there exists an isomorphism $x\rar y$, or, equivalently, if $\pi_0(\Pp)=0$.

\vspace{0.1cm} 

The group $\Aut_{\Pp}({\bf 1})$ is denoted by $\pi_1(\Pp)$. An application of the standard Eckmann-Hilton argument shows that this group is abelian. It is clear that $\pi_1(\Pp)$ is isomorphic (not canonically in general) to $\Aut_{\Pp}(x)$ for all objects $x\in\Pp$.
\begin{examples}\label{virtual}
\par
\begin{enumerate}
\item{Let be $G$ an abelian group. The category $\Tors(G)$ of the torsors over $G$ is a connected Picard category, with the monoidal structure given by the tensor product of torsors. The Picard category $\Tors(G)$ is strictly symmetric. The dual of a $G$-torsor $T$ is the $G$-torsor $\Hom(T,G)$. It is clear that $\Tors(G)$ is a connected Picard category, so $\pi_0(\Tors(G))=0$ and $\pi_1(\Tors(G))=G$.}

\vspace{0.1cm}

\item{ {\it The category of virtual objects of an exact category.} Following Deligne (\cite{d}), we associate to each exact category $\A$ a symmetric Picard category $V(\A)$, called the {\it category of virtual objects} of $\A$. Here is a slightly modified version of Deligne's construction: \\
- an object of $V(\A)$ is a loop of $S(\A) \gamma\colon [0, 1]\mapsto S(\A)$ (cf. sect. \eqref{SA}), with $\gamma(0)=\gamma(1)=*$. \\
- a morphism $\gamma_1\rar\gamma_2$ is a homotopy class rel $*$ of homotopies  from $\gamma_1$ to $\gamma_2$.

\vspace{0.1cm}

The composition of two morphisms $\gamma_1\xrar{[F]}\gamma_2\xrar{[G]}\gamma_3$ is defined as the class of the homotopy $F*G:\gamma_1\rar\gamma_3$. Since $F*(G*H)\sim (F*G)*H$, the composition of arrows is associative and $V(\A)$ is a category. The category $V(\A)$ is a Picard category, with the tensor product on objects $\gamma_1\otimes\gamma_2$ defined as the composition of loops $\gamma_1*\gamma_2$. The associativity constraint $\alpha$ is given by the class of  the standard homotopy of loops $\gamma_1*(\gamma_2*\gamma_3)\sim(\gamma_1*\gamma_2)*\gamma_3$. The unit object is the constant loop at 0. 
Further, $V(\A)$ 
admits a symmetry which makes it into a symmetric Picard category. To see 
this, consider the direct sum $\oplus$ in the exact category $\A$. The 
operation $\oplus$ makes $S(\A)$ into an $H$-space, whose sum will be 
still denoted by $\oplus$, commutative up to all higher homotopies. This 
defines a commutativity constraint on $V(\A)$, via:
$$
\gamma_1*\gamma_2\sim\gamma_1\oplus\gamma_2\sim\gamma_2\oplus
\gamma_1\sim\gamma_2*\gamma_1,
$$
cf. \cite{d}, 4.2.2. It is not difficult to see that the above ismorphism of 
$V(\A)$ makes $V(\A)$ into a symmetric Picard category, with 
$\pi_0(V(\A))=K_0(\A)$ and $\pi_1(V(\A))=K_1(\A)$. \\
In general, $V(\A)$ is not strictly symmetric.
}
\item{Let $\A=\vect$. In this case, the symmetric Picard category $V(\A)$ is
equivalent to the category $\Pic_k^{\Zee}$ of $\Zee$-graded 1-dimensional 
vector  spaces over $k$. This is a symmetric Picard category, with symmetry given 
as follows. Suppose that $L$ has degree $a$ and $M$ has degree $b$. Then, 
for all $x\in L$ and $y\in M$, we define 
$$
\sigma_{x,y}\colon x\otimes y\to (-1)^{ab}y\otimes x.
$$
The equivalence $V(\A)\sim \Pic_k^{\Zee}$ is mentioned in \cite{dr} (5.5.1).
}
\end{enumerate}
\end{examples}
 
\subsection{Torsors over a Picard category} We recall here a generalization of the concept of a torsor to Picard categories, which is discussed in a more general setting by Drinfeld in \cite{dr}.
\begin{definition}\label{pictorsor}
Let be $(\Pp, \otimes, \alpha, \sigma, {\bf 1})$ a symmetric Picard category. A {\it torsor} over $\Pp$ is a groupoid $\T$, together with a 
bifunctor 
$\otimes$:
\begin{align*}
\Pp\times\T&\xrar{\otimes}\T \\
(x,a) &\xrar{} x\otimes a,
\end{align*} for which:
\begin{itemize}
\item{There are natural isomorphisms $\beta_{x_1,x_2,a}:x_1\otimes(x_2\otimes a)\rar (x_1\otimes x_2)\otimes a$ for which the following diagram is commutative:
\[
\xymatrix@C=5em{
    x_1\otimes (x_2\otimes(x_3\otimes a))\ar[d]_{\beta_{x_1, x_2, x_3\otimes a}}\ar[r]^{1_{x_1}\otimes\beta_{x_2,x_3,a}}& x_1\otimes ((x_2\otimes x_3)\otimes a)\ar[r]^{\beta_{x_1, x_2\otimes x_3,a}}&(x_1\otimes (x_2\otimes x_3))\otimes a\ar[d]^{\alpha_{x_1,x_2,x_3\otimes 1_a}}\\
    (x_1\otimes x_2)\otimes (x_3\otimes a)\ar[rr]_{\beta_{x_1\otimes x_2,x_3,a}}&&((x_1\otimes x_2)\otimes x_3)\otimes a
}
\]}
\item{For all objects $a\in\T$ there is a natural isomorphism:
\begin{equation}\label{null}
\lambda_a\colon {\bf 1}\otimes a\xrar{\sim}a
\end{equation}
compatible with the associativity constraint $\alpha$ and with the isomorphism $\beta$.
}
\item{For all objects $a\in\T$, the induced functor:
\begin{equation}\label{freetransitive}
\Pp\xrar{-\otimes a}\T
\end{equation}
is an equivalence of categories.}
\end{itemize}
\end{definition}

Let $\T$ be a $\Pp$-torsor. In $\Ob(\T)$ we introduce an equivalence relation by letting $x\sim y \Leftrightarrow$ there is an isomorphism $x\rar y$. We denote by $[x]$ the equivalence class of the object $x$. We let: 
$$
\pi_0(\T):=\dfrac{\Ob(\T)}{\sim}.
$$
\begin{prop}\label{inducedtorsor}
Let be $\T$ a $\Pp$-torsor. Then there is an action of the group $\pi_0(\Pp)$ on the set $\pi_0(\T)$, which makes $\pi_0(\T)$ into a 
torsor over the abelian group $\pi_0(\Pp)$.
\end{prop}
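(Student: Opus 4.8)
The plan is to transport the $\Pp$-torsor structure on $\T$ down to the $\pi_0$ level and check the torsor axioms for abelian groups directly. First I would define the action: given $[x]\in\pi_0(\T)$ and $[g]\in\pi_0(\Pp)$, set $[g]\cdot[x]:=[g\otimes x]$. The first step is to verify this is well defined. If $g\to g'$ in $\Pp$ and $x\to x'$ in $\T$, then applying the bifunctor $\otimes$ to this pair of isomorphisms yields an isomorphism $g\otimes x\to g'\otimes x'$, so $[g\otimes x]=[g'\otimes x']$; hence the action descends to equivalence classes. Next, that it is a group action: the identity element of $\pi_0(\Pp)$ is $[\mathbf{1}]$, and $\lambda_x\colon\mathbf{1}\otimes x\xrar{\sim}x$ from \eqref{null} gives $[\mathbf{1}]\cdot[x]=[x]$; compatibility with the group law uses $\beta_{g_1,g_2,x}\colon g_1\otimes(g_2\otimes x)\xrar{\sim}(g_1\otimes g_2)\otimes x$, so $[g_1]\cdot([g_2]\cdot[x])=[g_1\otimes(g_2\otimes x)]=[(g_1\otimes g_2)\otimes x]=([g_1][g_2])\cdot[x]$. (Recall $\pi_0(\Pp)$ is abelian since $\Pp$ is symmetric, so there is no issue about which side the group acts on.)

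It remains to show the action is free and transitive on $\pi_0(\T)$. Both follow from the hypothesis that for each $a\in\T$ the functor $-\otimes a\colon\Pp\to\T$ in \eqref{freetransitive} is an equivalence of categories. For transitivity: given $[x],[y]\in\pi_0(\T)$, pick $a=x$; since $-\otimes x$ is essentially surjective, there is $g\in\Pp$ with $g\otimes x\cong y$, i.e. $[g]\cdot[x]=[y]$. For freeness: suppose $[g]\cdot[x]=[x]$, i.e. there is an isomorphism $g\otimes x\xrar{\sim}x$ in $\T$; composing with $\lambda_x^{-1}\colon x\xrar{\sim}\mathbf{1}\otimes x$ gives an isomorphism $g\otimes x\xrar{\sim}\mathbf{1}\otimes x$ in $\T$. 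Because $-\otimes x$ is an equivalence, hence fully faithful, this isomorphism is the image under $-\otimes x$ of a (unique) isomorphism $g\xrar{\sim}\mathbf{1}$ in $\Pp$, so $[g]=[\mathbf{1}]$ is the identity of $\pi_0(\Pp)$. Thus the stabilizer of every point is trivial, and since $\pi_0(\T)$ is nonempty (it contains the basepoint/unit class, or simply any object of the groupoid $\T$), $\pi_0(\T)$ is a $\pi_0(\Pp)$-torsor.

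I expect the only mildly delicate point to be bookkeeping with the coherence isomorphisms $\alpha$, $\beta$, $\lambda$ when checking that the action is a genuine (strict) group action on the set $\pi_0(\T)$ — but this is precisely where passing to $\pi_0$ helps, since all such coherence isomorphisms collapse to equalities of equivalence classes and the pentagon/compatibility diagrams in Definition \ref{pictorsor} are not actually needed beyond guaranteeing the relevant isomorphisms exist. So the main (very mild) obstacle is just confirming that no coherence obstruction survives the quotient, which it does not.
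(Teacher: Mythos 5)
Your proof is correct and follows essentially the same route as the paper: the same action $([g],[x])\mapsto[g\otimes x]$, well-definedness and the group-action axioms from $\beta$ and $\lambda$, and freeness and transitivity from the equivalence condition \eqref{freetransitive}. You merely spell out in more detail (essential surjectivity for transitivity, full faithfulness for freeness) what the paper's proof leaves as a one-line remark.
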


\begin{proof}
The action is defined as 
$$
\begin{array}{ll}
\pi_0(\Pp)\times\pi_0(\T)\rar\pi_0(\T) \\
([g], [x])\mapsto [g\otimes x].
\end{array}
$$
The action is well defined and associative because of the conditions on the isomorphism $\beta$. The conditions on the isomorphism $\lambda$ gives $[1][x]=[x]$ for all object $x$. The action is free and transitive as a consequence of the condition \eqref{freetransitive}. 
\end{proof}

\subsection{Determinantal theories on exact categories with values on Picard categories}\label{detspace}

\begin{definition}\label{dettheory}
Let $\A$ be an exact category and $\Pp$ a symmetric Picard category. A {\it $\Pp$-valued determinantal theory} on $\A$ is a pair 
$(h, \lambda)$, where:
\begin{enumerate}
\item{$h:\Ob(\A)\lrar\Ob\Pp$ is a function such that $h(0)=\bf 1$.}
\item{For all isomorphisms $a\xrar{\sim} a'$, an isomorphism $h(a)\xrar{\sim} h(a')$, making $h$ into a functor, for which:
}
\item{$\lambda$ is a system of isomorphisms given for all admissible short exact sequences $\sigma=a'\hrar a\epi a''$ of $\A$:
$$
\lambda_{\sigma}\colon h(a')\otimes h(a'')\xrar{\sim}h(a),
$$
which are natural with respect to isomorphisms of admissible short exact sequences.} \\
These data are required to satisfy: \\
\item{For all admissible filtration of length 2 of $\A$, $a_1\hrar a_2\hrar a_3$ with a compatible choice of quotients, there is a 
commutative  diagram
\begin{equation}\label{mult}
\xymatrix{ 
h(a_1)\otimes h\left (\dfrac{a_{2}}{a_{1}}\right )\otimes h\left (\dfrac{a_{3}}{a_{2}}\right )\ar[d]_{\lambda\otimes 1} \ar[rr]^-{1\otimes\lambda} &&h(a_1)\otimes h\left (\dfrac{a_{3}}{a_{1}}\right )\ar[d]^{\lambda}\\
h(a_2)\otimes h\left (\dfrac{a_{3}}{a_{2}}\right ) \ar[rr]_-{\lambda}&& h(a_3) \\
}
\end{equation}
}
\end{enumerate}
(where we have omitted for simplicity the associator of $\Pp$.)
\end{definition}

A {\it morphism} of determinantal theories $(h, \lambda)\lrar (h', \lambda ')$ is a collection of morphisms 
$\{f_i\colon h(a_i)\to h '(a_i)\}$ of $\Pp$, such that, for all admissible short exact sequences $a'\hrar a\epi a''$, the diagram
$$
\xymatrix{
h(a')\otimes h(a'')\ar[r]^-{\lambda}\ar[d]_{f_{a'}\otimes f_{a''}} &h(a)\ar[d]^{f_a} \\
 h'(a')\otimes h'(a'')\ar[r]^-{\lambda'} & h'(a)
}
$$
commutes.

\vspace{0.1cm}

It is clear that every morphism of determinantal theories is an isomorphism. 
\begin{remarks} 
\begin{enumerate}
\item{Notice that from the functoriality of $h$ it follows that if $f:a\xrar{\sim}b$ is an isomorphism, and $\sigma\colon a\xrar{\sim} b\epi 0$ (resp. 
$\sigma\colon 0\hrar a\xrar{\sim} b$), one has $\lambda_{\sigma}=h(f):h(a)=h(a)\otimes h(0)\xrar{\sim} h(b)$
}
\item{The conditions defining a determinantal theory on $\A$ can be interpreted as conditions that $h$ must satisfy on the simplices of dimension $\leq 3$ of the simplicial Waldhausen category $\SA$. Indeed, notice in the first place that $h$ is a functor $S_1(\A)\rar\Pp$. Next, referring to the description of low-dimensional bisimplexes  given in section \eqref{SA}, $h$ is completely determined as a map which sends:
\begin{itemize}
\item{bisimplexes of type  $\Delta^0\times\Delta^p(=* \text{ in } S(\A))\lrar$ the null object $\bf 1$
}
\item{bisimplexes of type  $\Delta^1\times\Delta^0\lrar$ objects of $\Pp$
}
\item{bisimplexes of type  $\Delta^1\times\Delta^1\lrar$ isomorphisms of $\Pp$
}
\item{bisimplexes of type  $\Delta^1\times\Delta^2\lrar$ compositions of isomorphisms of $\Pp$
}
\item{bisimplexes of type  $\Delta^2\times\Delta^0\lrar$ isomorphisms of type $\lambda_{\sigma}$ of $\Pp$
}
\item{bisimplexes of type  $\Delta^2\times\Delta^2\lrar$ diagrams expressing the naturality of $\lambda_{\sigma}$ in $\Pp$
}
\item{bisimplexes of type  $\Delta^3\times\Delta^0\lrar$ commutative diagrams of type \eqref{mult}.
}
\end{itemize} 
}
\end{enumerate}
\end{remarks}

\begin{definition} Let be $\A$ an exact category and $\Pp$ a Picard symmetric category. The category (groupoid) $\Det(\A,\Pp)$ whose objects are the $\Pp$-valued determinantal theories on $\A$  and morphisms the isomorphisms of determinantal theories is called the category of {\it $\Pp$-valued determinantal theories on $\A$}, and denoted by $\Det(\A,\Pp)$.
\end{definition}

\subsection{Symmetric vs. non-symmetric determinantal theories}
We introduce now the ``symmetric versions" of the notions of determinantal theory and of $\Det(\A, \Pp)$, which will be central in the developement of our theory, as follows:

\begin{definition}\label{newsymmetric}
Let be $\Pp$ a symmetric Picard category, with symmetry $\sigma$. A 
{\it $\Pp$-valued symmetric determinantal theory} on $\A$  is a $\Pp$-valued determinantal theory $(h, \lambda)$ on $\A$, such that, for all diagrams of type  \eqref{symmetricintrinsic}, the following diagram
\begin{equation}\label{detsymmetric}
\xymatrix{  
h(x^1_1)\otimes h(x^1_2)\otimes h(x^2_1)\otimes h(x^2_2)  \ar[d]_{\lambda\otimes\lambda}\ar[rr]^{1\otimes\sigma\otimes 1}   && h(x^1_1)\otimes h(x^2_1)\otimes h(x^1_2)\otimes h(x^2_2)\ar[d]^{\lambda\otimes\lambda} \\
h(x^1)\otimes h(x^2)  \ar[rd]_{\lambda}   && h(x_1)\otimes h(x_2)\ar[ld]^{\lambda} \\
& h(x)  &
}
\end{equation}
is commutative.
\end{definition}
A morphism of symmetric determinantal theories is defined as in the general case.
\begin{definition} 
If $\Pp$ is a symmetric Picard category, we define $\Det_{\sigma}(\A,\Pp)$  to  be the  groupoid whose objects are the symmetric $\Pp$-valued   determinantal  theories on $\A$, and whose morphisms are the morphisms of determinantal theories.
\end{definition}
\begin{remark}
Thus, the datum of a symmetric determinantal theory is equivalent to a collection of data on the cells of $SS(\A)$  up to dimension $\leq 4$, which $(h, \lambda)$ must satisfy. Indeed, all such cells come from those of $S(\A)$, except for those of type 
$\Delta^2\times\Delta^2\times\Delta^0$, for which we impose the additional condition \eqref{detsymmetric}. Notice also that \eqref{detsymmetric} implies \eqref{mult}, when in \eqref{symmetricintrinsic} we let the left column to be the admissible short exact  sequence $x^1_1=x_1\epi 0$.
\end{remark}
The next proposition, which is a reformulation of a result due to Deligne (cf. \cite{d}, (4.8)) will be useful to perform computations.
\begin{prop}\label{symmdet}
Let $(h, \lambda)$ be a determinantal theory on $\A$, with values in the symmetric Picard category $\Pp$. Then $(h, \lambda)$ is symmetric if and only if for each pair of objects $a, b$ of $\A$, the diagram 
\begin{equation}\label{oldsymm}
\xymatrix{
h(a)\otimes h(b)\ar[drr]_{\sigma}\ar[rr]^{\lambda}&& h(a\oplus b)=h(b\oplus a)\ar[d]^{\lambda^{-1}} \\
&& h(b)\otimes h(a)
}
\end{equation}
commutes.
\end{prop}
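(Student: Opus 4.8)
The plan is to show that the symmetry condition \eqref{detsymmetric} on the generic diagram \eqref{symmetricintrinsic} reduces, after suitable specializations and using the multiplicativity axiom \eqref{mult}, to the single condition \eqref{oldsymm} on direct sums; and conversely that \eqref{oldsymm} plus \eqref{mult} implies \eqref{detsymmetric} in general. One direction is easy: assuming $(h,\lambda)$ is symmetric, I would apply \eqref{detsymmetric} to the split diagram \eqref{symmetricintrinsic} with $x^1_1 = a$, $x^2_2 = b$, $x^1 = x^1_2 = a$ (via the identity and the zero sequence) in the appropriate slots, so that $x = a\oplus b$, $x_1 = a$, $x_2 = b$, $x^1 = a$, $x^2 = b$, and all the $\lambda$'s along the trivial rows/columns become identities by the functoriality remark (the first item of the Remarks after Definition \ref{dettheory}). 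With these choices \eqref{detsymmetric} collapses precisely to the commutativity of \eqref{oldsymm}.

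The substantive direction is the converse: given \eqref{oldsymm} for all pairs $a,b$, deduce \eqref{detsymmetric} for an arbitrary diagram \eqref{symmetricintrinsic}. Here I would follow Deligne's strategy (cf. \cite{d}, 4.8). The idea is that a biexact diagram \eqref{symmetricintrinsic} is, up to the coherence isomorphisms of $\Pp$, built out of elementary pieces each of which is either a direct-sum square or one of the two admissible short exact sequences appearing in a row or column; concretely, one filters $x$ simultaneously by $x_1$ (a column) and by $x^1_1 \hookrightarrow x^1$ (within the top row), obtaining a rigidified filtration of length $2$ of $x$ in two different ways, and \eqref{mult} identifies the two resulting factorizations of $\lambda$. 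Chasing the big diagram \eqref{detsymmetric} then amounts to inserting, between the two composites $h(x^1_1)\otimes h(x^1_2)\otimes h(x^2_1)\otimes h(x^2_2) \to h(x)$, a sequence of intermediate objects obtained by regrouping the four corner terms; each regrouping step is governed either by an instance of \eqref{mult} (for the ``horizontal vs. vertical'' reassociations) or by an instance of \eqref{oldsymm} applied to the relevant corner pair (for the middle transposition $1\otimes\sigma\otimes 1$), together with the naturality of $\sigma$ with respect to the maps in \eqref{symmetricintrinsic} and the Picard-category coherence (the hexagon/pentagon axioms) which lets us freely reassociate and which is absorbed into ``omitting the associator.''

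\emph{Main obstacle.} The hard part is the bookkeeping of this diagram chase: one must verify that the specific composite of instances of \eqref{mult}, naturality squares for $\sigma$, and coherence isomorphisms really does transport the ``$\lambda\otimes\lambda$ then $\lambda$'' composite on the left of \eqref{detsymmetric} to the one on the right, and in particular that no extra sign-type discrepancy (an unwanted automorphism in $\pi_1(\Pp)$) survives. The key point making this work is strictly the interplay of \eqref{oldsymm} with \eqref{mult}: \eqref{oldsymm} handles transpositions of \emph{split} pairs, and \eqref{mult} lets one reduce a transposition occurring inside \eqref{symmetricintrinsic} to a split one by first factoring through the subobject/quotient structure of the diagram. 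I would organize the chase by first treating the case where the middle row and column of \eqref{symmetricintrinsic} are split (so everything reduces to iterated \eqref{oldsymm} and coherence), and then reduce the general case to this one by applying \eqref{mult} along the nontrivial sub-quotient extensions; this mirrors exactly Deligne's reduction and is where citing \cite{d}, (4.8), does the real work.
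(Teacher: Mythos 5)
Your overall strategy coincides with the paper's (both follow Deligne, \cite{d} (4.8)): the ``only if'' direction is a specialization of \eqref{detsymmetric}, and the ``if'' direction reduces the transposition in \eqref{detsymmetric} to the split case \eqref{oldsymm} by exploiting the subobject structure of \eqref{symmetricintrinsic} together with \eqref{mult}. But two things need attention. First, your specialization for the easy direction is garbled: you cannot have $x^1_1=a$ and $x^1=x^1_2=a$ simultaneously (the exactness of the row $x^1_1\hrar x^1\epi x^1_2$ forces $x^1_2=0$), and if one takes $x^1_1=a$, $x^2_2=b$ literally, then $x^1_2=x^2_1=0$, the middle transposition $1\otimes\sigma\otimes 1$ acts on $h(0)\otimes h(0)$, and \eqref{detsymmetric} collapses to a statement not involving $\sigma$ at all. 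The correct specialization, which the paper uses, is $x^1_1=x^2_2=0$: then $x\cong x^1\oplus x^2$, the four corner factors reduce (up to units) to $h(x^1)\otimes h(x^2)$, and \eqref{detsymmetric} becomes exactly \eqref{oldsymm}.

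Second, and more seriously, the substantive direction remains a plan: you flag the ``bookkeeping of the diagram chase'' as the main obstacle and never supply the construction that actually effects the reduction to the split case. The paper's key step is to form the pushout $x^1+x_1$ of $x_1\hlar x^1_1\hrar x^1$, to check that $x^1+x_1\hrar x\epi x^2_2$ is an admissible short exact sequence, and above all to observe that $\dfrac{x^1+x_1}{x^1_1}\cong x^1_2\oplus x^2_1$. This single isomorphism is the whole point: it exhibits the pair being transposed as a genuinely split pair, so \eqref{oldsymm} applies to it directly, and the two filtrations $x^1_1\hrar x^1\hrar x^1+x_1\hrar x$ and $x^1_1\hrar x_1\hrar x^1+x_1\hrar x$ together with \eqref{mult} transport that split transposition to the two sides of \eqref{detsymmetric}. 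Your proposed organization --- first treat the case where the middle row and column of \eqref{symmetricintrinsic} are split, then reduce the general case to it via \eqref{mult} --- is not the same reduction and is not obviously achievable, since there is no comparison map from a general diagram \eqref{symmetricintrinsic} to a split one; what is always split is the specific subquotient $(x^1+x_1)/x^1_1$, and that is the object your argument needs but never names. Without it, the proof does not close.
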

\begin{proof}
We add the details to the argument sketched by Deligne. Since $\A$ is an exact category, it is closed under pushouts of admissible monomorphisms. Hence the diagram $x_1\hrar x^1_1\hlar x^1$ admits a pushout, which we denote by $x^1+x_1$, and in the resulting square
$$
\xymatrix{
x^1_1\ar@{^{(}->}[d]\ar@{^{(}->}[r] & x^1\ar@{^{(}->}[d] \\
x_1 \ar@{^{(}->}[r] & x^1+x_1
}
$$
all the morphisms are admissible monomorphisms. The arrow $x\epi x^2_2$ being an admissible epimorphism, it is easy to see that its 
cokernel is the arrow $ x^1+x_1\hrar x$, induced by the pushout diagram; therefore the second arrow  is an admissible monomorphism and thus
$$
 x^1+x_1\hrar x\epi x^2_2
$$
 is  an admissible short exact sequence in $\A$. It follows that $x^2_2\xrar{\sim}\dfrac{x}{x^1+x_1}$.
\vspace{0.1cm}

We can consider then the following admissible filtrations:
\begin{align*}
x^1_1\hrar x^1\hrar x^1+x_1 \hrar x \\ 
x^1_1\hrar x_1\hrar x^1+x_1 \hrar x          
\end{align*}
of $x$ in $\A$. In particular, from $x^1_1\hrar x^1\hrar x$ and $x^1_1\hrar x_1\hrar x$ we obtain that the diagram
\begin{equation}\label{qsymmetric}
\xymatrix{  
h(x^1_1)\otimes h(x^1_2)\otimes h(x^2)  \ar[d]_{\lambda\otimes\lambda}\ar[r]^{1\otimes\lambda}   &h(x^1_1)\otimes h\left(\dfrac{x}{x_1^1}\right)& h(x^1_1)\otimes h(x^2_1)\otimes h(x_2)\ar[l]_{1\otimes\lambda}\ar[d]^{\lambda\otimes 1} \\
h(x^1)\otimes h(x^2)  \ar[r]_{\lambda}   &h(x)& h(x_1)\otimes h(x_2)\ar[l]_{\lambda}
}
\end{equation}
is commutative.

\vspace{0.1cm}
On the other hand, from $x^1_1\hrar x^1+x_1 \hrar x$, and observing that $\dfrac{x^1+x_1}{x^1_1}\xrar{\sim} x^1_2\oplus x^2_1$, we get that the diagram
$$
\xymatrix{
h(x^1_1)\otimes h(x^1_2\oplus x^2_1)\otimes h(x^2_2)\ar[d]_{\lambda\otimes 1} \ar[rr]^-{1\otimes\lambda} &&h(x^1_1)\otimes h\left (\dfrac{x}{x^1_{1}}\right )\ar[d]^{\lambda}\\
h(x^1+x_1)\otimes h(x^2_2) \ar[rr]_-{\lambda}&& h(x) \\
}
$$
commutes. Similarly, taking quotients of the first filtration above by $x^1_1$ we obtain that the diagram
$$
\xymatrix{
h(x^1_2)\otimes h(x^2_1)\otimes h(x^2_2)\ar[d]_{\lambda\otimes 1} \ar[rr]^-{1\otimes\lambda} &&h(x^1_2\oplus x^2_1)\otimes h(x^2_2)\ar[d]^{\lambda}\\
h(x^1_1)\otimes h(x^2_2) \ar[rr]_-{\lambda}&& h\left (\dfrac{x}{x^1_{1}}\right ) \\
}
$$
also commutes. Since $h$ is symmetric,  the latter diagram, tensorized with $h(x^1_1)$ and compared with diagram \eqref{qsymmetric}, yields diagram \eqref{detsymmetric}. This proves the ``if" clause of the statement. For the ``only if" part, we observe that the commutative diagram \eqref{oldsymm} is just a particular case of the commutative diagram \eqref{detsymmetric}, when in  \eqref{symmetricintrinsic}, we let $x^1_1=x^2_2=0$. The proposition is proved.
\end{proof}
\begin{examples} 
(1) Let be $k$ a field and $\A=\vect$ the abelian category of finite dimensional vector spaces on $k$. Let be $G=k^*$, and let be $\Pp=\Tors(G)$. For an object $V\in\vect$, let us denote by $\Lambda^{\max}$ the top exterior power of $V$. Then we have a $G$-torsor
$$
\det(V)=\Lambda^{\max}-\{0\},
$$
called the {\it determinantal space} of $V$. For every short exact sequences of vector spaces $V'\hrar V\epi V''$, we have natural identifications 
$$
\lambda_{V',V,V''}\colon\det(V')\otimes\det(V'')\to\det(V)
$$
The collection $\{\det(V), \lambda\}_{V\in\vect}$ forms a determinantal theory on $\A$ (see \cite{ka}). This determinantal theory is non-symmetric.
\vspace{0.1cm}

(1') (Sketch) The non-symmetric determinantal theory $\det(V)$ has a symmetric analog. Let us consider the category $\Pic_k^{\Zee}$  (cf. Example \eqref{virtual} (3)). For any $V$ in $\vect$, let be $\det(V)$ the graded 1-dimensional vector space consisting of the top exterior power $\Lambda^{\max}(V)$ in degree  $\dim(V)$. Then, the correspondence
$$
V\mapsto\det(V)
$$
is a symmetric determinantal theory with values in $\Pic_k^{\Zee}$. \\
\vspace{0.1cm}

(2) {\it The universal determinantal theory.} The geometric description of the bisimplexes of $S(\A)$ of dimension $\leq 3$ has a natural interpretation in terms of the {\it universal determinantal theory}. This is a determinantal theory with values in the category of virtual objects (cf. Example \eqref{virtual} (2)):
$$
(h^u, \lambda^u)\colon\A\to V(\A)
$$
defined as follows: referring to the notations used in  Remark \eqref{bisimplexes}, for all object $a\in\A$, $h^u(a)$ is the loop $|a|$ of $S(\A)$, interpreted as on object of $V(\A)$. Given $\sigma\colon a'\hrar a\epi a''$, $|\sigma|$ is a homotopy class of homotopies between the composition of the loops $|a'|*|a''|$ and $|a|$, and it can be interpreted as an arrow
$$
\lambda^u_{\sigma}=|\sigma|\colon h^u(a')\otimes h^u(a'')\xrar{\sim}h^u(a).
$$ 
of $V(\A)$.
We claim that the pair ($h^u, \lambda^u$) defines a symmetric determinantal theory. Indeed, let be $\tau\colon a_1\hrar a_2\hrar a_3$. Interpret $|\tau|$ (see figure \eqref{Fi:tau}) as a class of homotopies between the composition of the even faces of $|\tau|$,  as in Figure \eqref{Fi:even}

\begin{figure}[h!]
\centerline{\includegraphics{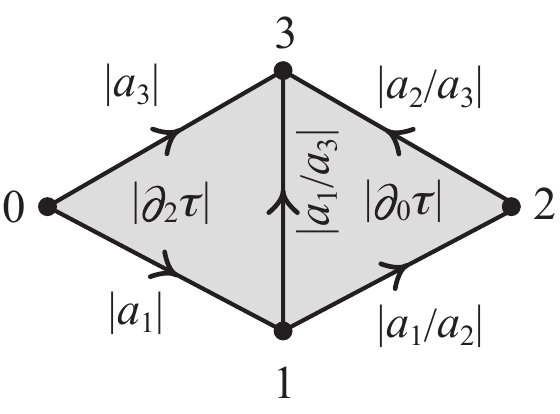}}
\caption{Even composition}\label{Fi:even}
\end{figure} 
i.e. the arrow of $V(\A)$:
$$
h^u(a_3)\xrar{(1\otimes\pt_0(\tau))\pt_2(\tau)}h^u(a_1)\otimes h^u\left (\dfrac{a_2}{a_1}\right)\otimes h^u\left(\dfrac{a_3}{a_2}\right)
$$
and the composition of the odd faces of  $|\tau|$:

\begin{figure}[h]
\centerline{\includegraphics{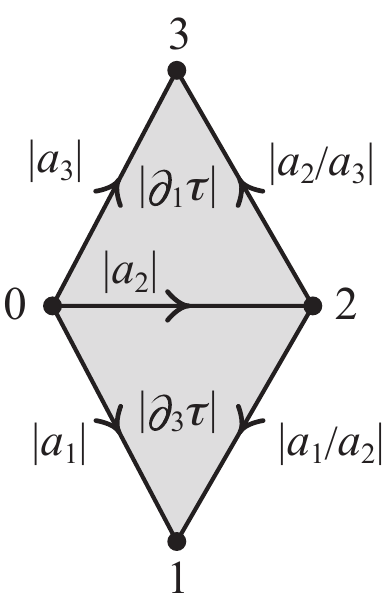}}
\caption{Odd composition}\label{Fi:odd}
\end{figure}

i.e. the arrow of $V(\A)$
$$
h^u(a_3)\xrar{(1\otimes\pt_1(\tau))\pt_3(\tau)}h^u(a_1)\otimes h^u\left (\dfrac{a_2}{a_1}\right)\otimes h^u\left(\dfrac{a_3}{a_2}\right).
$$
Thus, $|\tau|$ yields the commutativity, up to an isomorphism $\alpha$ of associativity of $V(\A)$, of a diagram of type \eqref{mult}, with $h=h^u$ and $\lambda=\lambda^u$. Similar interpretations give the functoriality of $h$ and the naturality of $\lambda^u_{\sigma}$ with respect to $\sigma$. Thus ($h^u, \lambda^u$) is a determinantal theory. A direct application of Proposition \eqref{symmdet} shows that it is symmetric. We call it the {\it universal determinantal theory} on $\A$. This terminology is justified by the following, which in the symmetric case is due 
to Deligne (cf. \cite{d}, (4.3)), and which explains how to reconstruct any $\Pp$-valued determinantal theory from the universal determinantal theory.
\begin{theorem}
(1) Let be $\Pp$ a Picard category and $\Fun^{\otimes}(V(\A), \Pp)$ the category of Picard functors $V(\A)\rar\Pp$. There exists an equivalence of categories
$$
\Det(\A,\Pp)\xrar{\sim}\Fun^{\otimes}(V(\A),\Pp).
$$
(2) Let be $\Pp$ a symmetric Picard category, and 
$\Fun_{\sigma}^{\otimes}(V(\A), \Pp)$ the category of symmetric Picard functors $V(\A)\rar\Pp$. There exists an equivalence of categories
$$
\Det_{\sigma}(\A,\Pp)\xrar{\sim}\Fun_{\sigma}^{\otimes}(V(\A),\Pp).
$$

\end{theorem}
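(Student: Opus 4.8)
The plan is to exhibit a quasi-inverse to the asserted equivalence, namely the functor
\[
\Phi\colon\Fun^{\otimes}(V(\A),\Pp)\lrar\Det(\A,\Pp),\qquad F\longmapsto(F\circ h^u,\,F\lambda^u),
\]
obtained by precomposing with the universal determinantal theory $(h^u,\lambda^u)\in\Det(\A,V(\A))$ constructed above. Since a Picard functor $F$ preserves $\otimes$, the unit object and the associativity constraint, $(F\circ h^u,F\lambda^u)$ is again a $\Pp$-valued determinantal theory, and a monoidal natural transformation $F\rrar F'$ restricts to a morphism of the associated theories; because $(h^u,\lambda^u)$ is symmetric (Proposition \eqref{symmdet}), $\Phi$ carries symmetric Picard functors into $\Det_{\sigma}(\A,\Pp)$. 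It then remains to prove that $\Phi$ is fully faithful and essentially surjective.

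\emph{Essential surjectivity} is the crux. Given $(h,\lambda)\in\Det(\A,\Pp)$ one must build $F\colon V(\A)\to\Pp$ with $\Phi(F)\cong(h,\lambda)$. Recall that $V(\A)=\Pi_1(\Omega S(\A))$ and that $S(\A)$ is the realization of the bisimplicial set $S_{\bullet}(\A)$, a CW complex whose cells of dimension $\leq 3$ are exactly those enumerated in Remark \eqref{bisimplexes}: one $1$-cell $|a|$ per object $a$, $2$-cells per isomorphism and per admissible short exact sequence, $3$-cells per composable pair of isomorphisms and per rigidified filtration of length $2$. By cellular approximation every object of $V(\A)$ is homotopic to a word in the $|a|^{\pm 1}$ and every morphism is generated, under concatenation and the Picard operations, by the $2$- and $3$-cells; hence $F$ is forced once one sets $F(|a|)=h(a)$, with transition isomorphisms supplied by the functoriality of $h$, and $F(|\sigma|)=\lambda_{\sigma}$, read as a map $F(|a'|*|a''|)=h(a')\otimes h(a'')\to h(a)$. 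Consistency is checked cell by cell: the $3$-cell attached to a filtration $a_{1}\hrar a_{2}\hrar a_{3}$ imposes precisely the coherence \eqref{mult}, which holds by hypothesis, and the isomorphism-composition cells impose the expected associativity coherence for the transition isomorphisms. For part $(2)$ one runs the same argument on $SS(\A)$: a symmetric monoidal functor amounts to cellular data on $SS(\A)$, whose cells up to dimension $4$ all descend from $S(\A)$ except the $\Delta^{2}\times\Delta^{2}\times\Delta^{0}$-cells, over which the extension is controlled exactly by axiom \eqref{detsymmetric}, valid since $(h,\lambda)$ is symmetric. In both cases $\Phi(F)\cong(h,\lambda)$ by construction.

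\emph{Full faithfulness.} A monoidal natural transformation $\theta\colon F\rrar F'$ is determined by its components on the generators $|a|$, the values on words being fixed by monoidality and on arbitrary loops by naturality and cellular approximation; these components are exactly the data $\{f_{a}\colon h(a)\to h'(a)\}$ of a morphism of determinantal theories, and the naturality squares of $\theta$ on the $2$-cells $|\sigma|$ are precisely the compatibility squares in the definition of such a morphism, while the $3$-cells impose no further condition. Conversely, the same cellular procedure assembles any morphism of (symmetric) determinantal theories into a unique (symmetric) monoidal natural transformation. Thus $\Phi$ is bijective on Hom-sets, hence an equivalence, and the equivalence of the theorem is the quasi-inverse $\Phi^{-1}$.

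The delicate point, used twice above, is that the finitely many low-dimensional coherences packaged into the definition of a (symmetric) determinantal theory are not merely necessary but sufficient: that the cellular data extends over all higher cells with no obstruction and no residual choices. This is the assertion that $V(\A)$ is $1$-truncated as a Picard category, equivalently that its classifying spectrum has homotopy concentrated in the two degrees $0,1$, namely $K_{0}(\A)$ and $K_{1}(\A)$; consequently the relevant classifying space $B\Pp$ (resp.\ its double delooping) is truncated in a range of two consecutive degrees, so that a map out of $S(\A)$ (resp.\ $SS(\A)$) is pinned down, without further obstruction, by its restriction to the skeleton of the dimension already analyzed --- the obstruction and indeterminacy groups are cohomology of the relevant (bi)simplicial set with coefficients in $\pi_{0}(\Pp)$ or $\pi_{1}(\Pp)$, and these vanish beyond that range. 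In the symmetric case this is exactly Deligne's universality theorem for $(h^{u},\lambda^{u})$ (\cite{d}, (4.3) and (4.8)); the non-symmetric statement follows by the same argument with $SS(\A)$ replaced by $S(\A)$. I would accordingly organize the final write-up around Deligne's result, contributing as the new ingredient the dictionary between the explicit cell-by-cell descriptions of $S(\A)$ and $SS(\A)$ recorded above and the defining conditions of Definitions \eqref{dettheory} and \eqref{newsymmetric}.
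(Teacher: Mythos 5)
The paper does not actually prove this theorem: it is stated as a reformulation of Deligne's result, with the symmetric case attributed to \cite{d}, (4.3), and no argument is supplied for either part. There is therefore no in-paper proof to measure your proposal against; what you have written is a reconstruction of the intended, Deligne-style argument, and as a sketch it is organized correctly. The quasi-inverse $\Phi$ given by precomposition with $(h^u,\lambda^u)$, the dictionary between the defining conditions of a (symmetric) determinantal theory and the cells of $S(\A)$ in dimensions $\leq 3$ (resp.\ of $SS(\A)$ in dimensions $\leq 4$), and the isolation of the real content --- that these low-dimensional coherences suffice because the target has homotopy concentrated in two consecutive degrees, so all higher obstruction and indeterminacy groups have vanishing coefficients --- are exactly the right ingredients, and your explicit appeal to Deligne for the rigorous core matches what the paper itself does.

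Two places where the sketch is thinner than a final write-up could afford to be. First, your census of the $3$-cells omits the bisimplices of type $\Delta^{2}\times\Delta^{1}$ (isomorphisms of admissible short exact sequences, cf.\ Remark \eqref{bisimplexes}); these are precisely the cells that force the naturality of $\lambda_{\sigma}$ in $\sigma$, which is part of Definition \eqref{dettheory}, so they must appear in the cell-by-cell consistency check for essential surjectivity and in the full-faithfulness argument. Second, the obstruction-theoretic step is cleanest in the symmetric case, where $B\Pp$ deloops to a spectrum; in part (1) the target is only a group-like monoidal groupoid, so the classifying object is a single delooping $Y$ with $\pi_{1}(Y)=\pi_{0}(\Pp)$ and $\pi_{2}(Y)=\pi_{1}(\Pp)$, and one should note that the action of $\pi_{1}$ on $\pi_{2}$ is trivial (via the canonical identification $\Aut_{\Pp}(x)\cong\Aut_{\Pp}({\bf 1})$) before invoking untwisted obstruction theory. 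Neither point is fatal, but both deserve a sentence.
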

\end{examples}

\subsection{Degree n multiplicative torsors} 
The concept of multiplicative (bi-)torsor has been introduced by Grothendieck in connection with the problem of the description of the second cohomology group of an abelian group $G$, and the classification of the central extensions of a group by an abelian group in \cite{sga7}. Although introduced in the context of group cohomology, Grothendieck's definition can be easily reworked for the more general case of simplicial sets, which is the context in which we, at first, shall use this notation.

\subsubsection{A pasting rule} Let $\mathcal C$ be the category of torsors over an abelian group $G$. This is a monoidal, strictly symmetric category. We denote by $c$ its symmetry. We introduce a notation which shall help us to write in a compact form some particular compositions of morphisms of $\mathcal C$.

\vspace{0.3cm} 

Let be $f:A\rar B_1\otimes B_2$ and $g:B_1\otimes B_3\rar C$ two morphisms of $\mathcal C$. Since $\dom(g)\neq\cod(f)$, these morphisms cannot be composed. However, we can define a new morphism, by ``pasting together" $f$ and $g$, as follows: 

\begin{equation}\label{pasting}
A\otimes B_3 \xrightarrow{f\otimes B_3}  B_1\otimes B_2\otimes B_3\xrightarrow{c\otimes B_3} B_2\otimes B_1\otimes B_3  \xrightarrow{B_2\otimes g} B_2\otimes C
\end{equation}

We shall denote such a composition by $``g\cdot f"$. Similarly, one defines $``h\cdot g\cdot f"$, and so on.

\subsubsection{Simplicial sets and pasting of  torsor morphisms} 
Let be $(\Sigma_{\bullet}, \pt_i, s_i)$ a simplicial set (as defined e.g. in \cite{gm} and \cite{gz}) and $G$ an abelian group. 

\vspace{0.2cm}

Let be given, for all $\rho\in\Sigma_{n-1}$ a $G$-torsor $T_{\rho}$ and for all $\sigma\in\Sigma_{n}$ an isomorphism of $G$-torsors:
$$
\alpha_{\sigma}:\bigotimes T_{\pt_{2i}(\sigma)}\lrar \bigotimes T_{\pt_{2i+1}(\sigma)}.
$$
Let be given $\tau\in\Sigma_{n+1}$. It is possible to construct the following composition, that we shall call the {\it even composition} of the $\alpha$'s:
$$
E_{\tau}:=``\cdots \cdot\alpha_{\pt_2(\tau)}\cdot\alpha_{\pt_0(\tau)}"
$$
and the similarly defined {\it odd composition}, as 
$$
O_{\tau}:=``\alpha_{\pt_1(\tau)}\cdot\alpha_{\pt_3(\tau)}\cdot\cdots"
$$
which are extended, respectively, to all the even/odd factors $\alpha_{\pt_i(\tau)}$ in the indicated order. 

\subsubsection{The Street decomposition of a simplex} Following Street (\cite{st}) we introduce a useful way to decompose the boundary of a simplex $\sigma$ of a simplicial set. Let be $\sigma\in\Sigma_n$. We let
\begin{align*}
\pt_+(\sigma) & =\{\pt_{2i}(\sigma)\} \\
\pt_-(\sigma) & =\{\pt_{2i+1}(\sigma)\} \\
\end{align*}
Then, we put (cf. \cite{ka2}):
\begin{align}
\pt_{++}(\sigma) &=\bigcup\pt_{+}\pt_{2i}(\sigma), & \pt_{+-}(\sigma) &=\bigcup\pt_{+}\pt_{2i+1}(\sigma) \\
\pt_{-+}(\sigma) &=\bigcup\pt_{-}\pt_{2i}(\sigma), & \pt_{--}(\sigma) &=\bigcup\pt_{-}\pt_{2i+1}(\sigma)
\end{align}

\begin{lm}
For all $\sigma\in\Sigma_n$, we have: $\pt_{++}(\sigma)=\pt_{--}(\sigma)$ and $\pt_{+-}(\sigma)=\pt_{-+}(\sigma)$.
\end{lm}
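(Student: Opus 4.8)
The plan is to translate the statement into the standard simplicial identities and then carry out a short bookkeeping with the two-element subsets of $\{0,1,\dots,n\}$ that label the codimension-two faces of $\sigma$.

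First I would record a normal form for iterated faces. For $\sigma\in\Sigma_n$ and indices $0\le a\le n-1$, $0\le b\le n$, the identity $\pt_i\pt_j=\pt_{j-1}\pt_i$ (valid for $i<j$) gives $\pt_a\pt_b(\sigma)=\pt_p\pt_q(\sigma)$ with $p<q$, where $(p,q)=(a,b)$ if $a<b$ and $(p,q)=(b,a+1)$ if $a\ge b$; concretely $\pt_p\pt_q(\sigma)$ is the face of $\sigma$ obtained by omitting the two vertices $p$ and $q$. Conversely, a fixed pair $p<q$ is produced as $\pt_a\pt_b(\sigma)$ in exactly the two ways $(a,b)=(p,q)$ and $(a,b)=(q-1,p)$. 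This dictionary lets me regard any set of codimension-two faces of $\sigma$ simply as a set of pairs $\{p,q\}$, $p<q$.

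Next I would unwind the four sets. By definition $\pt_{\epsilon\delta}(\sigma)$ consists of the faces $\pt_a\pt_b(\sigma)$ where $b$ runs over the indices of $\sigma$ of parity $\delta$ and, for each such $b$, $a$ runs over the indices of parity $\epsilon$ of the $(n-1)$-simplex $\pt_b(\sigma)$ (subject to $0\le a\le n-1$, $0\le b\le n$). Pushing each such face to its normal form, and carefully tracking that the step $\pt_i\pt_j=\pt_{j-1}\pt_i$ reverses the parity of the larger index, I would rewrite each of $\pt_{++}(\sigma),\pt_{+-}(\sigma),\pt_{-+}(\sigma),\pt_{--}(\sigma)$ as the collection of omitted pairs $\{p,q\}$, $p<q$, singled out by an explicit parity condition on $p$ and $q$. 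The point is then to observe that the condition attached to $(++)$ coincides with the one attached to $(--)$, and that the one attached to $(+-)$ coincides with the one attached to $(-+)$; since the two presentations $(a,b)=(p,q)$ and $(a,b)=(q-1,p)$ of a fixed pair contribute with matching parities, each admissible pair is moreover realized exactly once, so the corresponding sets of faces of $\sigma$ literally agree and both asserted equalities follow.

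The delicate point, where I expect essentially all of the work to sit, is precisely this parity bookkeeping: the relation $\pt_i\pt_j=\pt_{j-1}\pt_i$ shifts the parity of the larger index, so that a face appearing as ``even--even'' in one presentation is ``odd--even'' in its normal form. One is therefore forced to split into the cases $a<b$ and $a\ge b$ (equivalently, into the two ways a given omitted pair is produced) and to check that the two cases conspire to give the symmetric answer — a combinatorial shadow of $\pt\circ\pt=0$ on the boundary sphere $\pt\sigma$. Once this case analysis is set up, verifying the index ranges and the ``each pair exactly once'' claim is routine.
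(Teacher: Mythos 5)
Your reduction to the combinatorics of omitted vertex pairs is set up correctly: the normal form $\pt_a\pt_b=\pt_p\pt_q$ with $(p,q)=(a,b)$ for $a<b$ and $(p,q)=(b,a+1)$ for $a\ge b$, the fact that each pair $p<q$ arises in exactly those two ways, and the parity flip of the larger index under normalization are all right, and this is exactly the computation the paper's own one-line proof (``a restatement of the simplicial identities; details left to the reader'') is gesturing at. The problem is the final step you defer: the parity conditions attached to $(++)$ and $(--)$ do \emph{not} coincide. Carrying out your own bookkeeping, a pair $\{p<q\}$ of omitted vertices lies in $\pt_{++}(\sigma)$ iff it arises as $\pt_a\pt_b$ with $a,b$ both even, i.e.\ iff either $p$ and $q$ are both even (via $(a,b)=(p,q)$) or $p$ is even and $q$ is odd (via $(a,b)=(q-1,p)$) --- in total, iff $p$ is even; the same computation gives $\pt_{--}(\sigma)=\{\{p<q\}:\ p\ \text{odd}\}$. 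These two sets are complementary, not equal. Concretely, for $\sigma\in\Sigma_2$ one finds $\pt_{++}(\sigma)=\{\pt_0\pt_0\sigma,\ \pt_0\pt_2\sigma\}$, namely the vertices $2$ and $1$, while $\pt_{--}(\sigma)=\{\pt_1\pt_1\sigma\}$ is the vertex $0$. Likewise $\pt_{+-}$ is cut out by ``$q$ odd'' and $\pt_{-+}$ by ``$q$ even'', so the second asserted equality fails as well.

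So the gap is not a missing detail but the fact that the ``observation'' your argument hinges on is false; the lemma, read literally against the displayed definitions, cannot be proved. What your dictionary actually yields --- and what is the genuine combinatorial shadow of $\pt\circ\pt=0$ --- is the identity $\pt_{++}(\sigma)\sqcup\pt_{--}(\sigma)=\pt_{+-}(\sigma)\sqcup\pt_{-+}(\sigma)$, both sides being the set of all codimension-two faces of $\sigma$, each occurring exactly once. It is this identity, combined with the cancellation of common factors built into the pasting rule, that makes the domains and targets of the even and odd compositions agree in the proposition that follows (in the $3$-simplex example, the domain of the even composition is $\pt_{++}\setminus\pt_{-+}$, not $\pt_{++}$ itself). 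I would recommend recording that corrected identity as the conclusion of your computation rather than trying to force the two equalities as stated.
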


\begin{proof}
In fact, the lemma is just a restatement of the simplicial identities $\pt_i\pt_j=\pt_{j-1}\pt_i, \ (i\leq j)$. We leave the details to the reader.
\end{proof}
As a consequence, we have the
\begin{prop}
For all $n\geq 1$ the domain of the even composition coincides with the domain of the odd composition, and similarly for the target.
\end{prop}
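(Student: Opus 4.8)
The plan is to evaluate the source and the target of each of the two composites $E_\tau$ and $O_\tau$ explicitly, as tensor products of the torsors $T_\rho$ indexed by codimension‑two faces $\rho$ of $\tau$, and then to match them by means of the preceding Lemma, whose two identities say precisely that the even and odd halves of the boundary of $\tau$ have the same boundary.

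First I would isolate the bookkeeping of one elementary pasting step \eqref{pasting}: if $f\colon A\rar B_1\otimes B_2$ and $g\colon B_1\otimes B_3\rar C$, then $``g\cdot f"\colon A\otimes B_3\rar B_2\otimes C$. Thus forming $``g\cdot f"$ deletes the matched factor $B_1$ once from $\cod(f)$ and once from $\dom(g)$, keeps the surviving factors of $\dom(f)$ and then of $\dom(g)$ in the domain, and keeps the surviving factors of $\cod(f)$ and then of $\cod(g)$ in the target, everything read up to the strict symmetry $c$ of $\cc$.

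Next I would iterate this along the even chain $E_\tau=``\cdots\cdot\alpha_{\pt_2(\tau)}\cdot\alpha_{\pt_0(\tau)}"$. Since $\dom\alpha_{\pt_{2i}(\tau)}=\bigotimes_a T_{\pt_{2a}\pt_{2i}(\tau)}$ and $\cod\alpha_{\pt_{2i}(\tau)}=\bigotimes_a T_{\pt_{2a+1}\pt_{2i}(\tau)}$, the simplicial identities $\pt_a\pt_b=\pt_{b-1}\pt_a$ pin down which factors are matched along the chain: for consecutive even faces the target‑component of the earlier $\alpha$ equals a source‑component of the later one, and this is the only cancellation that occurs. Carrying the bookkeeping through the whole chain one obtains
\[
\dom(E_\tau)=\bigotimes_{\rho\in\pt_{++}(\tau)}T_\rho, \qquad \cod(E_\tau)=\bigotimes_{\rho\in\pt_{-+}(\tau)}T_\rho ,
\]
and the identical computation applied to the odd chain $O_\tau=``\alpha_{\pt_1(\tau)}\cdot\alpha_{\pt_3(\tau)}\cdot\cdots"$ gives
\[
\dom(O_\tau)=\bigotimes_{\rho\in\pt_{+-}(\tau)}T_\rho, \qquad \cod(O_\tau)=\bigotimes_{\rho\in\pt_{--}(\tau)}T_\rho .
\]
Now I would invoke the preceding Lemma, which identifies the index set of $\dom(E_\tau)$ with that of $\dom(O_\tau)$ and the index set of $\cod(E_\tau)$ with that of $\cod(O_\tau)$. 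Hence $\dom(E_\tau)$ and $\dom(O_\tau)$ are tensor products of the same torsors, so they coincide — and the coincidence is canonical, since $\cc$ is strictly symmetric and the reordering isomorphisms built from $c$ are therefore uniquely determined; likewise $\cod(E_\tau)=\cod(O_\tau)$. This is the assertion of the Proposition.

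I expect the main obstacle to be the third step, namely verifying that the iterated pasting telescopes as asserted: that at each junction the matched factor is unambiguously determined, that across the full chain no factor is matched twice and none survives spuriously, and that the list of surviving factors stabilises exactly to the double‑boundary sets $\pt_{\pm\pm}(\tau)$. This is the combinatorics of the oriented boundary of a simplex in the sense of Street \cite{st} (and \cite{ka2}); the most economical route is to organise the pasting around the Street decomposition from the outset, so that the simplicial identities produce every cancellation automatically and the Proposition is reduced to the Lemma.
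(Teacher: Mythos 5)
Your overall strategy --- compute the source and target of $E_\tau$ and $O_\tau$ as tensor products indexed by the Street sets $\pt_{\pm\pm}(\tau)$ and then reduce the Proposition to the preceding Lemma --- is the same as the paper's, whose entire proof is the assertion that the domain of the even composition is $\pt_{++}(\sigma)$, the domain of the odd composition is $\pt_{--}(\sigma)$, and the Lemma identifies the two. But your bookkeeping does not close the loop. The Lemma states $\pt_{++}=\pt_{--}$ and $\pt_{+-}=\pt_{-+}$; you assign $\dom(E_\tau)$ the index set $\pt_{++}(\tau)$ and $\dom(O_\tau)$ the index set $\pt_{+-}(\tau)$, and then claim the Lemma ``identifies the index set of $\dom(E_\tau)$ with that of $\dom(O_\tau)$.'' It does not: applied to your four formulas, the Lemma identifies $\dom(E_\tau)$ with $\cod(O_\tau)$ and $\cod(E_\tau)$ with $\dom(O_\tau)$, which is the opposite of what the Proposition asserts. (The paper attaches $\pt_{--}$ to the domain of the odd composition precisely so that its one-line appeal to the Lemma is formally consistent.) So either your identification of the index sets or your citation of the Lemma must change; as written the two steps contradict each other.

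The underlying difficulty is the one you yourself flag at the end as ``the main obstacle,'' and it is not optional: your displayed formulas are the \emph{raw} unions $\bigcup_i\pt_+\pt_{2i}(\tau)$, etc., whereas your own description of the pasting rule correctly notes that each pasting step deletes the matched factor from both the codomain of the earlier map and the domain of the later one. After the whole chain is pasted, $\dom(E_\tau)$ is indexed only by the uncancelled part of $\pt_{++}(\tau)$ --- already for $\tau\in\Sigma_3$ one checks against the explicit diagram in the degree-$1$ Remark that $\dom(E_\tau)$ has three tensor factors while $\pt_{++}(\tau)$ has four. The identity actually needed for the domains is therefore an equality of \emph{reduced} sets, of the shape $\pt_{++}(\tau)\setminus\pt_{-+}(\tau)=\pt_{+-}(\tau)\setminus\pt_{--}(\tau)$, and this does not follow formally from the two equalities the Lemma provides. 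A complete argument has to carry the simplicial identities $\pt_i\pt_j=\pt_{j-1}\pt_i$ through the cancellation pattern of the full chain (your $\Sigma_3$ sanity check is the right model), rather than quoting the Lemma on the uncancelled unions; until that combinatorial step is done the proof is not finished.
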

\begin{proof}
The same argument used to prove the Lemma shows that the domain of the even composition is $\pt_{++}(\sigma)$ and the domain of the odd composition is $\pt_{--}(\sigma)$. The Lemma thus gives the identity of the domains. Similarly for the targets.
\end{proof} 

\subsubsection{Multiplicative torsors of degree $n$} 
Because of the above proposition, the following makes sense:
\begin{definition}
Let be $n\geq 1$. A {\it degree $(n-1)$-multiplicative $G$-torsor on a simplicial set  $\Sigma_{\bullet}$}  is the datum 
$T=\{T_{\rho}, \alpha_{\sigma}\}$ consisting of:
\begin{itemize}
\item{For all $\rho\in\Sigma_{n-1}$, a $G$-torsor $T_{\rho}$;}
\item{For all $\sigma\in\Sigma_n$, an isomorphism of $G$-torsors:
$$
\alpha_{\sigma}:\bigotimes T_{\pt_{2i}(\sigma)}\lrar \bigotimes T_{\pt_{2i+1}(\sigma)};
$$}
\item{For all $\tau\in\Sigma_{n+1}$, an identity
$$
E_{\tau}=O_{\tau}.
$$}
\end{itemize}
\end{definition}
\vspace{0.1cm}

Let be $\{T_{\rho}, \alpha_{\sigma}\}$ and $\{T'_{\rho}, \alpha'_{\sigma}\}$ two multiplicative $G$-torsors of degree $n-1$. A {\it morphism} between them is a collection of morphisms of the underlying $G$-torsors $f_{\rho}:T_{\rho}\lrar T'_{\rho}$, defined for all $\rho\in\Sigma_{n-1}$, such that, for all $\sigma\in\Sigma_n$, the diagram
$$
\xymatrix{
\bigotimes T_{\pt_{2i}(\sigma)} \ar[d]_{\otimes f_{\pt_{2i}(\sigma)}}\ar[r]^{\alpha_{\sigma}} & \bigotimes T_{\pt_{2i+1}(\sigma)}\ar[d]^{\otimes f_{\pt_{2i+1}(\sigma)}}  \\
\bigotimes T'_{\pt_{2i}(\sigma)}\ar[r]_{\alpha'_{\sigma}} & \bigotimes T'_{\pt_{2i+1}(\sigma)} \\
}
$$
is commutative.
\vspace{0.1cm}

The collection of the multiplicative $G$-torsors of degree $n$ over $\Sigma_{\bullet}$ and their morphisms forms a category (groupoid), that we shall denote by $\Mult_n(\Sigma_{\bullet}, G)$. It is clear that the tensor product of the underlying torsors induces a strictly symmetric tensor product also for the objects of $\Mult_n(\Sigma_{\bullet}, G)$, which in turns induces a strictly symmetric Picard category structure on $\Mult_n(\Sigma_{\bullet}, G)$, whose symmetry is defined as in $\Tors(G)$. Moreover, we have the following, whose proof we postpone to a later paper:

\begin{theorem}\label{torsorcohomol}
(a) The Picard group $\pi_0(\Mult_n(\Sigma_{\bullet}, G))$ is isomorphic to the (n+1)-th cohomology group $H^{n+1}(\Sigma_{\bullet}, G)$.
\par
(b) The group $\pi_1(\Mult_n(\Sigma_{\bullet}, G)$) is isomorphic to $Z^{n}(\Sigma_{\bullet}, G)$, the group of simplicial 
$n$-cocycles.
\end{theorem}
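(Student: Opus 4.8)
The plan is to reduce the statement to an explicit cocycle computation by \emph{rigidifying} the torsor data, exploiting that every $G$-torsor is (non-canonically) trivializable and that $\Tors(G)$ is strictly symmetric with canonical associativity constraint.

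Step 1 (standard form). Given an object $T=\{T_\rho,\alpha_\sigma\}$ of $\Mult_n(\Sigma_\bullet,G)$, choose for each $\rho\in\Sigma_n$ a point $t_\rho\in T_\rho$ and use it to identify $T_\rho$ with the trivial torsor $G$; transporting the $\alpha_\sigma$ through these identifications exhibits $T$ as isomorphic to an object in \emph{standard form}, i.e. one with all $T_\rho=G$. For such an object each $\alpha_\sigma$ (for $\sigma\in\Sigma_{n+1}$) is an automorphism of a trivial $G$-torsor, hence translation by a unique element $g_\sigma\in G$, and these assemble into a cochain $g\in C^{n+1}(\Sigma_\bullet,G)$.

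Step 2 (pasting becomes addition). The heart of the argument is to check that under such a trivialization the pasting operation of \eqref{pasting} corresponds to addition in $G$: if $u$ and $v$ are morphisms of trivial $G$-torsors given by elements $a,b\in G$, then $``v\cdot u"$ is given by $a+b$, the symmetry $c$ and the associativity constraints of $\Tors(G)$ contributing nothing since they send distinguished points to distinguished points. Iterating, for $\tau\in\Sigma_{n+2}$ the even composition $E_\tau$ is translation by $\sum_{i\ \mathrm{even}}g_{\pt_i\tau}$ and the odd composition $O_\tau$ by $\sum_{i\ \mathrm{odd}}g_{\pt_i\tau}$ (their common domain and codomain being exactly the content of the Proposition on the equality of the domains and of the targets of the even and odd compositions). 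Hence the axiom $E_\tau=O_\tau$ for all $\tau\in\Sigma_{n+2}$ is equivalent to $(\delta g)(\tau)=0$ for all such $\tau$, i.e. to $g\in Z^{n+1}(\Sigma_\bullet,G)$. So standard-form objects correspond exactly to $(n+1)$-cocycles.

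Step 3 (morphisms, and conclusion). A morphism between standard-form objects attached to cocycles $g,g'\in Z^{n+1}(\Sigma_\bullet,G)$ is a family $f_\rho\in\Aut_{\Tors(G)}(G)=G$, i.e. a cochain $h\in C^n(\Sigma_\bullet,G)$, and — again using that pasting is addition — the commuting-square condition in the definition of a morphism of multiplicative torsors unwinds to $g-g'=\delta h$; in particular re-trivializing a single object changes its cocycle by $\delta h$, so the class $[g]\in H^{n+1}(\Sigma_\bullet,G)$ depends only on the isomorphism class of $T$. This yields a well-defined injection $\pi_0(\Mult_n(\Sigma_\bullet,G))\hookrightarrow H^{n+1}(\Sigma_\bullet,G)$, which is surjective because the object with all $T_\rho=G$ and $\alpha_\sigma$ translation by $g_\sigma$ realizes an arbitrary cocycle $g$, and a group homomorphism because the group law on $\pi_0$ is induced by the termwise tensor product, which in standard form is termwise addition of the $g_\sigma$; this proves (a). For (b), the unit object $\mathbf 1$ is the standard-form object with $g=0$, an automorphism of it is a cochain $h\in C^n(\Sigma_\bullet,G)$ subject to $\delta h=0$, hence an element of $Z^n(\Sigma_\bullet,G)$, and composition of automorphisms is addition of cochains, giving $\pi_1(\Mult_n(\Sigma_\bullet,G))=\Aut(\mathbf 1)\cong Z^n(\Sigma_\bullet,G)$.

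The main obstacle is bookkeeping rather than concept: one must verify that the iterated pastings defining $E_\tau$ and $O_\tau$, assembled via the Street decomposition operators $\pt_{++},\pt_{+-},\pt_{-+},\pt_{--}$ and many applications of the symmetry, really collapse after trivialization to the bare alternating sums, with no residual signs or permutations surviving. I would isolate a lemma asserting that the assignment ``torsor $\mapsto$ its automorphism group'' upgrades to a symmetric monoidal functor from $\Tors(G)$ to the one-object strict symmetric monoidal groupoid with automorphism group $G$, carrying $\otimes$ to $+$ and pasting to the corresponding composite; then the whole computation takes place where all coherence data are trivial and the cocycle identities fall out mechanically. Equivalently, one may phrase the result as identifying $\Mult_n(\Sigma_\bullet,G)$ with the Picard groupoid of the two-term complex $[\,C^n(\Sigma_\bullet,G)\xrightarrow{\ \delta\ }Z^{n+1}(\Sigma_\bullet,G)\,]$, from which (a) and (b) are immediate; the content is again the identification carried out in Step 2.
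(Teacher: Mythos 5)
The paper gives no proof of Theorem \ref{torsorcohomol} to compare against --- it explicitly postpones the proof to a later paper --- so your argument can only be judged on its own terms, and it is correct: trivializing the $T_\rho$, turning the $\alpha_\sigma$ into a cochain $g\in C^{n+1}(\Sigma_\bullet,G)$, identifying $E_\tau=O_\tau$ with $\delta g=0$ via the ``pasting is addition'' lemma, and reading off morphisms as cochains $h\in C^n$ with $g-g'=\delta h$ is exactly the computation the statement encodes, and your indexing ($\pi_0\cong H^{n+1}$, $\pi_1=\Aut(\mathbf 1)\cong Z^n$) is consistent with the paper's conventions. The one point that genuinely needs the care you flag is that the iterated pastings in $E_\tau$ and $O_\tau$ land in tensor products of the $T_{\pt_{\pm\pm}}$-factors in a priori different orders, so the comparison uses the strict symmetry of $\Tors(G)$ and the fact that the symmetry and associators preserve distinguished points; since you isolate precisely this as a lemma, the argument is complete.
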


\begin{remarks} (1) In this paper we shall concern ourselves only with multiplicative torsors of degree 1, otherwise simply called ``multiplicative torsors". Explicitly, a multiplicative $G$-torsor on a simplicial set $\Sigma_{\bullet}$ is the data consisting of:
\begin{itemize}
\item{ For all 1-simplexes $\rho$ on $\Sigma_1$, a $G$-torsor $T_{\rho}$}
\item{For all 2-simplexes $\sigma$ on $\Sigma_2$, an isomorphism of $G$-torsors:
$$
\mu_{\sigma}:T_{\pt_0(\sigma)}\otimes T_{\pt_2(\sigma)} \stackrel{}\lrar T_{\pt_1(\sigma)},
$$ such that}
\item{For all 3-simplexes $\tau\in \Sigma_3$, a commutative diagram:
$$
\xymatrix{ 
T_{\pt_2\pt_3(\tau)}\otimes T_{\pt_0\pt_3(\tau)}\otimes T_{\pt_0\pt_1(\tau)}\ar[d]_{\mu_{\pt_3}(\tau)\otimes 1} \ar[rr]^-{1\otimes \mu_{\pt_0(\tau)}} && T_{\pt_2\pt_3(\tau)}\otimes T_{\pt_0\pt_2(\tau)}\ar[d]^{\mu_{\pt_2(\tau)}} \\
T_{\pt_1\pt_3(\tau)}\otimes T_{\pt_0\pt_1(\tau)}  \ar[rr]_-{\mu_{\pt_1(\tau)}} && T_{\pt_1\pt_2(\tau)} 
}
$$
}
\end{itemize}
(2) One defines similarly the concept of multiplicative torsor of degree n for a bisimplicial (trisimplicial, ... etc) set.
\end{remarks}

\subsection{Gerbes}
\begin{definition}
Let be $G$ an abelian group. A {\it G-gerbe} $\mathfrak{g}$,  is a category with the following extra structures and properties:
\begin{itemize}
\item{$\g$ is a connected groupoid.}
\item{For all pair of objects $x, \ y$ of $\mathfrak{g}$, the set of morphisms $Hom_{\g}(x, \ y)$ is given a structure of $G$-torsor.}
\item{For all triples of objects of $\g$, $x,  y, \ z$, the composition of morphisms in $\g$: 
$Hom_{\g}(z, \ y)\times Hom_{\g}(x, \ y)\lrar Hom_{\g}(x, \ z)$, is a $G$-bilinear map.} 
\end{itemize}
\end{definition}
As a result, the bilinear map in the definition induces an isomorphism of the following $G$-torsors:
$$
Hom_{\g}(x, \ y)\otimes Hom_{\g}(y, \ z)\simeq Hom_{\g}(x, \ z).
$$
A {\it morphism of G-gerbes} is a functor between the underlying categories which is $G$-linear on the hom-sets. It follows that a morphism of gerbes is always an equivalence.

\vspace{0.1cm}

Given two $G$-gerbes $\g$ and $\h$, we define their {\it tensor product} $\g\otimes\h$ to be the $G$-gerbe defined as: 
$$
\Ob(\g\otimes\h)=\Ob(\g)\times\Ob(\h)
$$
$$
\Hom_{\g\otimes\h}((x', x''), (y', y''))=\Hom_{\g}(x',y')\otimes\Hom_{\h}(x'', y'').
$$ 
Let be now $\Pp$ a symmetric Picard category and $\T$ a torsor over $\Pp$. Let be $x\in\Ob\T$ and denote by $\T_x$ the connected component of $\T$ containing $x$. Thus, $\T_x$ is in particular a connected groupoid.

\begin{prop}\label{pi1gerbe}
For all $x\in\Ob\T$, we have that $\T_x$ is a $\pi_1(\T)$-gerbe.
\end{prop}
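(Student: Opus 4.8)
\textbf{Proof proposal for Proposition \ref{pi1gerbe}.}

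The plan is to verify the three defining properties of a $\pi_1(\T)$-gerbe for the connected component $\T_x$, using the torsor axioms of Definition \ref{pictorsor}. First, $\T_x$ is by construction a connected groupoid: it is a full subcategory of the groupoid $\T$, hence a groupoid, and it is connected because all its objects lie in the same equivalence class of $\pi_0(\T)$, so there is a morphism between any two of them. Second, I need to put a $\pi_1(\T)$-torsor structure on each hom-set $\Hom_{\T_x}(y, z)$ with $y, z \in \T_x$. Recall $\pi_1(\T) = \Aut_\T(\text{anything})$; more precisely, I should first observe that for $y \in \Ob\T$ the group $\Aut_\T(y)$ is abelian, which follows from the equivalence \eqref{freetransitive}: since $-\otimes y\colon \Pp \to \T$ is an equivalence, $\Aut_\T(y) \cong \Aut_\Pp(\mathbf 1) = \pi_1(\Pp)$, and this latter group is abelian by the Eckmann--Hilton argument recalled in the text. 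So set $\pi_1(\T) := \pi_1(\Pp)$ once we fix such an identification (it is canonical once we use that $\Pp$ is Picard, or at worst canonical up to inner automorphisms, which is harmless since the group is abelian).

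The action making $\Hom_{\T_x}(y,z)$ a torsor over $\Aut_\T(z) \cong \pi_1(\T)$ is post-composition: $g \cdot f := g \circ f$ for $g \in \Aut_\T(z)$, $f \in \Hom(y,z)$. This action is free and transitive because $\Hom(y,z)$ is nonempty (connectedness) and in a groupoid, given $f, f' \colon y \to z$, the element $f' f^{-1} \in \Aut_\T(z)$ is the unique automorphism carrying $f$ to $f'$. (One could equally use pre-composition by $\Aut_\T(y)$; the two actions agree after the canonical identification $\Aut_\T(y)\cong\Aut_\T(z)$ via conjugation by any chosen morphism $y\to z$, and abelianness makes this independent of the choice.) Third, I must check that composition $\Hom_{\T_x}(y,z) \times \Hom_{\T_x}(x',y) \to \Hom_{\T_x}(x',z)$ is $\pi_1(\T)$-bilinear: linearity in the first variable is immediate from associativity of composition in $\T$, $(g\circ f_1)\circ f_2 = g\circ(f_1\circ f_2)$; linearity in the second variable reads $f_1 \circ (h \circ f_2) = h' \circ (f_1 \circ f_2)$ where $h \in \Aut_\T(y)$ corresponds to $h' \in \Aut_\T(z)$ under the conjugation identification by $f_1$ — this is exactly the definition of that identification, so it holds. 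Hence the induced map $\Hom(y,z)\otimes\Hom(x',y) \to \Hom(x',z)$ is well defined and an isomorphism of $\pi_1(\T)$-torsors.

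The main subtlety, and the step deserving the most care, is not any single calculation but the \emph{coherence of the identifications} $\Aut_\T(y) \cong \pi_1(\T)$ used to pin down a single abelian group $\pi_1(\T)$ acting on all hom-sets simultaneously. One wants these identifications to be compatible with composition so that the gerbe structure is genuinely well defined and not merely defined component-by-component with incompatible labellings. The clean way to handle this is to transport everything through the equivalence $-\otimes x\colon \Pp \xrar{\sim} \T_x$ guaranteed by \eqref{freetransitive}: this equivalence identifies $\T_x$ with the connected component $\Pp_{\mathbf 1}$ (equivalently, with a connected Picard category equivalent to $\Pp$), all of whose automorphism groups are canonically $\pi_1(\Pp)$, with composition manifestly bilinear because the tensor structure of $\Pp$ is symmetric and every object is invertible. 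So in the write-up I would reduce the proposition to the case $\T = \Pp$ itself (with $x = \mathbf 1$) via this equivalence, where all three axioms are transparent, and then note that a $\Pp$-torsor equivalence carries gerbe structure to gerbe structure. This both proves the statement and sidesteps the bookkeeping of conjugation identifications.
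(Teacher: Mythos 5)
Your proof is correct, and it reaches the same conclusion by a recognizably different mechanism than the paper. The paper defines the $\pi_1(\Pp)$-action on $\Aut_{\T}(z)$ \emph{monoidally}: it uses the natural isomorphism $\lambda_z\colon {\bf 1}\otimes z\to z$ of \eqref{null} to identify $\Aut(z)$ with $\Aut({\bf 1}\otimes z)$ and then sets $(g,f)\mapsto g\otimes f$; freeness and transitivity drop out of the equivalence \eqref{freetransitive}, and bilinearity of composition is a one-line consequence of the bifunctoriality of $\Pp\times\T\to\T$ (the interchange law $(g\otimes 1_z)\circ(1_{\bf 1}\otimes f)=g\otimes f$). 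You instead act by \emph{post-composition} after transporting $\pi_1(\Pp)\cong\Aut_\T(z)$ along $-\otimes z$, which forces you to confront the coherence of the identifications $\Aut_\T(y)\cong\Aut_\T(z)$ across the component; you resolve this correctly, first by the standard observation that in a connected groupoid with abelian automorphism groups the conjugation identifications are choice-independent, and then more cleanly by pulling the whole problem back to the component $\Pp_{\bf 1}$ via the equivalence $-\otimes x$. The two actions in fact coincide: naturality of $\lambda$ gives $f\circ\lambda_y\circ(g\otimes 1_y)\circ\lambda_y^{-1}=\lambda_z\circ(g\otimes f)\circ\lambda_y^{-1}=\lambda_z\circ(g\otimes 1_z)\circ\lambda_z^{-1}\circ f$, so post-composition by the transported $g$ is exactly the paper's $g\otimes f$. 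What each approach buys: the paper's tensor formula extends verbatim to every hom-set and makes bilinearity automatic, so no coherence bookkeeping is needed; your route isolates the abelianness of $\Aut_\T(z)$ (via Eckmann--Hilton through the equivalence) as the one essential input and is arguably more careful than the paper's own reduction, which passes from general hom-sets to $\Aut_\T(z)$ with only the remark that ``the hom-sets are in bijection with each other'' --- precisely the point you flag as the subtlety. One small stylistic caveat: your intermediate justification that linearity in the second variable ``is exactly the definition of that identification'' is circular as stated (it defines away the thing to be checked); it is the final reduction through $-\otimes x$, or equivalently the naturality-of-$\lambda$ computation above, that actually closes that step.
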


\begin{proof}
Since $\T_x$ is a connected groupoid, it is enough to define an action of the group $\pi_1(\T)$ onto each hom-set 
$\Hom_{\T_x}(y,z)=\Hom_{\T}(y,z)$ and prove that this action makes each hom-set a $\pi_1(\T)$-torsor, for which the composition of morphisms is $\pi_1(\T)$-bilinear. Since in $\T_x$ the hom-sets are in bijection with each other, it suffices to prove that for all object $z$ the set  $\Aut_{\T}(z)$ is a $\pi_1(\T)$-torsor. 

\vspace{0.1cm}

Let us consider the natural isomorphisms \eqref{null} $\lambda_z\colon 1\otimes z\to z$. For all  $f\in\Aut(z)$ we have a commutative diagram:
\begin{equation}\label{action}
\xymatrix{
1\otimes z\ar[d]_{1\otimes f}\ar[r]^{\lambda_z} & z\ar[d]^f \\
1\otimes z\ar[r]_{\lambda_z} & z
}
\end{equation}
which allows us to identify $\Aut(z)$ with $\Aut(1\otimes z)$.  Define an action $\pi_1(\Pp)\times\Aut(z)$ by letting, for all 
$g\in\pi_1(\Pp)$ and $f\in\Aut(z)$, $(g,f)\mapsto g\otimes f$. Since the functor 
$$
\Pp\xrar{-\otimes z}\T
$$
is an equivalence, this action of $\pi_1(\Pp)$ onto $\Aut(z)$ is free and transitive, thus proving that $\Aut(z)$ is a $\pi_1(\Pp)$-torsor. The bifunctoriality of the action $\Pp\times\T\lrar\T$ implies the bilinearity.
\end{proof}
In particular, for each object $x\in\T$ the gerbes $\T_x$ are pairwise equivalent. Thus, Proposition \eqref{pi1gerbe} coupled with 
Proposition \eqref{inducedtorsor}, imply that the datum of a torsor over a Picard category encloses the datum of a torsor (over $\pi_0(\Pp)$) and of a gerbe (over $\pi_1(\Pp)$). 

\subsection{Multiplicative $G$-gerbes}
In analogy with the concept of ``multiplicative $G$-torsor of degree $n$" over a simplicial (bisimplicial, trisimplicial, ...) set
$\Sigma_{\bullet}$, it is also possible to introduce the concept of {\it multiplicative $G$-gerbe of degree $n$} over $\Sigma_{\bullet}$.  Since in this work we will only use multiplicative gerbes of degree 1, we shall bound ourselves to this case. 

\begin{definition}\label{defmultgerbe}
Let be $\Sigma_{\bullet}$ a simplicial set and $G$ an abelian group. A {\it multiplicative $G$-gerbe}, is the datum $(\g, \alpha, \beta)$ consisting of:

\begin{enumerate}
\item{For all $\rho\in\Sigma_{1}$, a $G$-gerbe $\g_{\rho}$;}
\item{For all $\sigma\in\Sigma_2$, an equivalence of $G$-gerbes:
$$
\alpha_{\sigma}:\g_{\pt_{2}(\sigma)}\otimes\g_{\pt_{0}(\sigma)} \xrar{\sim}\g_{\pt_{1}(\sigma)}
$$
}
\item{For all $\tau\in\Sigma_{3}$, a diagram, involving the $\alpha_{\pt\tau}$'s, commuting up to a natural 
isomorphism $\beta_{\tau}$,and written according to our pasting rule \eqref{pasting}:
$$
\beta_{\tau}:``\alpha_{\pt_2(\tau)}\cdot\alpha_{\pt_0(\tau)}"\simeq``\alpha_{\pt_1(\tau)}\cdot\alpha_{\pt_3(\tau)}"
$$
}
\item{For all $\upsilon\in\Sigma_4$, a cubic commutative diagram involving the $\beta_{\pt\upsilon}$'s, 
which can be written as 
$$
``\beta_{\pt_4\upsilon}\cdot\beta_{\pt_2\upsilon}\cdot\beta_{\pt_0\upsilon}"=
``\beta_{\pt_1\upsilon}\cdot\beta_{\pt_3\upsilon}".
$$
}
\end{enumerate}
\end{definition}

Similarly, one defines multiplicative gerbes on bisimplicial (trisimplicial, etc.) sets. 

\vspace{0.1cm}

It is possible to associate to a multiplicative gerbe of degree 1 a multiplicative torsor of degree 2:

\begin{theorem}\label{multgerbe}
A multiplicative $G$-gerbe $(\g,\alpha, \beta)$ induces a multiplicative 
$G$-torsor of degree $2$ on $\Sigma_{\bullet}$
\end{theorem}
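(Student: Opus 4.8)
The plan is to extract, from a multiplicative $G$-gerbe $(\g, \alpha, \beta)$ of degree $1$ on $\simpl$, a degree $2$ multiplicative $G$-torsor by applying the $\pi_1$-functor at the level of gerbes and the ``Hom-torsor'' construction at the level of the equivalences $\alpha_\sigma$. Concretely, to each $\rho \in \Sigma_2$ I would first need a $G$-torsor $T_\rho$; since $\g_{\pt_2\rho} \otimes \g_{\pt_0\rho}$ and $\g_{\pt_1\rho}$ are $G$-gerbes and $\alpha_\rho$ is an equivalence between them, the natural candidate is to use the equivalences $\alpha$ themselves one level down. But $\simpl$ here carries gerbes in degree $1$ and the torsor we want lives in degree $2$; so the right move is: for $\rho \in \Sigma_2$, consider the two gerbes $\g_{\pt_0\rho}\otimes\g_{\pt_2\rho}$ and $\g_{\pt_1\rho}$, pick the equivalence $\alpha_\rho$, and set $T_\rho$ to be a torsor built functorially from the pair $(\alpha_\rho, \text{canonical comparison})$. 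The cleanest formulation: since any equivalence of $G$-gerbes $F\colon \g_1 \to \g_2$ has an associated ``determinant'' $G$-torsor --- e.g. $\Hom_{\g_2}(F(x), \phi(x))$ for a fixed reference equivalence $\phi$, independent of $x$ up to canonical iso because composition is $G$-bilinear --- one defines $T_\rho$ via $\beta$. I would make this precise first.

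The key steps, in order. First, observe that for any two parallel equivalences of $G$-gerbes $F, G\colon \h \to \h'$, the set of natural isomorphisms $F \Rightarrow G$ is naturally a $G$-torsor (using connectedness of $\h'$ and $G$-bilinearity of composition), and that this construction is compatible with horizontal and vertical pasting of equivalences in the evident bilinear way. Second, for $\rho \in \Sigma_2$ set $T_\rho := \mathrm{Iso}(\alpha_\rho^{\mathrm{can}}, \alpha_\rho)$ where $\alpha_\rho^{\mathrm{can}}$ is a fixed choice (e.g. coming from the simplicial structure / a chosen trivialization); more robustly, reformulate so that $T_\rho$ is literally $\pi_1$ of the gerbe of equivalences --- but the degree bookkeeping forces $T_\rho$ to be read off from $\alpha$, not from $\g$. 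Third, for $\sigma \in \Sigma_3$ the natural isomorphism $\beta_\sigma\colon ``\alpha_{\pt_2\sigma}\cdot\alpha_{\pt_0\sigma}" \simeq ``\alpha_{\pt_1\sigma}\cdot\alpha_{\pt_3\sigma}"$, being an isomorphism between two pasted composites of equivalences, determines --- via the torsor structure of step one and the pasting rule \eqref{pasting} --- an isomorphism of $G$-torsors $\mu_\sigma\colon T_{\pt_0\sigma}\otimes T_{\pt_2\sigma} \to T_{\pt_1\sigma}\otimes T_{\pt_3\sigma}$, i.e. exactly the data $\alpha_\sigma$ of a degree $2$ multiplicative torsor. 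Fourth, for $\upsilon \in \Sigma_4$ the cubic coherence $``\beta_{\pt_4\upsilon}\cdot\beta_{\pt_2\upsilon}\cdot\beta_{\pt_0\upsilon}" = ``\beta_{\pt_1\upsilon}\cdot\beta_{\pt_3\upsilon}"$ translates, under the same dictionary, into the required identity $E_\upsilon = O_\upsilon$ for the even and odd compositions of the $\mu$'s; here one uses that the translation of step three is functorial for pasting and sends the cubical diagram of $\beta$'s to the corresponding equality of composed torsor isomorphisms. Finally, check that morphisms of multiplicative gerbes are carried to morphisms of the associated degree $2$ multiplicative torsors, so that the construction is functorial.

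The main obstacle I expect is Step 1 combined with the degree-shift bookkeeping in Step 2: one must pin down a canonical, basepoint-independent way to attach a $G$-torsor to an equivalence of $G$-gerbes and verify that this assignment is strictly (or coherently) compatible with the three operations that occur in the pasting rule \eqref{pasting} --- horizontal composition of equivalences, the symmetry $c$, and tensoring with an identity --- since all of Steps 3 and 4 are ``just'' applying this dictionary to the diagrams $\beta_\sigma$ and to the cube for $\upsilon$. In particular one has to confirm that the image of a $2$-simplex $\sigma \in \Sigma_2$ under the face maps lands correctly: $\pt_0\pt_i$, $\pt_1\pt_i$, etc., of a $3$-simplex $\tau$ reproduce, via the simplicial identities, precisely the faces appearing in the definition of a degree $2$ multiplicative torsor, so that the even/odd composition in $\Mult_2$ matches the $\beta$-cube. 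Once the compatibility of the torsor-of-isomorphisms construction with pasting is established, Steps 3--5 are formal diagram-chases, and the coherence of $(\g,\alpha,\beta)$ at level $\Sigma_4$ gives exactly the $E_\upsilon = O_\upsilon$ identity with no further input; verifying those chases is routine and I would not grind through them here.
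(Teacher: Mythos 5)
Your overall architecture --- torsors on $\Sigma_2$, isomorphisms $\mu_\tau$ extracted from $\beta_\tau$ on $\Sigma_3$, and the identity $E_\upsilon=O_\upsilon$ extracted from the $\Sigma_4$ cube --- is exactly the paper's, and your Steps 3 and 4 are the same formal translation the paper performs. The gap is in Step 2, and it is precisely the step you flag as the main obstacle: your definition $T_\rho:=\Iso(\alpha_\rho^{\mathrm{can}},\alpha_\rho)$ requires a second, reference equivalence $\g_{\pt_0\rho}\otimes\g_{\pt_2\rho}\to\g_{\pt_1\rho}$, and no such equivalence is supplied by the data of a multiplicative gerbe: the three gerbes sitting on the faces of a $2$-simplex are a priori unrelated except through $\alpha_\rho$ itself. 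Taking $\alpha_\rho^{\mathrm{can}}=\alpha_\rho$ is fatal, since $\Aut(\alpha_\rho)$ is canonically trivialized (each $\Aut$-set in a $G$-gerbe is a $G$-torsor containing the identity, and naturality plus connectedness forces an automorphism of an equivalence to be a single element of $G$), so every $T_\rho$ would be the trivial torsor and the resulting multiplicative torsor would carry no information. Your fallback of taking ``$\pi_1$ of the gerbe of equivalences'' has the same defect, and ``one defines $T_\rho$ via $\beta$'' cannot work because $\beta$ is indexed by $3$-simplices, not $2$-simplices.

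The paper resolves this by comparing not two equivalences but two objects: choose $x_i\in\g_{\pt_i\sigma}$ for $i=0,1,2$ (possible because the gerbes are nonempty connected groupoids) and set $T_\sigma:=\Hom_{\g_{\pt_1\sigma}}(\alpha_\sigma(x_0\otimes x_2),\,x_1)$, which is a $G$-torsor directly from the gerbe axioms, with no reference equivalence needed. With this definition the rest of your outline goes through essentially as you describe: $\beta_\tau$ induces $\mu_\tau$ on the $\Hom$-torsors, and the $\Sigma_4$ coherence of the $\beta$'s gives $E_\upsilon=O_\upsilon$. (One should still address the dependence of $T_\sigma$ on the choice of the $x_i$, a point the paper's sketch also leaves implicit; but that is a separate, fixable issue, whereas your write-up as it stands does not produce a well-defined $T_\rho$ at all.)
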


\begin{proof}(Sketch.)
Let us consider a 2-simplex $\sigma\in\Sigma_2$. Choose elements 
$x_0\in\g_{\pt_0\sigma}$, $x_1\in\g_{\pt_1\sigma}$ and $x_2\in\g_{\pt_2\sigma}$. We define a $G$-torsor $T_{\sigma}$, associated to $\sigma$, as:
$$
T_{\sigma}:=\Hom_{\g_{\pt_1\sigma}}(\alpha_{\sigma}(x_0\otimes x_2), x_1)
$$
Condition (3) of the definition of a multiplicative gerbe implies, for all 
$\tau\in\Sigma_3$, the existence of an isomorphism of $G$-torsors:
$$
\mu_{\tau}\colon 
T_{\pt_0\tau}\otimes T_{\pt_2\tau}\rar T_{\pt_1\tau}\otimes T_{\pt_3\tau}
$$
and condition (4) shows that for all $\upsilon\in\Sigma_4$, the isomorphisms $\mu_{\pt\upsilon}$ satisfy condition (3) of the definition of 
a multiplicative torsor of degree 2. Thus, $(T, \mu)$ is a multiplicative torsor 
of degree 2.
\end{proof}
From this theorem it follows that a multiplicative gerbe of degree 1 gives rise to a class in $H^3(\simpl, G)$. More in general, a multiplicative gerbe of 
degree $n$ induces a multiplicative torsor of degree $n+1$ and thus it 
determines a class in $H^{n+2}(\simpl, G)$. 

\section{Local compactness and Grassmannians in an exact category}
Throughout this section, let $\A$ be an exact category and $\F$ its abelian envelope (cf. sect. \eqref{quillen}).
\subsection{Ind/Pro-exact categories and the Beilinson category}
We recall some facts on ind/pro objects, some of which already known, which will be useful in the sequel. We refer to 
the papers \cite{am}, \cite{B}, \cite{sga}, \cite{pre} for background on the language of ind-pro objects, exact categories and the Beilinson category $\limA$ which is the natural setting of the concepts we are going to introduce. 
\begin{definition} (\cite{B}, \cite{pre})
The category $\Pro^a(\A)$ (resp. $\Ind^a(\A)$) of the {\it strictly admissible pro-objects} (resp. {\it ind-objects}) of $\A$ is the subcategory of $\Pro(\A)$ (resp., $\Ind(\A)$) whose objects have structure morphisms which are {\it admissible} epimorphisms (resp. monomorphisms). With an abuse of language, we shall refer to $\Pro^a(\A), \Ind^a(\A)$ simply as the categories of {\it strict} ind and pro-objects of $\A$.
\end{definition}

\begin{definition} (See \cite{B}, \cite{pre}). The {\it Beilinson category} of the exact category $\A$ is the category denoted by  $\limA$ defined as the full subcategory of $\Ind^a\Pro^a(\A)$ whose objects are formal limits $\indproriga$, for $(i,j)\in\Zee\times\Zee$, such that $i\leq j$, and for which the squares 

\begin{equation}
\xymatrix{ 
X_{i'j}\ar[d]
\ar[r]
& X_{i'j'} \ar[d] \\
X_{ij}\ar[r]_{}
& X_{ij'}
}
\end{equation}
defined for $i\leq i'$, $j\leq j'$, are cartesian (and thus they are automatically cocartesian). The objects of such category will also be called {\it generalized Tate spaces}.
\end{definition}

\begin{lm} (\cite{pre}) When $(\A, \E)$ is exact, then the categories $\Ind(\A)$, $\Pro(\A)$, $\Ind^a_{\aleph_0}(\A)$, $\Pro^a_{\aleph_0}(\A)$ and $\limA$ inherit in a natural way the structure of exact categories.
\end{lm}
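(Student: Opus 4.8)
The plan is to derive all five exact structures from that of the abelian envelope $\F$, by realizing each of the categories as a full, extension-closed subcategory of an ambient abelian category and then invoking the characterization recalled in \S\ref{quillen}: a full subcategory of an abelian category closed under extensions is exact, with admissible short exact sequences precisely those that become short exact in the ambient abelian category. First I would treat $\Ind(\A)$ and $\Pro(\A)$. Since $\F$ is abelian, so are $\Ind(\F)$ and $\Pro(\F)$, and $\Ind(\A)\subseteq\Ind(\F)$, $\Pro(\A)\subseteq\Pro(\F)$ are full subcategories. The only point requiring work is extension-closedness: given $0\to A'\to X\to A''\to 0$ exact in $\Ind(\F)$ with $A',A''\in\Ind(\A)$, one must show $X\in\Ind(\A)$. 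By the standard presentation theory of ind-objects, after a cofinal reindexing this sequence is, up to isomorphism of sequences, a filtered colimit in $\Ind(\F)$ of short exact sequences $A'_i\to X_i\to A''_i$ of $\F$ with $A'_i,A''_i\in\A$; since $\A$ is closed under extensions in $\F$, each $X_i\in\A$, whence $X\cong\varinjlim_i X_i\in\Ind(\A)$ by definition. The case of $\Pro(\A)$ is dual.

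For the strict countable categories $\Ind^a_{\aleph_0}(\A)$ and $\Pro^a_{\aleph_0}(\A)$, which sit as full subcategories of $\Ind(\A)$ and $\Pro(\A)$, the key input is Beilinson's description (\cite{B}, \cite{pre}) of admissible monomorphisms and epimorphisms between strict ind/pro-objects, together with the reindexing lemma that any morphism of countable strict ind-objects can be refined to a level-wise morphism $(A_i\to B_i)_i$ in $\A$, and any admissible short exact sequence of them to a level-wise system of admissible short exact sequences $A'_i\hrar A_i\epi A''_i$ with admissible transition maps; countability is what allows the index poset to be taken cofinal in $(\Zee,\leq)$. Granting this, closure under extensions and the verification of Quillen's axioms --- in particular the existence and admissibility of pushouts of admissible monomorphisms and of pullbacks of admissible epimorphisms --- reduce term-by-term to the corresponding statements in $\A$.

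For the Beilinson category $\limA$ one iterates: by the pro-case $\Pro^a_{\aleph_0}(\A)$ is exact, and the ind-case applied to it gives that $\Ind^a_{\aleph_0}(\Pro^a_{\aleph_0}(\A))$ is exact. It then remains to see that $\limA$ is a full, extension-closed subcategory of this category, i.e.\ that the defining condition --- the squares $X_{i'j}\to X_{i'j'}$, $X_{ij}\to X_{ij'}$ are cartesian --- is stable under extensions. For this one presents an extension level-wise by admissible short exact sequences of admissible commutative squares and checks that a square caught in such an extension between two admissible cartesian squares is again admissible cartesian, using Lemma \ref{pullback} to pass between $\A$ and $\F$, the stability of admissible epimorphisms under base change, and the fact recalled in \S\ref{quillen} that an admissible cartesian square in $\A$ is automatically cocartesian.

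I expect the main obstacle to be the reindexing and cofinality bookkeeping underlying the last two cases: making precise that an arbitrary morphism, and an arbitrary admissible short exact sequence, of (countable) strict ind-pro-objects admits a level-wise \emph{admissible} representative over a common, suitably cofinal index category, and that this can be arranged compatibly enough that Quillen's axioms --- above all the stability of admissible monomorphisms and epimorphisms under pushout and pullback, and the preservation of the bicartesian condition in $\limA$ --- follow by reducing them term-by-term to $\A$. The purely formal parts (the two abelian cases and the extension-closure criterion) are routine; all the care is in the level-wise presentations.
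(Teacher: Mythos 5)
This lemma is stated here without proof --- it is imported wholesale from \cite{pre} --- so there is no internal argument to measure you against; what follows assesses your outline on its own terms. Your strategy is the standard one and, as far as the present paper lets one see, the one carried out in the cited source: realize each of the five categories as a full, extension-closed subcategory of an abelian ambient category ($\Ind(\F)$, $\Pro(\F)$ and their iterates) and invoke the Quillen criterion, with the strict/countable cases and $\limA$ handled by straightening morphisms and admissible short exact sequences to level-wise representatives. This is exactly the content of the straightening results the paper also only quotes (Lemmas \eqref{straight0} and \eqref{straight}), and your iteration $\limA\subset\Ind^a_{\aleph_0}(\Pro^a_{\aleph_0}(\A))$, proving the pro-case first, is the right order of operations.

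The one place where your sketch passes off a genuine argument as ``routine'' is the extension-closure of $\Ind(\A)$ in $\Ind(\F)$. That a short exact sequence $0\to A'\to X\to A''\to 0$ of $\Ind(\F)$ with outer terms in $\Ind(\A)$ is a filtered colimit of short exact sequences of $\F$ with outer terms in $\A$ does not follow from ``cofinal reindexing'' of the given presentations alone: the presentations of $A'$, $X$, $A''$ need not be compatible, and one must instead pull $X$ back along a presentation $A''=\varinjlim_k A''_k$, use that objects of $\F$ are compact in $\Ind(\F)$ not only for $\Hom$ but for $\Ext^1$ (so that each extension of $A''_k$ by $A'=\varinjlim_j B_j$ descends to an extension by some $B_j$, which then lies in $\A$ by extension-closure of $\A$ in $\F$), and finally assemble the resulting doubly-indexed system into a single filtered one. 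Your second flagged worry --- that admissibility of induced maps on quotients is an extra condition in a general exact category, so the bicartesian condition defining $\limA$ must be checked by hand --- is well placed; there the appeal to base change of admissible epimorphisms is legitimate precisely because the exactness of $\Pro^a_{\aleph_0}(\A)$ has already been established at that stage. With the $\Ext$-compactness step supplied, the outline is sound and agrees in substance with \cite{pre}.
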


As an example of a category of the type $\limA$, we give the following

\begin{definition}\label{tatespaces}
Let be $k$ a field. The category $\T:=\dlim\vect$ is called the category of {\it Tate vector spaces} over $k$.
\end{definition}

Tate spaces will be our object of study in section \eqref{tate}. Objects of the Beilinson category $\limA$ provide a model for local compactness in the linear context (cf. \cite{pre}), generalizing the case of locally linearly compact vector spaces to exact categories. For this reason $\limA$ is also referred to sometimes as the {\it category of locally compact objects} over the exact category $\limA$. 

We also recall the following from \cite{B}:
\begin{prop}\label{limduality}
For any exact category $\A$, $(\limA)^o=\dlim(\A^o)$.
\end{prop}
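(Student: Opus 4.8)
The plan is to establish the duality $(\limA)^o = \dlim(\A^o)$ by tracking the explicit combinatorial description of the objects of $\limA$ through the opposite-category construction, using the well-known compatibilities $\Ind(\A)^o = \Pro(\A^o)$ and $\Pro(\A)^o = \Ind(\A^o)$ for ind/pro categories. First I would recall that an object of $\limA$ is a formal ind/pro-limit $\indproriga$ indexed by $(i,j) \in \Zee \times \Zee$ with $i \leq j$, whose structure maps $X_{ij} \to X_{ij'}$ (for $j \leq j'$) are admissible monomorphisms and whose structure maps $X_{i'j} \to X_{ij}$ (for $i \leq i'$) are admissible epimorphisms, subject to the cartesian/cocartesian square condition. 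Passing to the opposite category reverses all arrows: the admissible monomorphisms in the $j$-direction become admissible epimorphisms in $\A^o$, the admissible epimorphisms in the $i$-direction become admissible monomorphisms in $\A^o$, and ``$\varinjlim$ of a $\varprojlim$'' becomes ``$\varprojlim$ of a $\varinjlim$''. After relabeling the index poset $\Zee \times \Zee$ by the order-reversing bijection $(i,j) \mapsto (-j, -i)$ (which preserves the relation $i \leq j$), one sees that the resulting formal limit is exactly an object of $\dlim(\A^o)$ in the sense of the definition above.

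The key steps, in order, are: (1) identify $\Ind^a\Pro^a(\A)^o$ with $\Pro^a\Ind^a(\A^o)$ — this follows from the standard fact that $\Ind$ and $\Pro$ are interchanged under the opposite-category functor, combined with the observation that under this interchange admissible monomorphisms and admissible epimorphisms of $\A$ are swapped, so the ``strictly admissible'' condition is preserved; (2) verify that under this identification the full subcategory of objects satisfying the Beilinson indexing and cartesian-square conditions for $\A$ corresponds precisely to the full subcategory of objects satisfying the analogous conditions for $\A^o$; here the crucial point is that a commutative square is cartesian in $\A$ if and only if the reversed square is cocartesian in $\A^o$, and by the remark recalled in Section \ref{quillen} an admissible square is cartesian exactly when it is cocartesian, so the square condition is self-dual; (3) check that morphisms correspond correctly, which is automatic since morphisms in any full subcategory are inherited, and $\Hom$ sets are literally reversed under passage to the opposite category.

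I expect the main obstacle to be step (1): making rigorous the claim that $\Ind^a\Pro^a(\A)^o \cong \Pro^a\Ind^a(\A^o)$ as exact categories, since one must be careful that the opposite of the \emph{strictly admissible} pro-objects (structure maps admissible epis) is the \emph{strictly admissible} ind-objects of $\A^o$ (structure maps admissible monos), and that this identification is compatible with the ambient $\Ind\Pro$ versus $\Pro\Ind$ structures in the correct nested order. This is essentially bookkeeping with the definitions of $\Ind$, $\Pro$, and their admissible variants as recalled from \cite{B} and \cite{pre}, together with the elementary fact $\Ind(\B)^o = \Pro(\B^o)$ for any category $\B$; the admissibility refinement is handled by Lemma \ref{admono} applied in both $\A$ and $\A^o$ (noting that $\F^o$ is an abelian envelope of $\A^o$). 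Once step (1) is in place, steps (2) and (3) are routine, and the index relabeling $(i,j) \mapsto (-j,-i)$ makes the final identification with $\dlim(\A^o)$ transparent.
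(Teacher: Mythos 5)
The paper does not actually prove this proposition: it is simply recalled from \cite{B} (with details in \cite{pre}), so there is no internal argument to compare against. Your overall strategy — dualize the explicit double-system description, swap admissible monomorphisms and epimorphisms, and exploit the self-duality of the admissible bicartesian square condition — is the standard and essentially the only route, and your step (1), the identification $(\Ind^a\Pro^a(\A))^o \simeq \Pro^a(\Ind^a(\A^o))$, is indeed formal and correct.

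There is, however, a genuine gap exactly where you declare the argument ``routine''. After step (1), $(\limA)^o$ is realized as a full subcategory of $\Pro^a\Ind^a(\A^o)$, i.e.\ its objects are formal $\varprojlim$'s of $\varinjlim$'s; but $\dlim(\A^o)$ is by definition a full subcategory of $\Ind^a\Pro^a(\A^o)$, i.e.\ formal $\varinjlim$'s of $\varprojlim$'s. The relabeling $(i,j)\mapsto(-j,-i)$ only renames indices; it cannot convert a pro-object of ind-objects into an ind-object of pro-objects. On objects the two descriptions involve the same double system $\{X_{ij}\}$ read in two orders, but on morphisms one must check that $\varprojlim_j\varinjlim_l\varprojlim_k\varinjlim_i\Hom_{\A}(X_{ij},Y_{kl})$ agrees with the analogous expression with the four (co)limits interleaved in the other order, and this is false for arbitrary double systems: it is precisely the admissibility of the structure maps together with the bicartesianity of the squares that legitimizes the exchange of limits. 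This commutation statement — that the Beilinson category has canonically equivalent $\Ind^a\Pro^a$ and $\Pro^a\Ind^a$ presentations — is the actual mathematical content of the proposition and is the nontrivial lemma established in \cite{B} and \cite{pre}; your proposal must either invoke it explicitly or reprove it. By contrast, the point you single out as the main obstacle (compatibility of strict admissibility with the $\Ind/\Pro$ swap, via Lemma \eqref{admono} in $\A$ and $\A^o$) is the easy, purely formal part.
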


In particular, being $\vect=\vect^o$, the category $\T$ is self-dual.

From now on, we shall bound ourselves to countable ind- and pro-categories, unless specifically stated otherwise.

\vspace{0.1cm}
 
It is also useful to recall the exact structures of the categories $\Ind(\A)$, $\Pro(\A)$ and $\limA$, by specifying the classes of their admissible mono/epimorphisms.

\begin{lm}\label{straight0} (\cite{pre}) Let be $m: X\hrar Y$ an admissible monomorphism of $\Ind(\A)$. Then for every ind-representation of the objects $X$ and $Y$, say  $X=\limiriga$, and $Y=\limjrigaY$, $m$ can be written in components as $\{m^i_j\}$ in such a way that for every $i$ there is a $j$ and an admissible monomorphism $m^i_j:X_i\hrar Y_j$. Similarly for admissible monomorphisms of $\Pro(\A)$.
\end{lm}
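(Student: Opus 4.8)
The plan is to use a single, well-chosen ``straightened'' representation of $m$ as a bridge, and to transport admissibility of the components from it to an arbitrary representation by the usual ind-object yoga. (As is the standing convention in the context of $\Ind^a(\A)$ and the Beilinson category, ind-representations are taken to be \emph{strict}, i.e.\ to have admissible monomorphisms as transition maps; this is needed already for the statement to be correct, since for a non-strict representation a component $X_i\to Y$ of an admissible monomorphism need not even be injective.) The one genuinely non-formal input will be the description of the exact structure of $\Ind(\A)$ borrowed from \cite{pre}: every admissible short exact sequence of $\Ind(\A)$ is isomorphic to one of the form $\underset{k\in K}{``\varinjlim"}\bigl(X'_k\hrar Y'_k\epi Z'_k\bigr)$ for a common filtered $K$, with $X'_k\hrar Y'_k\epi Z'_k$ admissible short exact sequences of $\A$ and strict transition maps. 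Besides this I shall use two standard consequences of the exact category axioms: a composite of admissible monomorphisms is an admissible monomorphism, and if a composite $gf$ is an admissible monomorphism then $f$ is an admissible monomorphism.

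First I would complete $m$ to an admissible short exact sequence $X\xrar{m}Y\epi Z$ and apply the structure result, obtaining strict representations $X=\underset{k}{``\varinjlim"}X'_k$ and $Y=\underset{k}{``\varinjlim"}Y'_k$ over a common $K$ with $X'_k\hrar Y'_k$ admissible monomorphisms of $\A$ realizing $m$. I would also record that for any strict representation $X=\limiriga$ each structure morphism $X_i\hrar X$ is an admissible monomorphism of $\Ind^a(\A)$, its cokernel being $\underset{i'\ge i}{``\varinjlim"}X_{i'}/X_i$ (the sequence $X_i\hrar X\epi\underset{i'\ge i}{``\varinjlim"}X_{i'}/X_i$ is the colimit over the cofinal set $\{i'\ge i\}$ of the conflations $X_i\hrar X_{i'}\epi X_{i'}/X_i$ of $\A$); the same holds for $Y$.

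Now fix $i\in I$. Since $X=\underset{k}{``\varinjlim"}X'_k$, the map $X_i\to X$ factors as $X_i\xrar{a}X'_k\to X$ for some $k$; since conversely $X=\limiriga$, the map $X'_k\to X$ factors as $X'_k\xrar{b}X_{i'}\to X$ with $i'\ge i$, and after enlarging $i'$ we may assume $b\,a\colon X_i\to X_{i'}$ is the (admissible mono) transition map. The second standard fact then yields that $a\colon X_i\hrar X'_k$ is an admissible monomorphism of $\A$, so $c_i:=\bigl(X_i\xrar{a}X'_k\hrar Y'_k\bigr)$ is an admissible monomorphism of $\A$. Similarly, since $Y=\limjrigaY$, the map $Y'_k\to Y$ factors as $Y'_k\xrar{d}Y_j\to Y$ for some $j$; since conversely $Y=\underset{k}{``\varinjlim"}Y'_k$, the map $Y_j\to Y$ factors through some $Y'_{k'}$ with $k'\ge k$, and after enlarging $k'$ the composite $Y'_k\to Y_j\to Y'_{k'}$ is the (admissible mono) transition map, whence $d\colon Y'_k\hrar Y_j$ is an admissible monomorphism. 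Therefore $m^i_j:=d\,c_i\colon X_i\hrar Y_j$ is a composite of admissible monomorphisms of $\A$, hence an admissible monomorphism; and by construction the composite $X_i\xrar{a}X'_k\hrar Y'_k\xrar{d}Y_j\to Y$ agrees with $X_i\to X\xrar{m}Y$, so $m^i_j$ represents the $i$-th component of $m$ under $\Hom_{\Ind(\A)}(X,Y)=\varprojlim_i\varinjlim_j\Hom_{\A}(X_i,Y_j)$. As $i$ varies, these representatives are automatically compatible since they all represent the single morphism $m$. The assertion for $\Pro(\A)$ follows by running the dual argument in $\Pro(\A)$ (equivalently, by applying the case already proved to $\A^{o}$ and using $\Pro(\A)=(\Ind(\A^{o}))^{o}$), interchanging admissible monomorphisms with admissible epimorphisms.

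The main obstacle is exactly this transfer of admissibility between the straightened representation, where it is manifest, and an arbitrary strict one: it is what forces the use of the structure theorem for admissible short exact sequences in $\Ind(\A)$ and of the lemma that a left factor of an admissible monomorphism is again an admissible monomorphism, together with the repeated cofinality steps needed to reconcile the two index sets. Everything else is routine index bookkeeping.
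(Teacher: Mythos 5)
The paper does not actually prove this lemma: it is imported from \cite{pre} with a bare citation, so there is no in-text argument to compare yours against. Judged on its own terms, your strategy (complete $m$ to a conflation, represent that conflation levelwise, then transfer admissibility to the given representations by cofinality) is the natural one and very likely the intended one; the reduction to the two cancellation steps for $a$ and $d$ is where the real content sits, and that is exactly where the argument has a genuine gap.

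The gap is your ``second standard fact,'' that if a composite $gf$ is an admissible monomorphism then so is $f$. This is \emph{not} a consequence of the exact-category axioms. Quillen's ``obscure axiom'' yields this conclusion only under the additional hypothesis that $f$ admits a cokernel, and the unconditional cancellation you invoke is well known to be \emph{equivalent} to weak idempotent completeness of $\A$ (B\"uhler's survey on exact categories, Prop.~7.6). Neither the present paper nor its notion of exact category (``in the sense of Quillen'') imposes weak idempotent completeness, and in neither of your two uses --- for $a\colon X_i\rar X'_k$ and for $d\colon Y'_k\rar Y_j$ --- is a cokernel in $\A$ available a priori. The toolkit the paper actually imports from \cite{pre} points to the intended repair: work in the abelian envelope $\F$ and apply Lemma \eqref{admono}, i.e.\ check directly that $m^i_j$ is a monomorphism of $\F$ whose cokernel in $\F$ lies in $\A$, using closure of $\A$ under extensions and the conflations supplied by the levelwise representation; but that verification is precisely the non-formal content of the lemma and cannot be dismissed as routine. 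A second, related soft spot: you ask the structure theorem for admissible short exact sequences of $\Ind(\A)$ to produce a levelwise representation with \emph{strict} (admissible mono) transition maps. The usual definition of the exact structure on $\Ind(\A)$ only provides levelwise conflations over a common filtered index set, with no control on the transition maps; strictness of representations is essentially the content of Lemma \eqref{straight}, which is stated as a \emph{corollary} of the present lemma, so assuming it here risks circularity --- and your argument that $d$ is an admissible monomorphism uses that strictness in an essential way.
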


As a consequence of the previous Lemma, in \cite{pre}, Corollary (4.19),  we obtain:

\begin{lm}\label{straight} (Straightification of admissible monomorphisms) Let be $m: X\hrar Y$ an admissible monomorphism in $\Ind(\A)$. Then it is possible to express $X$ and $Y$ as ind-systems $X=\limjrigaX$ and $Y=\limjrigaY$, and $m=\limi m_i$, where for each $i\in I$, $m_i:X_i\hrar Y_i$ is an admissible monomorphism in $\A$. Similarly for an admissible monomorphism in $Pro^a(\A)$.
\end{lm}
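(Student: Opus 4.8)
The plan is to upgrade Lemma~\ref{straight0} to a common-index statement by building an appropriate cofinal index system. First I would fix ind-representations $X=\limiriga$ and $Y=\limjrigaY$ and apply Lemma~\ref{straight0}, so that the component description of $m$ supplies, for each $i$, an index $j(i)\in J$ and an admissible monomorphism $g_i\colon X_i\hrar Y_{j(i)}$ of $\A$ whose composite with the structure map $Y_{j(i)}\rar Y$ agrees with $X_i\rar X\xrar{m}Y$. I would then form the index category $K$ whose objects are triples $(i,j,g)$, with $i\in I$, $j\in J$ and $g\colon X_i\hrar Y_j$ an admissible monomorphism of $\A$ lying over the $i$-th component of $m$ in this sense, a morphism $(i,j,g)\rar(i',j',g')$ being a pair of inequalities $i\leq i'$, $j\leq j'$ for which the square built from $g$, $g'$ and the structure maps of $X$ and $Y$ commutes. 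The two forgetful functors $K\rar I$, $(i,j,g)\mapsto X_i$ and $K\rar J$, $(i,j,g)\mapsto Y_j$ then make $\{X_i\}_{(i,j,g)\in K}$ and $\{Y_j\}_{(i,j,g)\in K}$ into ind-representations of $X$ and of $Y$ over the common index $K$ as soon as $K$ is filtered and both functors are cofinal; the $g$'s form a morphism of ind-systems all of whose components are admissible monomorphisms of $\A$, and $\varinjlim_K g=m$, which is the assertion (after replacing $K$ by a cofinal directed subset if a poset index is wanted).

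The filteredness of $K$ and the cofinality of $K\rar I$ are routine consequences of Lemma~\ref{straight0} and the filteredness of $I$ and $J$: given finitely many objects of $K$, raise their $X$-coordinates to a common upper bound in $I$, invoke Lemma~\ref{straight0} to obtain an admissible monomorphism over the new $X$-term, and then raise the $J$-coordinate far enough that all the relevant squares commute in $Y$. The part I expect to be the main obstacle is the cofinality of $K\rar J$, that is, the ability to enlarge the \emph{target} of an admissible monomorphism of $\A$ while keeping it admissible: post-composing $g\colon X_i\hrar Y_j$ with a structure map $Y_j\rar Y_{j'}$ in general destroys monicity, so the given ind-system of $Y$ need not contain cofinally many terms that admissibly receive the $X_i$, and for this reason the present statement is strictly more than Lemma~\ref{straight0}. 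To overcome this I would not keep the given representation of $Y$: since $\Ind(\A)$ is exact, $m$ sits in an admissible short exact sequence $X\hrar Y\epi Z$ with $Z\in\Ind(\A)$; writing $Z=\varinjlim_j Z_j$ with $Z_j\in\A$, one forms the pullbacks $\widetilde{Y}_j:=Y\times_Z Z_j$ in $\Ind(\A)$ along the admissible epimorphism $Y\epi Z$, notes $Y=\varinjlim_j\widetilde{Y}_j$, and then uses Lemma~\ref{admono} applied in $\A$ together with the closure of $\A$ under extensions in $\F$ to replace, level by level and after a further refinement, these terms by honest objects of $\A$ which admissibly receive the $X_i$; the standing countability hypothesis lets this refinement be organised as an explicit induction interleaving the levels of $X$, $Y$ and $Z$. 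Feeding such an improved representation of $Y$ into the construction of $K$ makes $K\rar J$ cofinal and finishes the proof.

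For admissible monomorphisms of $\Pro^a(\A)$ one runs the formally dual argument — straightening through a common pro-representation — using pushouts of admissible monomorphisms, which exist in any exact category, and the dual of the kernel criterion of Lemma~\ref{admono}.
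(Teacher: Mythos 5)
Your overall strategy --- reduce to Lemma \ref{straight0} and then reindex over the comma-type category $K$ of triples $(i,j,g)$ --- is the standard Artin--Mazur straightening skeleton, and you have correctly located where the real content lies: one must be able to enlarge the target of an admissible monomorphism $X_i\hrar Y_j$ inside the ind-system of $Y$ without losing admissibility. (For what it is worth, the paper does not prove this lemma at all; it imports it from \cite{pre}, Cor.\ (4.19), as a stated consequence of Lemma \ref{straight0}, so there is no in-text proof to compare against.) Two points in your plan do not hold together as written. First, the filteredness of $K$ is not ``routine'': your recipe for a common upper bound ends by post-composing the admissible monomorphism supplied by Lemma \ref{straight0} with a structure map $Y_{j''_0}\rar Y_{j''}$ in order to make the squares commute, and that is exactly the operation you yourself point out can destroy (admissible) monicity in $\Ind(\A)$, where structure maps are arbitrary. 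So filteredness of $K$ stands or falls with the same repair you propose for the cofinality of $K\rar J$; it is not independently established.

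Second, and more seriously, the repair itself is a black box at precisely the step that carries all the difficulty. Forming $\widetilde{Y}_j:=Y\times_Z Z_j$ is fine ($\Ind(\A)$ is exact, so the base change of the admissible epimorphism $Y\epi Z$ exists, and $Y=\varinjlim_j\widetilde{Y}_j$), but $\widetilde{Y}_j$ is an extension of $Z_j$ by the whole ind-object $X$ and hence is itself a genuine ind-object; writing it as $\varinjlim_k W_{j,k}$ with $W_{j,k}\in\A$ produces terms with no reason to contain any $X_i$ admissibly, and Lemma \ref{admono} together with closure under extensions cannot be invoked until you have exhibited, inside $\widetilde{Y}_j$, an object $E$ of $\F$ sitting in a short exact sequence $X_{i_0}\hrar E\epi Z_j$ with $X_{i_0}$ a term of $X$. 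Producing such an $E$ is the actual content of the lemma: it amounts to descending the extension $X\hrar\widetilde{Y}_j\epi Z_j$ to an extension of $Z_j$ by some $X_{i_0}$ (for instance via $\Ext^1_{\Ind(\F)}(Z_j,\varinjlim X_i)\cong\varinjlim\Ext^1_{\F}(Z_j,X_i)$, after which $Y_{i,j}:=X_i\sqcup_{X_{i_0}}E$ is an object of $\A$ admissibly receiving $X_i$ by the pushout axiom), or else to an image factorization that a general exact category does not possess. None of this is supplied by the lemmas you cite, so the argument has a genuine gap at its central step. A last, smaller point: the case of $\Pro^a(\A)$ is not the ``formal dual'' of what you proved, since duality converts admissible monomorphisms of $\Pro(\A)$ into admissible \emph{epimorphisms} of $\Ind(\A^{o})$; you would need the epimorphism version of the straightening statement there.
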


The analogous propositions for admissible epimorphisms of  $\Ind^a(\A)$, $\Pro^a(\A)$ follow from the ones above.

\begin{definition} A stabilizing object $X$ of $\Ind(\A)$ ({\it resp.}, $Y\in \Pro(\A$)) is an object that can be expressed as  {$\limiriga$} for a set of objects $X_i$ ({\it resp.}, as {$\limprigaY$} for a set of objects $Y_j$), for which there exists an $i_0$ such that the morphisms $\dots\lrar X_{i-1}\lrar X_{i+1} \lrar X_{i+1}\lrar \dots $ are all isomorphisms for $i\geq i_0$. ({\it resp}., for which there exists a $j_0$ such that the morphisms $\dots\lrar Y_{j+1}\lrar Y_{j} \lrar Y_{j-1}\lrar \dots $ are isomorphisms).
\end{definition}

It is clear that a stabilizing object in $\Ind(\A)$ ({\it resp}., $\Pro(\A)$) is isomorphic to an object of $\A$.

\begin{prop}\label{stab} Let be $m: X\hrar Y$ an admissible monomorphism in $\Pro^a(\A)$. Then the quotient $Y/X$ is isomorphic to an object of $\A$ if and only if $m$ is representable by a ladder of cartesian squares.
\end{prop}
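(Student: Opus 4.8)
The plan is to prove both implications by translating the statement about $\Pro^a(\A)$ into statements about its abelian envelope, using Lemma \eqref{pullback} and Lemma \eqref{admono}, and then exploiting the straightification Lemma \eqref{straight}. First I would fix notation: write $m\colon X\hrar Y$ with $X=\limprigaXi$ and $Y=\limprigaY$ (countable pro-systems over $I\cong J\cong\Zee$), and by Lemma \eqref{straight} arrange that $m=\limiti m_i$ componentwise, with each $m_i\colon X_i\hrar Y_i$ an admissible monomorphism in $\A$. The quotient $Y/X$ is then computed componentwise as $``\varprojlim"\,(Y_i/X_i)$, and saying that $Y/X$ is isomorphic to an object of $\A$ is, by the discussion after the definition of stabilizing objects, equivalent to saying that this pro-system $\{Y_i/X_i\}$ is (pro-isomorphic to) a stabilizing one: there is an index $i_0$ such that $Y_i/X_i\to Y_{i'}/X_{i'}$ is an isomorphism for all $i_0\le i'\le i$.

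For the ``if'' direction: suppose $m$ is representable by a ladder of cartesian squares, i.e.\ for $i\le i'$ the squares
\[
\xymatrix{
X_i\ar[d]\ar@{^{(}->}[r] & Y_i\ar[d] \\
X_{i'}\ar@{^{(}->}[r] & Y_{i'}
}
\]
are cartesian. By Lemma \eqref{pullback} these remain cartesian in $\F$, and a cartesian square with an injection on one side forces $\ker(Y_i\to Y_{i'})\xrar{\sim}\ker(X_i\to X_{i'})$; hence on passing to cokernels of the horizontal maps one gets $Y_i/X_i\xrar{\sim}Y_{i'}/X_{i'}$ for all $i\le i'$. (Here I use that the horizontal cokernels exist in $\A$: they are the $Y_i/X_i$, which lie in $\A$ since $m_i$ is an admissible monomorphism.) Thus the pro-system $\{Y_i/X_i\}$ is constant up to isomorphism, so it is stabilizing, and therefore $Y/X$ is isomorphic to an object of $\A$. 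One should double-check the diagram-chase in $\F$ — that in an abelian category a pullback square $\smash{X_i\times_{X_{i'}}Y_{i'}=Y_i}$ with $X_i\hrar X_{i'}$, $Y_i\hrar Y_{i'}$ monic yields an isomorphism of the vertical cokernels — but this is the standard snake/nine-lemma argument.

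For the ``only if'' direction: assume $Y/X\cong$ an object of $\A$. Straightify $m$ as above so that $m=\limiti m_i$ with $m_i\colon X_i\hrar Y_i$ admissible monic in $\A$ and $Y/X=``\varprojlim"(Y_i/X_i)$. Since $Y/X$ is stabilizing, after reindexing we may assume the transition maps $Y_i/X_i\to Y_{i'}/X_{i'}$ are all isomorphisms. I then claim each square in the ladder is cartesian. Form the pullback $P:=X_{i'}\times_{Y_{i'}}Y_i$ in $\A$ — this exists because $X_{i'}\hrar Y_{i'}$ is an admissible monomorphism, and exact categories admit pullbacks of admissible monomorphisms; moreover $P\hrar Y_i$ is again an admissible monomorphism and $Y_i/P\cong Y_{i'}/X_{i'}$. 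The universal property gives a canonical map $X_i\to P$ over $Y_i$, which is an admissible monomorphism (it sits inside $X_i\hrar Y_i$), and comparing the short exact sequences $X_i\hrar Y_i\epi Y_i/X_i$ and $P\hrar Y_i\epi Y_i/P$ together with the isomorphism $Y_i/X_i\xrar{\sim}Y_i/P$ (both identified with $Y_{i'}/X_{i'}$, compatibly) forces $X_i\xrar{\sim}P$ by the five lemma in $\F$ and Lemma \eqref{admono}. Hence the square is cartesian, and being admissible it is also cocartesian by the remark in \S\ref{quillen}; so $m$ is representable by a ladder of cartesian squares.

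The main obstacle I anticipate is the bookkeeping in the ``only if'' direction: straightification only guarantees \emph{some} compatible componentwise representation, and one must be careful that the identification $Y_i/X_i\cong Y_{i'}/X_{i'}$ coming from ``$Y/X$ is stabilizing'' is the \emph{same} map as the transition map induced by the ladder, so that the five-lemma comparison $X_i\to P$ is genuinely over a commuting diagram. Handling this cleanly may require passing to a cofinal subsystem and invoking Lemma \eqref{straight0} to realign the indexing of $X$, $Y$ and $Y/X$ simultaneously; once the three pro-systems are straightified compatibly, the square-by-square verification is the routine abelian-category argument sketched above, carried back into $\A$ via Lemma \eqref{admono} and Lemma \eqref{pullback}.
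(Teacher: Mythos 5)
Your argument is correct, and your ``if'' direction is essentially the paper's: straightify $m$, observe that cartesianness of each rung forces $Y_i/X_i\xrar{\sim}Y_{i'}/X_{i'}$ (the paper asserts this is ``not difficult to see''; your snake-lemma justification in $\F$ is exactly the missing detail), and conclude that the quotient pro-system stabilizes. Your ``only if'' direction, however, takes a genuinely different route. The paper verifies cartesianness of each square directly by an element-level diagram chase in the abelian envelope (reducing to abelian groups), producing the required unique preimage by hand. You instead construct the comparison object $P=X_{i'}\times_{Y_{i'}}Y_i$, identify it as $\ker\bigl(Y_i\epi Y_{i'}\epi Y_{i'}/X_{i'}\bigr)$, and deduce $X_i\xrar{\sim}P$ from the short five lemma applied to the two admissible short exact sequences with cokernel $Y_i/X_i\cong Y_i/P$. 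This is cleaner and more portable, at the cost of one justification you should repair: the pullback $P$ does \emph{not} exist because ``exact categories admit pullbacks of admissible monomorphisms'' --- that is precisely the (AIC) condition the paper has to impose separately --- but because you are pulling back along the \emph{admissible epimorphism} $Y_i\epi Y_{i'}$, which is guaranteed by Quillen's axioms. With that correction the construction of $P$ and the identification $Y_i/P\cong Y_{i'}/X_{i'}$ are sound. Finally, the bookkeeping issue you flag (that ``$Y/X$ isomorphic to an object of $\A$'' must be converted into a componentwise representation whose quotient transition maps are the identifications you feed into the five lemma) is real, but the paper handles it at exactly the same level of rigor, saying only ``after choosing convenient pro-systems \dots the quotients are all isomorphic to each other''; your proposed fix via cofinal reindexing and Lemma \eqref{straight0} is the right way to make both proofs precise.
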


\begin{proof}
Suppose that $m$ is representable by a ladder of cartesian squares. Let's represent the two objects as $X=\limprigaXi$, and $Y=\limprigaY$. From Lemma \eqref{straight} we may assume that the monomorphism is represented by $m=\{m_j\}$, i.e. for all $j$ as an admissible monomorphism of $\A, m_j: X_j\rar Y_j$. Therefore, for every $j$ we have cartesian squares of the type:
 $$
\xymatrix{ 
X_{j+1}\ar[d]_{}
\ar[r]^{m_{j+1}}
& Y_{j+1}\ar[d]^{} \\
X_{j}\ar[r]_{m_{j}}
& Y_{j}
}$$
where the horizontal arrows are admissible monomorphisms and the vertical ones admissible epimorphisms. Then, for each $i$, it is not difficult to see that the quotients $\dfrac{Y_{i+1}}{X_{i+1}}$ and $\dfrac{Y_{i}}{X_{i}}$ are isomorphic. Thus, $Y/X=\limproi Y_i/X_i$ is stabilizing and then it lies in $\A$.

\vspace{0.2cm}

Conversely, after choosing convenient pro-systems for $U, \ V$ we can represent the monomorphism by squares like the following, where the quotients are all isomorphic to each other, e.g. via $h$ in the diagram below:
$$
\xymatrix{ 
U_i\ar[d]^{e_1}  \ar[r]^{m_2}& V_i \ar[d]^{e_2}\ar[r]^{f_2} & V_{i}/U_{i}\ar[d]^h \\
U_{i-1}\ar[r]_{m_1}& V_{i-1} \ar[r]_{f_1} & V_{i}/U_{i}\\
}
$$

We show that the square on the left is cartesian, by a diagram chase.  We can assume that the above is a diagram of abelian groups (see \cite{pre}). Let then be given $p\in V_i, \ q\in U_{i-1}$, such that $e_2(p)=m_1(q)$. We want to prove the existence of a (unique) element in $U_i$ which is a preimage of both $p$ and $q$.

\par

We have: $h(m_2(U_i)+p)=m_1(U_{i-1})+e_2(p)$. Let be $q'$ any preimage of $q$ in $U_i$. If 
$m_2(q')=p$, we are done. If not, $p-m_2(q')\in V_1$. For this element, $f_2(m_2(q')-p)=f_2m_2(q')-f_2(p)=f_2(p).$
\par
Thus, $e_2(m_2(q')-p)=m_1e_1(q')-e_2(p)=m_1(q)-e_2(p)=0.$  Put $p':=m_2(q')-p$, in $V_1$.
\par

For this element $p'$, it is then $h(m_2(U_i)+p')=m_1(U_{i-1})+e_2(p')=m_1(U_{i-1})$. 
\par

Hence, being $h$ injective, it follows $p'\in m_2(U_i)$. Then, from $m_2(q')-p=p'\in m_2(U_i)$, we finally get $p\in m_2(U_i)$: so the element $p$ has a (necessarily unique) preimage in $U_i$: therefore, the square is cartesian, and the proof is complete.

\end{proof}

\begin{cor}
Let  $X$ be an object of $\limA$, and let be $X=\indjriga$, for an ind-system of  objects $\{X_j\}$ in $\Pro^a(\A)$. Then, for $j<j'$, 
the quotient $X_{j'}/X_{j}$ is in $\A$.
\end{cor}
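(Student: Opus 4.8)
The plan is to recognize that the hypothesis of Proposition \eqref{stab} is built into the very definition of the Beilinson category, so that the corollary becomes an immediate application of that proposition.

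First I would unwind the notation. Writing each $X_j$ as a formal projective system $(X_{i,j})_{i\le j}$ of objects of $\A$ (so that the $X_{i,j}$ are exactly the terms of the bifiltration underlying $X$), the hypothesis that $X$ is an object of $\Ind^a\Pro^a(\A)$ means precisely that for $j\le j'$ the ind-structure map $X_j\hookrightarrow X_{j'}$ is an admissible monomorphism of $\Pro^a(\A)$; hence the quotient $X_{j'}/X_j$ already exists in $\Pro^a(\A)$, and the content of the corollary is that this quotient in fact lies in the category $\A$ itself.

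The key step is to exhibit $X_j\hookrightarrow X_{j'}$ as representable by a ladder of cartesian squares, in the sense of Proposition \eqref{stab}. With $j<j'$ fixed, the defining condition on the objects of $\limA$ asserts exactly that the squares
$$
\xymatrix{
X_{i',j}\ar[d]\ar[r] & X_{i',j'}\ar[d] \\
X_{i,j}\ar[r] & X_{i,j'}
}
$$
are cartesian for all $i\le i'$. Since the projective systems $(X_{i,j})_{i\le j}$ and $(X_{i,j'})_{i\le j'}$ may both be restricted, without changing the underlying pro-objects, to the common index set $\{i\le j\}$ (which is cofinal because it is unbounded below), these cartesian squares literally constitute a ladder of cartesian squares representing $X_j\to X_{j'}$; the straightification Lemma \eqref{straight} --- already invoked inside the proof of Proposition \eqref{stab} --- then lets one arrange the horizontal components to be admissible monomorphisms in $\A$. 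Proposition \eqref{stab} now applies to the admissible monomorphism $X_j\hookrightarrow X_{j'}$ and yields that $X_{j'}/X_j$ is isomorphic to an object of $\A$, which is the assertion.

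I do not anticipate a genuine obstacle: essentially all of the work has been front-loaded into Proposition \eqref{stab}, and what remains is bookkeeping. The only mildly delicate point is the reconciliation of index sets --- the pro-representations of $X_j$ and of $X_{j'}$ are a priori indexed differently --- but this is resolved by passing to a common cofinal index set as above, on which the cartesianness supplied by the definition of $\limA$ is directly available. Should one want an argument independent of Proposition \eqref{stab}, one could instead repeat the relevant diagram chase in the abelian envelope $\F$ (as in the proof of that proposition), but invoking the proposition is the cleaner route.
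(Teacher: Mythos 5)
Your argument is correct and is exactly the one the paper intends: the corollary is stated without proof precisely because the cartesianness of the squares built into the definition of $\limA$ makes the admissible monomorphism $X_j\hookrightarrow X_{j'}$ representable by a ladder of cartesian squares, so Proposition \eqref{stab} applies directly. Your extra care about reconciling the index sets is harmless bookkeeping that the paper leaves implicit.
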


\subsection{Grassmannians of generalized Tate spaces}

Let be $\A$ an exact category.  We introduce some terminology. 

\begin{definition} An object of $\limA$ is called {\it compact} if it is isomorphic to an object of $\Pro^a(\A)$ and {\it discrete} if it is isomorphic to an object of $\Ind^a(\A)$.
\end{definition}

\begin{prop}\label{compdisc} If an object $Z$ is both compact and discrete, then $Z$ is isomorphic to an object of $\A$.
\end{prop}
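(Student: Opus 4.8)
My plan is to extract a strict ind-presentation from discreteness and an object of the base $\Pro^a(\A)$ from compactness, and then to observe that the resulting isomorphism must already live at a single stage of the ind-system. Since $Z$ is discrete, fix an isomorphism $Z\cong\underset{i}{``\varinjlim"}X_i$ with all $X_i$ in $\A$ and all transition maps admissible monomorphisms; since $Z$ is compact, fix an isomorphism $Z\cong P$ with $P$ an object of $\Pro^a(\A)$. Composing, one obtains an isomorphism $\theta\colon P\xrar{\sim}\underset{i}{``\varinjlim"}X_i$ in the ambient category $\Ind^a\Pro^a(\A)$.

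The argument then rests on two standard facts about ind/pro-objects, both in \cite{pre} (see also \cite{B}). First, for an object $P$ of the base $\Pro^a(\A)$ and a strict ind-object one has $\Hom\bigl(P,\underset{i}{``\varinjlim"}X_i\bigr)=\underset{i}{\varinjlim}\,\Hom(P,X_i)$; this is essentially the definition of morphisms out of a base object in an ind-category, once one checks that $\underset{i}{``\varinjlim"}X_i$, formed in $\Ind^a(\A)$, genuinely computes the colimit of the $X_i$ inside $\Ind^a\Pro^a(\A)$. Consequently $\theta$ factors as $P\xrar{\theta_0}X_{i_0}\xrar{\iota_{i_0}}\underset{i}{``\varinjlim"}X_i$ for some index $i_0$, where $\iota_{i_0}$ is the canonical map. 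Second, the canonical map $\iota_{i_0}\colon X_{i_0}\to\underset{i}{``\varinjlim"}X_i$ of a strict ind-object is a monomorphism (in fact an admissible monomorphism).

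To conclude: from $\iota_{i_0}\circ\theta_0=\theta$ with $\theta$ an isomorphism, the morphism $s:=\theta_0\circ\theta^{-1}$ is a section of $\iota_{i_0}$, so $\iota_{i_0}$ is a split epimorphism; by the second fact it is also a monomorphism. Applying $\iota_{i_0}$ to the identity $\iota_{i_0}\circ s\circ\iota_{i_0}=\iota_{i_0}=\iota_{i_0}\circ\Id$ and cancelling the monomorphism $\iota_{i_0}$ gives $s\circ\iota_{i_0}=\Id$, so $\iota_{i_0}$ is an isomorphism. Therefore $Z\cong\underset{i}{``\varinjlim"}X_i\cong X_{i_0}$, and $X_{i_0}$ is an object of $\A$, as claimed.

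I do not expect a genuine obstacle: the whole argument is formal, and symmetric variants (factoring through a stage of a strict pro-presentation instead) would work equally well. The one point that must be spelled out carefully is the compatibility of the formal colimit $\underset{i}{``\varinjlim"}X_i$ with colimits in $\Ind^a\Pro^a(\A)$, so that the isomorphism $\theta$ out of the single base object $P$ really factors through one stage $X_{i_0}$; and one should invoke, rather than reprove, that the structure maps of a strict ind-object are monomorphisms. Neither is difficult, but both are essential to the conclusion.
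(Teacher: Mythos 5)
Your proof is correct and follows essentially the same route as the paper's: compactness lets you treat $Z$ as a single object of $\Pro^a(\A)$, so the isomorphism onto the strict ind-presentation factors through one stage $X_{i_0}$, and the monomorphy of the canonical map $\iota_{i_0}$ then forces it to be an isomorphism. The only cosmetic difference is that the paper concludes by viewing the factored identity $Z\hrar A_k\hrar Z$ in the abelian envelope $\Pro(\F)$, whereas you cancel the monomorphism directly via the split-epi argument; both hinge on the same two facts you isolate.
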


\begin{proof}  Let be $Z\in\Ind^a\Pro^a(\A)$. Since $Z$ is discrete, it is possible to write $Z=\underset{i\in I}{``\varinjlim"}A_i$ for a system of objects $\{A_i\}$ of $\A\hrar\Pro^a(\A)$, and $Z=\underset{j\in J}{``\varprojlim"} B_j$, for a pro-system $\{B_j\}_{j\in J}$ of objects of $\A$.  In this latter expression, $Z$ is to be understood as a trivial ind-system of $\Pro^a(\A)$. By hypotheses there is an isomorphism between the two ind-pro objects represented by these two systems:
$$
\xymatrix{ 
... \ar[r] & A_{i-1} \ar[r] &A_{i} \ar[r] &...\ar[r]& A_{i+1}\ar[r] & ... \\
...  \ar[r]^{id} & Z    \ar[r]^{id} & Z  \ar[r]^{id}  &... \ar[r]^{id}   & Z \ar[r]^{id}&   ...          \\
}
$$ 

In particular, this implies that the identity $Z = Z$, which is an arrow in $\Pro^a(\A)$, factors through a composition of two 
admissible monomorphisms $Z\hrar A_k\hrar Z$ for some object $A_k\in\A$ of the pro-system $\{A_i\}$. Let be $\F$ the abelian 
envelope of $\A$. From the embedding theorem (see \cite{pre}, Theorem (6)), we have in the abelian category $\Pro(\F)$ an isomorphism 
which is the composition of two monomorphisms. Each of these monomorphism must be an isomorphism of $Z$ with an object of $
\A\hrar \F$; thus $Z$ is isomorphic to an object of $\A$, and the proposition is proved.
\end{proof}
Fix an object $X\in \limA$.   

\begin{definition}\label{G(X)} 
The {\it Sato Grassmannian} of the object $X$ is the set $\Gamma(X)$ of all the admissible subobjects $[V\hrar X]$, such that $V\in\Pro^a(\A)$ and $\dfrac{X}{V}\in\Ind^a(\A)$.
\end{definition}
In other words, for a subobject of $X$, the statement ``$[V\hrar X]\in\Gamma(X)$" means that there is an admissible short exact sequence of $\limA$: 
$$
V\hrar X\epi\dfrac{X}{V}
$$
such that $V$ is in $\Pro^a(\A)$ and $\dfrac{X}{V}$ is in $\Ind^a(\A)$.

\vspace{0.1cm}

In such a situation, and when the class of the monomorphism $m:V\hrar X$ is known, we shall simply say that $V$ is in $\Gamma(X)$.

\par

Let thus $X\in\limA$ be given through a specific ind-pro system $\{X_i\}$, $X=\limiriga$. The existence of the monomorphism 
$m:V\hrar X$ implies the existence of an $i\in I$ and an admissible monomorphism of $\Pro^a(\A)$: $m_i: V\hrar X_i$. By 
composing with the structure maps of the ind-system $\{X_i\}$, we obtain that there is an admissible monomorphism $m_j:V\hrar 
X_j$ for all $j\geq i$. Then we can write the quotient $\dfrac{X}{V}$ as 
$$
\underset{i\in I}{``\varinjlim"} \left (\dfrac{X_i}{V}\right ).
$$
The condition expressed in the definition implies that this is a strict admissible ind-system of $\A$: it results that each quotient object $\dfrac{X_i}{V}$ is then in $\A$.
 
\begin{theorem}\label{criterion} 
Let be $X\in\limA$ and $V\hrar W$ an admissible monomorphism in $\Pro^a(\A)$, with $W\in\Gamma(X)$. Then $V\in\Gamma(X)$ 
$\Leftrightarrow$ $\dfrac{W}{V}\in\A$.
\end{theorem}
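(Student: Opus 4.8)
The plan is to prove both implications by exploiting the compatibility of short exact sequences in the exact category $\limA$ together with the characterization of $\Gamma(X)$-membership in terms of ind-pro representations.

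First I would set up notation: fix an ind-pro representation $X=\limiriga$ with $\{X_i\}$ in $\Pro^a(\A)$, and since $W\in\Gamma(X)$, as explained after Definition \eqref{G(X)} we may assume there is an admissible monomorphism $m_i\colon W\hrar X_i$ for all $i\geq i_0$, with each quotient $X_i/W$ lying in $\A$. I would then compose the given admissible monomorphism $V\hrar W$ with $W\hrar X$ to obtain an admissible monomorphism $V\hrar X$ (composition of admissible monomorphisms is again admissible in an exact category), and hence in components admissible monomorphisms $V\hrar X_i$ in $\Pro^a(\A)$ for $i$ large, which by Lemma \eqref{straight} we may take to fit into ladders. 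So $V$ is always a pro-object; the only issue is whether $X/V\in\Ind^a(\A)$, which by the discussion following Definition \eqref{G(X)} is equivalent to asking that each $X_i/V$ lie in $\A$.

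For the ($\Leftarrow$) direction, assume $W/V\in\A$. For each $i\geq i_0$ I would consider the filtration $V\hrar W\hrar X_i$ in $\Pro^a(\A)$, which (being a filtration by admissible monomorphisms with top quotient $X_i/W\in\A$) gives by the axioms of exact categories an admissible short exact sequence $W/V\hrar X_i/V\epi X_i/W$; since both outer terms $W/V$ and $X_i/W$ are in $\A$ and $\A$ is closed under extensions in its abelian envelope, $X_i/V\in\A$ as well. This holds for all large $i$, so $X/V=\limiriga (X_i/V)$ is a strict admissible ind-object of $\A$, i.e. $V\in\Gamma(X)$. For the ($\Rightarrow$) direction, assume $V\in\Gamma(X)$, so $X_i/V\in\A$ for large $i$. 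Now the admissible monomorphism $V\hrar W$ sits inside $X_i$, and I would use Lemma \eqref{admono} (or Proposition \eqref{stab}): the quotient $W/V$ is the kernel in $\F$ (respectively $\Pro(\F)$) of the induced admissible epimorphism $X_i/V\epi X_i/W$, and both $X_i/V$ and $X_i/W$ lie in $\A$; since $\A$ is closed under extensions and, by Lemma \eqref{admono}, the kernel of an admissible epimorphism between objects of $\A$ lies in $\A$, we get $W/V\in\A$.

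The main obstacle I expect is the careful bookkeeping needed to pass from the ``global'' statement in $\limA$ to the ``componentwise'' statements in $\A$ — in particular verifying that the relevant diagrams of pro-objects $V\hrar W\hrar X_i$ are genuinely admissible filtrations in $\Pro^a(\A)$ (so that the nine-lemma-type manipulations and Lemma \eqref{admono} apply), and justifying that one may simultaneously choose compatible ladder representations for $V\hrar W$ and $W\hrar X_i$ using Lemma \eqref{straight}. Once the filtration $V\hrar W\hrar X_i$ is in place with $X_i/W\in\A$, the argument is symmetric and reduces in each case to the single fact that $\A$, being closed under extensions in $\F$, absorbs the middle (or kernel) term of a short exact sequence whose other two terms are known to be in $\A$.
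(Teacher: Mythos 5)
Your argument is correct and essentially coincides with the paper's proof: both directions rest on the admissible short exact sequence of quotients $\dfrac{W}{V}\hrar \dfrac{X}{V}\epi\dfrac{X}{W}$ induced by the filtration $V\hrar W\hrar X$, together with a closure-under-extensions (resp.\ kernels) argument. The only difference is that the paper runs this globally in $\limA$, invoking that $\Ind^a(\A)$ is closed under extensions there, while you run it componentwise on the $X_i$; this is a presentational variation rather than a genuinely different route.
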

\begin{proof}
($\Leftarrow$) Let be $V\hrar W\hrar X$, the composition of two admissible monomorphisms. We want to show that $\dfrac{X}{V}\in\Ind^a(\A)$.

\par

We get the following diagram, where the horizontal arrows are admissible monomorphisms, and the vertical ones admissible epimorphisms:
\begin{equation}
\xymatrix{ 
V  \ar[r]& W\ar[d]\ar[r] & X\ar[d] \\
& \dfrac{W}{V} \ar[r] & \dfrac{X}{V}\ar[d]\\
&& \dfrac{X}{W}
}
\end{equation}
In particular, we get an admissible short exact sequence $\dfrac{W}{V}\hrar \dfrac{X}{V}\epi\dfrac{X}{W}$ in $\limA$, with $\dfrac{W}{V}\in\A$ and $\dfrac{X}{W}\in\Ind^a(\A)$, since $W\in\Gamma(X)$. But $\Ind^a(\A)$ is closed under extensions in $\limA$ (cf. \cite{pre}), hence it follows $\dfrac{X}{V}\in\Ind^a(\A)$, i.e. $[V\hrar X]\in\Gamma(X)$.
\vspace{0.1cm}

($\Rightarrow$) It is clear from the same diagram. 
\end{proof}
\subsection{Partially abelian exact categories}

\begin{definition}\label{aic}
(1) Let be $(\A,\E)$ and exact category. We say that $\A$ is {\it closed under admissible intersections}, or simply that $\A$ satisfies 
the admissible intersection condition (AIC), if any pair of admissible monomorphisms with the same target, $a'\hrar a\hlar a''$ have a 
pullback $p$ in $\A$, and in the resulting diagram
$$
\xymatrix{
p\ar@{^{(}->}[d]\ar@{^{(}->}[r] & a'\ar@{^{(}->}[d] \\
a'' \ar@{^{(}->}[r] & a
}
$$
all the morphisms are admissible monomorphisms.

\vspace{0.2cm}

(2) Dually, we say that $\A$ satisfies the (AIC)$^o$, if for any pair of admissible epimorphisms with the same source: $b\epi b'$, 
$b\epi b''$, have a pushout $q$ in $\A$, and in the resulting diagram
$$
\xymatrix{
b\ar[d]\ar[r] & b'\ar[d] \\
b'' \ar[r] & q
}
$$
all the morphisms are admissible epimorphisms.
\end{definition}

\begin{lm}\label{monoext1}
Let $(\A,\E)$ be closed under admissible intersections. Consider the pullback diagram of the admissible monomorphisms $a\hrar c$, $b\hrar c$:

\begin{equation}\label{squareaic1}
\xymatrix{
p\ar@{^{(}->}[d]\ar@{^{(}->}[r]^{i} & a\ar@{^{(}->}[d] \\
b \ar@{^{(}->}[r]_{i'} & c
}
\end{equation}

Let be $j=\coker(i)$ and $j'=\coker(i')$, admissible epimorphisms. Then, there exists a unique (not necessarily admissible) monomorphism of $\A$, $m''$, making the following diagram commutative:

$$
\xymatrix{
p\ar@{^{(}->}[d]\ar@{^{(}->}[r]^{i} & a\ar@{^{(}->}[d]\ar[r]^{j} & q\ar@{^{(}-->}[d]^{m''}  \\
b \ar@{^{(}->}[r]_{i'} & c\ar[r]_{j'} & d
}
$$

\end{lm}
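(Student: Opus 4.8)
The plan is to first produce $m''$ as a morphism of $\A$ using only the universal property of cokernels, and then to upgrade it to a monomorphism by passing to the abelian envelope $\F$ of $\A$.

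First I would name the two remaining edges of the cartesian square \eqref{squareaic1}, say $u\colon p\hrar b$ and $v\colon a\hrar c$, so that $vi=i'u$. Then the composite $j'v\colon a\to d$ kills $i$: indeed $j'vi=j'i'u=0$ because $j'=\coker(i')$. Since $j=\coker(i)$ is in particular an epimorphism, the universal property of the cokernel gives a unique morphism $m''\colon q\to d$ in $\A$ with $m''j=j'v$, which is exactly the asserted commutativity of the right-hand square; uniqueness of such an $m''$ is immediate from the fact that $j$ is epi. What remains — and this is the heart of the statement — is to show that $m''$ is a monomorphism.

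For that I would apply the Quillen embedding $h\colon\A\hrar\F$ and argue inside the abelian category $\F$. By Lemma \ref{pullback} the square \eqref{squareaic1} is still cartesian in $\F$, so, $a\hrar c$ and $b\hrar c$ being monomorphisms there, $p$ is identified with the intersection $a\cap b$ of the subobjects $a$ and $b$ of $c$. Since $h$ carries the admissible short exact sequences $p\hrar a\epi q$ and $b\hrar c\epi d$ to short exact sequences of $\F$, one may identify $q\cong a/(a\cap b)$ and $d\cong c/b$ in $\F$, and under these identifications $m''$ becomes the canonical map $a/(a\cap b)\to c/b$. This map is the monic part of the factorization of $a\hrar c\epi c/b$ through its image (its kernel being exactly $a\cap b$), equivalently the classical isomorphism $a/(a\cap b)\xrar{\sim}(a+b)/b$ composed with the inclusion $(a+b)/b\hrar c/b$; hence it is a monomorphism of $\F$, and therefore of the full subcategory $\A$ as well.

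The hard part is really just the bookkeeping in the last paragraph: I must make sure that $q$ and $d$, which are cokernels formed in $\A$, coincide in $\F$ with the honest quotients $a/(a\cap b)$ and $c/b$ — this is the defining property of the Quillen embedding — and that $p$ is the same object whether computed in $\A$ or in $\F$, which is Lemma \ref{pullback}. Once those identifications are in place, the fact that $m''$ is a monomorphism is nothing more than the second isomorphism theorem, so there is no real computation to grind through. I would also append the remark that $m''$ need not be admissible: its cokernel in $\F$ is $c/(a+b)$, and the sum $a+b$ of two admissible subobjects of $c$ need not be admissible, so $c/(a+b)$ need not lie in $\A$; by Lemma \ref{admono} that is precisely the obstruction. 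The hypothesis that $\A$ is closed under admissible intersections enters only to guarantee that $p\in\A$ and that $i\colon p\hrar a$ is an admissible monomorphism, so that $\coker(i)$ exists in $\A$ in the first place.
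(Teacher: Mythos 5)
Your proof is correct and follows the same route as the paper, which simply observes that the lemma holds in any abelian category and hence in the abelian envelope $\F$; you have merely filled in the details that the paper leaves implicit (constructing $m''$ from the universal property of $\coker(i)$, identifying $p$, $q$, $d$ in $\F$ with $a\cap b$, $a/(a\cap b)$, $c/b$ via Lemma \ref{pullback} and the Quillen embedding, and invoking the second isomorphism theorem). Your closing remarks on why $m''$ need not be admissible and on where the (AIC) hypothesis is actually used are accurate and go slightly beyond what the paper records.
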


\begin{proof}
The above lemma holds in any abelian category. It is thus valid in the abelian envelope $\F$ of $\A$. In particular, we get that $m''$ is a monomorphism of $\A$.
\end{proof}
\begin{lm}\label{monoext2}
In the situation of Lemma \eqref{monoext1}, let us extend  diagram \eqref{squareaic1} to a $3\times 3$ diagram by passing to the cokernels in the abelian envelope $\F$:
\begin{equation}\label{squareaic2}
\xymatrix{
p\ar@{^{(}->}[d]_{m'}\ar@{^{(}->}[r]^{i} & a\ar@{^{(}->}[d]^{m}\ar[r]^{j} & q\ar@{^{(}->}[d]^{m''}  \\
b\ar@{^{(}->}[r]_{i'}\ar[d]_{e'} & c\ar[d]^{e}\ar[r]_{j'} & d\ar[d]^{e''} \\
r\ar[r]_{i''} & s\ar[r]_{j''} & t
}
\end{equation}
where $(m'',e'')$ and $(i'', j'')$ are short exact sequences in  $\F$, while $(i,j), \ (i',j'), \ (m,e), \ (m',e')$ are admissible short exact sequences in $\A$. In this case, the bottom right square is a pushout diagram.
\end{lm}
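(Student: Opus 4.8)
\section*{Proof proposal}

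The plan is to notice that Lemma~\ref{monoext2} is at bottom a statement about the abelian category $\F$: the diagram \eqref{squareaic2} lives entirely in $\F$, all of its rows and columns are short exact sequences there, and ``pushout'' here must mean pushout in $\F$ (we only assume (AIC), not its dual, so there is no reason for the pushout to exist in $\A$). It therefore suffices to prove the purely abelian fact that in any commutative $3\times 3$ diagram with short exact rows and columns the lower-right square is cocartesian; concretely, that the square with vertices $c,d,s,t$, horizontal maps $j',j''$ and vertical maps $e,e''$, is a pushout in $\F$.

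For this I would invoke the standard criterion: the square is cocartesian if and only if the sequence
$$
c\ \xrar{(e,\,-j')}\ s\oplus d\ \xrar{(j''\,,\,e'')}\ t\ \lrar\ 0
$$
is exact (with the signs chosen so that the composite vanishes, which is precisely the commutativity $j''e=e''j'$ of the lower-right square). Exactness at $t$ is immediate, since $j''$ is an epimorphism --- it is the cokernel arrow in the short exact sequence $(i'',j'')$ --- hence so is $(j''\,,\,e'')$. The inclusion $\Img(e,-j')\subseteq\ker(j''\,,\,e'')$ is exactly that commutativity. The remaining inclusion $\ker(j''\,,\,e'')\subseteq\Img(e,-j')$ I would verify by the usual element chase, legitimate in $\F$ for the same reason invoked in the proof of Proposition~\ref{stab}: given $(u,v)\in s\oplus d$ with $j''(u)=-e''(v)$, lift $u$ to $z_0\in c$ using that $e$ is an epimorphism (column $(m,e)$ exact); from $e''(j'(z_0))=j''(e(z_0))=j''(u)=-e''(v)$ deduce $j'(z_0)+v\in\ker e''=\Img m''$ (column $(m'',e'')$ exact); lift the corresponding element of $q$ to $w_0\in a$ using that $j$ is an epimorphism (row $(i,j)$ exact); and put $z:=z_0-m(w_0)$. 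Then $j'(z)=j'(z_0)-m''(j(w_0))=-v$ by commutativity of the top-right square of \eqref{squareaic2}, and $e(z)=e(z_0)=u$ because $em=0$ (column $(m,e)$ exact); so $(u,v)=(e(z),-j'(z))\in\Img(e,-j')$.

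The computation is entirely routine; the only points that need a little care are bookkeeping the sign conventions in the pushout criterion and checking that every step of the chase uses an exactness or commutativity relation actually present in \eqref{squareaic2}, so there is no genuine obstacle. It is worth observing that this argument uses only the exactness of the rows and columns of \eqref{squareaic2}: the cartesianness of the square \eqref{squareaic1} (i.e.\ the (AIC) hypothesis) is needed solely to build the diagram in the first place --- it is what makes $m''$ a monomorphism, hence $(m'',e'')$ a short exact sequence --- and plays no further role here. Equivalently, and more conceptually, one may identify $t$ with $c/(a+b)$ via $t=\coker m''=d/\Img m''=(c/b)\big/\big((a+b)/b\big)$, whereupon the lower-right square is simply the standard square exhibiting $c/(a+b)$ as the pushout of the two quotient maps $c/a\lar c\rar c/b$.
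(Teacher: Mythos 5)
Your proof is correct and takes the same route as the paper, whose entire argument is the single sentence that the claim follows by ``a direct verification of the universal property of pushouts relative to the bottom right square'': your cokernel-criterion formulation and the element chase in $\F$ simply supply the details the paper omits, and the chase itself is sound (each step uses only the exactness of the rows and columns and the commutativity of \eqref{squareaic2}). Your remark that the pushout is meant in $\F$ --- with membership of $t$ in $\A$ only established later under (AIC)$^o$ in Proposition \eqref{selfdual} --- is also exactly how the lemma is used downstream.
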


\begin{proof}
The proof is a direct verification of the universal property of pushouts relative to the bottom right square.
\end{proof}

 \begin{prop}\label{selfdual}
 If $\A$ satisfies both (AIC) and (AIC)$^o$, then in diagram \eqref{squareaic2} $m''$ is an admissible monomorphism and $e''$ an admissible epimorphism. As a result, \eqref{squareaic2} represents an object of the category $S_2S_2(\A)$, of the delooping 
$S_{\bullet}S_{\bullet}(A)$ of $S_{\bullet}(\A)$ (cf. sect. \eqref{delooping}).
\end{prop}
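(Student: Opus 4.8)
The plan is to reduce the whole statement to a single point: that the object $t$ in the lower right corner of \eqref{squareaic2} lies in $\A$. Once this is known, Lemma \eqref{admono} upgrades $m''$ and $e''$ to admissible morphisms essentially for free, and the $3\times 3$ diagram \eqref{squareaic2} becomes an object of $S_2S_2(\A)$.

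First I would record the elementary consequences of (AIC). By part (1) of Definition \eqref{aic}, all four arrows $i,i',m,m'$ of the pullback square \eqref{squareaic1} are admissible monomorphisms of $\A$; hence the cokernels taken in $\F$, namely $q=a/p$, $d=c/b$, $s=c/a$ and $r=b/p$, are quotients of objects of $\A$ by admissible subobjects, so they all lie in $\A$. Thus in \eqref{squareaic2} every object except possibly $t$ is already known to belong to $\A$, and $m''$ is at least a monomorphism of $\A$ by Lemma \eqref{monoext1}.

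The key step is to identify $t$. By Lemma \eqref{monoext2} the bottom right square of \eqref{squareaic2} is a pushout square in $\F$. On the other hand $e\colon c\epi s$ and $j'\colon c\epi d$ are admissible epimorphisms of $\A$ with the same source, so by (AIC)$^o$ (part (2) of Definition \eqref{aic}) their pushout exists in $\A$, say with apex $t'\in\A$, and the two structure maps $d\epi t'$, $s\epi t'$ are admissible epimorphisms. Applying Lemma \eqref{pullback} to the exact category $\A^o$ with abelian envelope $\F^o$ shows that a pushout diagram of $\A$ remains a pushout diagram of $\F$; hence this square is also a pushout in $\F$. By uniqueness of pushouts in $\F$ there is then a canonical isomorphism $t\xrar{\sim}t'$ compatible with the structure maps, so $t\in\A$ and $e''\colon d\epi t$, $j''\colon s\epi t$ are admissible epimorphisms of $\A$. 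The delicate point of the whole argument is precisely this compatibility: one must be sure that the pushout produced abstractly inside $\A$ by (AIC)$^o$ really coincides with the one computed by cokernels in $\F$, which is exactly what the dual of Lemma \eqref{pullback} provides.

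With $t\in\A$ in hand the rest is formal. Since $m''\colon q\hrar d$ is a monomorphism of $\F$ between objects of $\A$ with $\coker(m'')=t\in\A$, Lemma \eqref{admono} makes $m''$ an admissible monomorphism of $\A$; applying the same lemma to $i''\colon r\hrar s$ (whose cokernel is again $t$) shows $i''$ is an admissible monomorphism as well, and $e''$, $j''$ are admissible epimorphisms by the key step (equivalently, by Lemma \eqref{admono} since $\ker(e'')\cong q\in\A$). Therefore all three rows and all three columns of \eqref{squareaic2} are admissible short exact sequences of $\A$ — the first two of each supplied by Lemma \eqref{monoext2} and the third just established — and, together with the rigidifying choices of objects already made, this is precisely the datum of a diagram of type \eqref{symmetricintrinsic}, i.e. an object of $S_2S_2(\A)$ in the delooping $S_{\bullet}S_{\bullet}(\A)$ (cf. sect. \eqref{delooping}). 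In particular the admissibility of the quotient sequences, which the remark following \eqref{symmetricintrinsic} warns does not hold automatically, is here guaranteed by the joint hypotheses (AIC) and (AIC)$^o$.
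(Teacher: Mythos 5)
Your proof is correct and follows essentially the same route as the paper: Lemma \eqref{monoext2} identifies the bottom right square as a pushout of the admissible epimorphisms $e$ and $j'$, (AIC)$^o$ then yields $t\in\A$ with $e''$, $j''$ admissible epimorphisms, and Lemma \eqref{admono} upgrades $m''$ (and $i''$) to admissible monomorphisms. Your only addition is to make explicit the identification of the pushout computed in $\F$ with the one supplied by (AIC)$^o$ inside $\A$, via the dual of Lemma \eqref{pullback} -- a point the paper's proof leaves implicit.
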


\begin{proof}
From Lemma \eqref{monoext2} we know that the diagram
$$
\xymatrix{
c\ar[d]_e\ar[r]^{j'} & d\ar[d]^{e''} \\
s \ar[r]_{j''} & t
}
$$
is a pushout diagram of the admissible epimorphisms $e$ and $j'$. Since $\A$ satisfies the (AIC)$^o$, it follows that $t\in\A$ and that $j'',\ e''$ are admissible epimorphisms. Then, from Lemma \eqref{admono}, $m''=\ker(e'')$ is an admissible monomorphism. 
\end{proof} 
\begin{definition}\label{qabelian}
An exact category $(\A,\E)$ is called {\it partially abelian exact} if every arrow $f$ which is the composition of an admissible 
monomorphism followed by an admissible epimorphism can be factored in a unique way as the composition of an admissible 
epimorphism followed by an admissible monomorphism.
\end{definition}
 For example, an abelian category is partially abelian exact.
 \begin{theorem}\label{factor}
 The category $(\A,\E)$ is partially abelian exact if and only if $\A$ satisfies both (AIC) and (AIC)$^o$.
 \end{theorem}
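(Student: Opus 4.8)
The plan is to prove the two implications separately, exploiting the self-duality of the notion of partial abelian exactness to halve the work in the ``only if'' direction. All arguments take place in $\A$ and its abelian envelope $\F$, using freely that the Quillen embedding $\A\hrar\F$ is fully faithful and sends admissible short exact sequences to short exact sequences.

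For the ``if'' direction, assume $\A$ satisfies both (AIC) and (AIC)$^o$ and let $f=j\circ m$ with $m\colon a\hrar c$ an admissible monomorphism and $j\colon c\epi d$ an admissible epimorphism. First I would set $b=\ker(j)\hrar c$, an admissible monomorphism of $\A$ (so that $d=\coker(b\hrar c)$ canonically), and form, using (AIC), the pullback $p=a\times_c b$ in $\A$; by Lemma \ref{pullback} this is also the pullback in $\F$, so $p\cong\ker(f)$ since $m$ is a monomorphism. Setting $e''=\coker(p\hrar a)\colon a\epi q$, an admissible epimorphism with $q\in\A$, Lemma \ref{monoext1} produces the unique monomorphism $m''\colon q\hrar d$ with $m''\circ e''=f$, and Proposition \ref{selfdual} — which is precisely where both (AIC) and (AIC)$^o$ are invoked — shows that $m''$ is an admissible monomorphism. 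This gives the desired factorization $f=m''\circ e''$. Uniqueness is then immediate: in $\F$ every admissible monomorphism is a monomorphism and every admissible epimorphism an epimorphism, so any two such factorizations of $f$ are epi--mono factorizations of $f$ in $\F$, hence joined by a canonical isomorphism; since all objects involved lie in $\A$, full faithfulness places this isomorphism and its inverse in $\A$.

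For the ``only if'' direction, I first record that the condition of Definition \ref{qabelian} is self-dual — an arrow of $\A^o$ is an admissible-mono-then-admissible-epi composite exactly when the corresponding arrow of $\A$ is, and likewise for the conclusion, and the abelian envelope of $\A^o$ is $\F^o$ — so it suffices to deduce (AIC), since (AIC)$^o$ for $\A$ is (AIC) for $\A^o$. So assume $\A$ is partially abelian exact and let $u\colon a'\hrar a$ and $v\colon a''\hrar a$ be admissible monomorphisms. Consider $f:=\coker(v)\circ u\colon a'\to a/a''$, an admissible monomorphism followed by an admissible epimorphism; by hypothesis $f=m\circ e$ with $e\colon a'\epi q$ an admissible epimorphism and $m\colon q\hrar a/a''$ an admissible monomorphism. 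Then $\ker(e)\in\A$, the map $\ker(e)\hrar a'$ is an admissible monomorphism, and in $\F$ one computes $\ker(e)=\ker(f)=u^{-1}(a'')=a'\times_a a''$. Running the same argument with $u$ and $v$ exchanged yields $\ker(e')\in\A$ with an admissible monomorphism $\ker(e')\hrar a''$ and $\ker(e')\cong a''\times_a a'$ in $\F$. The canonical isomorphism $\ker(e)\cong a'\times_a a''\cong\ker(e')$ in $\F$ lies in $\A$ by full faithfulness; transporting along it produces a single object $p\in\A$ together with admissible monomorphisms $p\hrar a'$ and $p\hrar a''$ forming a commutative square with $u$ and $v$, which is the pullback square in $\F$. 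Full faithfulness, together with the fact that the vertex lies in $\A$, then makes this square cartesian in $\A$ as well — and all four arrows are admissible monomorphisms, which is exactly (AIC). Dualizing gives (AIC)$^o$, completing the proof.

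Apart from the routine assembly in the ``if'' direction, the one genuinely new point — and the step I expect to be the main obstacle — is the construction in the ``only if'' direction: realizing the sought admissible intersection of $a'$ and $a''$ as the kernel of the admissible epimorphism occurring in the factorization of $\coker(v)\circ u$, and then recognizing that to exhibit both legs $p\hrar a'$ and $p\hrar a''$ as admissible one must run the construction symmetrically in $u$ and $v$ and glue the two kernels through the abelian envelope. The remaining verification — that a cartesian square in $\F$ with vertex in $\A$ descends to a cartesian square in $\A$ — is a soft consequence of full faithfulness and should present no difficulty.
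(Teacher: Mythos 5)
Your proof is correct. The ``if'' direction is essentially the paper's argument: pull back $m$ against $\ker(j)$ using (AIC), take the cokernel of the resulting admissible monomorphism, and use Lemma \eqref{monoext1} together with Proposition \eqref{selfdual} to see that the induced monomorphism into $d$ is admissible. You are in fact slightly more explicit than the paper here, which at the corresponding point cites only Lemma \eqref{monoext1} and asserts admissibility of $h$, and which does not spell out uniqueness at all; your epi--mono-factorization argument in $\F$ supplies it, and it is the same device the paper itself uses later in the converse direction. Where you genuinely diverge is in the ``only if'' direction. Both you and the paper obtain the candidate intersection as the kernel of the admissible epimorphism arising from factoring $a'\hrar a\epi a/a''$, so the first leg $p\hrar a'$ is admissible for free; the issue is the second leg. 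The paper keeps the asymmetric picture: it applies the factorization hypothesis a second time to the composite $k\hrar y\epi y/x$, compares the resulting image with $k/k'$ via the essential uniqueness of epi--mono factorizations in $\F$, concludes that $\coker(i)$ is an admissible epimorphism, and only then recovers admissibility of $i=\ker(\coker(i))$ through Lemma \eqref{admono}. You instead run the construction symmetrically on $\coker(u)\circ v$, harvest the admissible monomorphism $\ker(e')\hrar a''$ directly, and glue the two kernels through their canonical identification with $a'\times_a a''$ in $\F$, which full faithfulness places in $\A$. Your route buys a shorter path to admissibility of the second projection (no cokernel comparison, no appeal to Lemma \eqref{admono}) at the small cost of having to transport structure along the identification and check commutativity and cartesianity of the resulting square in $\A$, which you do correctly; both arguments invoke the factorization hypothesis exactly twice, once for each of the two symmetric composites attached to the square.
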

 \begin{proof}
 We first show that if $\A$ satisfies (AIC) and (AIC)$^o$, then $\A$ is partially abelian exact. Let be $f$ the composition $x\stackrel{m}\hrar y\stackrel{e}\epi z$  of the admissible mono $m$ followed by the admissible epi $e$. Let be $k\hrar y$ the kernel of $e$, which is an admissible monomorphism, and consider the pullback $p$ of $k\hrar y$ and $m$. We get the following diagram:
$$
\xymatrix{
p\ar@{^{(}->}[d]_{m'}\ar@{^{(}->}[r]^{} & k\ar@{^{(}->}[d]  \\
x \ar[d]_{e'}\ar@{^{(}->}[r]_{m} & y\ar[d]_{e} \\
z'\ar@{-->}[r]_{h} & z
}
$$ 
 in which $m':p\hrar x$ is an admissible monomorphism and $e'=\coker(m')\in\A$. From Lemma \eqref{monoext1} there exists a 
 unique admissible monomorphism $h:z'\hrar z$ for which the bottom square commutes. Thus, $x\stackrel{e'}\epi z'\stackrel{h}
 \hrar z$ is the required factorization of $f$.
 
 \vspace{0.2cm}
 
Conversely, supppose $A$ partially abelian exact. We first show that $\A$ satisfies (AIC). Let be a diagram of admissible 
monomorphisms of $\A$: $k\hrar y\hlar x$. Let be $z\colon=\dfrac{y}{k}$ and apply the factorization condition to the composition 
$x\hrar y\epi z$. We get the diagram 
$$
\xymatrix{
                                     & k\ar@{^{(}->}[d]^m  \\
x \ar[d]_{e'}\ar@{^{(}->}[r]_{} & y\ar[d]_{e} \\
y\ar@{^{(}->}[r]_{i''} & z
}
$$ 
where $e'$ is an admissible epi and $i''$ an admissible monomorphism.
 
 \par
 
 From the universal property of $m=\ker(e)$, we obtain a unique morphism $k'\rar k$, for which the following diagram 
\begin{equation}\label{squareaic3}
\xymatrix{
k'\ar[r]^i\ar@{^{(}->}[d]_{m'} & k\ar@{^{(}->}[d]^m  \\
x \ar[d]_{e'}\ar@{^{(}->}[r]_{} & y\ar[d]^{e} \\
y\ar@{^{(}->}[r]_{i''} & z
}
\end{equation}
 is commutative.
 
 \vspace{0.1cm}
It is clear that $i$ is a monomorphism in $\F$, hence in $\A$, and that the top square is cartesian. We want to prove that $i$ is an admissible monomorphism. Consider the admissible epimorphism $j'=\coker(i')$, and the epimorphism of $\F$ $j=\coker(i)$. Since the top square of \eqref{squareaic3} is cartesian in $\F$, we obtain, from Lemma \eqref{monoext1}, a unique monomorphism of $\F$, $m''\colon\dfrac{k}{k'}\hrar\dfrac{y}{x}$ making the following diagram
$$
\xymatrix{
k'\ar@{^{(}->}[d]\ar@{^{(}->}[r]^{i} & k\ar@{^{(}->}[d]^{m}\ar[r]^{j} & k/k'\ar@{^{(}->}[d]^{m''}  \\
x \ar@{^{(}->}[r]_{i'} & y\ar[r]_{j'} & y/x
}
$$
commutative. 

\vspace{0.1cm}
 
Let us call, in the previous diagram, $f$ the composition $j'\cdot m$.  Since $m$ is an admissible monomorphism, and $j'$ an 
admissible epimorphism, we can factor $f$ as a composition  
$k\stackrel{a}\epi z\stackrel{b}\hrar\dfrac{y}{x}$ where $a$ is an admissible epimorphism and $b$ an admissible monomorphism. In 
the abelian envelope $\F$ we thus obtain two decompositions of $f$ as an admissible epimorphism followed by an admissible 
monomorphism. Since in an abelian category every arrow has an essentially unique such decomposition, it must be 
$\dfrac{k}{k'}\stackrel{\sim}\rar z$, and $j$ is an admissible epimorphism. Since $i=\ker(j)$, it follows from Lemma \eqref{admono} 
that $i$ is an admissible monomorphism, as required.

\vspace{0.1cm}

By duality, we prove that $\A$ satisfies also (AIC)$^o$. This conlcudes the proof of the Theorem.
 \end{proof}
 \subsection{Grassmannians and intersections}
 In this and in the next section we clarify the behavior of $\Gamma(X)$ under admissible short exact sequences of $\limA$. The main result is theorem \eqref{projlift}, which roughly speaking allows us to lift an element $U\in\Gamma(X)$ along admissible monomorphisms $Y\hrar X$ and to project it along admissible epimorphisms $X\epi Z$ of $\limA$ to elements of the Grassmannians of $Y$ and $Z$, respectively, under the assumption that $\A$ is partially abelian exact. We start by showing that $\Gamma(X)$ is closed under the operation of taking the intersections of two of its elements, under the condition that $\A$ satisfies the (AIC).

\begin{theorem}\label{limintersections} 
Let be $\A$ an exact category satisfying (AIC). Let be $X\in\limA$ and $[U\hrar X], [V\hrar X]\in\Gamma(X)$. For all $m\colon U\hrar X, n\colon V\hrar X$ in their respective equivalence classes, the diagram
\begin{equation}\label{prolimpullback}
\xymatrix{
 U \ar@{^{(}->}[r]^m & X & V\ar@{_{(}->}[l]_n  \\
}
\end{equation}
can be completed to a pullback diagram in $\limA$:
\begin{equation}\label{intgr}
 \xymatrix{
U\cap V\ar@{^{(}->}[d]\ar@{^{(}->}[r]   & U\ar@{^{(}->}[d] \\
V \ar@{^{(}->}[r] & X
}
\end{equation}
such that $U\cap V\hrar U$ and $U\cap V\hrar V$ are admissible monomorphisms, and, after composing the arrows of \eqref{intgr}, we get: $[U\cap V\hrar X]$ is in $\Gamma(X)$.
\end{theorem}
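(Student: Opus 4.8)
The plan is to reduce the statement to a single compact subobject of $X$, form the intersection there levelwise in $\A$ using the admissible intersection condition, and finally recognize the result as an element of $\Gamma(X)$ by means of Proposition \eqref{stab} and Theorem \eqref{criterion}.

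First I would reduce to the case where $U$ and $V$ are admissible subobjects of one and the same object of $\Pro^a(\A)$. Writing $X=\limiriga$, the discussion preceding Theorem \eqref{criterion} shows that each of the admissible monomorphisms $m\colon U\hrar X$ and $n\colon V\hrar X$ factors, as an admissible monomorphism of $\Pro^a(\A)$, through some term of the ind-system; choosing a single index $k$ large enough we obtain admissible monomorphisms $\mu\colon U\hrar X_k$ and $\nu\colon V\hrar X_k$ in $\Pro^a(\A)$ with $X_k/U,\,X_k/V\in\A$. Since $X_k\hrar X$ is a monomorphism of $\limA$ through which both $m$ and $n$ factor, every cone over \eqref{prolimpullback} is a cone over $U\stackrel{\mu}{\hrar}X_k\stackrel{\nu}{\hlar}V$, and one checks that a pullback of the latter, formed inside $\Pro^a(\A)$, still satisfies the universal property of the pullback of \eqref{prolimpullback} in $\limA$. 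So it suffices to build this pullback in $\Pro^a(\A)$, to verify that its two legs are admissible monomorphisms, and to show $[U\cap V\hrar X]\in\Gamma(X)$.

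Next I would construct the intersection levelwise. Because $X_k/U\in\A$ and $X_k/V\in\A$, Proposition \eqref{stab} shows that $\mu$ and $\nu$ are each representable by a ladder of cartesian squares; straightening both simultaneously (Lemma \eqref{straight}) after replacing the pro-system of $X_k$ by a common cofinal one, I may assume that $X_k$, $U$, $V$ are represented by pro-systems $(P_n)$, $(U_n)$, $(V_n)$ with $U_n\hrar P_n$ and $V_n\hrar P_n$ admissible monomorphisms of $\A$ and with both ladders cartesian, i.e. $U_n=\phi_n^{-1}(U_{n-1})$ and $V_n=\phi_n^{-1}(V_{n-1})$ in the abelian envelope $\F$ (using Lemma \eqref{pullback}), where $\phi_n\colon P_n\epi P_{n-1}$ are the structure maps. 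Applying (AIC) in $\A$, for each $n$ the pullback $W_n:=U_n\cap V_n$ exists in $\A$ and all four edges of its defining square are admissible monomorphisms. From $U_n=\phi_n^{-1}(U_{n-1})$ and $V_n=\phi_n^{-1}(V_{n-1})$ one reads off $W_n=\phi_n^{-1}(W_{n-1})$, so the structure map $W_n\to W_{n-1}$ is an epimorphism with kernel $\ker\phi_n\in\A$, hence an admissible epimorphism by Lemma \eqref{admono}. Therefore the pro-object $W$ with components $(W_n)$ lies in $\Pro^a(\A)$; the levelwise admissible monomorphisms $W_n\hrar U_n$ and $W_n\hrar V_n$ assemble into admissible monomorphisms $W\hrar U$ and $W\hrar V$; and, since $W_n=U_n\times_{P_n}V_n$ for every $n$, the object $W$ is the pullback of $U\hrar X_k\hlar V$. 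Setting $U\cap V:=W$ gives the diagram \eqref{intgr}.

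Finally, to prove $U\cap V\in\Gamma(X)$: the identities $W_n=\phi_n^{-1}(W_{n-1})$ and $U_n=\phi_n^{-1}(U_{n-1})$ give $W_n=W_{n-1}\times_{U_{n-1}}U_n$, so the ladder $(W_n\hrar U_n)$ representing $W\hrar U$ consists of cartesian squares; by Proposition \eqref{stab} this forces $U/W\in\A$. Now $W\hrar U$ is an admissible monomorphism of $\Pro^a(\A)$ with $U\in\Gamma(X)$ and $U/W\in\A$, so Theorem \eqref{criterion} yields $[U\cap V\hrar X]=[W\hrar X]\in\Gamma(X)$, which completes the proof.

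The step I expect to be the main obstacle is the choice of a \emph{common} cartesian-ladder representation of $X_k$ straightening $\mu$ and $\nu$ at once — in effect, the fact that (AIC) on $\A$ propagates to $\Pro^a(\A)$. One has to check that passing to a common cofinal refinement of two pro-representations of $X_k$ preserves the cartesian-ladder property of both inclusions, which rests on the observations that a cofinal subsystem of a cartesian ladder is again a cartesian ladder and that two pro-representations of the same object admit a compatible common refinement (here in the countable setting). Everything after that — the identity $W_n=\phi_n^{-1}(W_{n-1})$, the admissibility of the $W_n\to W_{n-1}$, and the conclusion via Proposition \eqref{stab} and Theorem \eqref{criterion} — is a routine diagram chase in $\F$.
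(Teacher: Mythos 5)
Your proposal is correct and follows essentially the same route as the paper: straighten both inclusions to cartesian ladders over a single term $X_k$ of the ind-system, form the intersection levelwise in $\A$ via (AIC), verify that the resulting pro-object is strictly admissible with cartesian-ladder legs, and conclude via $U/(U\cap V)\in\A$ together with extension-closure of $\Ind^a(\A)$ (which is exactly the content of Theorem \eqref{criterion} that you cite). The only differences are organizational — you obtain admissibility of the structure maps $W_n\to W_{n-1}$ by a direct kernel computation in $\F$ where the paper invokes its Lemma \eqref{pullbackmorphism}, and you package the final step as an appeal to Theorem \eqref{criterion} rather than repeating the extension argument — neither of which changes the substance.
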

\begin{lm}\label{pullbackmorphism}
Let be $\A$ an exact category satisfying the (AIC), and let $\F$ be its abelian envelope.  Suppose we have two pullback diagrams 
 \begin{equation}\label{pullbacks}
\xymatrix{
P \ \ar@{^{(}->}[d]_{j_{1}}\ar@{^{(}->}[r]^{i_{1}} & A\ar@{^{(}->}[d]^{j_2}  && P''\ \ar@{^{(}->}[d]_{l_{1}}\ar@{^{(}->}[r]^{k_{1}} & A''\ar@{^{(}->}[d]^{l_2}  \\
B\ar@{^{(}->}[r]_{i_{2}} & Z && B''\ar@{^{(}->}[r]_{k_{2}} & Z'' \\
}
\end{equation}
where all the morphisms are admissible monomorphisms, and there are admissible epimorphisms 
 $e\colon A\epi A'', f\colon Z\epi Z'', g\colon B\epi B''$ for which we have a commutative diagram 
 \begin{equation}\label{cubic1}
\xymatrix{
& P\ar@{^{(}->}[rr]  \ar@{^{(}->}'[d]|{\;\;\;\;\; }[dd]                              &     &    A \ar@{^{(}->}[dd]^{}\ar@{->>}[dl]^{e}\\ 
                        P'' \ar@{^{(}->}[rr]\ar@{^{(}->}[dd]            &      &    A''  \ar@{^{(}->}[dd]|{} \\ 
                        &           B \ar@{^{(}->}'[r][rr]\ar@{->>}[dl]^{g}          &      &     Z  \ar@{->>}[dl]^{f} \\  
                        B'' \ar@{^{(}->}[rr]                                        &      &     Z''
                }
\end{equation}        
such that the square 
\begin{equation}\label{floor}
\xymatrix{
B \ \ar@{->>}[d]_{g}\ar@{^{(}->}[r]^{} & Z\ar@{->>}[d]^{f}   \\
B''\ar@{^{(}->}[r]_{} & Z''  \\
}
\end{equation}
is admissible and cartesian. Then there exists a unique morphism $r\colon P\rar P''$ for which the above cubic  diagram commutes, and $r$ is an admissible epimorphism.
 \end{lm}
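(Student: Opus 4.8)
The plan is to work entirely inside the abelian envelope $\F$, where by Lemma \ref{pullback} the squares in \eqref{pullbacks} are genuine intersection squares and where, invoking the embedding theorem as in the proof of Proposition \ref{stab}, one may carry out element chases. First I would produce $r$ by the universal property of the pullback: the composites $P\hrar A\epi A''$ and $P\hrar B\epi B''$ become equal after further composing with $A''\hrar Z''$ and $B''\hrar Z''$, since every face of \eqref{cubic1} not involving the edge $P$-to-$P''$ is asserted to commute and the left square of \eqref{pullbacks} commutes. As $P''=A''\times_{Z''}B''$ in $\F$ (Lemma \ref{pullback} again), there is a unique morphism $r\colon P\rightarrow P''$ compatible with the two projections; it makes the cube commute, and it is a morphism of $\A$ because $\A\hrar\F$ is full and $P,P''\in\A$. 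Uniqueness of $r$ subject to commutativity of the cube is then immediate from the same pullback property.

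It remains to show that $r$ is an admissible epimorphism of $\A$, and by Lemma \ref{admono} this reduces to two points: (i) $\ker r\in\A$, and (ii) $r$ is an epimorphism of $\F$. For (i), since $P''\hrar A''$ is monic, $\ker r$ coincides with the kernel of the composite $P\xrar{i_1}A\xrar{e}A''$, hence with the pullback in $\F$ of the two admissible monomorphisms $i_1\colon P\hrar A$ and $\ker(e)\hrar A$ (note $\ker(e)\in\A$ because $e$ is an admissible epimorphism). Since $\A$ satisfies (AIC), this pullback is already realized in $\A$, and by Lemma \ref{pullback} it agrees with the one computed in $\F$; therefore $\ker r\in\A$.

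For (ii) I would run an element chase in $\F$, and this is the one place the hypothesis that \eqref{floor} is cartesian is used. Given $p''\in P''=A''\cap B''\subseteq Z''$, let $a''$ and $b''$ be its images in $A''$ and $B''$ and $z''$ their common image in $Z''$. Since $e$ is surjective, choose $z\in A$ with $e(z)=a''$; commutativity of the face $\{A,Z,A'',Z''\}$ forces the image of $z$ in $Z''$ to equal $z''$. Then the pair consisting of $b''\in B''$ and $z\in Z$ agrees in $Z''$, so the cartesian square \eqref{floor} supplies $p\in B$ with $g(p)=b''$ and image $z$ in $Z$. As $z$ lies in the subobject $A\subseteq Z$, in fact $p\in A\cap B=P$, and the image of $r(p)$ in $A''$ is $e(z)=a''$, whence $r(p)=p''$ because $P''\hrar A''$ is monic. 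Thus $r$ is surjective, and Lemma \ref{admono} completes the proof.

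The step I expect to be the main obstacle is (ii): one must resist assuming that the ``back'' face $A\hrar Z$ over $A''\hrar Z''$ is cartesian (it need not be), and instead lift $p''$ first through $A$, using only surjectivity of $e$ and commutativity, and only then use the cartesian floor square to lift into $B$. Keeping precise track of which faces of \eqref{cubic1} are asserted to commute, and of exactly which compatible pair is fed into \eqref{floor}, is the delicate point; the rest is formal once one is in $\F$.
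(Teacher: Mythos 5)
Your proof is correct, and while its skeleton matches the paper's (existence and uniqueness of $r$ from the universal property of the pullback in $\F$, then the reduction via Lemma \eqref{admono} to showing $r$ is an epimorphism of $\F$ with kernel in $\A$), both halves of the admissibility argument are executed by a genuinely different, and in fact leaner, route. For the kernel, the paper erects the full double cubic diagram obtained by adjoining the kernels of $e$, $f$, $g$ and $r$, and then proves by a separate diagram chase that the square \eqref{sismono} relating $\ker r$, $\ker g$, $P$ and $B$ is cartesian in $\F$, so that (AIC) applies to the pair $\gamma\colon B'\hrar B$, $j_1\colon P\hrar B$; you instead observe that since $k_1$ is monic, $\ker r=\ker(e\circ i_1)=i_1^{-1}(\ker e)$, i.e.\ the pullback of the two admissible monomorphisms $i_1\colon P\hrar A$ and $\ker(e)\hrar A$, and apply (AIC) directly to that pair (with Lemma \eqref{pullback} guaranteeing the $\A$-pullback computes the $\F$-kernel). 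This bypasses the second chase and the auxiliary cube entirely. For surjectivity, the paper first lifts $z''$ arbitrarily through $f$ and is then forced into a case analysis over whether some preimage $\tilde d$ of $d$ in $A$ hits the chosen lift $\tilde e$; you lift through $e$ first, so the resulting element of $Z$ is automatically compatible with $b''$ over $Z''$, the cartesian floor square \eqref{floor} and the cartesianity of the left square of \eqref{pullbacks} then produce the preimage in $P$ in one pass, and monicity of $k_1$ closes the argument. Your correctly flagged caution --- that the face $\{A,Z,A'',Z''\}$ of \eqref{cubic1} is only assumed commutative, not cartesian, so one must enter through $e$ and reserve the cartesianity for \eqref{floor} --- is exactly the point that makes the paper's chase awkward; your ordering of the lifts is what removes the case split. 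Both arguments use the same hypotheses, so nothing is lost in generality, and your version could be substituted for the paper's with no change elsewhere.
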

 \begin{proof}
 The existence and uniqueness of $r$ is a consequence of the universal property of the pullback $P''$. We now prove that $r$ is an admissible epimorphism, by showing that $r$ is an epimorphism of $\F$ whose kernel is in $\A$, and then applying Lemma \eqref{admono}.
\vspace{0.1cm}
 
Let thus consider diagram \eqref{cubic1} in the abelian envelope $\F$. To prove that $r$ is an epimorphism of $\A$, we use a diagram-chase argument. 

\vspace{0.1cm}

Suppose, therefore, that an element $a$ is given in $P''$. We want to construct a preimage of $a$ through $r$.

\vspace{0.1cm}

Construct, from $a$, the following elements: $d=k_1(a)\in A''$; $c=l_1(a)\in B''$ and $e=k_2(c)\in Z''$. Then, lift $e$ to a preimage $\tilde e$ in $Z$, which exists since $f$ is surjective. From the cartesianity of diagram \eqref{floor}, we get a unique element $b\in B$ such that $i_2(b)=e$ and $g(b)=c$.

\vspace{0.1cm}

Next, consider the preimages $\tilde d$ of $d$ in $A$. If there exists a $\tilde d$ such that $j_1(\tilde d)=\tilde e$, then, from cartesianity of the left square in \eqref{pullbacks}, we obtain a unique element $\tilde x\in P$ for which $i_1(\tilde x)=\tilde d$ and $j_1(\tilde x)=b$. Thus, $r(\tilde x)$ is the unique preimage of $d$ in $P$, and $r$ is surjective in this case.

\vspace{0.1cm}

Suppose, on the other hand, that for all elements $\tilde d$ preimages of $d$ in $A$, it is $j_2(\tilde d)\neq\tilde e$. In this case, pick any $j_2(\tilde d)$ in $Z$. We get:
$$
f(j_2(\tilde d)-\tilde e)=(f\cdot j_2)(\tilde d)-f(\tilde e)=e-e=0.
$$
For a given $\tilde e'=j_2(\tilde d)$ in $Z$, and from cartesianity of diagram \eqref{floor}, we obtain a unique element $\tilde b$ in $B$, such that $i_1(\tilde b)=\tilde e'$ and $g(\tilde b)=c$. Now, the cartesianity of the left square in \eqref{pullback} yields again a unique element $\tilde x$ in $P$ for which $j_1(\tilde x)=\tilde b$ and $i_1(\tilde x)=\tilde d$. We thus have, again:
$$
k_1\cdot r(\tilde x)=(e\cdot i_1)(\tilde x)=d
$$
and we reduce ourselves to the previous case. Then, $r(\tilde x)=a$, and $r$ is an epimorphism.

\vspace{0.1cm}

We now prove that $r$ is an admissible epimorphism. For this, we consider the following double cubic diagram, which is the extension of the cubic diagram \eqref{cubic1} to the kernels of the epimorphisms there involved. 

\vspace{0.1cm}

Let then be $\gamma=\ker(g)$; $\epsilon=\ker(e)$; $\phi=\ker(f)$ admissible monomorphisms, and $s=\ker(r)$, the kernel of $r$ in $\F$. Then, we get the following cubic diagram in $\F$:
$$
\xymatrix{
&    A'\ar@{^{(}->}[rr]^{} \ar@{^{(}->}'[d][dd]^{\epsilon}  &     &Z'\ar@{^{(}->}[dd]^{\phi}\\ 
P'\ar@{^{(}->}[rr]^{\;\;\;\;\;\;\;\;\;\; \;\;\;\;\;  n}\ar@{^{(}->}[dd]_{s} \ar@{^{(}->}[ur]^{}    &   &   B'\ar@{^{(}->}[ur]^{}\ar@{^{(}->}[dd]^{\gamma} \\ 
 &           A\ar@{^{(}->}'[r][rr]^{}\ar@{->>}'[d][dd]^{e}   &      &            Z\ar@{->>}[dd]^{f} \\  
P\ar@{->>}[dd]_{r}\ar@{^{(}->}[rr]^{\ \ \ \ \ \ \ \ \  j_1}\ar@{^{(}->}[ur]^{}                    &     &                   B\ar@{->>}[dd]^g\ar@{^{(}->}[ur]_{} \\
&           A''\ar@{^{(}->}'[r][rr]   &      &            Z'' \\ 
 P''\ar@{^{(}->}[ur]^{}\ar@{^{(}->}[rr]_{l_1}  &   &    B''\ar@{^{(}->}[ur]^{} \\
                }
$$
In this diagram the arrows composing the top square are monomorphisms induced by the universal properties of the kernels involved. The columns $(\epsilon, e)$, $(\phi, f)$, $(\gamma, g)$ are admissible short exact sequences of $\A$, while the column $(s,r)$ is a short exact sequence of $\F$.  In order to prove that $r$ is an admissible epimorphism, we shall prove that $s=\ker(r)$ is an admissible monomorphism. The claim will then follow from Lemma \eqref{admono}. It will be enough to show that the square 
\begin{equation}\label{sismono}
\xymatrix{
P' \ \ar@{^{(}->}[d]_{s}\ar@{^{(}->}[r]^{} & B'\ar@{^{(}->}[d]^{\gamma}   \\
P_{}\ar@{^{(}->}[r]_{j_1} & B  \\
}
\end{equation}
is cartesian in $\F$. In fact, this will imply that it is the pullback square of two admissible monomorphisms of $\A$, i.e. $\gamma$ and $j_1$, thus, from the (AIC) condition and Lemma \eqref{pullback}, the square is cartesian in $\A$ and $s$ is an admissible monomorphism.

\vspace{0.1cm}

We shall use also in this case a diagram-chase argument. Let be $v$ in $P$ and $u$ in $B'$, two elements such that $j_1(v)=\gamma(u)=w$ in $B$. This element $w$ is sent by $g$ into 0 of $B''$, since $g(w)=g\cdot\gamma(u)$ and $(\gamma, g)$ is an admissible short exact sequence. Thus, $l_1\cdot r(v)=0$. But $l_1$ is a monomorphism, so $r(v)=0$.

\vspace{0.1cm}

It follows that $v$ belongs to the kernel of $r$, hence to the image of $s$. Let thus be $x$ in $P'$ the unique element such that $s(x)=v$. The cartesianity of \eqref{sismono} is proved if we can show that $n(x)=u$. But this is clear, since all the morphisms involved in diagram \eqref{sismono} are monomorphisms, and since $\gamma(u)=w=j_1\cdot s(x)$, then $n$ must send $x$ into the unique preimage of $w$ in $B'$, i.e. $u$. Thus, \eqref{sismono} is cartesian in $\F$. Then $r$ is an admissible epimorphism and the proof of the Lemma is complete. 
\end{proof}
We notice that the proof of the claim that $r$ is an epimorphism also proves the following
\begin{cor}\label{resulting}
The resulting admissible square in diagram \eqref{cubic1}:
$$
\xymatrix{
P'' \ \ar@{->>}[d]_{r}\ar@{^{(}->}[r]^{} & A''\ar@{->>}[d]^{}   \\
P_{}\ar@{^{(}->}[r]_{} & A  \\
}
$$
is cartesian.
\end{cor}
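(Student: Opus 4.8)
The plan is to obtain Corollary \eqref{resulting} by re-running, with one extra bookkeeping variable, the element chase already carried out in the proof of Lemma \eqref{pullbackmorphism} to show that $r$ is an epimorphism. As there, I would pass to the abelian envelope $\F$, using the embedding theorem of \cite{pre} to argue with elements as though every object were an abelian group. The square in question is the ``top face'' of the cube \eqref{cubic1}: its vertices are $P$, $A$, $P''$, $A''$, its horizontal edges are the admissible monomorphisms $i_1\colon P\hrar A$ and $k_1\colon P''\hrar A''$ of \eqref{pullbacks}, and its vertical edges are the admissible epimorphisms $r\colon P\epi P''$ and $e\colon A\epi A''$ of \eqref{cubic1} ($r$ being an admissible epimorphism by Lemma \eqref{pullbackmorphism}); in particular it is an admissible square. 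Since $P,A,P'',A''$ all lie in $\A$, it is enough to verify the universal property of a pullback in $\F$, after which the square is automatically cartesian in $\A$ as well (cf. Lemma \eqref{pullback}).

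Concretely, I would take a pair $(a,p'')\in A\times P''$ with $e(a)=k_1(p'')$ in $A''$ and produce an element $p\in P$ with $i_1(p)=a$ and $r(p)=p''$; uniqueness of such a $p$ is immediate because $i_1$ is a monomorphism. Writing $P=A\times_Z B$ and $P''=A''\times_{Z''}B''$ from the two squares of \eqref{pullbacks}, such a $p$ is the same datum as an element $b\in B$ with $i_2(b)=j_2(a)$ in $Z$ and $g(b)=l_1(p'')$ in $B''$: given such a $b$, the pair $p=(a,b)$ lies in $P$ and satisfies $k_1(r(p))=e(i_1(p))=e(a)=k_1(p'')$ and $l_1(r(p))=g(j_1(p))=g(b)=l_1(p'')$, hence $r(p)=p''$ since an element of $P''$ is determined by its images in $A''$ and $B''$. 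The existence of $b$ is exactly the cartesianity of the bottom face \eqref{floor}, $B=Z\times_{Z''}B''$, applied to the pair $(j_2(a),l_1(p''))\in Z\times B''$; the compatibility it requires, $f(j_2(a))=k_2(l_1(p''))$ in $Z''$, I would read off the commuting faces of the cube: $f(j_2(a))=l_2(e(a))$ (right face), $e(a)=k_1(p'')$ (hypothesis), and $l_2(k_1(p''))=k_2(l_1(p''))$ (front face). This is precisely the string of identities appearing in the surjectivity step of the proof of Lemma \eqref{pullbackmorphism}, which is why the corollary is essentially a re-reading of that argument.

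I do not expect a real obstacle: the statement amounts to the elementary fact that a commutative cube whose bottom, front, and back faces are cartesian has a cartesian top face, and every input needed---cartesianity of \eqref{floor} and of the two squares in \eqref{pullbacks}, commutativity of \eqref{cubic1}, and the already established fact that $r$ is an admissible epimorphism---is in hand. The only points that need a moment of care are clerical: matching the element $b$ produced from the bottom-face pullback with the one constructed implicitly in the surjectivity argument, observing that uniqueness of $p$ follows from $i_1$ being monic, and invoking Lemma \eqref{pullback} to transport the conclusion from $\F$ back to $\A$ (where the square, being admissible, is then also cocartesian).
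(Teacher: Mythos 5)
Your proof is correct and takes essentially the same route as the paper: the paper's entire argument for Corollary \eqref{resulting} is the remark that the element chase proving surjectivity of $r$ in Lemma \eqref{pullbackmorphism} already verifies the universal property of the top face of the cube, and your chase (producing $b$ from the cartesian bottom face \eqref{floor}, assembling $p=(a,b)\in P$, and getting uniqueness from $i_1$ being monic) is exactly that argument made explicit. The only cosmetic point is that passing cartesianity from $\F$ back to $\A$ rests on fullness of the Quillen embedding together with Lemma \eqref{pullback}, which is also how the paper itself handles this step.
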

\begin{prop}\label{intersections1}
Let be $U, V, Z$ objects of $\Pro^a(\A)$, with admissible monomorphisms $U\hrar Z$ and $V\hrar Z$, which can be expressed as ladders of cartesian admissible squares of $\A$.   Then the pullback 
$U\times_Z V$ exists in $\Pro^a(\A)$ and in the resulting diagram
 $$
 \xymatrix{
U\times_Z V\ar[d]\ar[r]& U
 \ar[d]^m \\
V \ar[r]_n & Z
}
$$
 the morphisms $U\times_Z V\rar U$ and $U\times_Z V\rar V$ are admissible monomorphisms which can be expressed as ladders of cartesian admissible squares of $\A$.
\end{prop}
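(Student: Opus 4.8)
The plan is to build $U \times_Z V$ one level at a time inside $\A$, using the (AIC), and then to glue these levels into a pro-object by means of Lemma \ref{pullbackmorphism} and Corollary \ref{resulting}, which are tailored exactly to this situation.

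First I would put everything over a common index set. By Proposition \ref{stab} we have $Z/U, Z/V \in \A$; applying straightification (Lemma \ref{straight}) to the admissible epimorphisms $Z \epi Z/U$ and $Z \epi Z/V$, I would choose a pro-representation $Z = \varprojlim_i Z_i$ fine enough that both epimorphisms are induced, for every $i$, by admissible epimorphisms $Z_i \epi Z/U$ and $Z_i \epi Z/V$ of $\A$. Setting $U_i := \ker(Z_i \epi Z/U)$ and $V_i := \ker(Z_i \epi Z/V)$ in $\A$ then yields $U = \varprojlim_i U_i$ and $V = \varprojlim_i V_i$, with $U_i \hrar Z_i \hlar V_i$ admissible monomorphisms whose ladder squares — being formed as kernels of a single fixed map at two consecutive levels — are cartesian and admissible. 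This is the same type of reduction carried out in the proof of Proposition \ref{stab}.

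Next, for each $i$, since $\A$ satisfies (AIC) I would form $P_i := U_i \times_{Z_i} V_i$ in $\A$; in the resulting square all four arrows are admissible monomorphisms of $\A$. To produce the transition maps of a pro-system $\{P_i\}$, I would apply Lemma \ref{pullbackmorphism} to each consecutive pair of levels, with $A = U_{i+1}$, $A'' = U_i$, $B = V_{i+1}$, $B'' = V_i$, $Z = Z_{i+1}$, $Z'' = Z_i$, $P = P_{i+1}$, $P'' = P_i$, and $e,f,g$ the transition maps of the systems for $U$, $Z$, $V$ respectively: the ``floor'' square is precisely the $(V \hrar Z)$-ladder square, hence admissible and cartesian, so the Lemma delivers a unique admissible epimorphism $r_i\colon P_{i+1}\epi P_i$. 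Uniqueness makes $\{P_i, r_i\}$ a genuine pro-system, so $P := \varprojlim_i P_i$ is an object of $\Pro^a(\A)$. Corollary \ref{resulting} then tells us that each square
$$
\xymatrix{
P_{i+1}\ar@{->>}[d]\ar@{^{(}->}[r] & U_{i+1}\ar@{->>}[d] \\
P_i\ar@{^{(}->}[r] & U_i
}
$$
is cartesian and admissible, so the componentwise admissible monomorphisms $P_i \hrar U_i$ assemble to an admissible monomorphism $P \hrar U$ of $\Pro^a(\A)$ that is represented by a ladder of cartesian admissible squares of $\A$. Exchanging the roles of $U$ and $V$ — now the ``floor'' square in Lemma \ref{pullbackmorphism} is the $(U \hrar Z)$-ladder square — gives the same statement for $P \hrar V$.

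Finally I would verify that $P$, equipped with the legs $P \to U$ and $P \to V$ obtained as $\varprojlim_i$ of the componentwise pullback projections, is the pullback $U \times_Z V$. Because a morphism $T \to \varprojlim_i P_i$ in $\Pro(\A)$ is an element of $\varprojlim_i \varinjlim_k \Hom(T_k, P_i)$ and each $P_i$ is a pullback in $\A$, the universal property propagates levelwise from $\A$ to $\Pro(\A)$; since $\Pro^a(\A) \hrar \Pro(\A)$ is full and $P$ lies in $\Pro^a(\A)$, the square is a pullback in $\Pro^a(\A)$ as claimed. I expect the main obstacle to be the first step — securing a single pro-system over which both $U \hrar Z$ and $V \hrar Z$ are simultaneously cartesian admissible ladders — and, subordinate to it, the routine but necessary verification that at each level the hypotheses of Lemma \ref{pullbackmorphism} are met (admissibility and cartesianness of the floor square, and $e,f,g$ admissible epimorphisms). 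Once those are in hand, everything else is formal.
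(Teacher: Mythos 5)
Your proof is correct and follows essentially the same route as the paper's: straighten both monomorphisms over a common pro-system, form the levelwise pullbacks $U_j\times_{Z_j}V_j$ via the (AIC), invoke Lemma \eqref{pullbackmorphism} to see that the transition maps are admissible epimorphisms (so the limit lies in $\Pro^a(\A)$), and use Corollary \eqref{resulting} for the cartesian ladders. The only deviation is in the first step, where you arrange the common reindexing through Proposition \eqref{stab} and the kernels of $Z_i\epi Z/U$ and $Z_i\epi Z/V$, whereas the paper simply cites Lemma \eqref{straight}; your version in fact spells out the simultaneous straightification that the paper leaves implicit.
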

\begin{proof}
By Lemma \eqref{straight}, we can write, in components of $\A$, $Z=\proZj$, $U=\proUj$ and $V=\proVj$, so that $m$ and $n$ are given by cartesian ladders of admissible monomorphisms $m_j:U_j\hrar Z_j$, $n_j:V_j\hrar Z_j$. Then, the object
$$
\limproj U_j\times_{Z_j} V_j\in\Pro^a(\A)
$$
is clearly the pullback $U\times_Z V$. By construction, the morphisms $U\times_Z V\hrar U$ and $U\times_Z V\hrar V$ are admissible monomorphisms. They are represented by cartesian ladders by  Corollary \eqref{resulting}. The proof is complete.
 \end{proof}
 
 Let us now consider the case for $\limA$. Let be $X\in\limA$ and suppose $X=``\limj"X_j$, for $X_j\in\Pro^a(\A)$.
 
 \vspace{0.1cm}
 
 Let be $U\in\Pro^a(\A)$, and $m:U\hrar X$ an admissible monomorphism in $\limA$. Since $U$ in $\limA$ is represented as a trivial ind-system of $\Pro^a(\A)$, the datum of $m$ is equivalent to the datum of the existence of an index $j$ and an admissible monomorphism $U\hrar X_j$ in $\Pro^a(\A)$.
 \vspace{0.1cm}
 
 Then, if $[U\hrar X], [V\hrar X]\in\Gamma(X)$, there are indexes $j_1, j_2$ for which any pair of representatives $m\colon U\hrar X$, $n\colon V\hrar X$ are given in components by admissible monomorphisms of $\Pro^a(\A)$, $U\hrar X_{j_1}, V\hrar X_{j_2}$, as ladders of cartesian squares in $\A$. By taking $j=\max(j_1, j_2)$, we can assume, without loss of generality, $j_1=j_2=j$.
 \begin{lm}\label{intersections2}
 With the same notation as above, the object $U\times_{X_j} V$ is a pullback in $\limA$ of the diagram\eqref{prolimpullback}.
\end{lm}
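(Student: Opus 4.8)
The plan is to verify the universal property of the pullback directly, after reducing the diagram \eqref{prolimpullback} to the diagram over $X_j$ for which the pullback has already been produced inside $\Pro^a(\A)$ in Proposition \ref{intersections1}, and then transporting that pullback into $\limA$ by means of the elementary description of the morphism sets of an ind-category.

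First I would record what the reductions preceding the statement supply. The admissible monomorphisms $m\colon U\hrar X$ and $n\colon V\hrar X$ factor through the canonical map $\iota_j\colon X_j\hrar X$ from the term $X_j$ of the strict ind-system to $X$, say $m=\iota_j\circ u$ and $n=\iota_j\circ v$ with $u\colon U\hrar X_j$ and $v\colon V\hrar X_j$ admissible monomorphisms of $\Pro^a(\A)$. Since $[U\hrar X],[V\hrar X]\in\Gamma(X)$, the quotients $X_j/U$ and $X_j/V$ lie in $\A$ (as observed after Definition \ref{G(X)}), hence by Proposition \ref{stab} the monomorphisms $u$ and $v$ are representable by ladders of cartesian admissible squares of $\A$; thus Proposition \ref{intersections1} applies, $P:=U\times_{X_j}V$ exists in $\Pro^a(\A)$, and its projections $\mathrm{pr}_U\colon P\hrar U$, $\mathrm{pr}_V\colon P\hrar V$ are admissible monomorphisms with $u\,\mathrm{pr}_U=v\,\mathrm{pr}_V$, whence $m\,\mathrm{pr}_U=n\,\mathrm{pr}_V$. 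So $P$ with these projections is at least a commutative square over \eqref{prolimpullback}.

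Next I would observe that $\iota_j$ is a monomorphism of $\limA$: for $T\in\limA$, written as an ind-system $\{T_k\}$ of objects of $\Pro^a(\A)$, one has $\Hom_{\limA}(T,X_j)=\varprojlim_k\Hom_{\Pro^a(\A)}(T_k,X_j)$ and $\Hom_{\limA}(T,X)=\varprojlim_k\varinjlim_{j'}\Hom_{\Pro^a(\A)}(T_k,X_{j'})$, and the map between them induced by $\iota_j$ is injective because every transition map $X_j\hrar X_{j'}$ is a monomorphism. Therefore, for any pair of morphisms $a\colon T\to U$ and $b\colon T\to V$ of $\limA$, the relation $m\,a=n\,b$ is equivalent to $u\,a=v\,b$; in other words the cones over \eqref{prolimpullback} are precisely the cones over the diagram $U\xrightarrow{u}X_j\xleftarrow{v}V$, so it suffices to prove that $P$ is the pullback of this last diagram in $\limA$.

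Finally, for $T=\{T_k\}$ in $\limA$ and any object $W$ of $\Pro^a(\A)$ we have $\Hom_{\limA}(T,W)=\varprojlim_k\Hom_{\Pro^a(\A)}(T_k,W)$. Applying this with $W\in\{P,U,V,X_j\}$, using that $P=U\times_{X_j}V$ is a pullback \emph{in} $\Pro^a(\A)$ (so $\Hom(T_k,P)=\Hom(T_k,U)\times_{\Hom(T_k,X_j)}\Hom(T_k,V)$ for every $k$), and using that inverse limits of sets commute with fibre products, one obtains
\[
\Hom_{\limA}(T,P)=\Hom_{\limA}(T,U)\times_{\Hom_{\limA}(T,X_j)}\Hom_{\limA}(T,V)
\]
naturally in $T$, which is exactly the universal property making $P$ the pullback of $U\xrightarrow{u}X_j\xleftarrow{v}V$, hence of \eqref{prolimpullback}, in $\limA$. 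The hard part is precisely this last step: one must be sure the pullback, a priori only a limit inside the subcategory $\Pro^a(\A)$, is preserved in the larger category $\limA$, and the crux is the identity $\Hom_{\limA}(T,W)=\varprojlim_k\Hom(T_k,W)$ for $W$ a constant ind-object together with the commutation of inverse limits with fibre products; the remainder is bookkeeping around the factorizations already set up before the statement.
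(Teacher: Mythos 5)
Your proof is correct, but it reaches the conclusion by a different mechanism than the paper. The paper's own argument stays in the ind-direction: it observes that for every $j'$ beyond $j$ the composites $U\hrar X_{j'}$, $V\hrar X_{j'}$ are still cartesian ladders, that the resulting levelwise pullbacks $U\times_{X_{j'}}V$ are all canonically isomorphic to $U\times_{X_j}V$, and hence that the stabilizing ind-system $\limjrigaU\kern-2.5em{}\;U\times_{X_{j'}}V$ --- which is the natural candidate for the pullback of \eqref{prolimpullback} computed termwise in the ind-variable --- collapses to $U\times_{X_j}V$. You instead verify the universal property head-on: you first reduce cones over \eqref{prolimpullback} to cones over $U\hrar X_j\hlar V$ by showing the canonical map $\iota_j\colon X_j\hrar X$ is a monomorphism of $\limA$ (a filtered colimit of injections on hom-sets), and then use the formula $\Hom_{\limA}(T,W)=\varprojlim_k\Hom_{\Pro^a(\A)}(T_k,W)$ for $W$ constant, together with the commutation of inverse limits with fibre products, to transport the pullback property of $U\times_{X_j}V$ from $\Pro^a(\A)$ to $\limA$. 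What your route buys is completeness: the paper's final step (that the stabilized ind-system of levelwise pullbacks really is the pullback in $\limA$) is asserted rather than argued, whereas your hom-set computation is exactly the missing justification. What the paper's route buys is uniformity with the surrounding proofs (Proposition \eqref{intersections1} and Proposition \eqref{lift} are organized around straightened ladders and levelwise constructions), and it makes visible the independence of the construction from the choice of $j$, which is used again later. One small point to keep in mind in your version: the identification $\Hom_{\limA}(T,W)=\varprojlim_k\Hom(T_k,W)$ presupposes that $\limA$ is a full subcategory of $\Ind(\Pro^a(\A))$, which is the intended reading of the definitions here but is worth saying explicitly since $\Ind^a$ is introduced only as "the subcategory with admissible structure maps."
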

\begin{proof}
The object $U\times_{X_j} V$ exists from Proposition \eqref{intersections1}, and it is a pullback of the diagram $U\hrar X_j\hlar V$.
Since $X=``\limj"X_j$ is a strictly admissible ind-system, and $U\hrar X_j$, $V\hrar X_j$ are admissible monomorphisms, by composition we get admissible monomorphisms $U\hrar X_{j'}$, $V\hrar X_{j'}$ for  all $j'\leq j$, still represented as ladders of cartesian admissible squares. It is easy to check that 
$U\times_{X_{j'}} V\stackrel{\sim}\rar U\times_{X_j} V$. Thus,
$$
``\limj" U\times_{X_j} V\stackrel{\sim}\rar U\times_{X_j} V
$$
so that $U\times_{X_j} V$ is a pullback of \eqref{prolimpullback}
\end{proof} 
 
We shall denote the object $``\limj" U\times_{X_j} V$ by $U\cap V$.
 
\vspace{0.1cm}

We now can prove Theorem \eqref{limintersections}.

{\it Proof of Theorem \eqref{limintersections}.}
In Lemma \eqref{intersections2} we have proved the existence, under the assumptions of the Theorem, of a pullback square \eqref{intgr}, where $U\cap V\in\Pro^a(\A)$, and $U\cap V\hrar U, U\cap V\hrar V$ admissible monomorphisms from Proposition \eqref{intersections1}. It remains to prove that $[U\cap V\hrar X]\in\Gamma(X)$.

\vspace{0.1cm}

This can be achieved by the consideration of the induced admissible short exact sequence of $\limA$:
$$
 \xymatrix{
\dfrac{U}{U\cap V}\ar@{^{(}->}[r] & \dfrac{X}{U\cap V}\ar@{->>}[r] & \dfrac{X}{U} \\
}
$$
Since from Proposition \eqref{intersections1} the monomorphism $U\cap V\hrar U$ can be expressed as a ladder of cartesian squares, the quotient $\dfrac{U}{U\cap V}$ is in $\A$.

\par

On the other hand, since $[U\hrar X]$ is in $\Gamma(X)$, $\dfrac{X}{U}$ is in $\Ind^a(\A)$. Thus, in the above short exact sequence, 
the first and the last term are in $\Ind^a(\A)$, which, being closed under extensions in $\limA$, forces $\dfrac{X}{U\cap V}$ to be 
also in $\Ind^a(\A)$. Then $[U\cap V\hrar X]$ is in $\Gamma(X)$, and the Theorem is proved.
\qed
\subsection{Grassmannians and short exact sequences }

We now discuss the behavior of Sato Grassmannians under admissible short exact sequences of $\limA$.

\begin{prop}\label{lift}
Let be $\A$ an exact category satisfying the (AIC). Let be $m\colon X\hrar Y$ an admissible monomorphism in $\limA$, and $[U\hrar Y]$ 
an element of $\Gamma(Y)$. Then the diagram
\begin{equation}\label{xyu}
\xymatrix{
   X \ar@{^{(}->}[r]^m & Y \\
                                    & U\ar@{^{(}->}[u]
}
\end{equation}
can be completed to a pullback diagram 
\begin{equation}\label{leftsquare}
\xymatrix{
   X \ar@{^{(}->}[r]^m & Y \\
U\cap X\ar@{^{(}->}[r]\ar@{^{(}->}[u]  & U\ar@{^{(}->}[u]
}
\end{equation}
where the object $U\cap X$ is in $\Pro^a(\A)$, all the maps are admissible monomorphisms, and the resulting composition $[U\cap X\hrar X]$ is in $\Gamma(X)$.
\end{prop}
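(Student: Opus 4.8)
The plan is to reduce the formation of the pullback to an intersection inside a single compact subobject of $Y$, where Lemma~\ref{pullbackmorphism}, Corollary~\ref{resulting} and Proposition~\ref{intersections1} can be brought to bear, and then to deduce $[U\cap X\hrar X]\in\Gamma(X)$ by an argument parallel to the proof of Theorem~\ref{limintersections}. The first step exploits that $U$ is ``bounded'' in $Y$: since $[U\hrar Y]\in\Gamma(Y)$, write $Y/U$ as a strict ind-system $\{C_i\}$ of objects of $\A$, and let $Y^{(i)}\hrar Y$ be the preimage of $C_i$ under $Y\epi Y/U$. Then $U\hrar Y^{(i)}$ with $Y^{(i)}/U\cong C_i\in\A$, so $Y^{(i)}\in\Pro^a(\A)$ (closure under extensions), $Y=``\varinjlim_i"Y^{(i)}$, and by Proposition~\ref{stab} each $U\hrar Y^{(i)}$, as well as each $Y^{(i)}\hrar Y^{(i')}$ (cokernel $C_{i'}/C_i\in\A$), is representable by a ladder of cartesian admissible squares. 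Straightening $m$ compatibly with this ind-system (Lemma~\ref{straight}) gives $X=``\varinjlim_i"X^{(i)}$ with $X^{(i)}:=X\times_Y Y^{(i)}\in\Pro^a(\A)$ and admissible monomorphisms $X^{(i)}\hrar Y^{(i)}$. Since $U\subseteq Y^{(0)}$, the pullback $U\times_Y X$ coincides with $U\times_{Y^{(0)}}X^{(0)}$, so the whole construction takes place inside the compact object $Y^{(0)}$.

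Next I would form $U\cap X:=U\times_{Y^{(0)}}X^{(0)}$. Straighten $U\hrar Y^{(0)}$ and $X^{(0)}\hrar Y^{(0)}$ over a common pro-index, with the ladder of $U\hrar Y^{(0)}$ cartesian, and pull back level-wise in $\A$ using (AIC). Lemma~\ref{pullbackmorphism}, applied with the cartesian ladder of $U\hrar Y^{(0)}$ playing the role of the ``floor'' square \eqref{floor}, shows that the transition maps of the resulting pro-system are admissible epimorphisms, so $U\cap X\in\Pro^a(\A)$, the leg $U\cap X\hrar U$ is an admissible monomorphism, and (Corollary~\ref{resulting}) the leg $U\cap X\hrar X^{(0)}$ is itself a cartesian ladder, whence $X^{(0)}/(U\cap X)\in\A$. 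Composing with $X^{(0)}\hrar X$ produces the admissible monomorphism $U\cap X\hrar X$ and the pullback square \eqref{leftsquare} in $\limA$.

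For the remaining assertion, the same ``pullback of a cartesian ladder'' argument, applied now to $X^{(i)}\hrar Y^{(i)}\hookleftarrow Y^{(0)}$ (using that $Y^{(0)}\hrar Y^{(i)}$ is a cartesian ladder), shows that each $X^{(0)}\hrar X^{(i)}$ is a cartesian ladder, so $X^{(i)}/X^{(0)}\in\A$ and hence $X/X^{(0)}=``\varinjlim_i"X^{(i)}/X^{(0)}\in\Ind^a(\A)$. The filtration $U\cap X\hrar X^{(0)}\hrar X$ then yields an admissible short exact sequence $X^{(0)}/(U\cap X)\hrar X/(U\cap X)\epi X/X^{(0)}$ in $\limA$ whose outer terms lie in $\Ind^a(\A)$ (the first because it lies in $\A$); since $\Ind^a(\A)$ is closed under extensions in $\limA$ (cf. \cite{pre}), $X/(U\cap X)\in\Ind^a(\A)$, i.e. $[U\cap X\hrar X]\in\Gamma(X)$, which completes the proof.

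The main obstacle is the reduction carried out in the first two steps. Unlike in Theorem~\ref{limintersections}, here $X$ is not assumed to lie in $\Gamma(Y)$, so the components of $m$ are \emph{not} representable by cartesian ladders and Proposition~\ref{intersections1} does not apply directly; the crucial point is that it suffices for only one of the two monomorphisms being intersected — namely $U\hrar Y^{(0)}$, which is a cartesian ladder precisely because $Y^{(0)}/U\in\A$ — to be well behaved, and that Lemma~\ref{pullbackmorphism} is exactly the device that promotes the level-wise pullbacks to a pro-object in that asymmetric situation. A secondary technical point that must be checked with care is the compatible straightening of $m$ with the ind-system $\{Y^{(i)}\}$, so that one may take $X^{(i)}=X\times_Y Y^{(i)}$ with $X=``\varinjlim_i"X^{(i)}$.
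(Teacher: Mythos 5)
Your core mechanism is the right one and coincides with the paper's: after straightening everything down to $\A$, the levelwise pullbacks exist by (AIC), and Lemma \ref{pullbackmorphism} --- with the cartesian ladder of $U$ (which you correctly obtain from $Y^{(i)}/U\in\A$ and Proposition \ref{stab}) serving as the floor square --- promotes them to a strict pro-object; your observation that only one of the two legs being intersected needs to be a cartesian ladder is exactly the asymmetry the paper exploits. The gap is in the outer reduction, and it is not the ``secondary technical point'' you call it. You need $X^{(i)}:=X\times_Y Y^{(i)}$ to exist in $\Pro^a(\A)$ with $X=``\varinjlim_i"X^{(i)}$. Lemma \ref{straight} does not give this: it produces \emph{some} straightening $X_i\hrar Y_i$, and its terms are in general not the pullbacks $X\times_Y Y_i$ --- e.g. the identity of $k((t))$ admits the straightening $X_i=t^{-i+1}k[[t]]\hrar Y_i=t^{-i}k[[t]]$, for which $X_0\neq X\times_Y Y_0=Y_0$ and, with $U=k[[t]]$, one gets $U\times_{Y_0}X_0=tk[[t]]\subsetneq U=U\times_Y X$. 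Moreover each $Y^{(i)}$ is itself an element of $\Gamma(Y)$, so the existence of $X\times_Y Y^{(i)}$ as an object of $\Pro^a(\A)$ is precisely an instance of the proposition being proved, with $Y^{(i)}$ in place of $U$; invoking it is circular. The claim is load-bearing: without it, $U\times_{Y^{(0)}}X^{(0)}$ computes only $U\cap X^{(0)}$, the square \eqref{leftsquare} need not be cartesian over all of $X$, and your later steps (cartesianness of $X^{(0)}\hrar X^{(i)}$, hence $X/X^{(0)}\in\Ind^a(\A)$) rest on the same unproved compatibility.

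The paper sidesteps this by not reducing to a single level: it runs the levelwise pullback construction for \emph{every} index $i$ of an arbitrary straightening, obtaining $``\varinjlim_i"(X_i\times_{Y_i}U)$ a priori only in $\Ind\Pro^a(\A)$, and then shows that this object lies in $\Pro^a(\A)$ because the compatible admissible monomorphisms $X_i\times_{Y_i}U\hrar U$ assemble into an admissible monomorphism into the compact object $U$; the membership $[U\cap X\hrar X]\in\Gamma(X)$ is then read off from the induced monomorphism $X/(U\cap X)\hrar Y/U$ into a discrete object. If you want to keep your one-level picture, you must first prove the stabilization $U\times_{Y_i}X_i=U\times_Y X$ for $i$ large --- which is essentially the paper's compactness argument in disguise --- rather than locating it at $i=0$ by definition. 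Your closing argument via closure of $\Ind^a(\A)$ under extensions is fine and would be a legitimate alternative ending once the earlier steps are repaired.
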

\begin{proof}
Straightify $m$. Then, for all $j$, we can represent  $m$ by a system of monomorphisms of $\Pro^a(\A)$:
$$
\xymatrix{ 
... \ar@{^{(}->}[r] & X_{i-1} \ar@{^{(}->}[d]\ar@{^{(}->}[r] &X_{i}\ar@{^{(}->}[d] \ar@{^{(}->}[r] &X_{i-1} \ar@{^{(}->}[d]\ar@{^{(}->}[r]&... \\
... \ar@{^{(}->}[r] & Y_{i-1} \ar@{^{(}->}[r] &Y_i \ar@{^{(}->}[r] &Y_{i+1} \ar@{^{(}->}[r]&... \\
}
$$
Since $U\in\Pro^a(\A)$, the existence of the admissible monomorphism $U\hrar Y$ in $\limA$ is equivalent to the existence of an $i$ and of an admissible monomorphism $U\hrar Y_i$ of $\Pro^a(\A)$. For this monomorphism we have the diagram, in $\Pro^a(\A)$:
\begin{equation}\label{xi}
\xymatrix{
 X_i \ar@{^{(}->}[r] & Y_i & U\ar@{_{(}->}[l]  \\
}
\end{equation}
We straightify this diagram writing, in components: $X_i=\proXj$, $Y_i=\proYj$, $U=\proUj$. We then obtain a diagram of objects of $\A$, as follows:
$$
\xymatrix{ 
... \ar@{->>}[r] & X_{j+1} \ar@{^{(}->}[d]\ar@{->>}[r] &X_{j}\ar@{^{(}->}[d] \ar@{->>}[r] &X_{j-1} \ar@{^{(}->}[d]\ar@{->>}[r]&... \\
... \ar@{->>}[r] & Y_{j+1} \ar@{->>}[r] &Y_j \ar@{->>}[r] &Y_{j-1} \ar@{->>}[r]&... \\
... \ar@{->>}[r] & U_{j+1} \ar@{^{(}->}[u]\ar@{->>}[r] &U_{j}\ar@{^{(}->}[u] \ar@{->>}[r] &U_{j-1} \ar@{^{(}->}[u]\ar@{->>}[r]&... \\
}
$$
In this diagram, the horizontal arrows are admissible epimorphisms, the vertical arrows admissible monomorphisms, and the square corresponding to the morphism $U\hrar Y$ are cartesian.

\vspace{0.1cm}

We then construct, for each $j$, the pullback of 
$$ 
\xymatrix{
 X_j \ar@{^{(}->}[r]^m & Y_j & U_j\ar@{_{(}->}[l]_n  \\
}
$$
which exists since $\A$ satisfies the (AIC). We are now in the hypotheses of Lemma \eqref{pullbackmorphism}; its application gives us a strictly admissible pro-system $\{X_j\times_{Y_j}U_j\}_j$, and then we get an object $\limproj X_j\times_{Y_j} U_j$, which is a pullback in $\Pro^a(\A)$ of the diagram \eqref{xi}. 

\vspace{0.1cm}

Let us denote this pullback by $X_i\times_{Y_i} U$. For all $i\leq j$ we have a canonical map of corresponding pullbacks, induced by the diagram:
 \begin{equation}\label{cubic2}
\xymatrix{
& X_i\times_{Y_i}U\ar@{^{(}->}[rr]  \ar@{^{(}->}'[d][dd]\ar@{^{(}->}[ld]  &     &    X_i \ar@{^{(}->}[dd]^{}\ar@{^{(}->}[dl]\\ 
                        X_j\times_{Y_j}U \ar@{^{(}->}[rr]\ar@{^{(}->}[dd]   &      &    X_j \ar@{^{(}->}[dd]|{} \\ 
                        &           U \ar@{^{(}->}'[r][rr]\ar[dl]^{id}       &      &     Y_i  \ar@{^{(}->}[dl]^{} \\  
                        U \ar@{^{(}->}[rr]                                        &      &     Y_j
                }
\end{equation}   
and it is clear that such arrow is a monomorphism. Then, the object $\limi (X_i\times_{Y_i} U)$ is the pullback of the diagram \eqref{xyu}. We shall denote this object by $U\cap X$. 

\vspace{0.1cm}

A priori, $U\cap X$ is an object of $\Ind\Pro^a(\A)$. However, for each $i$, we have from the above cubic diagram an admissible monomorphism in $\Pro^a(\A)$: 
$$
X_i\times_{Y_i} U\stackrel{m_i}\hrar U.
$$
Therefore, the admissible monomorphisms $\{m_i\}$ form an inductive system of admissible monomorphisms, which gives raise to an admissible monomorphism
$$
\limi (X_i\times_{Y_i} U)\hrar U.
$$
But $U$ is in $\Pro^a(\A)$, and then the object $U\cap X=\limi (X_i\times_{Y_i} U)$ also belongs to $\Pro^a(\A)$. We therefore get a cartesian diagram of type \eqref{leftsquare}, where all the morphisms are admissible monomorphisms, and $U\cap X\in\Pro^a(\A)$.

\vspace{0.1cm}

It is left to prove that $[U\cap X\hrar X]$ is in $\Gamma(X)$.  We argue as in the proof that $U\cap X$ is in $\Pro^a(\A)$. The above square being cartesian, we get, on the quotients, a monomorphism:
$$
\dfrac{X}{U\cap X}\hrar \dfrac{Y}{U}
$$
A priori, the object $\dfrac{X}{U\cap X}$ is in $\Ind^a\Pro^a(\A)$. But since $[U\hrar Y]\in\Gamma(Y)$, 
$\dfrac{Y}{U}$ is in $\Ind^a(\A)$. Thus, $\dfrac{X}{U\cap X}$ is in $\Ind^a(\A)$, i.e. $[U\cap X\hrar X]\in\Gamma(X)$. Proposition
\eqref{lift} is proved.

\end{proof}
Thus, for an admissible monomorphims $X\hrar Y$ in $\limA$, given $U\in\Gamma(Y)$, in order to prove that the ``intersection" $U\cap X$ is an element of the Grassmannians of $X$, is sufficient to assume that $\A$ satisfies the (AIC). However, to make sure that the quotient $\dfrac{U}{U\cap X}$  is an element of the Grassmannians of the quotient object $\dfrac{Y}{X}$, we need also the dual condition (AIC)$^o$. This is the content of the next statement.
\begin{theorem}\label{projlift}
Let be $\A$ a partially abelian exact category. Let be 
$$
 \xymatrix{
X\ar@{^{(}->}[r] & Y\ar@{->>}[r] & Z \\
}
$$
an admissible short exact sequence of $\limA$, and let $[U\hrar Y]$ in $\Gamma(Y)$ be given. Then we have a commutative diagram:
\begin{equation}\label{lp}
\xymatrix{
U\cap X \ar@{^{(}->}[r]\ar@{^{(}->}[d] &U \ar@{->>}[r]\ar@{^{(}->}[d] &\dfrac{U}{U\cap X}\ar@{^{(}->}[d]^m\\
 X \ar@{^{(}->}[r] &Y\ar@{->>}[r] &Z  \\
}
\end{equation}
in which the top sequence is an admissible short exact sequence of $\Pro^a(\A)$, such that the arrow $\dfrac{U}{U\cap X}\hrar Z$ is an admissible monomorphism and $[\dfrac{U}{U\cap X}\hrar Z]$ is in $\Gamma(Z)$.

\end{theorem}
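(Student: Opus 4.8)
\emph{Setup.} Proposition \eqref{lift}, which uses only (AIC), already gives the left-hand square of \eqref{lp}: it is cartesian, its four maps are admissible monomorphisms, $U\cap X\in\Pro^a(\A)$, and $[U\cap X\hrar X]\in\Gamma(X)$. The composite $U\cap X\hrar U\hrar Y\epi Z$ equals $U\cap X\hrar X\hrar Y\epi Z$ (the square commutes) and hence vanishes, so the universal property of the cokernel $U\epi\frac{U}{U\cap X}$ (where $\frac{U}{U\cap X}:=\coker(U\cap X\hrar U)$) produces a unique arrow $m\colon\frac{U}{U\cap X}\to Z$ with $m\circ(U\epi\frac{U}{U\cap X})=(U\hrar Y\epi Z)$, making the right-hand square of \eqref{lp} commute. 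The theorem will follow once we prove: (a) $m$ is an admissible monomorphism of $\limA$; (b) $\coker(m)\in\Ind^a(\A)$; (c) $\frac{U}{U\cap X}\in\Pro^a(\A)$ — for then $[\frac{U}{U\cap X}\hrar Z]\in\Gamma(Z)$, and the top row $U\cap X\hrar U\epi\frac{U}{U\cap X}$, having all three terms in $\Pro^a(\A)$, is an admissible short exact sequence of $\Pro^a(\A)$.

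\emph{Passing to the opposite category.} By Theorem \eqref{factor}, ``$\A$ partially abelian exact'' means ``$\A$ satisfies (AIC) and (AIC)$^o$'', and since (AIC)$^o$ for $\A$ is precisely (AIC) for $\A^o$, the category $\A^o$ is again partially abelian exact; in particular it satisfies (AIC). By Proposition \eqref{limduality}, $(\limA)^o=\dlim(\A^o)$, and (cf.\ \cite{B}) under this equivalence $\Pro^a(\A)$ corresponds to $\Ind^a(\A^o)$, $\Ind^a(\A)$ to $\Pro^a(\A^o)$, and admissible epimorphisms to admissible monomorphisms. Hence a datum $[V\hrar Y]\in\Gamma(Y)$ in $\limA$ corresponds exactly to a datum $[(Y/V)^o\hrar Y^o]\in\Gamma(Y^o)$ in $\dlim(\A^o)$: the subobject $(Y/V)^o$ lies in $\Pro^a(\A^o)$, and the quotient $Y^o/(Y/V)^o\cong V^o$ lies in $\Ind^a(\A^o)$.

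\emph{Applying Proposition \eqref{lift} to $\A^o$ and dualizing.} Dualizing $X\hrar Y\epi Z$ gives the admissible short exact sequence $Z^o\hrar Y^o\epi X^o$ of $\dlim(\A^o)$, so $Z^o\hrar Y^o$ is an admissible monomorphism; and $[U\hrar Y]\in\Gamma(Y)$ dualizes to $[(Y/U)^o\hrar Y^o]\in\Gamma(Y^o)$. Applying Proposition \eqref{lift} (valid since $\A^o$ satisfies (AIC)) to the admissible monomorphism $Z^o\hrar Y^o$ and the element $[(Y/U)^o\hrar Y^o]$ yields the pullback $(Y/U)^o\cap Z^o\in\Pro^a(\A^o)$, with all maps admissible monomorphisms, and $[(Y/U)^o\cap Z^o\hrar Z^o]\in\Gamma(Z^o)$. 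Dualizing back, the pullback square becomes a pushout square with all four maps admissible epimorphisms and new corner $W$; a short computation in the abelian envelope of $\limA$ identifies the pushout of $Y\epi Y/U$ and $Y\epi Z$ as $W\cong Y/(U+X)$, the resulting map $Z\epi W$ as the canonical projection $Z=Y/X\epi Y/(U+X)$, its kernel as $U/(U\cap X)=\frac{U}{U\cap X}$, and the monomorphism $\ker(Z\epi W)\hrar Z$ as $m$. Now the dual of ``$(Y/U)^o\cap Z^o\in\Pro^a(\A^o)$'' reads $W\in\Ind^a(\A)$, and the dual of ``$[(Y/U)^o\cap Z^o\hrar Z^o]\in\Gamma(Z^o)$'' reads: $Z\epi W$ is an admissible epimorphism of $\limA$ with $W\in\Ind^a(\A)$ and $\ker(Z\epi W)\in\Pro^a(\A)$. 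This gives (a) $m=\ker(Z\epi W)\hrar Z$ is an admissible monomorphism, (b) $\coker(m)=W\in\Ind^a(\A)$, and (c) $\frac{U}{U\cap X}=\ker(Z\epi W)\in\Pro^a(\A)$, completing the proof.

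\emph{Remarks on the proof.} The crux is that ``partially abelian exact'' is a self-dual hypothesis, so Proposition \eqref{lift} — a statement about \emph{lifting} a Grassmannian element along an admissible monomorphism, proved with (AIC) alone — can be run for $\A^o$ and reinterpreted, via Proposition \eqref{limduality}, as the desired statement about \emph{projecting} along an admissible epimorphism; this is exactly where the dual condition (AIC)$^o$ is used. The only technical content left is the duality dictionary of the second paragraph (essentially \cite{B} together with Proposition \eqref{limduality}) and the identifications carried out in the abelian envelope of $\limA$; both are routine. Alternatively one can avoid duality entirely: straightify as in the proof of Proposition \eqref{lift}, apply Proposition \eqref{selfdual} to the pullback squares $P_{ij}=X_{ij}\times_{Y_{ij}}U_j$ to obtain objects of $S_2S_2(\A)$, and push the conclusion through the formal pro-limit in $j$ and then the formal ind-limit in $i$ — but then the delicate step is to verify that each of the nine families stays strictly admissible in $j$ and has admissible-monomorphism transition maps in $i$, which is the real obstacle on that route.
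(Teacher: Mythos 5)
Your proof is correct, but it takes a genuinely different route from the paper's. The paper works entirely at the level of components: it straightifies to bi-indexed families $X_{i,j}, Y_{i,j}, U_j$ in $\A$, applies Proposition \eqref{selfdual} to each cartesian square $U_j\times_{Y_{i,j}}X_{i,j}$ in $\A$ to produce admissible monomorphisms $m_{i,j}$ on the quotients, assembles $m$ as $\limi\limitinvj m_{i,j}$, and then forms the full $3\times 3$ diagram in $\limA$ to read off membership in $\Gamma(Z)$ from the closure of $\Ind^a(\A)$ under admissible quotients. You instead exploit the self-duality of the hypothesis: since (AIC)$^o$ for $\A$ is (AIC) for $\A^o$ and $(\limA)^o=\dlim(\A^o)$, the ``projection'' statement is literally the ``lifting'' statement (Proposition \eqref{lift}) read in the opposite category, plus bookkeeping in the abelian envelope. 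This is shorter, avoids re-running the limit constructions, and makes transparent exactly why (AIC)$^o$ is the extra ingredient needed beyond Proposition \eqref{lift} --- indeed the paper itself gestures at this (``or by dualizing Proposition \eqref{lift}'') for the pushout square at the end of its proof. What the paper's explicit route buys in exchange is the identified $3\times 3$ diagram with all nine terms and both descriptions of the common quotient $Q$, which is precisely what feeds Corollary \eqref{squareofmonos} and the later multiplicativity arguments; on your route those identifications (pushout $=Y/(U+X)$ in the envelope, $m=\ker(Z\epi W)\hrar Z$, and the persistence of admissible cocartesian squares to the envelope via Lemma \eqref{pullback} and the cartesian/cocartesian equivalence for admissible squares) must still be spelled out, as you correctly flag. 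Both arguments rely tacitly on the fact that an admissible short exact sequence of $\limA$ with all three terms in $\Pro^a(\A)$ is admissible in $\Pro^a(\A)$; you are at the same level of rigor as the paper there.
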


{\bf Terminology.} We shall say that $U$ has been {\it lifted to $X$}  along the admissible monomorphism $X\hrar Y$, and that  $U$ has been {\it projected to $Z$} along the corresponding epimorphism $Y\epi Z$.
\begin{proof}
Let us keep the same notations as in the proof of Prop. \eqref{lift}. As we have seen, the diagram \eqref{leftsquare} is constructed from the diagrams \eqref{xi} of $\Pro^a(\A)$, by forming the limit $\limi (X_i\times_{Y_i} U)$, which is still an object of $\Pro^a(\A)$. Let us take the quotients of the horizontal monomorphisms and get the following diagram, where the horizontal sequences are admissible short exact:
\begin{equation}\label{m_i}
\xymatrix{ 
U\times_{Y_i} X_i \ar@{^{(}->}[r]\ar@{^{(}->}[d] &U \ar@{->>}[r]\ar@{^{(}->}[d] &\dfrac{U}{U\times_{Y_i} X_i} \\
 X_i \ar@{^{(}->}[r] &Y_i\ar@{->>}[r] &Z_i  \\
}
\end{equation}
We now prove the existence of an admissible monomorphism $m_i:\dfrac{U}{U\times_{Y_i} X_i}\hrar Z_i$ making \eqref{m_i} commutative.

\vspace{0.1cm}

As in Prop. \eqref{lift}, write $X_i=\proXij$, $Y_i=\proYij$, $U=\proUj$, with $X_{i,j}, Y_{i.j}, U_j$ objects of $\A$.

\vspace{0.1cm}

For all $j$ we then have cartesian diagrams
$$
\xymatrix{ 
U_j\times_{Y_{i,j}} X_{i,j} \ar@{^{(}->}[r]\ar@{^{(}->}[d] &U_j \ar@{^{(}->}[d] \\
 X_{i,j} \ar@{^{(}->}[r] &Y_{i,j}  \\
}
$$
Since $\A$ is partially abelian, we can apply Proposition \eqref{selfdual}. and we get commutative diagrams for all $j$
$$
\xymatrix{ 
U_j\times_{Y_{i,j}} X_{i,j} \ar@{^{(}->}[r]\ar@{^{(}->}[d] &U_j\ar@{->>}[r]\ \ar@{^{(}->}[d] &
\dfrac{U_j}{U_j\times_{Y_{i,j}} X_{i,j}}\ar@{^{(}->}[d] ^{m_{i,j}} \\
 X_{i,j} \ar@{^{(}->}[r] &Y_{i,j} \ar@{->>}[r]\ &\dfrac{Y_{i,j}}{X_{i,j}}  \\
}
$$
where the arrows $m_{i,j}$ are admissible monomorphisms.

\vspace{0.1cm}

Taking projective limits, we get an admissible monomorphism of $\Pro^a(\A)$:
$$
m_i=\limproj m_{i,j}:\limproj\dfrac{U_j}{U_j\times_{Y_{i,j}} X_{i,j}}\hrar\limproj\dfrac{Y_{i,j}}{X_{i,j}}
$$
and notice that 
$$
\limproj\dfrac{U_j}{U_j\times_{Y_{i,j}} X_{i,j}}=\dfrac{U}{U\times_{Y_i}X_i} \text{ \ \ \  and \ \ \  } \limproj\dfrac{Y_{i,j}}{X_{i,j}}=Z_i.
$$
Thus, for all $i$ we have an admissible monomorphism $m_i$ making  the diagram 
\begin{equation}\label{fullm_i}
\xymatrix{ 
U\times_{Y_i} X_i \ar@{^{(}->}[r]\ar@{^{(}->}[d] &U \ar@{->>}[r]\ar@{^{(}->}[d] &\dfrac{U}{U\times_{Y_i} X_i}\ar@{^{(}->}[d]^{m_i} \\
 X_i \ar@{^{(}->}[r] &Y_i\ar@{->>}[r] &Z_i  \\
}
\end{equation}
commutative.

\vspace{0.1cm}

We then repeat the same argument, this time taking inductive limits of the diagram \eqref{fullm_i}. When applying $\limi$ to the left square of \eqref{fullm_i} we get, as in Prop. \eqref{lift}, the commutative diagram \eqref{leftsquare}. When applied to the right square, we get an arrow
$$
m=\limi m_{i}:\limi\dfrac{U}{U\times_{Y_{i}} X_{i}}\hrar\limi\dfrac{Y_{i}}{X_{i}},
$$
i.e. an admissible monomorphism $m:\dfrac{U}{(U\cap X)}\hrar Z$, for which the diagram \eqref{lp} is commutative. This proves the first assertion of the Theorem. It is then left to prove that $[\dfrac{U}{U\cap X}\hrar Z]$ is in $\Gamma(Z)$.  Let us repeat the same argument, this time to the columns of Diagram \eqref{m_i}, that is, we take this time the quotients in the vertical direction. We obtain a commutative diagram:
 $$
\xymatrix{ 
U\cap X \ar@{^{(}->}[r]\ar@{^{(}->}[d] &U \ar@{^{(}->}[d] \\
X\ar@{->>}[d] \ar@{^{(}->}[r] &Y\ar@{->>}[d]  \\
\dfrac{X}{U\cap X} \ar@{^{(}->}[r]_{i} &\dfrac{Y}{U} \\
}
$$
where $i$ is an admissible monomorphism of $\Ind^a(\A)$. Then, we get a commutative diagram as follows:
$$
\xymatrix{ 
U\cap X \ar@{^{(}->}[r]\ar@{^{(}->}[d] &U \ar@{->>}[r]\ar@{^{(}->}[d] &\dfrac{U}{U\cap X}\ar@{^{(}->}[d]^{m}\\
X \ar@{^{(}->}[r]\ar@{->>}[d] &Y\ar@{->>}[d]\ar@{->>}[r] &Z\ar@{->>}[d]^{e}  \\
\dfrac{X}{U\cap X} \ar@{^{(}->}[r]_{i} &\dfrac{Y}{U}\ar@{->>}[r]_{j} & Q \\
}
$$
in which all the rows and columns are admissible short exact sequences, $Q$ is the common quotient and the bottom right square is a pushout square of admissible epimorphisms, as it can be seen by application of Proposition \eqref{selfdual}, or by dualizing Proposition \eqref{lift}.

\vspace{0.1cm}

Since $[U\cap X\hrar X]\in\Gamma(X)$ and $[U\hrar Y]\in\Gamma(Y)$, we get that $\dfrac{X}{U\cap X}$ and $\dfrac{Y}{U}$ are in $\Ind^a(\A)$. Hence their quotient $Q$  is in $\Ind^a(\A)$. But $Q$ is also the quotient $\dfrac{Z}{U/(U\cap X)}$, which is thus in $\Ind^a(\A)$. This shows that $\dfrac{U}{U\cap X}\in\Gamma(Z)$, and the proof of the Theorem is complete.
\end{proof}
\begin{cor}\label{squareofmonos}
 Let $\A$ be a partially abelian exact category and let $X_1\hrar X_2$ be an admissible monomorphism in $\limA$. Suppose we have a pullback diagram of admissible monomorphisms in  $\Pro^a(\A)$:
\begin{equation}
\xymatrix{
   U'_1 \ar@{^{(}->}[r]\ar@{^{(}->}[d] & U_1\ar@{^{(}->}[d] \\
U'_2\ar@{^{(}->}[r] & U_2
}
\end{equation}
where $U_1, U'_1\in\Gamma(X_1)$, $U_2, U'_2\in\Gamma(X_2)$. Then we have an induced commutative diagram
$$
\xymatrix{ 
U'_1 \ar@{^{(}->}[r]\ar@{^{(}->}[d] &U_1 \ar@{->>}[r]\ar@{^{(}->}[d] &U''_1\ar@{^{(}->}[d]^{}\\
U'_2 \ar@{^{(}->}[r]\ar@{->>}[d] &U_2\ar@{->>}[d]\ar@{->>}[r] &U''_2\ar@{->>}[d]^{}  \\
\dfrac{U'_2}{U'_1} \ar@{^{(}->}[r]_{} &\dfrac{U_2}{U_1}\ar@{->>}[r]_{} & \dfrac{U''_2}{U''_1} \\
}
$$
where all the rows and columns are admissible short exact sequences. In particular, the bottom row (and, symmetrically, the right column) is an admissible short exact sequence of $\A$.
\end{cor}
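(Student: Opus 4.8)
The plan is to push the whole configuration down to the exact category $\A$ by straightification, apply Proposition \eqref{selfdual} level by level, and then reconstitute the $3\times 3$ diagram by passing to a projective limit; the Grassmannian hypotheses enter only at the end, through Theorem \eqref{criterion} and Proposition \eqref{compdisc}, to locate which of the resulting objects actually lie in $\A$.

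\emph{Reduction to $\A$.} Since $U'_1\hrar U_1$ is an admissible monomorphism of $\Pro^a(\A)$ between two elements of $\Gamma(X_1)$, Theorem \eqref{criterion} gives $U''_1:=U_1/U'_1\in\A$, and similarly $U''_2:=U_2/U'_2\in\A$. By Proposition \eqref{stab} the monomorphism $U'_2\hrar U_2$ is representable by a ladder of cartesian admissible squares of $\A$, so a routine cofinality argument provides compatible pro-presentations $U_1=\liminvj U_{1,j}$, $U'_2=\liminvj U'_{2,j}$, $U_2=\liminvj U_{2,j}$ in $\A$ in which $U_{1,j}\hrar U_{2,j}$ and $U'_{2,j}\hrar U_{2,j}$ are admissible monomorphisms of $\A$ and $\{U'_{2,j}\hrar U_{2,j}\}_j$ is a cartesian ladder.

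\emph{Level-wise nine-lemma and limit.} For each $j$ form the pullback $P_j:=U_{1,j}\times_{U_{2,j}}U'_{2,j}$ in $\A$, which exists because $\A$ satisfies (AIC) (Theorem \eqref{factor}); the level-$j$ square with vertex $P_j$ is then a cartesian square of admissible monomorphisms of $\A$. Lemma \eqref{pullbackmorphism} together with Corollary \eqref{resulting}, applied to consecutive levels — the cartesianity hypothesis needed there being exactly that $\{U'_{2,j}\hrar U_{2,j}\}_j$ is a cartesian ladder — shows that the transition maps $P_j\to P_{j-1}$ are admissible epimorphisms, that $\{P_j\}_j$ is a strict pro-system, and that the ladders $\{P_j\hrar U_{1,j}\}_j$ and $\{P_j\hrar U'_{2,j}\}_j$ are again cartesian; by uniqueness of pullbacks $\liminvj P_j=U_1\times_{U_2}U'_2=U'_1$, compatibly with the maps to $U_1$ and $U'_2$. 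Applying Proposition \eqref{selfdual} to the cartesian square at level $j$ and extending it by cokernels in the abelian envelope yields, for every $j$, a $3\times 3$ diagram in $S_2S_2(\A)$, all of whose rows and columns are admissible short exact sequences of $\A$. Passing to the projective limit over $j$ — a projective limit of admissible monomorphisms (resp. epimorphisms) of $\A$ is admissible in $\Pro^a(\A)$, and cokernels of admissible monomorphisms commute with such limits — gives a commutative $3\times 3$ diagram in $\Pro^a(\A)$ which refines the given pullback square, whose remaining entries are $U''_1$, $U''_2$, $\tfrac{U'_2}{U'_1}$, $\tfrac{U_2}{U_1}$ and the common quotient $\tfrac{U''_2}{U''_1}$, and all of whose rows and columns are admissible short exact sequences of $\Pro^a(\A)$.

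\emph{The $\A$-valued line.} In the diagram just obtained $U''_1,U''_2\in\A$ by the reduction step. Moreover $\tfrac{U''_2}{U''_1}$ is an admissible quotient of $U''_2\in\A\subseteq\Ind^a(\A)$, hence discrete, and it is a cokernel in $\Pro^a(\A)$, hence compact, so $\tfrac{U''_2}{U''_1}\in\A$ by Proposition \eqref{compdisc}. Thus the line $U''_1\hrar U''_2\epi\tfrac{U''_2}{U''_1}$ — the right-hand column of the displayed diagram, which becomes its bottom row once the picture is transposed — is an admissible short exact sequence of $\A$, as asserted. The one place where real care is needed is the middle step: arranging the compatible, level-wise cartesian straightification and then verifying, through Lemma \eqref{pullbackmorphism} and Corollary \eqref{resulting}, that the level-$j$ pullbacks $P_j$ assemble into a strict pro-system with limit $U'_1$ and cartesian transition squares. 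Once that bookkeeping is in place, the construction of the $3\times 3$ diagram and the identification of its $\A$-valued line are formal consequences of Proposition \eqref{selfdual} and the stability of admissible short exact sequences under projective limits.
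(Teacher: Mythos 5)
Your strategy is the one the paper intends: the corollary carries no separate proof because it is extracted from the machinery of Theorem \eqref{projlift} --- straightify to compatible level-wise presentations, apply Proposition \eqref{selfdual} at each level, pass to projective limits, and invoke Theorem \eqref{criterion} to place $U''_1=U_1/U'_1$ and $U''_2=U_2/U'_2$ in $\A$. Two assertions in your write-up are nevertheless off. First, Corollary \eqref{resulting} does not give cartesianity of both ladders $\{P_j\hrar U_{1,j}\}_j$ and $\{P_j\hrar U'_{2,j}\}_j$: its hypothesis is that the floor of the cube --- here $\{U'_{2,j}\hrar U_{2,j}\}_j$ --- is cartesian, and its conclusion concerns only the face over the \emph{other} leg, namely $\{P_j\hrar U_{1,j}\}_j$. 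Cartesianity of $\{P_j\hrar U'_{2,j}\}_j$ would force $\{U_{1,j}\hrar U_{2,j}\}_j$ to be cartesian, hence (Proposition \eqref{stab}) $U_2/U_1\in\A$, which fails in general because $U_1$ and $U_2$ lie in Grassmannians of different objects. Fortunately only the first cartesianity is load-bearing (it is what makes $\varprojlim(U_{1,j}/P_j)$ stabilize to the object $U''_1$ of $\A$); the strictness of the other quotient pro-system has to be read off from the level-wise $3\times 3$ diagrams instead.

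Second, you establish that the right column $U''_1\hrar U''_2\epi U''_2/U''_1$ lies in $\A$ and then dispose of the bottom row by calling it ``the transpose.'' It is not the same sequence: the bottom row is $\tfrac{U'_2}{U'_1}\hrar\tfrac{U_2}{U_1}\epi\tfrac{U''_2}{U''_1}$, and its first two terms are in general only in $\Pro^a(\A)$. For instance, with $X_1=0\hrar X_2=k((t))$, $U'_1=U_1=0$ and $U'_2=U_2=k[[t]]$, the hypotheses hold but the middle term of the bottom row is $k[[t]]\notin\vect$. So your instinct is right that only the right column can be placed in $\A$ under the stated hypotheses --- the bottom row is an admissible short exact sequence of $\Pro^a(\A)$, landing in $\A$ only in the situation where the corollary is later applied, with all the subobjects cut out of Grassmannians of a single object --- but this should be stated as an explicit qualification of the corollary's last sentence rather than concealed by identifying two different sequences.
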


\section{The determinantal torsor on the Waldhausen space $S(lim \mathcal A)$}\label{sectiondim}
\subsection{The dimensional torsor}
\begin{definition} Let be: $\A$ an exact category, $G$ an abelian group. 
\vspace{0.1cm}

(a) A function $\chi: Ob\ \A\lrar G$ is called {\it dimensional theory on} $\A$ if, for any admissible short exact sequence of $\A, a'\hrar a\epi a''$, it is: 
$$
\chi(a)=\chi(a') +\chi(a'').
$$
\vspace{0.1cm}
(b) Let be $\chi$ a dimensional theory and $X\in\limA$. A {\it $\chi$-relative dimensional theory} is a map $d:\Gamma(X)\lrar G$ such that, for all admissible monomorphisms $U\hrar V$, between elements  $U, V\in\Gamma(X)$, we have:
\begin{equation}\label{dimx}
d(V)=d(U)+\chi\left (\dfrac{V}{U}\right ).
\end{equation}
(c) Given a dimensional theory $\chi$, we denote by $Dim_{\chi}(X)$ the set of all $\chi$-relative dimensional theories on $X$.
\end{definition}
As a consequence of (a), we have $\chi(0)=0$, and that whenever $a\xrar{\sim} a'$ it is $\chi(a)=\chi(a')$.  Moreover, let be $K_0(\A)$ the 
Grothendieck group of the exact category $\A$. From the universal property of $K_0(\A)$, the datum of a dimension theory 
$\chi: Ob\ \A\rar G$ is equivalent to the datum of a homomorphism $u_{\chi}:K_0(A)\rar G$.
\begin{lm} \label{uisov}
If $U\xrar{\sim} V$ in $\Gamma(X)$, then $d(U)=d(V)$, for all $d\in Dim_{\chi}(X)$.
\end{lm}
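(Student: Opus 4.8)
The plan is to deduce the statement from a single application of the defining identity \eqref{dimx} of a $\chi$-relative dimensional theory. First I would make explicit what ``$U\xrar{\sim} V$ in $\Gamma(X)$'' means: the elements $U,V\in\Gamma(X)$ are given by admissible monomorphisms $m_U\colon U\hrar X$ and $m_V\colon V\hrar X$ (with $U,V\in\Pro^a(\A)$ and the quotients in $\Ind^a(\A)$), and $\phi\colon U\xrar{\sim} V$ is an isomorphism of $\limA$ compatible with these presentations, i.e.\ $m_V\circ\phi=m_U$. Thus $\phi$ exhibits $U$ as an admissible subobject of $V$ with both $U,V$ sitting in $\Gamma(X)$ and $U\leq V$ there, so the hypotheses of \eqref{dimx} are met by the pair $(U,V)$ together with the monomorphism $\phi$.

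The core of the argument is then the observation that $\phi$, regarded as an arrow $U\to V$, is an admissible monomorphism of $\limA$ whose cokernel is a zero object. Indeed $\limA$ is an exact category (it inherits the exact structure of $\A$), and in it the sequence $U\xrar{\phi}V\epi 0$ is isomorphic to the admissible short exact sequence $U\xrar{\Id}U\epi 0$; since admissible short exact sequences are closed under isomorphism, $\phi$ is an admissible monomorphism and $V/U$ is a zero object. Applying \eqref{dimx} to $\phi$ now yields $d(V)=d(U)+\chi(V/U)$ for any $d\in\Dim_{\chi}(X)$. Finally, $\chi$ is invariant under isomorphism and $\chi(0)=0$ — both recorded right after the definition of a dimensional theory, as consequences of axiom (a) — so $\chi(V/U)=0$, and we conclude $d(V)=d(U)$.

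Since the computation is formal, the only step that really demands attention is the first one: one must be careful that the isomorphism is taken \emph{inside} $\Gamma(X)$, i.e.\ compatibly with the structure monomorphisms to $X$, so that $\phi$ genuinely qualifies as an admissible monomorphism between elements of $\Gamma(X)$ in the sense of \eqref{dimx}. This compatibility is precisely what ``in $\Gamma(X)$'' encodes and cannot be omitted: abstractly isomorphic admissible subobjects need not have equal $d$-value — for instance, two distinct lattices in $k((t))$ are isomorphic as topological vector spaces but carry different relative dimensions. Granting that, the rest is immediate from the exactness of $\limA$ and the normalization $\chi(0)=0$.
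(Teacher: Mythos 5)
Your argument is correct and is essentially the paper's own proof: both apply the defining identity \eqref{dimx} to the admissible short exact sequence $U\xrar{\sim}V\epi 0$ and conclude from $\chi(0)=0$. The additional care you take that the isomorphism be compatible with the structure monomorphisms to $X$ (illustrated by the two-lattices example) is a sensible clarification of what the paper leaves implicit, not a departure from its method.
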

The proof is a simple consequence of the above mentioned fact  $\chi(0)=0$ applied to the admissible short exact sequence $U\xrar{\sim} V\rar 0$.
\begin{prop} 
Let $\A$ be an exact category satisfying the (AIC). Then, $Dim_{\chi}({X})$ is a $G$-torsor.
\end{prop}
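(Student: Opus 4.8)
The plan is to show that $Dim_\chi(X)$ is nonempty, that $G$ acts on it freely and transitively, i.e. that it is a $G$-torsor in the sense of the definition given above.

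\textbf{The $G$-action.} First I would define the action. Given $d \in Dim_\chi(X)$ and $g \in G$, set $(g \cdot d)(U) := d(U) + g$ for all $U \in \Gamma(X)$. This is again a $\chi$-relative dimensional theory: for an admissible monomorphism $U \hrar V$ in $\Gamma(X)$ we have $(g\cdot d)(V) = d(V) + g = d(U) + \chi(V/U) + g = (g\cdot d)(U) + \chi(V/U)$, as required by \eqref{dimx}. Here I use Theorem \eqref{criterion} (with (AIC) in force) to know that $V/U \in \A$, so that $\chi(V/U)$ makes sense. This action is manifestly free, since $(g\cdot d)(U) = d(U)$ for one $U$ forces $g = 0$ in $G$.

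\textbf{Transitivity.} Given $d, d' \in Dim_\chi(X)$, I claim the function $U \mapsto d'(U) - d(U)$ on $\Gamma(X)$ is constant, with value some $g \in G$, whence $d' = g \cdot d$. For any two $U, V \in \Gamma(X)$, by Theorem \eqref{limintersections} (this is where (AIC) is essential) the intersection $U \cap V$ exists, lies in $\Gamma(X)$, and $U \cap V \hrar U$, $U \cap V \hrar V$ are admissible monomorphisms; moreover $U/(U\cap V)$ and $V/(U\cap V)$ are in $\A$. Applying \eqref{dimx} twice to $U \cap V \hrar U$ and once to $U \cap V \hrar V$:
$$
d'(U) - d(U) = \chi(U/(U\cap V)) + d'(U\cap V) - \chi(U/(U\cap V)) - d(U\cap V) = d'(U\cap V) - d(U\cap V),
$$
and symmetrically $d'(V) - d(V) = d'(U\cap V) - d(U\cap V)$. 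Hence $d'(U) - d(U) = d'(V) - d(V)$, so the difference is constant; call it $g$. Then $d' = g\cdot d$, proving transitivity.

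\textbf{Nonemptiness.} Finally I must exhibit one $d \in Dim_\chi(X)$. Fix any $U_0 \in \Gamma(X)$ (the Grassmannian is nonempty: for $X = \limi X_i$ with $X_i \in \Pro^a(\A)$, any single $X_i$ viewed inside $X$ gives such an element, with quotient $X/X_i \in \Ind^a(\A)$ by construction). Declare $d(U_0) = 0$. For an arbitrary $U \in \Gamma(X)$, form $U \cap U_0 \in \Gamma(X)$ as above; both $U/(U\cap U_0)$ and $U_0/(U\cap U_0)$ lie in $\A$, so $\chi$ of each is defined, and I set
$$
d(U) := \chi\!\left(\dfrac{U}{U\cap U_0}\right) - \chi\!\left(\dfrac{U_0}{U\cap U_0}\right).
$$
It remains to verify \eqref{dimx}: for an admissible monomorphism $U \hrar V$ in $\Gamma(X)$ one must check $d(V) - d(U) = \chi(V/U)$. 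The main obstacle, and the part requiring genuine care, is precisely this additivity check. I expect to handle it by comparing the three intersections $U\cap U_0$, $V\cap U_0$, $U$ inside $V$ and using the multiplicativity of $\chi$ in admissible short exact sequences repeatedly; concretely, one passes $\chi$ to $K_0(\A)$ via $u_\chi : K_0(\A) \to G$, where the relation becomes a straightforward identity among classes $[U/(U\cap U_0)]$, $[V/(V\cap U_0)]$, $[V/U]$, etc., obtained from the admissible short exact sequences linking these objects (using Lemma \eqref{admono} and Lemma \eqref{pullback} to see the relevant monomorphisms are admissible, and that the intersection squares remain cartesian in $\F$). Well-definedness of $d$ — independence of the choices implicit above — follows from Lemma \eqref{uisov} together with the same $K_0$-bookkeeping. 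This establishes that $Dim_\chi(X)$ is a nonempty set with a free transitive $G$-action, i.e.\ a $G$-torsor.
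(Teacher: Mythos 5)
Your argument for the action, freeness, and transitivity is exactly the paper's: define $(g\cdot d)(U)=d(U)+g$, and for transitivity show $d'-d$ is constant by first treating nested $U\hrar V$ and then reducing the general case to this one via the intersection $U\cap V$ supplied by Theorem \eqref{limintersections}. The one genuine difference is that you also prove $\Dim_{\chi}(X)$ is nonempty, which the paper's (explicitly sketched) proof never addresses; this is a real addition, since a free transitive action on the empty set is vacuous. Your construction of a base point is sound and the additivity check you defer does close: for $U\hrar V$ in $\Gamma(X)$, comparing the two admissible filtrations $U\cap U_0\hrar U\hrar V$ and $U\cap U_0\hrar V\cap U_0\hrar V$ (all subquotients lie in $\A$ by Theorem \eqref{criterion}) gives
\[
\chi\!\left(\frac{U}{U\cap U_0}\right)+\chi\!\left(\frac{V}{U}\right)=\chi\!\left(\frac{V\cap U_0}{U\cap U_0}\right)+\chi\!\left(\frac{V}{V\cap U_0}\right),
\]
which combined with $\chi\bigl(U_0/(U\cap U_0)\bigr)=\chi\bigl(U_0/(V\cap U_0)\bigr)+\chi\bigl((V\cap U_0)/(U\cap U_0)\bigr)$ yields exactly $d(V)-d(U)=\chi(V/U)$ for your formula; the only point to spell out is that $U\cap U_0\hrar V\cap U_0$ is an admissible monomorphism, which follows by identifying $U\cap U_0$ with $U\cap(V\cap U_0)$ and invoking Theorem \eqref{limintersections} once more. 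So your proposal subsumes the paper's proof and fills in the omission it leaves.
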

\begin{proof} (Sketch.) We first define an action $G \times Dim_{\chi}({X})\stackrel{*}\lrar Dim_{\chi}({X})$ by letting, $\forall g\in G$, $d\in Dim_{\chi}({X})$, and $U\in \Gamma(X)$, $(g*d)(U):=g+d(U)$. It is immediate that this datum defines an action of $G$ onto $Dim_{\chi}({X})$. We prove that it is free and transitive. To see this, let be $d, \ \ d'\in Dim_{\chi}(X)$. Let's fix $U\in\Gamma(X)$. Then, write $g:=d(U)-d'(U).$ It's enough to prove that this $g$ does not depend on $U$.  The argument works as follows: let be $U\hrar U'$ in $\Gamma(X)$ We have: $d(U')=d(U)+\chi\left (\dfrac{U'}{U}\right )$; $d'(U')=d'(U)+\chi\left (\dfrac{U'}{U}\right )$. Thus, since $G$ is abelian, it follows $d(U')-d'(U')=d(U)-d'(U)=g$. When $U'$ is any element of $\Gamma(X)$, Theorem \eqref{limintersections} shows that $U\cap U'$ is in $\Gamma(X)$. The consideration of the diagram \eqref{intgr} proves that it is $d(U')-d(U)=g$ also in this case.
\end{proof}

\subsection{The universal dimensional torsor}\label{sectionunivdim}

It is possible to introduce a dimensional torsor which is ``universal" in the sense that it depends only on the category $\A$ via the Grothendieck group $K_0(\A)$. This dimensional torsor will be denoted by $\Dim(X)$.

\begin{definition} 
Let be $\psi:Ob\ \A\rar K_0(\A)$ the dimension theory sending each $a\in Ob\ \A$ into its class $[a]\in K_0(\A)$. We shall call this 
function $\psi$ the {\it universal dimension theory} on $\A$. We then define
$$
\Dim(X):=\Dim_{\psi}(X).
$$
\end{definition}
Thus, $\Dim(X)$ is the $K_0(\A)$-torsor associated with the identity on $K_0(\A)$. 

\vspace{0.1cm}

If $\chi\colon Ob\ \A\to G$ is any other dimension theory, and $u_{\chi}$ the corresponding group morphism $K_0(\A)\rar G$, we have the following
\begin{prop}\label{univdim}
$u_{\chi_*}(\Dim(X))=\Dim_{\chi}(X)$.
\end{prop}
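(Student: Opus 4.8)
The plan is to produce a canonical morphism of $G$-torsors
$$
\bar\Phi\colon u_{\chi_*}(\Dim(X)) = G\otimes_{K_0(\A)}\Dim(X)\lrar \Dim_{\chi}(X)
$$
and then conclude by the general fact that every morphism of $G$-torsors is an isomorphism. Throughout I use that $\Dim(X)=\Dim_\psi(X)$ is a $K_0(\A)$-torsor and $\Dim_\chi(X)$ a $G$-torsor (under the standing (AIC) hypothesis), that $u_{\chi_*}(\Dim(X))$ is a $G$-torsor by the proposition on pushouts of torsors, and that $\chi = u_\chi\circ\psi$ where $\psi\colon a\mapsto[a]$.

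First I would define a map $\Phi\colon G\times\Dim(X)\to\Dim_\chi(X)$ by sending a pair $(g,d)$ to the function
$$
\Phi(g,d)\colon \Gamma(X)\lrar G,\qquad U\longmapsto g + u_\chi\bigl(d(U)\bigr).
$$
The first point to verify is that $\Phi(g,d)$ is genuinely a $\chi$-relative dimensional theory, i.e. satisfies \eqref{dimx}: for an admissible monomorphism $U\hrar V$ between elements of $\Gamma(X)$ we have $d(V)=d(U)+\psi(V/U)$ since $d\in\Dim_\psi(X)$, and applying the group homomorphism $u_\chi$ gives
\[
\Phi(g,d)(V) = g + u_\chi\bigl(d(V)\bigr) = g + u_\chi\bigl(d(U)\bigr) + u_\chi\bigl(\psi(V/U)\bigr) = \Phi(g,d)(U) + \chi(V/U),
\]
using $u_\chi\circ\psi=\chi$. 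So $\Phi$ is well defined.

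Next I would check the two compatibilities that let $\Phi$ descend to the pushout and that make it torsor-equivariant. For $G$-equivariance: $\Phi(h+g,d)(U) = h + \Phi(g,d)(U)$, so $\Phi(h+g,d) = h*\Phi(g,d)$ with the action $*$ from the proof that $\Dim_\chi(X)$ is a torsor. For the balancing relation defining $G\otimes_{K_0(\A)}\Dim(X)$, recall $n\in K_0(\A)$ acts on $\Dim_\psi(X)$ by $(n\cdot d)(U)=n+d(U)$ and on $G$ (through $u_\chi$) by addition of $u_\chi(n)$; then $\Phi(g,n\cdot d)(U) = g+u_\chi(n)+u_\chi(d(U)) = \Phi(u_\chi(n)+g,\,d)(U)$, which is exactly the identification $(u_\chi(n)+g,\,d)\sim(g,\,n\cdot d)$ imposed in the pushout. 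Hence $\Phi$ factors through a well-defined $G$-equivariant map $\bar\Phi\colon u_{\chi_*}(\Dim(X))\to\Dim_\chi(X)$, which is therefore an isomorphism of $G$-torsors, proving the proposition.

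I do not expect a serious obstacle: the entire argument is an unwinding of the definitions of $\Dim_\psi$, $\Dim_\chi$, the homomorphism $u_\chi$, and the pushout of a torsor along it. The only mildly delicate step is bookkeeping the direction of the balancing relation $(u_\chi(n)+g,\,d)\sim(g,\,n\cdot d)$ so that the compatibility check in the third paragraph comes out in the right order; once that is set up correctly, equivariance and descent are immediate and the conclusion follows formally.
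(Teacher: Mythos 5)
Your proof is correct: the paper states Proposition \eqref{univdim} without proof, and your argument --- defining $\Phi(g,d)(U)=g+u_\chi(d(U))$, checking the $\chi$-relative cocycle condition via $\chi=u_\chi\circ\psi$, verifying the balancing relation so that $\Phi$ descends to the pushout, and invoking the fact that any morphism of $G$-torsors is an isomorphism --- is exactly the routine unwinding of definitions the author evidently had in mind. The only implicit ingredient worth flagging is that you are using the paper's earlier result that $\Dim_\chi(X)$ is itself a $G$-torsor (which requires the (AIC) hypothesis), since the ``morphism of torsors is an isomorphism'' step needs the target to be a torsor.
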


\begin{example}
{\it The Kapranov Dimensional torsor} $\Dim(V)$. \\
Let be $\A=\vect$. We have $\limA=\T$, the category of Tate spaces. Let be $V\in\T$ a Tate space. Since $K_0(\A)=\Zee$, $\Dim
(V)$ is a 
$\Zee$-torsor. This torsor is the {\it Kapranov dimensional torsor} associated to a Tate space $V$, defined by Kapranov in \cite{ka}.
\end{example}
\subsection{Dimensional torsors form a symmetric determinantal theory}\label{sectionmult}

We now  study the behavior of the dimensional torsor with respect to admissible short exact sequences of $\limA$, where $\A$ is a partially abelian exact category. 

Let $\A$ be a partially abelian exact category and $\chi$ be a dimensional theory on $\A$ with values in an abelian group $G$. Consider, in $\limA$, any admissible short exact sequence:
$$
X'\hrar X\epi X''.
$$
Let $[U\hrar X]\in\Gamma(X)$. Since $\A$ is partially abelian exact, from Theorem \eqref{projlift} we get the admissible short exact sequence of $\Pro^a(\A)$:
$$
U\cap X'\hrar U\epi\frac{U}{U\cap X'}
$$
where $[U\cap X'\hrar X']\in\Gamma(X')$  and $\left [\dfrac{U}{U\cap X'}\hrar X''\right ]\in\Gamma(X'')$.

\vspace{0.3cm}

Next, let be given $d'\in\Dim_{\chi}(X)$ and $d''\in\Dim_{\chi}(X'')$. Define:
\begin{equation}\label{d}
d(U):=d'(U\cap X')+d''\left (\dfrac{U}{U\cap X'}\right )
\end{equation}
for all $[U\hrar X]\in\Gamma(X)$.

\begin{theorem}\label{tensordim}
(1) The map $d:\Gamma(X)\lrar G$ defined in \eqref{d} is a $\chi$-relative dimensional theory on $X$.
\par 
(2) The induced map $\mu:\Dim_{\chi}(X')\times\Dim_{\chi}(X'')\rar\Dim(X)$ given by $\mu(d', d'')=d$ descends to a (iso-)morphism of $G$-torsors:
$$
\mu_{X',X,X''}:\Dim_{\chi}(X')\otimes\Dim_{\chi}(X'') \stackrel{}\lrar\Dim_{\chi}(X)
$$
for which the pair $(\Dim_{\chi}(X), \mu)_{X\in\limA}$ is a symmetric determinantal theory on $\limA$, with values in the Picard category $\Tors(G)$.
\end{theorem}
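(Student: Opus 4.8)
The plan is to verify, one axiom at a time, the conditions of Definition \ref{dettheory} and Definition \ref{newsymmetric} for the pair $(h,\lambda)$ with $h(X)=\Dim_{\chi}(X)$ and $\lambda_{X'\hrar X\epi X''}=\mu_{X',X,X''}$, taking $\Pp=\Tors(G)$. For part (1), fix an admissible monomorphism $U\hrar V$ between elements of $\Gamma(X)$. Applying Theorem \ref{projlift} to the sequence $X'\hrar X\epi X''$ and to $U$, resp.\ $V$, produces admissible short exact sequences $U\cap X'\hrar U\epi U/(U\cap X')$ and $V\cap X'\hrar V\epi V/(V\cap X')$ of $\Pro^a(\A)$ with $U\cap X',V\cap X'\in\Gamma(X')$ and $U/(U\cap X'),V/(V\cap X')\in\Gamma(X'')$. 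Since $U\cap X'=U\times_X X'=U\times_V(V\times_X X')=U\times_V(V\cap X')$, the square with corners $U\cap X',V\cap X',U,V$ is a pullback of admissible monomorphisms in $\Pro^a(\A)$ (Proposition \ref{intersections1}), so Corollary \ref{squareofmonos} yields a $3\times 3$ diagram of admissible short exact sequences whose bottom row is $U/(U\cap X')\hrar V/(V\cap X')\epi Q$ and whose right column is $(V\cap X')/(U\cap X')\hrar V/U\epi Q$ for a common quotient $Q\in\A$. Additivity of $d'$ on $U\cap X'\hrar V\cap X'$, of $d''$ on $U/(U\cap X')\hrar V/(V\cap X')$ (whose quotient is $Q$), and of $\chi$ on the right column give, after adding and using that $G$ is abelian, $d(V)=d(U)+\chi\!\left((V\cap X')/(U\cap X')\right)+\chi(Q)=d(U)+\chi(V/U)$. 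Hence $d\in\Dim_{\chi}(X)$, and its independence of the chosen representatives $m,n$ follows from the isomorphism-invariance of $d',d''$ (Lemma \ref{uisov}).

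For part (2), the identity $(g*d')(W)=g+d'(W)$ together with the formula \eqref{d} shows at once that $\mu(g*d',d'')=\mu(d',g*d'')=g*\mu(d',d'')$, so $\mu$ factors through the tensor product and induces a $G$-equivariant map $\mu_{X',X,X''}\colon\Dim_{\chi}(X')\otimes\Dim_{\chi}(X'')\to\Dim_{\chi}(X)$; as both sides are $G$-torsors, this is an isomorphism of $G$-torsors. One has $h(0)=\Dim_{\chi}(0)=\mathbf 1$ via the canonical isomorphism $d\mapsto d(0)$ onto $G$. An isomorphism $X\xrar{\sim}Y$ of $\limA$ transports $\Gamma(X)$ onto $\Gamma(Y)$ compatibly with the formation of the intersections $U\cap X'$ and the quotients $U/(U\cap X')$, hence transports $\Dim_{\chi}(X)$ onto $\Dim_{\chi}(Y)$ $G$-equivariantly, so $h$ is a functor; and naturality of $\mu$ with respect to isomorphisms of admissible short exact sequences is immediate from \eqref{d} for the same reason.

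For the associativity axiom \eqref{mult}, given an admissible filtration $X_1\hrar X_2\hrar X_3$ I would evaluate both composites on an arbitrary $U\in\Gamma(X_3)$. Using the identities $(U\cap X_2)\cap X_1=U\cap X_1$ and $(U/(U\cap X_1))\cap(X_2/X_1)=(U\cap X_2)/(U\cap X_1)$ (modularity in the abelian envelope) together with the third isomorphism identity $\bigl(U/(U\cap X_1)\bigr)\bigl/\bigl((U\cap X_2)/(U\cap X_1)\bigr)=U/(U\cap X_2)$, both composites send $(d_1,\bar d,d'')$ to the dimensional theory $U\mapsto d_1(U\cap X_1)+\bar d\bigl((U\cap X_2)/(U\cap X_1)\bigr)+d''\bigl(U/(U\cap X_2)\bigr)$ on $\Gamma(X_3)$; hence \eqref{mult} commutes. (Each of $U\cap X_1\in\Gamma(X_1)$, $(U\cap X_2)/(U\cap X_1)\in\Gamma(X_2/X_1)$, $U/(U\cap X_2)\in\Gamma(X_3/X_2)$ is obtained by successive applications of Proposition \ref{lift} and Theorem \ref{projlift}, so all three terms are legitimate.)

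Finally, for symmetry I would invoke Proposition \ref{symmdet}: since $\Tors(G)$ is strictly symmetric, it suffices to check that \eqref{oldsymm} commutes for every pair $X,Y$, i.e.\ that $\mu_{Y,Y\oplus X,X}\circ\sigma$ agrees with $\mu_{X,X\oplus Y,Y}$ after the canonical identification $\Dim_{\chi}(X\oplus Y)=\Dim_{\chi}(Y\oplus X)$; both are elements of the $G$-torsor $\Dim_{\chi}(Y\oplus X)$, so it is enough to compare their values on the subobject $Y\hrar Y\oplus X$, where one obtains $d_Y(Y)+d_X(0)$ and $d_X(0)+d_Y(Y)$ respectively — equal since $G$ is abelian. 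The main obstacle is part (1) together with the verification of \eqref{mult}: both rest on feeding the correct pullback square into Corollary \ref{squareofmonos} and on the bookkeeping of the intersection/quotient identities in $\Pro^a(\A)$ (each valid because it holds in the abelian envelope $\F$); once those $3\times 3$ diagrams are in hand, the statement reduces to the additivity of $d'$, $d''$, $\chi$ and the commutativity of $G$.
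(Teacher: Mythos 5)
Your argument is essentially the paper's own: part (1) rests on the same $3\times 3$ diagram of admissible short exact sequences (the paper extracts the key admissible sequence relating $(V\cap X')/(U\cap X')$, $V/U$ and the quotient on the $X''$-side from Theorem \eqref{projlift}; you cite Corollary \eqref{squareofmonos}, which is the sharper reference), and your verification of \eqref{mult} uses exactly the modularity and third-isomorphism identities that the paper isolates as Lemma \eqref{fundlemma}. The reduction of symmetry to Proposition \eqref{symmdet} is also what the paper does.

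The one step that fails as written is the check of \eqref{oldsymm}: you evaluate the two dimensional theories at the subobject $Y\hrar Y\oplus X$, but for general $X,Y\in\limA$ the object $Y$ is \emph{not} an element of $\Gamma(Y\oplus X)$ (it need not lie in $\Pro^a(\A)$, and the quotient $X$ need not be discrete), so $d_Y(Y)$ and $d_X(0)$ are simply undefined --- already for $Y=k((t))$ in the Tate case. The repair is immediate: two elements of the $G$-torsor $\Dim_{\chi}(Y\oplus X)$ coincide as soon as they agree at a single point of $\Gamma(Y\oplus X)$, so evaluate instead at $W_1\oplus W_2$ for any $W_1\in\Gamma(X)$, $W_2\in\Gamma(Y)$ (this does lie in the Grassmannian); then $(W_1\oplus W_2)\cap X=W_1$ with quotient $W_2$, and symmetrically, so the two sides become $d_X(W_1)+d_Y(W_2)$ and $d_Y(W_2)+d_X(W_1)$, equal by commutativity of $G$. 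With that substitution your proof is complete and coincides in substance with the paper's.
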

\begin{proof} 
\par
(1) Let be $U_1\hrar U_2\hrar X$ with $[U_1\hrar X], \  [U_2\hrar X]\in\Gamma(X)$.

\vspace{0.1cm}

We have the following relations: 
$$
d(U_2):=d'(U_2\cap X')+d''\left (\dfrac{U_2}{U_2\cap X'}\right )
$$
$$
d(U_1):=d'(U_1\cap X')+d''\left (\dfrac{U_1}{U_1\cap X'}\right )
$$
$$
d'(U_2\cap X'):=d'(U_1\cap X')+\chi\left (\dfrac{U_2\cap X'}{U_1\cap X'}\right )
$$
and
$$
d''\left (\dfrac{U_2}{U_2\cap X'}\right ):=d''\left (\dfrac{U_1}{U_1\cap X'}\right )+
\chi\left (\dfrac{\dfrac{U_2}{U_2\cap X'}}{\dfrac{U_1}{U_1\cap X'}}\right ).
$$
It's enough to prove $d(U_2)=d(U_1)+\chi(U_2/U_1)$. This results from the above relations thanks to the commutativity of $G$ and because the sequence 
$$
\dfrac{U_2\cap X'}{U_1\cap X'}\hrar \dfrac{U_2}{U_1}\epi\dfrac{\dfrac{U_2}{U_2\cap X'}}{\dfrac{U_1}{U_1\cap X'}}
$$
is an admissible short exact sequence of $\A$, from Theorem \eqref{projlift}. Thus, since $\chi$ is defined on $K_0(\A)$, we have 
$$
\chi\left (\dfrac{U_2}{U_1}\right )=\chi\left (\dfrac{U_2\cap X'}{U_1\cap X'}\right )+\chi\left (\dfrac{\dfrac{U_2}{U_2\cap X'}}{\dfrac{U_1}{U_1\cap X'}}\right ).
$$
Substituting in the expression obtained for $d(U_2)-d(U_1)$, we get $\chi(U_2/U_1)$, as claimed.

\vspace{0.1cm}

(2) To check that $\mu$ descends to a (iso)morphism of torsors, it is enough to check that $\forall g\in G$, it is $\mu(gd', d'')=\mu(d', gd'')$. But this is immediate from the definition of $\mu$ and the commutativity  of $G$. 

\vspace{0.1cm}

In order to prove that $(\Dim_{\chi}(X), \mu)_{X\in\limA}$ is a determinantal theory we need to show that the isomorphisms $\mu$ are natural with respect to isomorphisms of admissible short exact sequences of $\limA$, and that diagram \eqref{mult}
 commutes for $h(X)=\Dim(X), a_i=X_i$ for $i=1,2,3$ and $\lambda=\mu$. We shall need a topological lemma about the Grassmannians.

\vspace{0.1cm}

Let be given the following diagram in $\limA$, where the horizontal arrows are admissible monomorphisms and the vertical ones the corresponding cokernels: 
\begin{equation}\label{mult2}
\xymatrix{ 
X_1  \ar@{^{(}->}[r]& X_2\ar@{->>}[d]\ar@{^{(}->}[r] & X_3\ar@{->>}[d] \\
& \dfrac{X_2}{X_1} \ar@{^{(}->}[r] &\dfrac{X_3}{X_1}\ar@{->>}[d]\\
&&\dfrac{X_3}{X_2}
}
\end{equation}
We are given 3 dimension theories, $d_1, d_{21}, d_{32}$ resp. on $X_1$, $\dfrac{X_2}{X_1}$ and $\dfrac{X_3}{X_2}$. The commutativity of diagram \eqref{mult} is equivalent to the equality
$$
\mu(\mu(d_1, d_{21}), d_{32})=\mu(d_1, \mu(d_{21}, d_{32})),
$$
as dimension theories on $X_3$.

\vspace{0.1cm}

Let thus $[U\hrar X_3]\in\Gamma(X_3)$ be given, and let us construct first $\mu(\mu(d_1, d_{21}), d_{32})$, by applying \eqref{d}.  

\vspace{0.1cm}

We first lift $U$ along $X_2\hrar X_3$ to $U_2=U\cap X_2\in\Gamma(X_2)$. We then project $U_2$ along $X_3\epi\dfrac{X_3}{X_2}$ to an element $U_{32}=\dfrac{U}{U_2}\in\Gamma\left (\dfrac{X_3}{X_2}\right )$. The element $U_2$ is then lifted along $X_1\hrar X_2$ to $U_1=U\cap X_1\in\Gamma(X_1)$, and then projected along $X_1\epi\dfrac{X_2}{X_1}$ to the element 
$$
U_{21}=\dfrac{U_2}{U_1}=\dfrac{U\cap X_2}{U\cap X_1}\in\Gamma\left (\dfrac{X_2}{X_1}\right ).
$$
We thus can write:
$$
\mu(\mu(d_1, d_{21}), d_{32})=d_1(U_1)+d_{21}(U_{21})+d_{32}(U_{32}).
$$
We similarly construct $\mu(d_1, \mu(d_{21}, d_{32}))$, as follows: we first lift $U$ to the same $U_1\in\Gamma(X_1)$, since pullbacks are unique up to a unique isomorphism. We then project $U$ along $X_3\epi\dfrac{X_3}{X_1}$, to obtain $\dfrac{U}{U_1}\in\Gamma\left (\dfrac{X_3}{X_1}\right )$. This element is then lifted along $\dfrac{X_2}{X_1}\hrar \dfrac{X_3}{X_1}$ to the element 
$$
U'_{21}=\dfrac{U}{U_1}\cap\dfrac{X_2}{X_1}\in\Gamma\left (\dfrac{X_2}{X_1}\right ),
$$
then projected along $\dfrac{X_3}{X_1}\hrar \dfrac{X_3}{X_2}$ to the element 
$U'_{32}=\dfrac{U/U_1}{U'_{21}}$ in $\Gamma\left (\dfrac{X_3}{X_2}\right )$. We thus have:
$$
\mu(d_1, \mu(d_{21}, d_{32}))(U)=d_1(U_1)+d_{21}(U'_{21})+d_{32}(U'_{32}).
$$
The proof of (4) is then an immediate consequence of the following
\begin{lm}\label{fundlemma} 
In the above situation, we have equalities $U_{21}=U'_{21}$ in $\Gamma\left (\dfrac{X_2}{X_1}\right )$ and $U_{32}=U'_{32}$ in $\Gamma\left (\dfrac{X_3}{X_2}\right )$.
\end{lm}
Both equalities are general properties holding in any abelian category. The first equality, in set-theoretical terms, reads:
$$
\dfrac{U\cap X_2}{U\cap X_1}=\ker\left \{\dfrac{U}{U\cap X_1}\lrar\dfrac{X_3}{X_2}\right \}
$$
as subobjects in 
$$
\dfrac{X_2}{X_1}=\ker\left\{\dfrac{X_3}{X_1}\epi\dfrac{X_3}{X_2}\right\}.
$$
The second equality is a consequence of the first, since $\dfrac{U/U_1}{U_2/U_1}=\dfrac{U}{U_2}$,
and the lemma is proved. It remains to check the symmetry of $\Dim_{\chi}(X)$. This is easily done directly using proposition \eqref{symmdet}. The proof of Theorem \eqref{tensordim} is now complete.
\end{proof}
\subsection{Cohomological interpretation of Dim(X) in terms of the Waldhausen space of lim $\mathcal A$} \label{sectionwaldhausen}
We refer to multiplicative torsors of degree $n$ over the bisimplicial set determined by $\SA$ simply as multiplicative torsors of degree $n$ over $S(\A)$. 

\vspace{0.1cm}

For $i=0$ the relation $\pi_{i+1}(S(\A))=K_i(\A)$ gives $\pi_1(S(\A))=H_1(S(\A), \Zee)=K_0(\A)$, the Grothendieck group of the category $\A$. So the universal dimensional theory on $\A$ gives rise to a class $\zeta\in H^1(S(\A), K_0(\A))$. 
\begin{example}
It is immediate from the definitions that a 0-multiplicative $G$-torsor on $S(\A)$ is a dimensional theory, and that a 1-multiplicative $G$-torsor on $S(\A)$ is a determinantal theory on $\A$.
\end{example}
It is then possible to re-interpret theorem \eqref{tensordim} in terms of the Waldhausen space of $\limA$, as follows:
\begin{theorem}\label{dimxismult}
Let be $\A$ a partially abelian exact category. Let be $G$ an abelian group and $\chi:K_0(\A)\rar G$ a homomorphism. Therefore, the collection  $\{\Dim(X), X\in\limA\}$ is a multiplicative $G$-torsor on $S(\limA)$.
\end{theorem}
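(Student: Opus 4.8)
The plan is to read Theorem \eqref{dimxismult} as a translation of Theorem \eqref{tensordim} into the formalism of degree-$1$ multiplicative torsors. Two preliminaries make this translation meaningful. First, as recalled earlier, the Beilinson category $\limA$ is itself an exact category, so the Waldhausen space $S(\limA)=|S_\bullet(\limA)|$ is defined, and with it the notion of a degree-$1$ multiplicative $G$-torsor over the bisimplicial set $S_\bullet(\limA)$. Second, the homomorphism $\chi\colon K_0(\A)\to G$ composed with the universal dimension theory $\psi\colon\Ob\,\A\to K_0(\A)$ is a $G$-valued dimensional theory on $\A$, and by Proposition \eqref{univdim} the associated relative dimensional torsor attached to an object $X\in\limA$ is exactly $\chi_*\Dim(X)$; so the collection in the statement is $\{\chi_*\Dim(X)\}_{X\in\limA}=\{\Dim_{\chi\circ\psi}(X)\}_{X\in\limA}$.

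The key step is the dictionary, valid for any exact category $\B$, between $\Tors(G)$-valued determinantal theories on $\B$ and degree-$1$ multiplicative $G$-torsors over $S(\B)$; this is the content of the Example preceding the theorem, and it is made explicit by the description of the low-dimensional bisimplices of $S(\B)$ in Remark \eqref{bisimplexes}. Concretely: a $\Delta^1\times\Delta^0$-cell of $S(\B)$ is an object of $\B$, to which one attaches the torsor $h(\,\cdot\,)$, functorially in isomorphisms (the $\Delta^1\times\Delta^1$- and $\Delta^1\times\Delta^2$-cells); a $\Delta^2\times\Delta^0$-cell is an admissible short exact sequence $\sigma\colon b'\hrar b\epi b''$ with $\pt_0\sigma=b''$, $\pt_1\sigma=b$, $\pt_2\sigma=b'$, to which one attaches $\mu_\sigma\colon h(b'')\otimes h(b')\to h(b)$, namely $\lambda_\sigma$ precomposed with the (strict) symmetry of $\Tors(G)$ that identifies $h(b'')\otimes h(b')$ with $h(b')\otimes h(b'')$, naturally in isomorphisms of admissible short exact sequences (the $\Delta^2\times\Delta^1$-cells); and a $\Delta^3\times\Delta^0$-cell is a rigidified admissible filtration of length $2$, for which the identity required of a degree-$1$ multiplicative torsor on a $3$-simplex is precisely the commutativity of diagram \eqref{mult}. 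Because $\Tors(G)$ is strictly symmetric, no coherence ambiguities enter, and this dictionary is a bijection between the two sets of data.

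With these in hand the proof is immediate: by Theorem \eqref{tensordim}(2) --- whose hypothesis that $\A$ be partially abelian exact is exactly the hypothesis of the present statement --- the pair $\bigl(\Dim_{\chi\circ\psi}(X),\mu_{X',X,X''}\bigr)_{X\in\limA}$ is a (symmetric) $\Tors(G)$-valued determinantal theory on the exact category $\limA$. Applying the dictionary of the previous paragraph with $\B=\limA$ converts this datum into a degree-$1$ multiplicative $G$-torsor over $S(\limA)$, which is the assertion. The only point requiring care --- and the place where one could slip --- is the bookkeeping just indicated: matching the face-map conventions of the $S$-construction with those of the multiplicative-torsor formalism (whence the insertion of the symmetry of $\Tors(G)$), and checking that the functoriality and naturality clauses of Definition \eqref{dettheory} supply exactly the data on the mixed cells $\Delta^p\times\Delta^q$ with $q>0$. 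All of this is routine once $\Tors(G)$ is seen to be strictly symmetric, and no new diagram chase is needed beyond what is already contained in Theorem \eqref{tensordim}.
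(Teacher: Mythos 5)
Your proposal is correct and follows essentially the same route as the paper: the paper's own proof consists precisely of the observation that the theorem is a restatement of Theorem \eqref{tensordim}, via the dictionary (recorded in the Example of that section) identifying degree-$1$ multiplicative $G$-torsors on $S(\limA)$ with $\Tors(G)$-valued determinantal theories on $\limA$. Your additional care about the face-map conventions (the reordering $T_{\pt_0\sigma}\otimes T_{\pt_2\sigma}$ versus $h(a')\otimes h(a'')$, absorbed by the strict symmetry of $\Tors(G)$) and about passing from $\Dim$ to $\Dim_{\chi}$ via Proposition \eqref{univdim} is consistent with, and slightly more explicit than, what the paper writes.
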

\begin{cor}\label{cortorsmult}
The class $\zeta$ in $H^1(S(\A), K_0(\A))$ gives rise to a cohomology class in \linebreak $H^2(S(\limA), K_0(\A))$.
\end{cor}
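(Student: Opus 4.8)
The plan is to read the corollary off Theorem \ref{dimxismult} together with the cohomological interpretation of multiplicative torsors provided by Theorem \ref{torsorcohomol}. First I would recall, following section \ref{sectionunivdim}, that the universal dimensional theory corresponds to the identity homomorphism $\Id\colon K_0(\A)\rar K_0(\A)$, and that, by the Example at the start of section \ref{sectionwaldhausen}, a dimensional theory on $\A$ with values in an abelian group $G$ is nothing but a $0$-multiplicative $G$-torsor on $S(\A)$; hence, by Theorem \ref{torsorcohomol}(a) with $n=0$, the class $\zeta$ is the element of $\pi_0(\Mult_0(S(\A),K_0(\A)))\cong H^1(S(\A),K_0(\A))$ represented by this $0$-multiplicative torsor.

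Next I would apply Theorem \ref{dimxismult} with $G=K_0(\A)$ and $\chi=\Id_{K_0(\A)}$, obtaining that the collection $\{\Dim(X)\}_{X\in\limA}$ is a degree-$1$ multiplicative $K_0(\A)$-torsor on $S(\limA)$. Here one uses that $S(\limA)$ is the geometric realization of a bisimplicial set (as in section \ref{SA}) and that, as indicated in the Remark following Theorem \ref{torsorcohomol}, both the notion of a degree-$n$ multiplicative torsor and the isomorphism of Theorem \ref{torsorcohomol}(a) carry over to bisimplicial sets. The isomorphism class of $\{\Dim(X)\}$ in the groupoid $\Mult_1(S(\limA),K_0(\A))$ then determines, via
\[
\pi_0(\Mult_1(S(\limA),K_0(\A)))\cong H^2(S(\limA),K_0(\A)),
\]
the required cohomology class. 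Since the entire construction $X\mapsto\Dim(X)$ was parametrized by the identity homomorphism of $K_0(\A)$ — that is, by the datum encoded in $\zeta$ — and since this dependence is functorial in $\chi$ by Proposition \ref{univdim} (so that an arbitrary $\chi\colon K_0(\A)\rar G$ would produce the pushforward class $\chi_*$ of this one), this class is exactly the one to which $\zeta$ gives rise.

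The only point I expect to require real care is the passage from the simplicial to the bisimplicial setting: Theorem \ref{torsorcohomol} is stated only for simplicial sets, with its proof deferred, so one must either invoke its bisimplicial analogue directly or reduce to the simplicial case. For the reduction one would pass to the diagonal simplicial set of $S_{\bullet}(\limA)$ — whose realization is again $S(\limA)$, so its cohomology computes $H^2(S(\limA),K_0(\A))$ — and verify that the data $\{\Dim(X),\mu\}$ restricts along the diagonal to a genuine degree-$1$ multiplicative torsor, the coherence on diagonal $3$-simplices following from the commutativity established in the proof of Theorem \ref{tensordim} (the equalities of Lemma \ref{fundlemma}). Everything else is formal bookkeeping.
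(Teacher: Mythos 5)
Your proposal is correct and follows essentially the same route as the paper: specialize Theorem \ref{dimxismult} to $G=K_0(\A)$, $\chi=\Id_{K_0(\A)}$, and then invoke Theorem \ref{torsorcohomol}(a) (in its bisimplicial form) to identify the isomorphism class of the multiplicative torsor $\{\Dim(X)\}$ with an element of $H^2(S(\limA),K_0(\A))$. The only difference is that you spell out the simplicial-versus-bisimplicial issue (via the diagonal), which the paper handles implicitly by declaring that multiplicative torsors are defined analogously for bisimplicial sets; this is a reasonable precaution but not a divergence in method.
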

This result can be interpreted as a ``first step delooping" between the first cohomology of $S(\A)$ and the second cohomology of $S(\limA)$.
\begin{proof}
The theorem is a restatement of Theorem \eqref{tensordim}: the claims proved there are equivalent to the statement that $\{\Dim(X); X\in\limA\}$ is a multiplicative torsor. The corollary follows when we let $G=K_0(\A)$. Then, from Theorem \eqref{torsorcohomol}, the induced multiplicative $K_0(\A)-$torsor $\Dim(X)$ represents an element of $H^2(S(\limA), K_0(\A))$.
\end{proof}

\subsection{The determinantal torsor $\mathcal D_h(X)$} \label{sectiondet}
Given a Tate space $V$, we generalize the construction of the determinantal gerbe $\Det(V)$ (cf. \cite{ka}) in two directions: in the first place we consider any generalized Tate space $X$, provided that the exact base category $\A$ is partially abelian exact; secondly, instead of a gerbe over an abelian group, we shall produce a torsor $\D$ over a symmetric Picard category $\Pp$, which gives rise to the gerbe $\Det$ when we restrict to $\pi_1(\Pp)$, and to the torsor $\Dim$ when restricted to $\pi_0(\Pp)$.

\vspace{0.1cm}

Let $\A$ be an exact category and $X$ an object of $\limA$. Let be $(h, \lambda)$ a determinantal theory on $\A$ with values in a symmetric Picard category $\Pp$.

\begin{definition}
 A  {\it h-relative determinantal theory $\Delta$ on X} is the datum consisting of a pair $(\Delta, \delta)$, where $\Delta$ is a function $\Delta:\Gamma(X)\lrar\Ob\Pp$, such that:
 \begin{enumerate}
\item{For all admissible monomorphism $U\hrar V$ in $\Gamma(X)$, an isomorphism 
$$
\delta_{U,V}\colon\Delta(U)\otimes h\left (\dfrac{V}{U}\right )\xrar{\sim}\Delta(V),
$$
natural with respect to isomorphisms of admissible short exact sequences $U\hrar V\epi\dfrac{V}{U}$.}
\item{For all filtration of length 2 of admissible monomorphisms in $\Gamma(X)$, $U_1\hrar U_2\hrar U_3$, a commutative diagram
\begin{equation}\label{delta}
\xymatrix{ 
\Delta(U_1)\otimes h\left (\dfrac{U_{2}}{U_{1}}\right )\otimes h\left (\dfrac{U_{3}}{U_{2}}\right )\ar[d]_{\delta_{U_1,U_2}\otimes 1} \ar[rr]^-{1\otimes\lambda} &&\Delta(U_1)\otimes h\left (\dfrac{U_{3}}{U_{1}}\right )\ar[d]^{\delta_{U_1,U_3}}\\
\Delta(U_2)\otimes h\left (\dfrac{U_{3}}{U_{2}}\right ) \ar[rr]_-{\delta_{U_2,U_3}}&& \Delta(U_3) \\
}
\end{equation}}
where, as before, we have omitted the associator for simplicity.
\end{enumerate}
\end{definition}

A {\it morphism of h-relative determinantal theories} $f\colon (\Delta, \delta)\to(\Delta',\delta')$ is a collection of isomorphisms of $\Pp$, $
\{f_U\colon\Delta(U)\to\Delta '(U)\}_{U\in\Gamma(X)}$, such that, for $U\hrar V$ in $\Gamma(X)$, the diagram 
$$
\xymatrix{
\Delta(U)\otimes h\left (\dfrac{V}{U}\right )\ar[r]^-{\delta_{U,V}}\ar[d]_{f_{U}\otimes 1} &\Delta(V)\ar[d]^{f_V} \\
\Delta'(U)\otimes h\left (\dfrac{V}{U}\right )\ar[r]^-{\delta'_{U,V}} & \Delta'(V)
}
$$
commutes. It is clear that any such morphism is invertible, hence an isomorphism.

\begin{definition}
Let be $\Pp$ a Picard category, and let $X$ be as before an object in $\limA$. We denote by $\D_h(X,\Pp)$, or simply $\D_h(X)$
if no confusion arises, the category (groupoid) whose  objects are $h$-relative determinantal theories on X with values in $\Pp$ and 
morphisms are the morphisms of determinantal theories.
\end{definition}

\begin{theorem}\label{detxtorsor}
If the exact category $\A$ satisfies the (AIC), $\D_h(X)$ is a $\Pp$-torsor.
\end{theorem}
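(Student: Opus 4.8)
The plan is to check, one by one, the defining properties of a $\Pp$-torsor from Definition \eqref{pictorsor} for the groupoid $\D_h(X)$, with $\Pp$ acting by tensoring on the left. Throughout, $\Gamma(X)$ is nonempty: writing $X=\indjriga$ with $X_j\in\Pro^a(\A)$, each inclusion $X_j\hrar X$ is an admissible monomorphism with $X/X_j\in\Ind^a(\A)$, so $[X_j\hrar X]\in\Gamma(X)$.

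\emph{Step 1: the action and the structural isomorphisms.} Given $p\in\Pp$ and $(\Delta,\delta)\in\D_h(X)$, I would define $p\otimes(\Delta,\delta)$ by $U\mapsto p\otimes\Delta(U)$, with structure isomorphisms $1_p\otimes\delta_{U,V}$ (reparenthesized via the associator of $\Pp$). The cocycle diagram \eqref{delta} for $p\otimes\Delta$ is obtained from the one for $\Delta$ by tensoring with $1_p$ and inserting associators, so this is an object of $\D_h(X)$; this assignment is functorial in $p$ and in $(\Delta,\delta)$ and the two functorialities commute, giving the bifunctor $\Pp\times\D_h(X)\rar\D_h(X)$. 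The isomorphisms $\beta_{p_1,p_2,\Delta}$ are then induced objectwise by the associator $\alpha$ of $\Pp$, with the coherence diagram of Definition \eqref{pictorsor} following from MacLane's pentagon in $\Pp$; the isomorphism $\lambda_{\Delta}\colon{\bf 1}\otimes\Delta\xrar{\sim}\Delta$ is induced objectwise by the unit constraint of $\Pp$, and it is a morphism in $\D_h(X)$ by naturality of that constraint. All of Step 1 is a formal, cellwise‑over‑$\Gamma(X)$ transport of the Picard structure of $\Pp$, so no hard work here.

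\emph{Step 2: $-\otimes\Delta\colon\Pp\rar\D_h(X)$ is fully faithful.} Fix $(\Delta,\delta)\in\D_h(X)$. A morphism $g\colon p\otimes\Delta\rar p'\otimes\Delta$ is a family $\{g_U\colon p\otimes\Delta(U)\rar p'\otimes\Delta(U)\}_{U\in\Gamma(X)}$ compatible with the structure isomorphisms. Since $\Pp$ is a Picard category, $-\otimes\Delta(U)$ is a self‑equivalence, so $g_U=\phi_U\otimes1_{\Delta(U)}$ for a unique $\phi_U\in\Hom_{\Pp}(p,p')$. For $U\hrar V$ in $\Gamma(X)$, the compatibility square of $g$ with $\delta_{U,V}$, together with the functoriality of the tensor product of $\Pp$ and the invertibility of $1_p\otimes\delta_{U,V}$, gives $\phi_U\otimes1_{\Delta(V)}=\phi_V\otimes1_{\Delta(V)}$, hence $\phi_U=\phi_V$ by faithfulness of $-\otimes\Delta(V)$. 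For arbitrary $U,U'\in\Gamma(X)$, Theorem \eqref{limintersections}, which uses only the (AIC), provides $U\cap U'\in\Gamma(X)$ with admissible monomorphisms to $U$ and to $U'$, so $\phi_U=\phi_{U\cap U'}=\phi_{U'}$. Thus all the $\phi_U$ coincide with a single $\phi\colon p\rar p'$, and $g=\phi\otimes\Delta$; conversely every such $\phi$ visibly yields a morphism. Hence $\Hom_{\Pp}(p,p')\rar\Hom_{\D_h(X)}(p\otimes\Delta,p'\otimes\Delta)$ is a bijection.

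\emph{Step 3: $-\otimes\Delta$ is essentially surjective (and $\D_h(X)\neq\emptyset$).} I would first exhibit an object of $\D_h(X)$ by fixing $U_0\in\Gamma(X)$ and setting $\Delta_{U_0}(U):=h(U/(U\cap U_0))\otimes h(U_0/(U\cap U_0))^{-1}$, with structure isomorphisms assembled from the multiplicativity isomorphisms $\lambda$ of $h$ (via the filtrations $U\cap U_0\hrar V\cap U_0\hrar U_0$ and $U\cap U_0\hrar U\hrar V$) and the duality of $\Pp$; here Theorem \eqref{limintersections} keeps every intersection in $\Gamma(X)$ and every relevant subquotient in $\A$, so that $h$ applies. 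Then, with $\Delta:=\Delta_{U_0}$ and an arbitrary $(\Delta',\delta')$, put $p:=\Delta'(U_0)\otimes\Delta(U_0)^{-1}$ and, for each $U\in\Gamma(X)$, use the admissible monomorphisms $U\cap U_0\hrar U$, $U\cap U_0\hrar U_0$ and the isomorphisms $\delta,\delta'$ to assemble a canonical isomorphism $\theta_U\colon p\otimes\Delta(U)\xrar{\sim}\Delta'(U)$. Showing that $\{\theta_U\}$ is a morphism $p\otimes\Delta\rar\Delta'$ in $\D_h(X)$ — and therefore, since $\D_h(X)$ is a groupoid, an isomorphism — then gives $p\otimes\Delta\cong\Delta'$ and finishes the proof.

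\emph{Main obstacle.} The work in Steps 1 and 2 is routine diagram chasing in $\Pp$ and in the abelian envelope $\F$; the substance of the argument is the coherence bookkeeping in Step 3: verifying the cocycle axiom \eqref{delta} for $\Delta_{U_0}$ and the compatibility of $\theta$ with the structure isomorphisms. Both reduce to equalities of subobjects valid in $\F$ (of the type in Lemma \eqref{fundlemma}), combined with the multiplicativity axiom \eqref{mult} for $h$ and the cocycle \eqref{delta} for $\Delta$ and $\Delta'$. This is the categorified analogue of the associativity computation in the proof that $\Dim_{\chi}(X)$ is a $G$-torsor, but one must now carry the associativity (and, for the symmetric refinement, the commutativity) constraints of $\Pp$ through the computation rather than simply invoke that $G$ is abelian.
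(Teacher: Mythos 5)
Your proposal follows essentially the same route as the paper: both prove that $-\otimes\Delta_0$ is an equivalence by taking $a:=\Delta(U)\otimes\Delta_0(U)^{-1}$ for a fixed $U\in\Gamma(X)$, extending the isomorphism $f_U$ first along the cone of admissible monomorphisms into/out of $U$ and then to arbitrary $W\in\Gamma(X)$ via $W\cap U$ (which is where the (AIC), through Theorem \eqref{limintersections}, enters), and both handle fullness by extracting a single morphism $\phi$ of $\Pp$ from the component at one $U$ and propagating it by the same intersection argument. The one genuine difference is that you explicitly establish nonemptiness of $\D_h(X)$ by constructing $\Delta_{U_0}(U)=h(U/(U\cap U_0))\otimes h(U_0/(U\cap U_0))^{-1}$, whereas the paper simply fixes some $(\Delta_0,\delta_0)$ and leaves existence tacit (the letter of Definition \eqref{pictorsor} does not force nonemptiness, but your addition is the more honest reading of ``torsor''); note only that verifying the cocycle \eqref{delta} for this $\Delta_{U_0}$ requires commuting the $h(U_0/(\,\cdot\,))^{-1}$ factors past the $\lambda$'s, so it leans on the symmetry of $\Pp$ (and, for full coherence, on $h$ being symmetric in the sense of \eqref{detsymmetric}) — a point worth making explicit, though it is no less detailed than the paper's own sketch.
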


\begin{proof}

The action of $\Pp$ onto $\D_h(X)$ is defined as follows, for all objects $a\in\Pp$, $(\Delta, \delta)\in\D_h(X)$, and $U$ in $\Gamma(X)
$:
\begin{align*}
\Pp\times\D_h(X)&\xrightarrow{\otimes} \D_h(X) \\
(a, \Delta)(U)&\mapsto a\otimes\Delta(U)
\end{align*}
and extended to the arrows of $\Pp$ and $\Det_h(X)$ in the obvious way. 

\vspace{0.1cm}

Let us fix $(\Delta_0,\delta_0)$ and consider the induced functor:
\begin{align}
\Pp&\xrightarrow{-\otimes\Delta_0} \D_h(X) \\
b&\mapsto b\otimes \Delta_0
\end{align}
We sketch the proof that the  functor is an equivalence of categories. We first show that it is essentially surjective. 

\vspace{0.1cm}

Let $(\Delta, \delta)\in\Ob\D_h(X)$ be given. We shall prove the existence of an object $a\in\Pp$ and an isomorphism of determinantal theories:
$$
\Delta\xrar{\sim}a\otimes\Delta_0.
$$
Let $(\Delta,\delta)\in\Det_h(X)$ be given. Let us choose $[U\hrar X]\in\Gamma(X)$. In $\Pp$, consider the following isomorphism, naturally defined in $\Pp$:
$$
\Delta(U)\xrar{\sim}\Delta(U)\otimes 1\xrar{\sim}\Delta(U)\otimes(\Delta_0(U)^*\otimes\Delta_0(U))\xrar{\alpha}(\Delta(U)\otimes\Delta_0(U)^*)\otimes\Delta_0(U)
$$
where the first is the isomorphism given by 1 as a null object of $\Pp$ and the second is the isomorphism of duality for objects of $\Pp$.
We let $a:=\Delta(U)\otimes\Delta_0(U)^*$, and we write the above composition as
$$
f_U\colon\Delta(U)\xrar{\sim} a\otimes\Delta_0(U).
$$
We have to show the following: for all $W_1\hrar W_2$ in $\Gamma(X)$, there are isomorphisms 
$f_{W_1}\colon\Delta(W_1)\xrar{\sim}a\otimes\Delta_0(W_1)$ and $f_{W_2}\colon\Delta(W_2)\xrar{\sim}a\otimes\Delta_0(W_2)$, for which the diagram 
\begin{equation}\label{W1W2}
\xymatrix{
\Delta(W_1)\otimes h\left (\dfrac{W_2}{W_1}\right )\ar[rr]^-{\delta_{W_1,W_2}}\ar[d]_{f_{W_1}\otimes 1} &&\Delta(W_2)\ar[d]^{f_{W_2}} \\
a\otimes\Delta_0(W_1)\otimes h\left (\dfrac{W_2}{W_1}\right )\ar[rr]^-{1\otimes\delta_{0_{W_1,W_2}}} && a\otimes\Delta_0(W_2)
}
\end{equation}
is commutative.

\vspace{0.1cm}

We start by defining $f_V$ for alll $V$ with $U\hrar V$. In this case, $f_V$ is defined as the dotted arrow of the diagram below, i.e. as the composition of the isomorphisms represented by full arrows as the already defined morphisms:
\begin{equation}\label{UV}
\xymatrix{
\Delta(U)\otimes h\left (\dfrac{V}{U}\right )\ar[rr]^-{\delta_{U,V}}\ar[d]_{f_{U}\otimes 1} &&\Delta(V)\ar@{-->}[d]^{f_V} \\
a\otimes \Delta_0(U)\otimes h\left (\dfrac{V}{U}\right )\ar[rr]^-{1\otimes\delta_{0_{U,V}}} && a\otimes\Delta_0(V)
}
\end{equation}
Similarly one defines $f_V$ if $V\hrar U$.

\vspace{0.1cm}

Next, let be $W\in\Gamma(X)$. To define $f_W$, we consider the following diagram in $\limA$, whose existence follows from Theorem \eqref{limintersections}:
\begin{equation}\label{WU}
 \xymatrix{
U\cap W\ar@{^{(}->}[d]\ar@{^{(}->}[r]   & W\ar@{^{(}->}[d] \\
U \ar@{^{(}->}[r] & X
}
\end{equation}
with $[W\cap U]\in\Gamma(X)$.  We use  diagram \eqref{UV} applied to $V=W\cap U$ and $U=U$. This defines $f_{W\cap U}$. 

\vspace{0.1cm}

From $f_{W\cap U}$ we can define $f_W\colon\Delta(W)\rar a\otimes\Delta_0(W)$ using again diagram \eqref{UV}, where now it is $U=U\cap W$ and $V=W$. This defines $f_W$ for all $W\in\Gamma(X)$.

\vspace{0.1cm}

It therefore remains to prove that for all $W_1\hrar W_2$ in $\Gamma(X)$, diagram \eqref{W1W2} commutes, where $f_{W_1}$ and $f_{W_2}$ have been constructed according with the above procedure, for $W=W_1$ and $W=W_2$.

\vspace{0.1cm}

Let us first consider two elements $V_1, V_2\in\Gamma(X)$, such that $U\hrar V_1\hrar V_2$. From the isomorphism
$$
\lambda\colon h\left (\dfrac{V_1}{U}\right )\otimes h\left (\dfrac{V_2}{V_1}\right )\xrar{\sim} h\left (\dfrac{V_2}{U}\right )
$$
we obtain the following commutative diagram:
\begin{equation}\label{V1V2}
\xymatrix{
\Delta(V_1)\otimes h\left (\dfrac{V_2}{V_1}\right )\ar[r]^-{\delta^{-1}\otimes 1}\ar[d]_{f_{V_1}\otimes 1} &
\Delta(U)\otimes h\left (\dfrac{V_1}{U}\right )\otimes h\left (\dfrac{V_2}{V_1}\right )\ar[d]^{f_{U}\otimes 1}\ar[r]^-{1\otimes\lambda}&\Delta(U)\otimes h\left (\dfrac{V_2}{U}\right )\ar[d]^{f_U\otimes 1}\ar[r]^-{\delta}& \Delta(V_2)\ar[d]^{f_{V_2}} \\
a\otimes\Delta_0(V_1)\otimes h\left (\dfrac{V_2}{V_1}\right )\ar[r]^-{1\otimes\delta^{-1}_{0}\otimes 1} & a\otimes\Delta_0(U)
\otimes h\left (\dfrac{V_1}{U}\right )\otimes h\left (\dfrac{V_2}{V_1}\right )\ar[r]^-{1\otimes\lambda} & a\otimes\Delta_0(U)\otimes h\left (\dfrac{V_2}{U}\right )\ar[r]^-{\delta} & a\otimes\Delta_0(V_2)
}
\end{equation}
this diagram allows to express $f_{V_2}$ in terms of $f_{V_1}$. We have a similar diagram when $V_1\hrar V_2\hrar U$. 

\vspace{0.1cm}

Let now be $W_1\hrar W_2$. Then, $U\cap W_1\hrar U\cap W_2\hrar U$ and $U\cap W_1\hrar W_1\hrar W_2$ are admissible filtrations in $\Gamma(X)$.

\vspace{0.2cm}

From diagram \eqref{V1V2}, applied to the first filtration, we obtain a diagram of type \eqref{UV}, with $U\cap W_1$ as $U$ and $U\cap W_2$ as $V$. We compose this diagram with the diagram defining $f_{W_2}$. We get
$$
\xymatrix{
\Delta(U\cap W_1)\otimes h\left (\dfrac{U\cap W_2}{U\cap W_1}\right )\otimes h\left (\dfrac{W_2}{U\cap W_2}\right )\ar[r]
\ar[d]_{f_{U\cap W_1}\otimes 1} &\Delta(W_2)\ar[d]^{f_{W_2}} \\
a\otimes\Delta_0(U\cap W_1)\otimes h\left (\dfrac{U\cap W_2}{U\cap W_1}\right )\otimes h\left (\dfrac{W_2}{U\cap W_2}\right )\ar[r] & a\otimes\Delta_0(W_2)
}
$$
On the other hand, we have isomorphisms:
$$
h\left (\dfrac{U\cap W_2}{U\cap W_1}\right )\otimes h\left (\dfrac{W_2}{U\cap W_2}\right )\xrar{\sim} h\left (\dfrac{W_2}{U\cap W_1}\right )
$$
from the first filtration, and 
$$
 h\left (\dfrac{W_2}{U\cap W_1}\right )\xrar{\sim} h\left (\dfrac{W_1}{U\cap W_1}\right )\otimes  h\left (\dfrac{W_2}{W_1}\right )
 $$
 from the second. Thus, 
 $$
 h\left (\dfrac{U\cap W_2}{U\cap W_1}\right )\otimes h\left (\dfrac{W_2}{U\cap W_2}\right )\xrar{\sim}
 h\left (\dfrac{W_1}{U\cap W_1}\right )\otimes  h\left (\dfrac{W_2}{W_1}\right ).
 $$
 The above diagram can thus be rewritten as:
 $$
\xymatrix{
\Delta(U\cap W_1)\otimes h\left (\dfrac{W_1}{U\cap W_1}\right )\otimes h\left (\dfrac{W_2}{W_1}\right )\ar[r]
\ar[d]_{f_{U\cap W_1}\otimes 1} &\Delta(W_2)\ar[d]^{f_{W_2}} \\
a\otimes\Delta_0(U\cap W_1)\otimes h\left (\dfrac{W_1}{U\cap W_1}\right )\otimes h\left (\dfrac{W_2}{W_1}\right )\ar[r] & a\otimes\Delta_0(W_2).
}
$$
Composing this diagram with the diagram \eqref{UV} defining $f_{W_1}$, we finally get diagram \eqref{W1W2}, thus proving that the functor $\Pp\rar\D_h(X)$ is essentially surjective. 

\vspace{0.1cm}

We sketch the proof that the functor is full. This amounts to show that for all object of $\Pp$, the map
\begin{align}
\Hom_{\Pp}(a,b)&\rar\Hom_{\D_h(X)}(a\otimes\Delta_0,b\otimes\Delta_0) \\
h&\mapsto h\otimes 1_{\Delta_0}
\end{align}
is surjective. We shall consider only the case $a=b=1$, since the general case is treated with the obvious modifications.

\vspace{0.1cm}

Let be $f\in\Aut(1\otimes\Delta_0)\xrar{\sim}\Aut(\Delta_0)$. Let us choose $U\in\Gamma(X)$ and 
$f_U\colon\Delta_0(U)\xrar{\sim}\Delta_0(U)$ be given. There is a unique $g:1\rar 1$ making the following diagram commute:
$$
\xymatrix{
\Delta_0(U)\otimes\Delta_0(U)^{-1}\ar[d]\ar[rr]^-{f_0\otimes 1_{\Delta_0(U)^{-1}}}&&\Delta_0(U)\otimes\Delta_0(U)^{-1}\ar[d] \\
1\ar@{-->}[rr]_{g} && 1 .
}
$$ 
For this $g$ we have $g\otimes 1_{\Delta_0(U)}=f_U\colon1\otimes\Delta_0(U)\rar 1\otimes\Delta_0(U)$. It is sufficient to prove that for all $V\in\Gamma(X)$ the arrow $g\otimes 1_{\Delta_0(V)}$ coincides with $f_V$. This will imply that $f=g\otimes 1_{\Delta_0}$, and hence that the functor is full.

\vspace{0.1cm}

Let be $V$ with $U\hrar V$. In this case $f_V$ is the unique arrow making the diagram
$$
\xymatrix{
1\otimes\Delta_0(U)\otimes h\left (\dfrac{V}{U}\right )\ar[rr]^-{1\otimes\delta_{0}}\ar[d]_{f_{U}\otimes 1} &&1\otimes\Delta_0(V)\ar[d]^{f_V} \\
1\otimes \Delta_0(U)\otimes h\left (\dfrac{V}{U}\right )\ar[rr]^-{1\otimes\delta_{0}} && 1\otimes\Delta_0(V)
}
$$
commutative. From the diagram for the identity on $(\Delta_0,\delta_0)$, applying $g:1\rar 1$ and using the bifunctoriality olf $\otimes$, we get the following:
$$
\xymatrix{
1\otimes\Delta_0(U)\otimes h\left (\dfrac{V}{U}\right )\ar[rr]^-{1\otimes\delta_{0}}\ar[d]_{g\otimes 1_{\Delta_0(U)}\otimes 1} &&1\otimes\Delta_0(V)\ar[d]^{g\otimes 1_{\Delta_0(V)}} \\
1\otimes \Delta_0(U)\otimes h\left (\dfrac{V}{U}\right )\ar[rr]^-{1\otimes\delta_{0}} && 1\otimes\Delta_0(V)
}
$$
but $g\otimes 1_{\Delta_0(U)}=f_U$. It follows that $g\otimes 1_{\Delta_0(V)}$ coincides with $f_V$, as claimed. The proof for the general case of an element $W\in\Gamma(X)$ follows the same pattern as the proof that $-\otimes\Delta_0$ is essentially surjective. Thus the functor is full. Injectivity of the map is obvious, so the functor is also faithful and then an equivalence.
\end{proof}
\subsection{Examples: the gerbe of determinantal theories Det(V)}
As a corollary of Proposition \eqref{pi1gerbe} and of Theorem \eqref{detxtorsor}, we have the following
\begin{prop}
Let $\A$ be an exact category satisfying the (AIC), and let be $h$ a determinantal theory on $\A$ with values in a Picard category $\Pp$, and choose an object $(\Delta,\delta)\in\D_h(X)$. Then $\D_h(X)_{(\Delta,\delta)}$ is a $\pi_1(\Pp)$-gerbe. 
\end{prop}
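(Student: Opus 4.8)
The plan is to deduce the statement directly from Theorem \eqref{detxtorsor} and Proposition \eqref{pi1gerbe}, with essentially no new argument needed. First, since by hypothesis $\A$ satisfies the (AIC), Theorem \eqref{detxtorsor} tells us that $\D_h(X)$ is a torsor over the symmetric Picard category $\Pp$. Thus $\D_h(X)$ is a groupoid equipped with the structure data $\otimes$, $\beta$, $\lambda$ of Definition \eqref{pictorsor}; in particular, for the chosen object $(\Delta,\delta)\in\D_h(X)$, the induced functor $-\otimes(\Delta,\delta)\colon\Pp\to\D_h(X)$ of \eqref{freetransitive} is an equivalence of categories, and there is the natural isomorphism $\lambda_{(\Delta,\delta)}\colon{\bf 1}\otimes(\Delta,\delta)\xrar{\sim}(\Delta,\delta)$ of \eqref{null}.

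Next I would apply Proposition \eqref{pi1gerbe} with $\T=\D_h(X)$ and $x=(\Delta,\delta)$. By that proposition the connected component $\D_h(X)_{(\Delta,\delta)}$ is a connected groupoid whose hom-sets carry compatible free and transitive actions of $\pi_1(\Pp)$ (induced by the bifunctor $\otimes$ via the identification of $\Aut((\Delta,\delta))$ with $\Aut({\bf 1}\otimes(\Delta,\delta))$ coming from $\lambda_{(\Delta,\delta)}$), and for which the composition law is $\pi_1(\Pp)$-bilinear. By the definition of a $G$-gerbe this is exactly the assertion that $\D_h(X)_{(\Delta,\delta)}$ is a gerbe over the group acting on its hom-sets, namely over $\pi_1(\Pp)=\Aut_{\Pp}({\bf 1})$. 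The only point worth spelling out is the identification of this group: the equivalence $-\otimes(\Delta,\delta)$ together with $\lambda_{(\Delta,\delta)}$ yields $\Aut_{\Pp}({\bf 1})\xrar{\sim}\Aut_{\D_h(X)}({\bf 1}\otimes(\Delta,\delta))\xrar{\sim}\Aut_{\D_h(X)}((\Delta,\delta))$, and in a connected groupoid all automorphism groups are (non-canonically) isomorphic, so the group acting on any hom-set of $\D_h(X)_{(\Delta,\delta)}$ is $\pi_1(\Pp)$.

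There is no genuine obstacle: the content of the statement is carried entirely by Theorem \eqref{detxtorsor} (which supplies the torsor structure) and Proposition \eqref{pi1gerbe} (which turns a connected component of a torsor into a gerbe). The only mild care is needed in checking that the group acting on the hom-sets is precisely $\pi_1(\Pp)$ and not some larger or smaller group; this is guaranteed by the freeness and transitivity of the action, which in turn follow from the fact that $-\otimes(\Delta,\delta)$ is an equivalence — i.e. from axiom \eqref{freetransitive} of a $\Pp$-torsor — exactly as in the proof of Proposition \eqref{pi1gerbe}.
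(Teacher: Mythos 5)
Your proposal is correct and follows exactly the route the paper takes: the paper derives this proposition precisely as a corollary of Theorem \eqref{detxtorsor} (which gives the $\Pp$-torsor structure on $\D_h(X)$ under the (AIC) hypothesis) and Proposition \eqref{pi1gerbe} (which makes any connected component of a $\Pp$-torsor a $\pi_1$-gerbe). Your extra remark identifying the acting group with $\pi_1(\Pp)=\Aut_{\Pp}({\bf 1})$ via the equivalence $-\otimes(\Delta,\delta)$ is exactly the content of the proof of Proposition \eqref{pi1gerbe}, so nothing is missing.
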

With the same step-by-step method used to prove in Theorem \eqref{detxtorsor} the existence of an isomorphism of $h$-relative determinantal theories, we can prove the following
\begin{lm}\label{connected}
If $\Pp$ is a connected category, then $\D_h(X)$ is a connected groupoid.
\end{lm}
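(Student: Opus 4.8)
The plan is to run the step-by-step construction from the proof of Theorem \eqref{detxtorsor} with the ambient object $a\otimes\Delta_0$ replaced by a second $h$-relative determinantal theory, using connectedness of $\Pp$ only to get the construction started. Concretely, let $(\Delta,\delta)$ and $(\Delta_0,\delta_0)$ be two objects of $\D_h(X)$; I want to produce an isomorphism of determinantal theories $f\colon\Delta\xrar{\sim}\Delta_0$, which, since $\D_h(X)$ is a groupoid, shows that $\D_h(X)$ is connected.

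First I would fix $[U\hrar X]\in\Gamma(X)$. Since $\Pp$ is connected, the objects $\Delta(U)$ and $\Delta_0(U)$ of $\Pp$ are isomorphic; choose an isomorphism $f_U\colon\Delta(U)\xrar{\sim}\Delta_0(U)$. Then, exactly as in the proof of Theorem \eqref{detxtorsor}, I would propagate $f_U$ to the rest of $\Gamma(X)$: for $V$ with $U\hrar V$ (or $V\hrar U$), define $f_V$ as the unique arrow making the square \eqref{UV} commute with $a\otimes\Delta_0$ replaced by $\Delta_0$; and for an arbitrary $W\in\Gamma(X)$, use the pullback square \eqref{WU} (which exists by Theorem \eqref{limintersections}) to define first $f_{U\cap W}$ from $f_U$ and then $f_W$ from $f_{U\cap W}$. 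This yields a family $\{f_W\colon\Delta(W)\xrar{\sim}\Delta_0(W)\}_{W\in\Gamma(X)}$.

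It then remains to verify that, for every $W_1\hrar W_2$ in $\Gamma(X)$, the compatibility square \eqref{W1W2} (again with $\Delta_0$ in place of $a\otimes\Delta_0$) commutes; this makes $\{f_W\}$ an isomorphism of $h$-relative determinantal theories, and hence any object of $\D_h(X)$ is isomorphic to $\Delta_0$. This verification is literally the one carried out at the end of the proof of Theorem \eqref{detxtorsor}: one reduces along the filtrations $U\cap W_1\hrar U\cap W_2\hrar U$ and $U\cap W_1\hrar W_1\hrar W_2$ and applies the compatibility diagram \eqref{V1V2} relating the $\delta$'s and $\delta_0$'s. I do not expect a genuine obstacle: the only mildly delicate point, ensuring that the propagated isomorphisms stay compatible with the structure isomorphisms $\delta,\delta_0$, is handled word for word as in Theorem \eqref{detxtorsor}. (If one is willing to invoke the (AIC) hypothesis in force in this subsection, there is also a one-line argument: by Theorem \eqref{detxtorsor} the groupoid $\D_h(X)$ is a $\Pp$-torsor, so by Proposition \eqref{inducedtorsor} the set $\pi_0(\D_h(X))$ is a torsor over $\pi_0(\Pp)=0$, hence a single point, i.e.\ $\D_h(X)$ is connected.)
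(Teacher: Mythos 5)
Your proof is correct and is essentially the paper's own argument: the proof of Lemma \eqref{connected} given in the paper consists precisely of the remark that the step-by-step method from the proof of Theorem \eqref{detxtorsor} applies once connectedness of $\Pp$ supplies the initial isomorphism $f_U\colon\Delta(U)\xrar{\sim}\Delta_0(U)$, which is exactly what you spell out. Your parenthetical alternative via Theorem \eqref{detxtorsor} and Proposition \eqref{inducedtorsor} is also valid under the standing (AIC) hypothesis and is shorter, but it is not the route the paper takes.
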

Thus, if $\Pp$ is connected, for all $(\Delta, \delta)$ we have $\D_h(X)_{(\Delta, \delta)}=\D_h(X)$, which is then a $\pi_1(\Pp)$-gerbe.
\vspace{0.1cm}

\begin{examples}\label{detexamples}
\begin{enumerate}
\item{ {\it The Kapranov gerbe of determinantal theories} $\Det(V)$.
Let be $k$ a field, $\A=\vect$, and let $\Pp$ be the category $\onevect$ of 1-dimensional vector spaces over $k$. The category $\Pp$ is obviously a connected Picard category, with $\pi_1(\Pp)=k^*$. Let $h$ be the determinantal theory on $\A$ which associates to each finite dimensional space its determinantal space (as described in the example in sect. \eqref{detspace}); finally, let be $V\in\dlim\vect$ a Tate space. Define $\Det(V):=\D_h(V)$.  From Lemma \eqref{connected}, $\Det(V)$ is connected and thus it is a $k^*$-gerbe. It is called the {\it gerbe of determinantal theories} of the Tate space $V$, and it was introduced by Kapranov in his paper \cite{ka}.}
\vspace{0.1cm}

\item{ Let $\A$ be an exact category satisfying the (AIC), $G$ an abelian group and let be $\Pp=\Tors(G)$. Let be $h$ a determinantal theory on $\A$ with values on $\Pp$. The category $\Pp$ is a connected Picard category, so we have $\D_h(X)_{(\Delta,\delta)}=\D_h(X)$, for each such $h$ and any determinantal theory $(\Delta, \delta)$. Since $\pi_1(\Pp)=G$,  $\D_h(X)$ is a $G$-gerbe. In this case we shall employ also the notation $\Det_h(X)$, to emphasize that this is really the Kapranov $G$-gerbe of determinantal theories of a generalized Tate space $X$.}
\end{enumerate}
\end{examples}
\subsection{The universal $\mathcal D(X)$}
In analogy with the case of the dimensional torsor $\Dim(X)$, it is possible to define a ``universal" determinantal torsor $\D(X)$ over Picard categories.

\begin{definition}
Let be $\A$ an exact category satisfying the (AIC) and $X$ an object of $\limA$. The {\it universal determinantal torsor} is the $V(\A)$-torsor
$$
\D(X)\colon=\D_{h^u}(X, V(\A))
$$
associated with the symmetric universal determinantal theory on $\A$, $h^u$, defined in sect.\eqref {detspace}.
\end{definition}
\begin{remark} 
It is possible to characterize $\D(X)$ by an appropriate 2-categorical universal property. We postpone the precise statement and the discussion of this topic to a later paper.
\end{remark}
\begin{example} Let $\A=\vect$, and let be $V\in\limA=\T$, a Tate space. In this case, the category of virtual objects $\Pp=V(\A)$ has 
$\pi_0(\Pp)=\Zee$, and thus it is not connected. Since $\pi_1(\Pp)=k^*$, the universal determinantal $V(\A)$-torsor $\D_{h^u}(V)$ is a non-connected groupoid. Each of its connected components $\D(V)_{(\Delta, \delta)}$, is a $k^*$-gerbe, and all of these components compose a set (indexed by $\Zee$) of copies of the Kapranov $k^*$-gerbe $\Det(V)$.
\end{example}

\subsection{Cohomological interpretation of Det(X) in terms of the Waldhausen space of lim $\mathcal A$}

\begin{theorem}
Let be $G$ an abelian group and $\A$ a partially abelian exact category. Let be 
$(h,\lambda)\in\Det_{\sigma}(\A, \Tors(G))$ a symmetric determinantal theory on $\A$ with values in the Picard category of $G$-torsors. Then, the collection 
$\{\Det_h(X), X\in\limA\}$, defined in \eqref{detexamples}, is a multiplicative $G$-gerbe of degree 1 on 
$S(\limA)$.
\end{theorem}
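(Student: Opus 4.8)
The plan is to verify the three items of Definition \ref{defmultgerbe} directly for the assignment $\rho \mapsto \Det_h(X_\rho)$, using the machinery already developed for the torsor $\mathcal D_h(X)$ and the multiplicativity of $\Dim$. First I would fix notation: a $1$-simplex $\rho$ of $S(\limA)$ is an object $X$ of $\limA$, to which we associate the $\pi_1(\Tors(G)) = G$-gerbe $\g_\rho := \Det_h(X)$ (cf. Examples \ref{detexamples}(2)); by Lemma \ref{connected} this is legitimate since $\Tors(G)$ is connected, so $\D_h(X)$ is itself a connected groupoid which is a $G$-gerbe. A $2$-simplex $\sigma$ is an admissible short exact sequence $X' \hrar X \epi X''$, with $\pt_0\sigma = X''$, $\pt_1\sigma = X$, $\pt_2\sigma = X'$; I need to produce an equivalence of $G$-gerbes $\alpha_\sigma \colon \Det_h(X') \otimes \Det_h(X'') \xrar{\sim} \Det_h(X)$. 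This is built exactly as in Theorem \ref{tensordim}(2): given $h$-relative determinantal theories $(\Delta', \delta')$ on $X'$ and $(\Delta'', \delta'')$ on $X''$, use Theorem \ref{projlift} to lift and project each $[U \hrar X] \in \Gamma(X)$ to $U \cap X' \in \Gamma(X')$ and $\tfrac{U}{U \cap X'} \in \Gamma(X'')$, and set
$$
\Delta(U) := \Delta'(U \cap X') \otimes \Delta''\!\left(\tfrac{U}{U \cap X'}\right),
$$
with the transition isomorphisms $\delta_{U,V}$ assembled from $\delta'$, $\delta''$, $\lambda$ and the admissible short exact sequence of $\A$ relating the quotients (the same sequence used in the proof of Theorem \ref{tensordim}(1)). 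That this $(\Delta,\delta)$ is a well-defined object of $\D_h(X)$ — i.e. that diagram \eqref{delta} commutes — follows from Lemma \ref{fundlemma} together with the determinantal axiom \eqref{delta} for $(\Delta',\delta')$ and $(\Delta'',\delta'')$; that the construction descends to the tensor product of gerbes and is $G$-bilinear on hom-sets is immediate from the bilinearity of $\otimes$ and the abelianness of $G$.

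Next comes item (3): for a $3$-simplex $\tau$, a rigidified admissible filtration $X_1 \hrar X_2 \hrar X_3$, I must exhibit a natural isomorphism $\beta_\tau$ witnessing the commutativity up to homotopy of the pentagon of $\alpha$'s associated to the four faces $\pt_i\tau$. Concretely, applying $\alpha$ along the two ways of decomposing the filtration produces two $h$-relative determinantal theories on $X_3$, each obtained by iterated lift/project operations; the content of Lemma \ref{fundlemma} is precisely that the intermediate Grassmannian elements $U_{21} = U'_{21}$ and $U_{32} = U'_{32}$ agree, so the two resulting functions $\Gamma(X_3) \to \Ob\Pp$ are canonically isomorphic, and $\beta_\tau$ is this canonical isomorphism. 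Compatibility of $\beta_\tau$ with the transition data — i.e. that $\beta_\tau$ is a morphism of $h$-relative determinantal theories — uses the associativity of $\otimes$ in $\Pp$ and the naturality clauses in Definition \ref{dettheory}; this is the analog, one categorical level up, of the identity $\mu(\mu(d_1,d_{21}),d_{32}) = \mu(d_1,\mu(d_{21},d_{32}))$ proved in Theorem \ref{tensordim}. Finally, item (4) requires, for a $4$-simplex $\upsilon$ (a filtration of length $3$), the "cubic" coherence $\beta_{\pt_4\upsilon} \cdot \beta_{\pt_2\upsilon} \cdot \beta_{\pt_0\upsilon} = \beta_{\pt_1\upsilon} \cdot \beta_{\pt_3\upsilon}$; since each $\beta$ is a canonical isomorphism coming from an equality of subobjects in the ambient abelian envelope $\F$ (via the embedding theorem of \cite{pre}), both composites reduce to the same canonical identification, and the identity holds because all the relevant intersections and quotients, being computed in $\F$, satisfy the usual abelian-category coherences.

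The main obstacle I expect is item (3), and more precisely checking that the candidate $\beta_\tau$ is genuinely a morphism in $\D_h(X_3)$ rather than merely a levelwise isomorphism: the two $h$-relative theories being compared differ in how their transition isomorphisms $\delta$ are built from $\lambda$ and the associator of $\Pp$, and reconciling these requires unwinding the naturality of $\lambda$ with respect to isomorphisms of admissible short exact sequences (the $\Delta^2 \times \Delta^2$-cells of Remark following Definition \ref{dettheory}) in the presence of the repeated lift/project steps. Once one grants — as we may, by working in the abelian envelope $\F$ and invoking Lemmas \ref{monoext1}, \ref{monoext2} and Proposition \ref{selfdual} — that all the Grassmannian intersections and quotients behave as they do in an abelian category, this reduces to a diagram chase of the same flavor as, but bulkier than, the proof of Theorem \ref{tensordim}(1); I would organize it by first establishing the levelwise agreement from Lemma \ref{fundlemma} and then checking compatibility on a single admissible monomorphism $U \hrar V$ in $\Gamma(X_3)$, which suffices by the definition of a morphism of $h$-relative determinantal theories. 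The remaining bookkeeping — naturality of $\alpha_\sigma$ in isomorphisms of short exact sequences, and the degenerate-simplex/unit compatibilities — is routine and parallels the corresponding verifications for $\Dim$ in Theorem \ref{tensordim} and for $\D_h(X)$ in Theorem \ref{detxtorsor}.
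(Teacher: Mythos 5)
There is a genuine gap: your argument never uses the hypothesis that $(h,\lambda)$ is \emph{symmetric}, i.e.\ that it lies in $\Det_{\sigma}(\A,\Tors(G))$ rather than merely in $\Det(\A,\Tors(G))$, and this hypothesis is precisely what the paper's proof turns on. The difficulty sits at the stage you treat as routine, namely that $\alpha_{\sigma}\bigl((\Delta',\delta'),(\Delta'',\delta'')\bigr)=(\Delta,\delta)$ is a well-defined object of $\D_h(X)$. First, the transition isomorphism $\delta_{U_1,U_2}$ cannot be ``assembled from $\delta'$, $\delta''$ and $\lambda$'' alone: one must interleave the factors, passing from $\Delta'(U'_1)\otimes\Delta''(U''_1)\otimes h(U'_2/U'_1)\otimes h(U''_2/U''_1)$ to $\Delta'(U'_1)\otimes h(U'_2/U'_1)\otimes\Delta''(U''_1)\otimes h(U''_2/U''_1)$ via the symmetry of $\Tors(G)$ before $\delta'\otimes\delta''$ can be applied. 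Second, and more seriously, the commutativity of diagram \eqref{delta} for $(\Delta,\delta)$ does \emph{not} follow from Lemma \eqref{fundlemma} together with the axiom \eqref{delta} for $(\Delta',\delta')$ and $(\Delta'',\delta'')$. Lemma \eqref{fundlemma} only identifies the underlying elements of the Grassmannians; what has to be compared are two different ways of contracting the four objects $h(U'_2/U'_1)$, $h(U''_2/U''_1)$, $h(U'_3/U'_2)$, $h(U''_3/U''_2)$ down to $h(U_3/U_1)$ by $\lambda$ --- one grouping primed with primed, the other grouping the two ``level $(2,1)$'' factors and the two ``level $(3,2)$'' factors. These two contractions agree exactly when the symmetric determinantal theory condition \eqref{detsymmetric} of Definition \eqref{newsymmetric} is applied to the $3\times 3$ diagram of quotients \eqref{quotients} furnished by Corollary \eqref{squareofmonos} (which is also where the partial abelian exactness of $\A$ enters). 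Since non-symmetric determinantal theories with values in $\Tors(G)$ do exist --- e.g.\ $V\mapsto\Lambda^{\max}V\smallsetminus\{0\}$ on $\vect$ with $G=k^*$, which the paper explicitly notes is non-symmetric --- an argument that never invokes symmetry cannot establish the theorem.

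A secondary point of divergence: you locate the main obstacle at item (3) of Definition \eqref{defmultgerbe} (the coherence $\beta_{\tau}$), whereas the paper regards items (3) and (4) as straightforward once the equivalence $\alpha_{\sigma}$ is correctly constructed; the substance of the proof is the well-definedness of $\alpha_{\sigma}$ on objects, and that is where the missing symmetry argument must be inserted. Your reduction of item (4) to ``abelian-category coherences in $\F$'' is also too quick for the same reason --- the $\beta$'s are built from $\lambda$ and $\sigma$, not only from identifications of subobjects --- but this would be repaired automatically once the symmetry condition is put in its proper place.
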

\begin{proof}(Sketch.) The proof of this theorem is  similar to, although considerably longer than, the proof of Theorem \eqref{dimxismult}. We emphasize only the most salient points. As already noticed, $(h,\lambda)$ can be interpreted as a symmetric multiplicative torsor on $\A$. The core of the proof consists in showing the existence of an equivalence of $G$-gerbes $\mu:\Det_h(X')\otimes\Det_h(X'')\rar\Det_h(X)$ for an admissible short exact 
sequence $X'\hrar X\epi X''$ in $\limA$.

\vspace{0.1cm}

For an admissible filtration $U_1\hrar U_2\hrar U_3$ in $\Gamma(X)$, consider the induced commutative diagram
$$
\xymatrix{ 
U'_1 \ar@{^{(}->}[r]\ar@{^{(}->}[d] &U_1 \ar@{->>}[r]\ar@{^{(}->}[d] &U''_1\ar@{^{(}->}[d]^{}\\
U'_2 \ar@{^{(}->}[r]\ar@{^{(}->}[d] &U_2\ar@{^{(}->}[d]\ar@{->>}[r] &U''_2\ar@{^{(}->}[d]^{}  \\
U'_3 \ar@{^{(}->}[r]\ar@{^{(}->}[d] &U_3\ar@{^{(}->}[d]\ar@{->>}[r] &U''_3\ar@{^{(}->}[d]^{}  \\
X' \ar@{^{(}->}[r] &X\ar@{->>}[r] &X'' 
}
$$
whose left squares are pullbacks, the horizontal rows admissible short exact sequences; then from theorem \eqref{projlift} 
$U''_1\hrar U''_2\hrar U''_3$ is an admissible filtration in $\Gamma(X'')$. Since $\A$ is partially abelian, we can use corollary \eqref{squareofmonos}, and thus we obtain an induced commutative diagram
\begin{equation}\label{quotients}
\xymatrix{ 
\dfrac{U'_3}{U'_2}\ar@{^{(}->}[r] &\dfrac{U_3}{U_2} \ar@{->>}[r] &\dfrac{U''_3}{U''_2}\\
\dfrac{U'_3}{U'_1} \ar@{^{(}->}[r]\ar@{->>}[u] &\dfrac{U_3}{U_1}\ar@{->>}[u]\ar@{->>}[r] &\dfrac{U''_3}{U''_1}\ar@{->>}[u]^{}  \\
\dfrac{U'_2}{U'_1} \ar@{^{(}->}[r]_{} \ar@{^{(}->}[u] &\dfrac{U_2}{U_1}\ar@{->>}[r]_{}\ar@{^{(}->}[u] & \dfrac{U''_2}{U''_1}\ar@{^{(}->}[u] \\
}
\end{equation}
whose rows and columns are admissible short exact sequences.

\vspace{0.1cm}

Let be $(\Delta', \delta')\in\Det_h(X')$ and $(\Delta'', \delta'')\in\Det_h(X'')$. We define a pair $(\Delta, \delta)$, where $\Delta$ is 
a function $\Gamma(X)\rar\Tors(G)$ defined as
$$
\Delta(U_1)\colon =\Delta'(U'_1)\otimes\Delta''(U''_1)
$$
and $\delta$ an arrow $\Delta(U_1)\otimes h\left(\dfrac{U_2}{U_1}\right)\rar\Delta(U_2)$, defined as the composition
$$
 \xymatrix{
 \Delta(U_1)\otimes h\left(\dfrac{U_2}{U_1}\right)=\ar[ddrr]_{\delta}\Delta'(U'_1)\otimes\Delta''(U''_1)\otimes h\left(\dfrac{U_2}{U_1}\right) \ar[rr]^-{1\otimes\lambda^{-1}}&& \Delta'(U'_1)\otimes\Delta''(U''_1)\otimes h\left(\dfrac{U'_2}{U'_1}\right)\otimes  h\left(\dfrac{U''_2}{U''_1}\right)\ar[d]^{1\otimes\sigma\otimes 1} \\
&& \Delta'(U'_1)\otimes h\left(\dfrac{U'_2}{U'_1}\right)\otimes \Delta''(U''_1)\otimes h\left(\dfrac{U''_2}{U''_1}\right)\ar[d]^{\delta'\otimes\delta''} \\
&& \Delta'(U'_2)\otimes\Delta''(U''_2)=\Delta(U_2)
}
$$
and we claim that $(\Delta, \delta)$ is an object if $\Det_h(X)$. This amounts to show that for this pair the diagram \eqref{delta} 
is commutative.

\vspace{0.1cm}

The proof consists in the construction of the diagram \eqref{delta} by tensorizing the analog diagrams for the determinantal theories 
$(\Delta', \delta')$ and $(\Delta'', \delta'')$, using the above given definitions of $\Delta$ and $\delta$. The resulting tensor product of the diagrams is equal to \eqref{delta}, provided that the diagram
$$
\xymatrix{  
 h\left(\dfrac{U'_2}{U'_1}\right)\otimes  h\left(\dfrac{U'_3}{U'_2}\right)\otimes  h\left(\dfrac{U''_2}{U''_1}\right)\otimes  h\left(\dfrac{U''_3}{U''_2}\right)  \ar[d]_{\lambda\otimes\lambda}\ar[rr]^{1\otimes\sigma\otimes 1}   &&  h\left(\dfrac{U'_2}{U'_1}\right)\otimes  h\left(\dfrac{U''_2}{U''_1}\right)\otimes  h\left(\dfrac{U'_3}{U'_2}\right)\otimes  h\left(\dfrac{U''_3}{U''_2}\right)\ar[d]^{\lambda\otimes\lambda} \\
 h\left(\dfrac{U'_3}{U'_1}\right)\otimes  h\left(\dfrac{U''_3}{U''_1}\right)  \ar[rd]_{\lambda}   &&  h\left(\dfrac{U_2}{U_1}\right)\otimes  h\left(\dfrac{U_3}{U_2}\right)\ar[ld]^{\lambda} \\
& h\left(\dfrac{U_3}{U_1}\right)  &
}
$$
commutes. But $(h, \lambda)$ is symmetric, hence this diagram commutes from Definition \eqref{newsymmetric}, applied to diagram 
\eqref{quotients}. Therefore $\mu$ is well defined on the objects. The proof that $\mu$ is a multiplicative equivalence in the sense of definition  \eqref{defmultgerbe} is straightforward. The theorem follows.
\end{proof}

Since such a determinantal theory $h$ can be interpreted as a multiplicative 
$G$-torsor of degree 1 on $\SA$, it determines a class in 
$H^2(S(\A), G)$. Then, from Theorem \eqref{multgerbe}, we obtain the 
following, which is the  analog of Corollary \eqref{cortorsmult}:

\begin{cor}
The class $[h]\in H^2(S(\A), G)$ gives rise to a cohomology class in 
$H^3(S(\limA), G)$.
\end{cor}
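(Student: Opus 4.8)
The argument is short: it chains together the structural results already in hand. As observed in the paragraph preceding the statement, a symmetric determinantal theory $(h,\lambda)$ on $\A$ with values in $\Tors(G)$ is exactly a degree-$1$ multiplicative $G$-torsor on the bisimplicial set underlying $S(\A)$: the torsor attached to the $1$-cell $|a|$ is $h(a)$, the isomorphism attached to the $2$-cell $\sigma\colon a'\hrar a\epi a''$ is $\lambda_\sigma$, and the $3$-cell coherence is diagram \eqref{mult}. Hence, by Theorem \eqref{torsorcohomol}(a) with $n=1$, $(h,\lambda)$ represents a class $[h]\in\pi_0(\Mult_1(S(\A),G))\cong H^2(S(\A),G)$.

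Next I would invoke the theorem just proved: since $\A$ is partially abelian exact, the collection $\{\Det_h(X)\}_{X\in\limA}$, together with the equivalences $\mu_{X',X,X''}$ and their higher coherences, is a multiplicative $G$-gerbe of degree $1$ on $S(\limA)$. Theorem \eqref{multgerbe} then applies verbatim to the (bi)simplicial set underlying $S(\limA)$: from this gerbe it extracts, for each $2$-cell $\sigma$ and each choice of objects $x_i$ in the gerbes attached to the faces of $\sigma$, the $G$-torsor $T_\sigma=\Hom(\mu_\sigma(x_0\otimes x_2),x_1)$; the $3$-cell coherence produces the degree-$2$ gluing isomorphisms $\mu_\tau$, and the $4$-cell coherence gives their compatibility. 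Thus $\{T_\sigma,\mu_\tau\}$ is a degree-$2$ multiplicative $G$-torsor on $S(\limA)$, and Theorem \eqref{torsorcohomol}(a) with $n=2$ — giving $\pi_0(\Mult_2(S(\limA),G))\cong H^3(S(\limA),G)$ — turns it into a class in $H^3(S(\limA),G)$, exactly as in Corollary \eqref{cortorsmult}.

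The one point requiring care — and which I regard as the main, if minor, obstacle — is well-definedness: that the resulting $H^3$-class depends on $(h,\lambda)$ only through $[h]$, so that one really obtains an assignment $[h]\mapsto$ a class in $H^3(S(\limA),G)$. For this I would track functoriality along the chain. A morphism $(h,\lambda)\to(h',\lambda')$ of symmetric determinantal theories on $\A$ induces, through the maps $f_U$ appearing in the definition of a morphism of $h$-relative determinantal theories, an equivalence of $G$-gerbes $\Det_h(X)\xrar{\sim}\Det_{h'}(X)$ compatible with the $\mu$'s, i.e. a morphism of multiplicative $G$-gerbes of degree $1$; the construction of Theorem \eqref{multgerbe} is functorial in this input, so it yields a morphism of the associated degree-$2$ multiplicative torsors, whence by Theorem \eqref{torsorcohomol}(a) the same image in $\pi_0$, i.e. the same $H^3$-class. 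That $[h]$ itself is unchanged under such a morphism is immediate, since the $f_a\colon h(a)\to h'(a)$ already constitute a morphism in $\Mult_1(S(\A),G)$. No genuinely new computation is needed beyond these bookkeeping checks.
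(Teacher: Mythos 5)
Your proposal is correct and follows the same route as the paper: interpret $(h,\lambda)$ as a degree-$1$ multiplicative $G$-torsor on $S(\A)$, pass to the multiplicative $G$-gerbe $\{\Det_h(X)\}$ on $S(\limA)$ via the preceding theorem, and then apply Theorem \eqref{multgerbe} together with Theorem \eqref{torsorcohomol} to land in $H^3(S(\limA),G)$. The only addition is your well-definedness check that the class depends on $(h,\lambda)$ only through $[h]$, which the paper leaves implicit; this is a reasonable and harmless supplement rather than a different argument.
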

The corollary has an interpretation analog to that of corollary \eqref{cortorsmult}, as the ``second step delooping" of the cohomology of $S(\A)$ in terms of the cohomology of $S(\limA)$.
\section{Applications. Tate spaces and the iteration of the dimensional torsor}\label{tate} 
In this section we focus on the abelian category $\A=\vect$ of finite dimensional vector spaces over a field $k$.
\subsection{Tate spaces}
We have introduced the category $\T$ of Tate spaces in definition \eqref{tatespaces}. Let us denote by $\Ll_0$ the category of
linearly compact topological $k$-vector spaces and by $\Ll$ the category of locally linearly compact topological $k$-vector 
spaces and their morphisms, as introduced in \cite{l}, II.27.1 and II.27.9. We recall the following propositions, whose proofs can be found in \cite{pre}.
\begin{lm}
There are equivalences of categories: 
$$
\Phi_0:\Pro(\vect)\xrar{\sim}\Pro^s(\vect)\xrar{\sim}\Ll_0.
$$ 
In particular, the category $\Ll_0$is an abelian category.
\end{lm}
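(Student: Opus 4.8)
The plan is to prove the two equivalences in turn and then deduce that $\Ll_0$ is abelian by transport of structure.

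\emph{The reduction $\Pro(\vect)\xrar{\sim}\Pro^s(\vect)$.} Since $\Pro^s(\vect)$ is a full subcategory of $\Pro(\vect)$, it suffices to construct a quasi-inverse to the inclusion, i.e.\ to straighten every pro-object into one whose structure maps are epimorphisms. Given a cofiltered system $\{X_i\}_{i\in I}$ in $\vect$ I would set $\overline{X_i}:=\bigcap_{j\to i}\Img(X_j\to X_i)$; finite-dimensionality of $X_i$ forces this descending chain of subspaces to stabilize, so the intersection is attained at a single index and $\overline{X_i}\in\vect$. The assignment $i\mapsto\overline{X_i}$ is functorial with surjective transition maps, and the canonical level map $\{X_i\}\to\{\overline{X_i}\}$ is a pro-isomorphism by the usual Mittag--Leffler cofinality argument. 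Full faithfulness of the inclusion is automatic and its essential surjectivity is exactly what was just shown; the straightening functor is the desired $\Phi_0'$.

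\emph{The identification $\Pro^s(\vect)\xrar{\sim}\Ll_0$.} Define $\Phi_0''$ on objects by $\{X_i\}\mapsto\varprojlim_i X_i$, topologized as the projective limit of the discrete finite-dimensional spaces $X_i$. By Lefschetz's results (\cite{l}, II.27) each $X_i$ is linearly compact, an arbitrary product of linearly compact spaces is linearly compact, and a closed subspace of a linearly compact space is linearly compact; since $\varprojlim_i X_i$ is closed in $\prod_i X_i$ it lies in $\Ll_0$. For full faithfulness I would use that a continuous linear map $f\colon\varprojlim X_i\to\varprojlim Y_j$ composed with a projection to the discrete space $Y_j$ is continuous, hence kills an open subspace; because the structure maps of $\{X_i\}$ are epimorphic, the kernels of the projections $\varprojlim X_\bullet\to X_i$ are cofinal among open subspaces and each projection is surjective, so $f$ is induced, compatibly in $j$, by a system of maps $X_{i(j)}\to Y_j$, i.e.\ by a pro-morphism; checking that this assignment is a two-sided inverse of the tautological one identifies $\Hom_{\Pro^s}(\{X_i\},\{Y_j\})$ with continuous $\Hom_{\Ll_0}$. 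For essential surjectivity, given $V\in\Ll_0$, every quotient $V/U$ by an open subspace $U$ is discrete and linearly compact, hence finite-dimensional (a discrete linearly compact space is finite-dimensional, again \cite{l}), the maps $V/U'\to V/U$ for $U'\subseteq U$ are surjective, and completeness and Hausdorffness of $V$ give $V\cong\varprojlim_U V/U$; thus $V\cong\Phi_0''(\{V/U\}_U)$.

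Composing, $\Phi_0:=\Phi_0''\circ\Phi_0'$ is an equivalence $\Pro(\vect)\xrar{\sim}\Ll_0$. For the last sentence, $\Pro(\vect)$ is abelian because $\vect$ is abelian (kernels and cokernels of pro-morphisms exist, and a pro-morphism that is both mono and epi is invertible), so the equivalence transports this structure and $\Ll_0$ is abelian. The step I expect to be the real work is the full faithfulness of $\Phi_0''$: straightening an arbitrary continuous linear map between the projective limits into a compatible family of maps of finite-dimensional spaces and verifying this is inverse to the obvious map on Hom-sets — where the epimorphicity of the structure maps secured in the first equivalence is precisely what makes the kernels of the projections cofinal, and hence the straightening possible.
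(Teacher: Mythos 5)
The paper itself gives no proof of this lemma: it is one of the statements explicitly ``recalled'' from \cite{pre}, so there is nothing in this text to compare against line by line. Your argument is correct and is the standard (and surely intended) one: Mittag--Leffler straightening of a pro-object of finite-dimensional spaces into one with surjective transition maps, followed by the identification of strict pro-objects with linearly compact spaces via $\varprojlim$ with the limit topology, using Lefschetz's facts that closed subspaces and products of linearly compact spaces are linearly compact and that discrete linearly compact spaces are finite-dimensional. Two small points worth tightening, neither of which is a gap: (i) the canonical \emph{level} map is the inclusion $\{\overline{X_i}\}\hookrightarrow\{X_i\}$, not a map $\{X_i\}\to\{\overline{X_i}\}$; its pro-inverse is the non-level morphism obtained by choosing, for each $i$, an index $j(i)$ with $\Img(X_{j(i)}\to X_i)=\overline{X_i}$. (ii) The surjectivity of the projections $\varprojlim X_\bullet\to X_i$, which you rely on for both fullness and faithfulness of $\Phi_0''$, deserves a word: it follows either from the countability of the index set (which the paper imposes from Section 3 onward) or from a linear-compactness argument for general cofiltered systems; as you note, this is precisely where the surjectivity of the structure maps secured in the first step is used.
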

\begin{prop}\label{lefschetz}
There is an equivalence of categories: $\Phi:\T\xrar{\sim}\Ll$, whose restriction to the category $\Pro(\vect)$ is $\Phi_0$.
\end{prop}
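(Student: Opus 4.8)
\textbf{Proposal for the proof of Proposition \ref{lefschetz}.}

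The plan is to build the equivalence $\Phi:\T\xrar{\sim}\Ll$ by localizing the statement: every locally linearly compact space $V$ contains an open linearly compact subspace $V_0$, and conversely every object $X$ of $\T=\dlim\vect$ is, by the very definition of the Beilinson category, a formal limit $\indproriga$ in which the diagonal entries give a compatible family of compact subobjects $X_j\in\Pro^a(\vect)$ with $X_{j'}/X_j$ in $\vect$ (this is exactly the Corollary to Proposition \ref{stab}). The extension $\Phi_0:\Pro(\vect)\xrar{\sim}\Ll_0$ of the preceding Lemma handles the compact pieces; the task is to glue these equivalences along admissible monomorphisms with finite-dimensional cokernel and check this respects morphisms.

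Concretely, I would proceed as follows. First, define $\Phi$ on objects: given $X\in\T$, choose an ind-pro presentation, set $V:=\varinjlim_j \Phi_0(X_j)$ as a topological $k$-vector space, topologized so that each $\Phi_0(X_j)$ is an open linearly compact subspace; since $X_{j'}/X_j\in\vect$, the transition maps $\Phi_0(X_j)\hrar\Phi_0(X_{j'})$ have finite-dimensional cokernel, so the union is locally linearly compact, i.e. lies in $\Ll$. Second, check this is independent of the chosen presentation: two presentations of $X$ are cofinally related in $\Ind^a\Pro^a(\vect)$, and $\Phi_0$ being an equivalence transports the comparison isomorphisms; here Proposition \ref{compdisc} (an object that is both compact and discrete lies in $\vect$) is the tool that pins down the ambiguity, since any two compact open subspaces of $V$ have a common refinement whose relative quotients are finite-dimensional. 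Third, define $\Phi$ on morphisms: an arrow $f:X\to Y$ in $\T$ is, after straightification (Lemmas \ref{straight0}, \ref{straight}), represented in components by maps of pro-objects that $\Phi_0$ carries to continuous linear maps of linearly compact subspaces, which assemble (again using that everything is determined up to finite-dimensional pieces) to a unique continuous linear map $\Phi(f):V\to W$; functoriality is then a diagram chase. Fourth, verify that $\Phi$ is fully faithful and essentially surjective: essential surjectivity uses that every $V\in\Ll$ has an open $V_0\in\Ll_0\simeq\Pro(\vect)$ with $V/V_0$ discrete, hence $V=\varinjlim(V_0\hrar V_0'\hrar\cdots)$ is in the image; full faithfulness reduces, via the compact open subspaces and $\Phi_0$ full faithful, to the fact that a continuous linear map between locally linearly compact spaces is determined by its restrictions to a cofinal family of compact open subspaces. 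Finally, observe that by construction $\Phi$ restricted to $\Pro(\vect)\hrar\T$ (trivial ind-systems) is literally $\Phi_0$.

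The main obstacle I expect is the independence-of-presentation step and, relatedly, the coherence needed to make $\Phi$ a genuine functor rather than a mere bijection on isomorphism classes: one must show that the identifications $\Phi_0(X_j)\simeq$ (a compact open subspace of $V$) can be chosen compatibly across all $j$ and across changes of presentation, so that no cocycle obstruction appears. This is where the hypothesis that the relevant quotients are finite-dimensional is essential — it forces the comparison maps to be iso after passing to a cofinal subsystem — and where Proposition \ref{compdisc} does the real work. Everything else (continuity, linearity, the explicit topology) is routine point-set verification of the kind carried out in \cite{pre}, and I would cite that reference rather than reproduce it.
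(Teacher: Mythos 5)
The paper gives no proof of this proposition here — it is explicitly recalled from \cite{pre} — and your sketch follows exactly the route taken there: transport an ind--pro presentation through $\Phi_0$ to a colimit of open linearly compact subspaces, using that the relative quotients $X_{j'}/X_j$ lie in $\vect$, and reduce full faithfulness and essential surjectivity to the cofinal family of compact open subspaces. Your outline is correct and is essentially the same approach as the cited reference.
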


As a consequence of Proposition \eqref{lefschetz}, $\Ll$ becomes endowed with a structure of an exact category, and it is self-dual 
(see Prop.\eqref{limduality}). 
\begin{prop}\label{lefschetz2}
(a) Under the identification of Proposition \eqref{lefschetz}, the class of admissible monomorphisms of $\Ll$ coincides with the class 
of its closed embeddings. \\
(b) Similarly, the class of admissible epimorphisms in $\Ll$ coincides with the class of continuous surjective morphisms $p:B\rar C$, 
such that the canonical bijection $\dfrac{B}{\ker(p)}\rar C$ is a homeomorphism.
\end{prop}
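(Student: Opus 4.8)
The plan is to read the statement off from the exact structure that $\Ll$ inherits from $\T=\dlim\vect$ via the equivalence $\Phi$ of Proposition \eqref{lefschetz}, passing between admissible short exact sequences of $\T$ and topological data stage by stage along a linearly compact filtration. Throughout I would use that linearly compact spaces behave like compact Hausdorff spaces in the linear setting: a closed subspace of a linearly compact space is linearly compact, a continuous image of a linearly compact space is linearly compact, and a continuous bijection between linearly compact spaces is a homeomorphism. As a first consequence the statement already holds inside $\Ll_0\cong\Pro^a(\vect)$: this category is abelian, so an admissible monomorphism there is simply an injective continuous map, whose image is a linearly compact — hence closed — subspace and whose corestriction is a homeomorphism; conversely a closed embedding of linearly compact spaces is visibly a monomorphism of $\Ll_0$. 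Since $\vect$ is abelian it satisfies (AIC) (and (AIC)$^o$), so the machinery of the preceding sections on Grassmannians applies; in particular $\vect$ is partially abelian exact.

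For part (a), the implication "admissible monomorphism $\Rightarrow$ closed embedding": let $m\colon B\hrar C$ be an admissible monomorphism of $\Ll$, i.e. of $\T$ under $\Phi$ (the exact structure of $\Ll$ being by definition the transported one). Write $C=\varinjlim_j C_j$ with $C_j\in\Pro^a(\vect)$ (such a presentation exists since $C\in\T$); by the Corollary following Proposition \eqref{stab} each successive quotient $C_{j+1}/C_j$ lies in $\vect$, hence is finite dimensional and discrete, so every $C_j$ is an admissible subobject of $C$ with discrete quotient, that is $C_j\in\Gamma(C)$. By Proposition \eqref{lift} I may form, for each $j$, the intersection $B_j:=B\cap C_j\in\Pro^a(\vect)$, sitting in a cartesian square with $B_j\hrar C_j$ and $B_j\hrar B$ admissible monomorphisms; since $\{C_j\}$ exhausts $C$ the $B_j$ exhaust $B$, so $B=\varinjlim_j B_j$ and $m=\varinjlim_j m_j$ with $m_j\colon B_j\hrar C_j$ an admissible monomorphism of $\Pro^a(\vect)$, hence (by the $\Ll_0$ case) a closed embedding of linearly compact spaces. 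Because the squares are cartesian one has $m(B)\cap C_j=m_j(B_j)$, so $m(B)$ is closed in $C=\varinjlim_j C_j$ and the colimit topology of $B$ agrees, via $m$, with the subspace topology from $C$; thus $m$ is a closed embedding.

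For part (a), the implication "closed embedding $\Rightarrow$ admissible monomorphism": let $m\colon B\hrar C$ be a closed embedding in $\Ll$. Pick a linearly compact open subspace $C_0\subseteq C$ and write the discrete quotient $C/C_0$ as the increasing union of its finite dimensional subspaces $D_j$; let $C_j\subseteq C$ be the preimage of $D_j$, so $C_0\hrar C_j\epi D_j$ is an admissible short exact sequence of $\Ll_0$, the inclusions $C_j\hrar C_{j+1}$ are open with finite dimensional cokernel, and $C=\varinjlim_j C_j$. Put $B_j:=B\cap C_j$; this is closed in $C_j$, hence linearly compact and open in $B$, with $B=\varinjlim_j B_j$ and $m_j\colon B_j\hrar C_j$ a closed embedding of linearly compact spaces, i.e. an admissible monomorphism of $\Pro^a(\vect)$. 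The cokernels $C_j/B_j$ assemble into a legitimate ind-object of $\Pro^a(\vect)$, since $(C_{j+1}/B_{j+1})/(C_j/B_j)\cong C_{j+1}/(C_j+B_{j+1})$ is a quotient of the finite dimensional space $C_{j+1}/C_j$; this ind-object is $C/B$ with its quotient topology, which is therefore locally linearly compact. Taking the limit of the compatible family of admissible short exact sequences $B_j\hrar C_j\epi C_j/B_j$ of $\Pro^a(\vect)$ then exhibits $B\hrar C\epi C/B$ as an admissible short exact sequence of $\limA=\T$ (using the description of the exact structure of $\limA$ in terms of strict systems, cf. \cite{pre}, which is the converse of the straightification Lemma \eqref{straight}); hence $m$ is an admissible monomorphism.

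Part (b) I would deduce from (a) by self-duality. By Proposition \eqref{limduality} and $\vect=\vect^{o}$ the category $\T$ is self-dual, and under $\Phi$ this anti-equivalence is Lefschetz duality $C\mapsto C^{\vee}$ on $\Ll$ (the continuous dual topologized by uniform convergence on linearly compact subspaces, cf. \cite{l}). It interchanges admissible monomorphisms with admissible epimorphisms, and it interchanges closed embeddings with the maps described in (b): a closed embedding $B\hrar C$ dualizes to a continuous surjection $C^{\vee}\to B^{\vee}$ with kernel $(C/B)^{\vee}$ inducing a homeomorphism $C^{\vee}/(C/B)^{\vee}\xrar{\sim}B^{\vee}$, while conversely a continuous surjection $p\colon B\to C$ for which $B/\ker(p)\to C$ is a homeomorphism dualizes to the closed embedding $C^{\vee}\hrar B^{\vee}$. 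Combining with (a) gives (b). The main obstacle is the "closed embedding $\Rightarrow$ admissible monomorphism" step: one must produce by hand, from the topology of $C$, an exhausting filtration realizing $m$ as an ind-limit of admissible monomorphisms of linearly compact spaces, and then check both that this ind-limit is admissible in $\T$ and that $\Phi$ carries the resulting categorical colimit to the topological one, so that $C/B$ receives exactly its quotient topology; this is precisely where the dictionary between $\Ll$ and $\dlim\vect$ of \cite{pre} is doing the real work, the remaining point-set assertions being standard facts about linearly compact spaces.
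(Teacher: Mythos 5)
The paper does not actually prove Proposition \eqref{lefschetz2}: it is introduced with ``We recall the following propositions, whose proofs can be found in \cite{pre}'', so there is no in-text argument to compare yours against. Judged on its own, your reconstruction is essentially sound and is the natural way to prove the statement. The reduction to the linearly compact case ($\Ll_0\simeq\Pro^a(\vect)$ is abelian, and there ``injective continuous'' $=$ ``closed embedding'' by the compactness trichotomy you list), the exhaustion of $C$ by open linearly compact $C_j\in\Gamma(C)$ with finite-dimensional successive quotients (which, via Proposition \eqref{stab}, is exactly the Beilinson cartesianity condition), the formation of $B_j=B\cap C_j$ by Proposition \eqref{lift} in one direction and by hand in the other, and the verification that $\{C_j/B_j\}$ is again a strict ind-system with quotients in $\vect$ are all correct. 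Two points deserve to be made explicit rather than waved at: first, the step ``the $B_j$ exhaust $B$, so $B=\varinjlim_j B_j$'' should be justified by mutual cofinality (any presentation $B=\varinjlim_i B'_i$ has each $B'_i$, being a single object of $\Pro^a(\vect)$, factoring through some $C_j$ and hence through $B_j$, and conversely each $B_j$ factors through some $B'_i$); second, your proof of (b) by duality quietly uses that the categorical self-duality $(\dlim\vect)^o=\dlim\vect$ of Proposition \eqref{limduality} is intertwined by $\Phi$ with Lefschetz duality on $\Ll$, which is an additional compatibility not stated in this paper --- if one wants to avoid invoking it, part (b) can simply be proved by running the argument of (a) mutatis mutandis on pro-systems of discrete quotients. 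As you say yourself, the genuinely load-bearing inputs are the dictionary statements of \cite{pre} (that $\Phi$ carries formal ind-limits to topological colimits over open subspaces, and that levelwise admissible systems of short exact sequences give admissible sequences of $\limA$); since the paper itself outsources the whole proposition to that reference, relying on them is legitimate here.
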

The above proposition allow us to identify $\T$ and $\Ll$. We also recall that 
the category $\T$ is not abelian. For example, the inclusion $k[t]\hrar k[[t]]$ is a non-admissible monomorphisms in $\T$. 
\begin{theorem}\label{tateqae}
The category $\T$ is partially abelian exact.
\end{theorem}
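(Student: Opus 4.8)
The plan is to invoke Theorem~\ref{factor} and reduce the statement to showing that $\T$ satisfies both (AIC) and (AIC)$^o$ of Definition~\ref{aic}. Since $\vect^o = \vect$, Proposition~\ref{limduality} gives a self-duality $\T^o \simeq \T$ of exact categories; as (AIC)$^o$ for a category is by definition (AIC) for its opposite, this self-duality shows that $\T$ satisfies (AIC)$^o$ as soon as it satisfies (AIC). Thus the whole problem reduces to verifying (AIC) for $\T$.

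For this I would pass to the equivalent exact category $\Ll$ of locally linearly compact $k$-vector spaces via Proposition~\ref{lefschetz}, using Proposition~\ref{lefschetz2}(a) to identify admissible monomorphisms with closed embeddings. Given two closed embeddings $Y\hrar X$ and $Z\hrar X$, I regard $Y$ and $Z$ as closed linear subspaces of $X$ and take $W:=Y\cap Z$ equipped with the subspace topology. Three points then have to be checked: (i) $W$ is again locally linearly compact, so that it defines an object of $\T$; (ii) the inclusions $W\hrar Y$, $W\hrar Z$ and $W\hrar X$ are closed embeddings, hence admissible monomorphisms; and (iii) the resulting square is cartesian. For (i), I would choose an open linearly compact linear subspace $L\subseteq X$ and observe that $W\cap L$ is open in $W$ and is a closed linear subspace of $L$, hence linearly compact, so that $W$ admits an open linearly compact subspace. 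For (ii), I would use that $W$ is the intersection of the closed sets $Y$ and $Z$ and that the subspace topologies induced on $W$ from $Y$, from $Z$ and from $X$ all coincide by transitivity of the subspace topology. For (iii), given maps from an object $T$ of $\Ll$ to $Y$ and to $Z$ agreeing in $X$, the common composite $T\to X$ has image contained in the subspace $W$, giving the unique linear factorization $T\to W$, which is continuous precisely because $W$ carries the subspace topology; this shows the square is a pullback in $\Ll$, hence in $\T$. Combining (i)--(iii) with the duality reduction and Theorem~\ref{factor} finishes the proof.

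The main obstacle, and essentially the only point with real content, is step (i): one needs that local linear compactness is inherited by closed linear subspaces, which rests on the standard fact that a closed subspace of a linearly compact space is linearly compact (cf.~\cite{l}); the rest is formal. An alternative, more categorical route would avoid topology and instead straighten the admissible monomorphisms $Y\hrar X$ and $Z\hrar X$ into compatible ladders of admissible monomorphisms in $\vect$ by Lemma~\ref{straight}, form the pullbacks levelwise in the abelian category $\vect$, and reassemble them into an object of $\dlim\vect$; this works because an abelian category is trivially partially abelian exact, but bookkeeping the two ind/pro directions simultaneously is more cumbersome than the topological argument, which is why I would present the latter.
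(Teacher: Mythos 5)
Your proposal follows essentially the same route as the paper: reduce to (AIC) and (AIC)$^o$ via Theorem \ref{factor}, obtain (AIC)$^o$ from the self-duality of $\T$ coming from $\vect^o=\vect$, and verify (AIC) in the topological model $\Ll$ using the fact that the intersection of two closed subspaces is closed. Your write-up is more detailed than the paper's (which treats points (i)--(iii) as clear), but the argument is correct and the approach is the same.
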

\begin{proof}
From the equivalence $\T\xrar{\sim}\Ll$ of Propositions \eqref{lefschetz} and \eqref{lefschetz2}, the closure of $\T$ under admissible intersections is clear, since the intersection of two closed subspaces of a space $X\in\T$ is closed. Thus $\T$ satisfies the (AIC). The dual condition (AIC)$^o$ comes from this fact because of the self-duality of $\T$.
\end{proof}
\subsection{Sato Grassmannians} The concept of Sato Grassmannian, introduced for any generalized Tate space $X$ in definition \eqref{G(X)}, coincides with the concept of semi-infinite Grassmannian in the case $X$ is a Tate vector 
space (i.e. when $\A=\vect$).
\begin{prop}
Let X be an object of $\T$. The Sato Grassmannian of X coincides with the set  $G(X)$ of open, linearly compact subspaces of X.
\end{prop}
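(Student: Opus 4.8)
The plan is to translate Definition \eqref{G(X)} through the equivalence $\Phi\colon\T\xrar{\sim}\Ll$ of Propositions \eqref{lefschetz} and \eqref{lefschetz2}. First I would set up the dictionary. Under the equivalence $\Pro(\vect)\simeq\Ll_0$, the objects of $\Pro(\vect)$ correspond to the linearly compact spaces; since in the abelian category $\vect$ every epimorphism is admissible, every pro-object of $\vect$ is isomorphic to a strict admissible one, so ``$V\in\Pro^a(\vect)$'' means, up to isomorphism in $\T$, that $V$ is linearly compact. An object of $\Ind^a(\vect)$, being a directed union of finite-dimensional subspaces along (automatically admissible) injections, corresponds to a discrete $k$-vector space, and conversely every discrete $k$-vector space is such a union and hence lies in $\Ind^a(\vect)$. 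By Proposition \eqref{lefschetz2}(a) the admissible monomorphisms of $\T$ are exactly the closed embeddings, and by (b) the admissible epimorphisms are exactly the topological quotient maps, which for topological groups are open continuous surjections. Finally, an admissible subobject $[V\hrar X]$ is the same datum as the closed subspace $\Img(V)\subseteq X$, so both $\Gamma(X)$ and $G(X)$ are sets of subspaces of $X$ and it suffices to compare them.

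Next I would prove $\Gamma(X)\subseteq G(X)$. Let $[V\hrar X]\in\Gamma(X)$. The monomorphism is admissible, hence a closed embedding, so $V$ is a closed subspace of $X$, and $V\in\Pro^a(\vect)$ means $V$ is linearly compact. To see $V$ is open, use that $X/V\in\Ind^a(\vect)$ is discrete, so $\{0\}$ is open in $X/V$; since the quotient map $q\colon X\epi X/V$ is continuous and $q^{-1}(0)=V$, the subspace $V$ is open. Hence $V\in G(X)$.

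Then I would prove $G(X)\subseteq\Gamma(X)$. Let $V\subseteq X$ be an open, linearly compact subspace. An open subgroup of a topological group is also closed, so $V\hrar X$ is a closed embedding, hence an admissible monomorphism of $\T$; and since $V$ is linearly compact it is isomorphic to an object of $\Pro^a(\vect)$. The cokernel $X/V$ in $\T$ carries the quotient topology (the quotient map being an admissible epimorphism), which is discrete because $V$ is open and $q$ is an open map; a discrete $k$-vector space lies in $\Ind^a(\vect)$. Therefore $V\hrar X\epi X/V$ is an admissible short exact sequence of $\T$ with $V\in\Pro^a(\vect)$ and $X/V\in\Ind^a(\vect)$, i.e. $[V\hrar X]\in\Gamma(X)$.

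The substance of the argument lies entirely in the cited equivalences (Propositions \eqref{lefschetz}, \eqref{lefschetz2} and the identification $\Pro(\vect)\simeq\Ll_0$); granting these, what remains is the elementary observation that for a linearly compact subspace $V$ of a Tate space $X$ the conditions ``$V$ open'', ``$X/V$ discrete'', and ``$[V\hrar X]\in\Gamma(X)$'' are mutually equivalent, which is the routine part I would not spell out in detail. The one point requiring a little care is that membership in $\Pro^a(\vect)$, resp.\ $\Ind^a(\vect)$, is a condition only up to isomorphism in $\T$, so one should invoke $\Pro(\vect)\xrar{\sim}\Pro^s(\vect)$ (and the analogous fact exhibiting a discrete space as a strict admissible ind-object) in order to pass freely between an abstract linearly compact --- resp.\ discrete --- space and an admissible pro- (resp.\ ind-) presentation of it.
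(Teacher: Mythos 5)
Your proposal is correct and follows essentially the same route as the paper: both directions are obtained by translating Definition of $\Gamma(X)$ through the equivalence $\T\xrar{\sim}\Ll$, identifying admissible monomorphisms with closed embeddings, objects of $\Pro^a(\vect)$ with linearly compact spaces, and objects of $\Ind^a(\vect)$ with discrete spaces. The only cosmetic difference is that where you argue directly that the quotient topology on $X/V$ is discrete because $V$ is open, the paper invokes Lefschetz (25.8)(c) via the notion of nuclear subspace; the substance is the same.
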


\begin{proof}
(i) $\Gamma(X)\subset G(X)$. Let be $U\in\Gamma(X)$. By definition, 
$U\in\Pro^s(\vect)\xrar{\sim}\Ll_0$, so $U$ is a linearly compact subspace of 
$X$. Next, since $U$ is closed in $X$,  the projection 
$X\epi\dfrac{X}{U}$ is a continuous map. Since $\dfrac{X}{U}\in\Ind^s(\vect)\sim\allvect$, 
it follows that $\dfrac{X}{U}$ is a discrete space. Thus, $U=\pi^{-1}(0)$ is 
open. \\
(ii) $G(X)\subset\Gamma(X)$. Let be $V\in G(X)$ an open, linearly compact 
subspace of $X$. Since $V$ is linearly compact, $V\in\Pro^s(\vect)$. Also, 
$V$ is closed in $X$; from (27.8) of \cite{l}, the inclusion $V\hrar X$ is therefore a closed embedding, hence an admissible monomorphism. Being $V$ open, and a linear subspace of $X$, $V$ is a nuclear subspace. Thus, 
from (25.8)(c) of \cite{l}, the quotient $\dfrac{X}{V}$ is discrete, i.e. an object 
of $\Ind^s(\vect)$. It follows $V\in\Gamma(X)$, and we are done.
\end{proof}

\subsection{2-Tate spaces}
\begin{definition}
Let be $k$ a field. The category $\T_2=\dlim\T$ is called the category of {\it 2-Tate spaces} over $k$.
\end{definition}
The category $\T_2$ is thus in a natural way an exact category, and it is of course possible to further iterate the functor $\dlim$ 
and define, for all $n$, the exact category $=\T_n=\dlim\T_{n-1}=\dlim^{n}\vect$, of {\it n-Tate spaces}, but in this paper we shall 
be only concerned with 2-Tate spaces. We remark however that our definition of $n$-Tate spaces coincides with that of Arkhipov 
and Kremnizer, in \cite{ak}.
\subsection{Iteration}
Since from Theorem \eqref{tateqae} the category $\T$ is partially abelian exact, it is possible to extend the results on $\Dim$ and 
$\Det$ of the previous sections to the object of the category $\T_2$ of 2-Tate spaces.

\vspace{0.1cm} 

Let be $\Xi\in\T_2$ a 2-Tate space. We shall denote, from now on, by $\Dim^{(1)}$ the universal dimensional $\Zee$-torsor 
$\Dim$ over the category $\T$, constructed in section \eqref{sectionunivdim}.  As we have seen, the collection of $\Dim^{(1)}(V)$, for $V\in\Ob \T$ 
forms a symmetric determinantal theory. 

\begin{theorem}
It is possible to define a 
{\it ($\Zee$-tors)-torsor} (i.e. a $\Zee$-gerbe), associated to the object $\Xi\in\T_2$, as 
$$
\Dim^{(2)}(\Xi):=\Det_{\Dim^{(1)}}(\Xi).
$$
\end{theorem}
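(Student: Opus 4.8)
The strategy is to recognise the right--hand side as an instance of the construction $\D_h(\,\cdot\,)$ of Section~\ref{sectiondet}, applied to the exact category $\T$ in place of $\A$ and to the symmetric determinantal theory $\Dim^{(1)}$ in place of $h$. First I would recall that $\vect$, being abelian, is partially abelian exact, so that Theorem~\ref{tensordim} applies with $\A=\vect$, $G=\Zee=K_0(\vect)$ and $\chi=\psi$ the universal dimension theory; it yields that $(\Dim^{(1)}(V),\mu)_{V\in\T}$ is a \emph{symmetric} determinantal theory on $\T$ with values in the (strictly symmetric) Picard category $\Pp=\Tors(\Zee)$. This is exactly the assertion quoted just before the theorem, and it supplies the datum $h=\Dim^{(1)}$ needed to form $\D_h$.

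Next, by Theorem~\ref{tateqae} the category $\T$ is partially abelian exact, and in particular it satisfies the (AIC). Hence Theorem~\ref{detxtorsor}, applied with $\A$ replaced by $\T$, with $X=\Xi\in\dlim\T=\T_2$, and with $h=\Dim^{(1)}$ valued in $\Pp=\Tors(\Zee)$, shows that the groupoid $\D_{\Dim^{(1)}}(\Xi)$ is a $\Tors(\Zee)$-torsor. One then \emph{defines} $\Dim^{(2)}(\Xi)$ to be this $\Pp$-torsor; this is the $(\Zee\text{-tors})$-torsor of the statement.

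Finally I would translate ``torsor over $\Tors(\Zee)$'' into ``$\Zee$-gerbe''. By Examples~\ref{virtual}(1) the Picard category $\Pp=\Tors(\Zee)$ is connected with $\pi_1(\Pp)=\Zee$; since $\Pp$ is connected, Lemma~\ref{connected} gives that $\D_{\Dim^{(1)}}(\Xi)$ is a connected groupoid, so it coincides with its unique connected component, and Proposition~\ref{pi1gerbe} then identifies it with a $\pi_1(\Pp)=\Zee$-gerbe. This completes the construction.

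There is essentially no hard computation here: the content is packaged in Theorems~\ref{tensordim},~\ref{tateqae} and~\ref{detxtorsor}. The one point that must not be glossed over is that the datum $h=\Dim^{(1)}$ fed into $\D_h$ is genuinely a determinantal theory, together with its coherence isomorphisms $\mu$, and moreover a \emph{symmetric} one --- the symmetry being the non-formal conclusion of Theorem~\ref{tensordim}(2). Although symmetry of $h$ is not needed merely for $\D_h(\Xi)$ to be a torsor, it is precisely what one would use to promote the present existence statement to the multiplicativity of $\Dim^{(2)}$ over $S(\T_2)$, in the spirit of the cohomological results of Section~\ref{sectiondet}; I would flag this as the conceptual rather than technical crux.
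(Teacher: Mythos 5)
Your proposal is correct and follows exactly the route the paper intends: the paper states this theorem without a displayed proof, but the surrounding text (Theorem \eqref{tateqae} giving that $\T$ is partially abelian exact, the remark that $\Dim^{(1)}$ is a symmetric determinantal theory via Theorem \eqref{tensordim}, and Examples \eqref{detexamples}(2) identifying a $\D_h(X)$ valued in $\Tors(G)$ with a $G$-gerbe via Theorem \eqref{detxtorsor}, Lemma \eqref{connected} and Proposition \eqref{pi1gerbe}) is precisely the chain of citations you assemble. Your closing remark correctly separates what is needed for the torsor/gerbe statement itself (only (AIC) for $\T$ and the determinantal-theory structure of $\Dim^{(1)}$) from what the symmetry buys (the multiplicativity of $\Dim^{(2)}$ asserted immediately after the theorem).
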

The gerbe $\Dim^{(2)}$ is multiplicative with respect to admissible short exact sequences of  $\T_2$.

\vspace{0.1cm}

It is also possible to define $\Det^{(2)}(\Xi)$, the universal determinantal gerbe of $\Xi$, over the universal determinantal theory 
on $\T$. It results a multiplicative 2-gerbe over $k^*$. \\
This theory  coincides with the theory of gerbel theories and 2-gerbes contained in \cite{ak}. We postpone to a forthcoming paper a 
more detailed proof of this equivalence.

{\small D\'epartement de Math\'ematiques\\
Universit\'e Paris 13\\
93430 Villetaneuse\\
France\\
\tt previdi@math.univ-paris13.fr}


\begin{thebibliography}{widest-label}
\bibitem{ak} S. Arkhipov, K. Kremnizer, {\it 2-gerbes and 2-Tate spaces}, arXiv:0708.4401 (2007).
\bibitem{am} M. Artin, B. Mazur, Etale Homotopy, LNM 100, Springer-Verlag, Berlin-Heidelberg (1969).
\bibitem{B} A. Beilinson, {\it How to glue perverse sheaves,} in:  K-Theory, Arithmetic and Geometry, Yu. I. Manin (Ed.), Lecture Notes in Mathematics 1289, pp. 42-51, Springer-Verlag, New York (1987).
\bibitem{d} P. Deligne, {\it Le determinant de la cohomologie}, Current trends in arithmetical algebraic geometry (Arcata, Calif., 1985), 93--177, Contemp. Math., 67, Amer. Math. Soc., Providence, RI, 1987.
\bibitem{dr} V. Drinfeld, {\it Infinite-Dimensional Vector Bundles in Algebraic 
Geometry}, in: The Unity of Mathematics, Birkhauser, Boston (2006). 
\bibitem{fz} E. Frenkel, X. Zhu, {\it Gerbal Representations of Double Loop Groups}, arXiv:0810.1487v3 (2008).
\bibitem{gk} D. Gaitsgory, D. Kazhdan, {\it Representations of algebraic groups over a 2-dimensional local field}, Geometric And Functional Analysis 14 (3), (2004), 535-574.
\bibitem{gm} S. I Gelfand, Yu. I. Manin, Methods of Homological Algebra, Second Edition, Springer Monographs in Mathematics, Springer-Verlag, Berlin - Heidelberg (2003).
\bibitem{gz} P. Gabriel, M. Zisman, Calculus of fractions and homotopy theory. Springer-Verlag, New York (1967)
\bibitem{sga} A. Grothendieck et al., S\'eminaire de g\'eometrie alg\'ebrique IV:  Th\'eorie des topos et cohomologie \'etale des schemas, Lecture Notes in Mathematics 269, Springer-Verlag, Berlin - 
Heidelberg (1972).
\bibitem{sga7} A. Grothendieck et al., S\'eminaire de g\'eometrie alg\'ebrique VII: Groupes de Monodromie en G\'eometrie Alg\'ebrique, Lecture Notes in Mathematics 288, Springer-Verlag, Berlin - Heidelberg (1972).
\bibitem{ka} M. Kapranov, {\it Semiinfinite symmetric powers},  arXiv:math/0107089v1 (2001).
\bibitem{ka2} M. Kapranov, {\it Analogies between the Langlands Correspondence and Topological Quantum Field Theory}, in: Functional Analysis on the Eve of the $21^{st}$ Century vol. I, Progress in Mathematics 131, pp. 119-151, Birkh\"auser, Boston 1995.
\bibitem{k}  K. Kato, {\it Existence theorem for higher local fields}, in: Geometry and Topology Monographs Vol 3: Invitation to higher local fields, I. Fesenko and M. Kurihara, (Eds.), pp. 165-195, University of Warwick (2000).
\bibitem{l} S. Lefschetz, Algebraic Topology (AMS Colloquium Publications 27), Amer. Math. Soc. New York (1942).
\bibitem{ml} S. Mac Lane, Categories for the Working Mathematician, Second Edition, GTM Springer Vol. 5, Springer-Verlag, New York (1998).
\bibitem{op} D. V. Osipov, A. N. Parshin, {\it Harmonic analysis on local fields and adelic spaces I},  arXiv:0707.1766 (2007).
\bibitem{pre} L. Previdi, {\it Locally compact objects in exact categories}, arXiv:0710.2509v2 (2009)
\bibitem{q} D. Quillen, {\it Higher Algebraic K-Theory I}, in: Higher K-Theories, Lecture Notes in Mathematics 341, pp. 77-139, Springer-Verlag, New York (1973).
\bibitem{s} M. Sato, {\it The KP hierarchy and infinite-dimensional Grassmann manifolds,} in: Theta Functions, Bowdoin 1987, American Mathematical Society (1988).
\bibitem{st} R. Street, {\it The algebra of oriented simplexes}, J. Pure Appl. Algebra 49 (1987) pp. 283-335.
\bibitem{w} F. Waldhausen, {\it Algebraic K-Theory of Generalized Free Products, Part I},  Annals of Mathematics  2nd Ser., vol 108, No. 2 (Sep. 1978), pp. 135-204.
\bibitem{w2} F. Waldhausen, {\it Algebraic K-theory of spaces}, in: Algebraic and Geometric Topology
Proceedings of a Conference held at Rutgers University, New Brunswick, USA July 1983, Lecture Notes in Mathematics 1126, Springer-Verlag, New York (1985).
\end{thebibliography}
\end{document}